\numberwithin{equation}{section}
\newtheorem{theo}{Theorem} 
\newtheorem{lemma}{Lemma}[section]
\newtheorem{prop}[lemma]{Proposition}
\newtheorem{theoint}[lemma]{Theorem}
\newtheorem{claim}[lemma]{Claim}
\theoremstyle{remark}
\newtheorem{remark}[lemma]{Remark}
\theoremstyle{definition}
\newcommand{\ds}{\displaystyle}
\newcommand{\hdot}{\dot{H}^1}
\newcommand{\NN}{\mathbb{N}}
\newcommand{\RR}{\mathbb{R}}
\newcommand{\ZZ}{\mathbb{Z}}
\newcommand{\eps}{\varepsilon}
\newcommand{\tu}{\tilde{u}}
\newcommand{\tv}{\tilde{v}}
\newcommand{\tT}{\widetilde{T}}
\newcommand{\teps}{\tilde{\eps}}
\newcommand{\DDD}{\mathcal{D}}
\newcommand{\HHH}{\mathcal{H}}
\newcommand{\III}{\mathcal{I}}
\newcommand{\JJJ}{\mathcal{J}}
\newcommand{\Ebf}{\mathbf{E}}
\newcommand{\Hbf}{\mathbf{H}}
\newcommand{\tW}{\widetilde{W}}
\newcommand{\indic}{1\!\!1}
\DeclareMathOperator{\loc}{loc}
\DeclareMathOperator{\vect}{span}
\DeclareMathOperator{\supp}{supp}
\DeclareMathOperator{\rad}{rad}
\DeclareMathOperator{\inn}{in}
\DeclareMathOperator{\out}{out}
\DeclareMathOperator{\sinc}{sinc}
\title[Soliton resolution for critical wave equations]{Soliton resolution for critical co-rotational wave maps and radial cubic wave equation}
\author[T.~Duyckaerts]{Thomas Duyckaerts$^1$}
\author[C.~Kenig]{Carlos Kenig$^2$}
\author[Y.~Martel]{Yvan Martel$^3$}
\author[F.~Merle]{Frank Merle$^4$}
\thanks{$^1$LAGA (UMR 7539), Universit\'e Sorbonne Paris Nord, and Institut Universitaire de France}
\thanks{$^2$University of Chicago. Partially supported by NSF Grants DMS-14363746 and DMS-1800082}
\thanks{$^3$CMLS  (UMR 7640), Ecole Polytechnique}
\thanks{$^4$AGM (UMR 8088), CY Cergy Paris Universit\'e and IHES}
\keywords{Wave maps, focusing wave equation, dynamics, soliton resolution, global solutions, blow-up}
\begin{document}
\begin{abstract}
In this paper we prove the soliton resolution conjecture for all times, for all solutions in the energy space, of the co-rotational wave map equation. To our knowledge this is the first such result for all initial data in the energy space for a wave-type equation. We also prove the corresponding results for radial solutions, which remain bounded in the energy norm, of the cubic (energy-critical) nonlinear wave equation in space dimension 4.
\end{abstract}

\maketitle
\tableofcontents
\section{Introduction}
In this paper we first consider co-rotational wave maps, from Minkowski space into the two-sphere, which in spherical coordinates for the two-sphere, corresponds to solutions of the following equation:
\begin{equation}
 \label{WM}
 \partial_t^2\psi-\partial_r^2\psi-\frac{1}{r}\partial_r\psi+\frac{\sin(2\psi)}{2r^2}=0,
\end{equation} 
where $r>0$ and $t\in \RR$, with initial data
\begin{equation}
 \label{IDWM}
 \vec{\psi}_{\restriction t=0}=(\psi_0,\psi_1)\in \Ebf,
\end{equation} 
where $\vec{\psi}=(\psi,\partial_t\psi)$ (see for example \cite{ShTZ94} and the introduction of \cite{CoKeLaSc15a}).  Here $\Ebf$ is the space of initial data such that the conserved energy:
$$E_{M}(\psi_0,\psi_1)=\frac{1}{2}\int_0^{\infty} (\psi_1)^2rdr+\frac{1}{2}\int_0^{\infty} (\partial_r\psi_0)^2 rdr+\frac{1}{2}\int_0^{\infty}\frac{\sin^2\psi_0}{r^2}rdr$$
is finite. 

If $(\psi_0,\psi_1)$ in $\Ebf$, there exists $(\ell,m)\in \ZZ^2$ such that
\begin{equation}
 \label{lim_lm}
\lim_{r\to 0} \psi_0(r)=\ell\pi,\quad \lim_{r\to\infty}\psi_0(r)=m\pi.
 \end{equation} 
We denote by $\Hbf_{\ell,m}$ the set of $(\psi_0,\psi_1)$ in $\Ebf$ such that \eqref{lim_lm} holds. This is an affine space, parallel to the Hilbert space $\Hbf=\Hbf_{0,0}$ of functions $(\psi_0,\psi_1)\in \left(L^2_{\loc}\big((0,\infty)\big)\right)^2$ such that 
\begin{equation}
\label{normH}
\|(\psi_0,\psi_1)\|^2_{\Hbf}=\int_0^{\infty} \left((\partial_r\psi_0)^2+\frac{1}{r^2}(\psi_0(r))^2\right)rdr+\int_0^{\infty} \psi_1^2rdr <\infty.
\end{equation}
It is well known (\cite{ShTZ94}) that \eqref{WM} is locally well-posed in $\Ebf$ and that the spaces $\Hbf_{\ell,m}$ are stable by the flow (see for instance \cite{CoKeLaSc15a}). The equation \eqref{WM} has the following scaling invariance: if $\psi$ is a solution of \eqref{WM}, then $\psi(\lambda\,\cdot,\lambda\,\cdot)$ is also a solution of \eqref{WM}, with same energy. 

The bubble
\begin{equation}
 \label{defQ}
 Q(r)=2\arctan r
\end{equation} 
is a stationary solution of \eqref{WM} such that $(Q,0)\in \Hbf_{0,1}$. Other stationary solutions of \eqref{WM} are given by $\ell \pi\pm Q(\lambda\,\cdot)$, $\lambda>0$. These are the only finite energy, stationary solutions of \eqref{WM}. 

The stationary solution $Q$ has several important properties. It
is stable up to the symmetries (including scaling) as a solution of \eqref{WM}, but also as a general wave map from the Minkowski space into the two-sphere (this is, for example, a simple consequence of Theorem 6.1 in \cite{DuJiKeMe17b} and the conservation laws for general wave maps). In addition, $Q$ is up to symmetries the static solution of least energy among all the (general) static wave maps from Minkowski space into the two-sphere. Thus, $Q$ is the ``ground state''.

The equation \eqref{WM} is the case $k=1$ of the equation obtained by considering $k$-equivariant wave maps from Minkowski space into the two-sphere:
\begin{equation}
 \label{WMk}
 \partial_t^2\psi-\partial_r^2\psi-\frac{1}{r}\partial_r\psi+k^2\frac{\sin(2\psi)}{2r^2}=0,
 \end{equation}
 The co-rotational case $k=1$ is distinguished among the $k$-equivariant ones, because it is the one satisfied by the ground state $Q$ (of the whole wave map equation).
  
 The linearized equation of \eqref{WMk} at $\psi=0$ is 
$$\partial_t^2\psi_L-\partial_r^2\psi_L-\frac{1}{r}\partial_r\psi_L+\frac{k^2}{r^2}\psi_L=0,$$
which can be reduced to the radial wave equation in dimension $2k+2$ by the change of unknown function $r^{k}u_L=\psi_L$. The linearized equation for \eqref{WM} is thus essentially the $4D$ wave equation.

In this paper we will also consider a twin problem, the wave equation on $\RR^4$, with the energy-critical focusing nonlinearity:
\begin{equation}
 \label{NLW}
 \partial_t^2u-\Delta u=u^3,
\end{equation} 
where $t\in \RR$ and $x\in \RR^4$, 
with initial data
\begin{equation}
 \label{ID}
 \vec{u}_{\restriction t=0}=(u_0,u_1)\in \HHH,
\end{equation} 
where $\vec{u}=(u,\partial_t u)$, and $\HHH=\hdot(\RR^4)\times L^2(\RR^4)$. We will only consider radial initial data, i.e. data depending only on $r=|x|=\sqrt{x_1^2+x_2^2+x_3^2+x_4^2}$.

We denote by
$$W(x)=\frac{1}{1+\frac{|x|^2}{8}}$$
the ground state of \eqref{NLW}. 

The equation \eqref{NLW} is a special case of the energy-critical wave equation 
\begin{equation}
\label{NLWN}
\partial_t^2u-\Delta u=|u|^{\frac{4}{N-2}}u 
\end{equation} 
in general space dimension $N\geq 3$. In this general case, the ground state stationary solution is given by $W(x)=\left(1+\frac{|x|^2}{N(N-2)}\right)^{1-\frac{N}{2}}$.

\subsection{Background on the soliton resolution conjecture}

\emph{The main results of this paper are the proofs of soliton resolution, without size constraints, and for all times, for solutions of \eqref{WM} and for radial solutions of \eqref{NLW}. }In particular, we will obtain the complete classification of the solutions of \eqref{WM}, which is to our knowledge the  first result of soliton resolution \emph{for all initial data in the energy space} for a wave-type equation. In this work we only consider co-rotational wave maps and radial solutions of \eqref{NLW}. The general problems, without any symmetry assumption, seem
out of reach by current methods.

To put these results in perspective, we start with a general discussion of the soliton resolution conjecture for nonlinear dispersive equations. This conjecture predicts that any global in time solution of this type of equation evolves asymptotically as a sum of decoupled solitons (traveling wave solutions, which are well-localized and traveling at a fixed speed), a radiative term (typically a solution to a linear equation) and a term going to zero in the energy space. For finite time blow-up solutions, a similar decomposition should hold, depending on the nature of the blow-up.

This conjecture arose in the 1970's from numerical simulations and the theory of integrable equations, in order to explain a ``puzzling paradox'' 
stemming from the birth of scientific computation, in a numerical simulation carried out at Los Alamos in the early 1950's (see \cite{FermiPastaUlam55}) by Fermi, Pasta and Ulam. In 1965 Kruskal  found that as the spacial mesh in the discretization of the Fermi-Pasta-Ulam model tends to $0$, the solutions of the Fermi-Pasta-Ulam problem converge to solutions of the KdV equation (see \cite{Kruskal78} for an account). The fact that the KdV equation has soliton solutions provided an explanation for the ``puzzling paradox''. Following this discovery, Zabusky and Kruskal \cite{ZabuskyKruskal65} conducted another influential numerical simulation, which showed numerically the emergence of solitons and multisolitons in the KdV equation. This experiment led to the soliton resolution conjecture, and to the theory of complete integrability, to explain the inelastic collision of solitons that was observed.

The first theoretical results in the direction of soliton resolution were obtained for the completely integrable KdV, mKdV and $1$-dimensional cubic NLS, using the method of inverse scattering (\cite{Lax68}, \cite{EcSc83},\cite{Eckhaus86},  \cite{Schuur86BO},  \cite{SegurAblowitz76}, \cite{Novoksenov80}, \cite{BoJeMcL18}). Very few complete results exist for equations that are not completely integrable. For non-integrable nonlinear dispersive equations, such as nonlinear Schr\"odinger and Klein-Gordon equations, complete results seem out of reach. Known results include scattering below a threshold given by the ground state of the equation (see e.g. \cite{KeMe06}, \cite{DuHoRo08}, \cite{IbMaNa11}, \cite{Dodson15}), local study close to the ground state soliton \cite{NaSc11Bo}), and in some special cases the existence of a global compact attractor (e.g. \cite{Tao07DPDE}). We refer to the introduction of \cite{DuJiKeMe17a} for a more complete discussion and more references on the subject.

\emph{The soliton resolution conjecture is believed to hold unconditionally for energy-critical wave maps into the two-sphere, and for energy-bounded solutions of the energy-critical nonlinear wave equation. }For the case of wave maps, a first result in this direction was that any solution that blows up in finite time converges locally in space, up to the symmetries of the equation, for a sequence of times, to a traveling wave solution (see \cite{ChTZ93}, \cite{Struwe03b} for the equivariant case, 
\cite{StTa10a}, \cite{StTa10b} for the general case). Similar results hold for solutions that exist for infinite time but don't scatter (see  \cite{StTa10b}). In the equivariant case, using techniques developed by the first, second and fourth authors, one can prove stronger statements, namely that the soliton resolution holds in the equivariant case with a condition on the energy ruling out a multisoliton configuration \cite{CoKeLaSc15a}, \cite{CoKeLaSc15b}) and that it holds for a sequence of times without this condition in the co-rotational case \cite{Cote15} and for the $k=2$ equivariant case \cite{JiaKenig17} (and for
all equivariant cases, modulo a  technical condition regarding the local well-posedness theory, also in \cite{JiaKenig17}). The limiting case of a pure two-soliton was treated in \cite{JendrejLawrie18}), where it is shown, in particular, that the collision between the two solitons is inelastic. Subsequent work on the case of two solitons is in \cite{JendrejLawrie20Pa}, \cite{JendrejLawrie20Pb} and \cite{Rodriguez21}, while in \cite{JendrejLawrie20Pc}, the full resolution is proved in the equivariant case, in the two-soliton setting. For wave maps into the two-sphere without any symmetry assumption, the resolution is known only close to the ground state, in work of the first, second and fourth authors, with Hao Jia \cite{DuJiKeMe17b} (see also \cite{Grinis17} for a weaker version of the decomposition, without size constraint, for a sequence of times).

For the parabolic analog of wave maps (the harmonic map heat flow) these questions have been studied earlier (see e.g. \cite{Struwe85},
\cite{Qing95}, \cite{QingTian97}, \cite{Topping97}). In full generality, in the parabolic case, the soliton resolution is only known for a sequence of times. An example of Topping \cite{Topping97} shows that for a general target manifold, two sequences of times may lead to different decompositions. In the case when this target is the two-sphere, it is conjectured that the analog of the soliton resolution holds, but this is still open. For a recent work in the parabolic analog of \eqref{NLWN}, in dimension greater than or equal to $7$, without radial symmetry, near the ground state, see \cite{CollotMerleRaphael17}. 

All the results in non-integrable cases and in the absence of size constraints, mentioned in the last two paragraphs, are proven using monotonicity laws after time averaging, and hence hold only for well-chosen sequences of times. The example of Topping just mentioned shows the necessity of this. Thus, in order to obtain results that hold for all times, new methods that go beyond monotonic time averages have to be created, which address the notoriously difficult issue of time oscillations. This, for the case of \eqref{WM} and the radial case of \eqref{NLW}, is a key contribution of this work.

For the case of the energy-critical nonlinear wave equation \eqref{NLWN}, classification results for solutions ``below the ground state'' were obtained in \cite{KeMe08}, \cite{DuMe08}. (Corresponding classification results for co-rotational wave maps are in \cite{CoteKenigMerle08} and in \cite{CoKeLaSc15a}, \cite{CoKeLaSc15b}). These results are inspired by earlier results for the nonlinear Schr\"odinger equation going back to \cite{Merle93}. (See also the results for gKdV in \cite{MartelMerleRaphael15}).

Decomposition results for \eqref{NLWN} in the $3D$ radial case, near the ground state, are in \cite{DuKeMe11a}, and corresponding results in the non-radial case,  near the ground state are in \cite{DuKeMe12}. Constructions near the ground state of center stable manifold, in the radial case and non-radial cases, are in \cite{KrNaSc13b}, \cite{KrNaSc15}. The soliton resolution for sequences of time in the radial case, for solutions which are bounded in the energy norm, was proved by \cite{DuKeMe12b} in $3$ dimensions, \cite{Rodriguez16} in all other odd dimensions, in \cite{CoKeLaSc18} in $4$ dimensions and in \cite{JiaKenig17} in $6$ dimensions. In \cite{DuJiKeMe17a}, the first, second and fourth authors, with Hao Jia, proved the decomposition for sequences of time, in the nonradial case, for solutions which are bounded in the energy norm, in dimensions $3$, $4$ and $5$. In the radial case, $W$, defined above, is the only soliton up to sign change and scaling. The full resolution (for all times), for solutions bounded in the energy norm, for $N=3$, in the radial case, was proved in \cite{DuKeMe13}, by the first, second and fourth authors. The key fact in the proof is a rigidity theorem giving a dynamical characterization of the static solutions, in terms of outer energy lower bounds. The proof of this fact used techniques that are very specific to radial solutions in $3$ space dimensions. Recently, combining the papers \cite{DuKeMe19Pc}, \cite{DuKeMe19Pa} and \cite{DuKeMe19Pb}, the first, second and fourth authors were able to prove the soliton resolution for solutions of \eqref{NLWN} that remain bounded in energy norm, in the radial case, for all odd dimensions. An important tool in this proof is the odd dimensional linear outer energy inequalities proved in \cite{KeLaLiSc15}, that hold up to certain finite dimensional subspaces with increasing dimension as a function of $N$. When $N=3$, the dimension is $1$ and one can use scaling to deal with this exceptional subspace. For higher odd dimensions, the dimension is larger than $1$ and this was a stumbling block for many years. The proofs mentioned above involve modulation analysis to analyse the collisions of solitons, showing that they are inelastic and hence pure multisolitons in both time directions are ruled out.

\subsection{Background on rigidity theorems for dispersive equations}

As seen in many recent works, \emph{the proof of rigidity (also called Liouville) theorems, classifying 
solutions that are non-dispersive (in a sense to be specified) is crucial in the understanding of the asymptotic dynamics of semilinear dispersive equations as \eqref{WMk} and \eqref{NLWN}.} A typical statement is that the only non-dispersive solutions are the stationary solutions (or more generally the solitons) of the equation.
 
A first notion of non-dispersive solutions is given by \emph{solutions with the compactness property}, that are solutions whose trajectory is precompact up the modulations of the equation. These solutions are also sometimes called quasi-periodic or almost periodic. This concept goes back to (at least) the work of the third and fourth authors \cite{MaMe00}, in the context of the KdV equation. See also   \cite{KeMe06} for the energy-critical NLS, \cite{Keraani06}, \cite{TaViZh08} for mass-critical NLS. We refer to \cite{Tao07DPDE} and \cite{DuKeMe15a} for works highlighting the importance of solutions with the compactness property in the asymptotic study of bounded solutions of nonlinear dispersive equations.

For equation \eqref{NLWN}, solutions with the compactness property were first considered in \cite{KeMe08}, where a rigidity theorem with a size constraint is proved. A general rigidity theorem, without a size constraint (but with an additional nondegeneracy assumption for solutions without symmetry) is proved in \cite{DuKeMe16a} (see also \cite[Theorem 2]{DuKeMe11a} for the radial, $3D$ case).

Nevertheless, this notion is not useful to understand the collision of solitons. Indeed a pure multisoliton in both time directions, that is a solution that is, asymptotically as $t\to+\infty$ and as $t\to-\infty$, a sum of decoupled solitons, is not a solution with the compactness property, but it should be definitively considered as a non-dispersive solution.

It is believed that for completely integrable equations, the collision of solitons is always elastic, but that, for non-integrable equations such as \eqref{WMk}  and \eqref{NLWN}, it is not elastic and should always generate some radiation (see e.g. \cite{MartelMerle11} in the context of generalized Korteweg-de Vries equations).

To deal with this problem, the first, second and fourth authors have introduced the concept of \emph{non-radiative} solutions of \eqref{NLWN}. By definition, these are solutions of \eqref{NLWN}, defined for $|x|>|t|$, and such that 
\begin{equation}
\label{non-radiative}
\sum_{\pm} \lim_{t\to\pm\infty}  \int_{|x|>|t|} \left(|\nabla u(t,x)|^2+(\partial_tu(t,x))^2\right)dx=0.
\end{equation}
(see \cite{DuKeMe13} where this concept is used without formal definition, and \cite{DuKeMe19Pa} where the term ``non-radiative'' is introduced). This definition can be easily adapted to \eqref{WMk}.

Note that solitary waves (which always travel at speed $\ell<1$ \cite{DuKeMe15a}) are non-radiative. As shown in \cite{DuKeMe13} using the profile decomposition of \cite{BaGe99}, a rigidity theorem stating that all non-radiative solutions are solitons (or in a radial context, stationary solutions) essentially implies the soliton resolution. 

The usefulness of this concept is that, using finite speed of propagation, it can be applied by first studying solutions in the exterior of a wave cone $\{|x|>R+|t|\}$, for large $R$, thus restricting to small solutions, that are close to solutions of the linear wave equation. In connection with this, the study of lower bounds of the form
\begin{equation}
 \label{lower_bound}
 C\sum_{\pm}\lim_{t\to\pm\infty} \int_{|x|>R+|t|} |\nabla_{t,x} u_L(t,x)|^2dx\geq \int_{|x|>R} (u_1(x))^2+|\nabla u_0(x)|^2dx
\end{equation} 
for radial solutions of the linear wave equation
\begin{equation}
 \label{LWintro}
 \partial_t^2u_L-\Delta u_L=0, \quad (t,x)\in \RR\times \RR^N,\quad |x|>R+|t|
\end{equation} 
with initial data $\vec{u}_{\restriction t=0}=(u_0,u_1)$ is crucial to prove the rigidity theorem for the equations \eqref{NLWN} and \eqref{WMk}. The linear estimate  \eqref{lower_bound} depends strongly on the dimension $N$. In the case $R=0$ it was proved that for $N$ odd, \eqref{lower_bound} holds for any $(u_0,u_1)\in \HHH$ (see \cite{DuKeMe12}), whereas when $N$ is even, it is not valid in full generality, but it holds (at least in the radial case) for initial data of the form $(u_0,0)$ or $(0,u_1)$ depending on the congruence of $N$ modulo $4$ (see \cite{CoKeSc14}). In the case $R>0$,  explicit counterexamples to \eqref{lower_bound} exist and one can hope to prove \eqref{LWintro} only on a strict subspace of the energy space $\HHH$.
For $N=3$, \eqref{lower_bound} is valid for all radial initial data $(u_0,u_1) \in \HHH(\{r>R\})$, that are orthogonal to $\left( \frac{1}{r},0 \right)$. This single nondegenerate direction can be handled with the scaling of the equation, and corresponds to the stationary solution $W=\left( 1+\frac{|x|^2}{3} \right)^{-1/2}$. This leads to the proof of a strong rigidity theorem, as was mentioned earlier, and the soliton resolution for all radial solutions of \eqref{NLWN} with $N=3$ that are bounded in the energy space $\HHH$ \cite{DuKeMe13}.

For $N$ odd, $N\geq 5$, \eqref{lower_bound} holds in the radial case, for all radial data in an $\frac{N-1}{2}$ co-dimensional subspace of $\HHH(\{r>R\})$, which does not seem sufficient to deduce a strong rigidity result  for \eqref{NLWN} as in space dimension $3$,   using the scaling invariance of the equation. However, as mentioned earlier, combining asymptotic estimates on non-radiative solutions  of \eqref{NLW} deduced from \eqref{lower_bound} with a careful study of the modulation equations close to a multisoliton, the soliton resolution for all radial solutions of \eqref{NLW} that are bounded in the energy space, for all times, was proved by three of the authors in \cite{DuKeMe19Pa,DuKeMe19Pb,DuKeMe19Pc}.

In even space dimension, up to now, no lower bound of the form \eqref{lower_bound} has been known. Note that this is relevant for equivariant wave maps (verifying \eqref{WMk}), for which the underlying space dimension, after linearization, is $2k+2$.

\subsection{Main results and ideas of proofs}
In this article, we consider co-rotational wave maps (equation \eqref{WM}), for which the underlying space dimension is $4$, and \eqref{NLW}. Besides the difficulty arising from linear estimates in even space dimensions, these are also special issues at the nonlinear level, since the modulation equations close to a multisoliton degenerate, and this makes the method used in \cite{DuKeMe19Pc} for large odd dimensions ineffective.

Our first step is to prove that \eqref{lower_bound} is valid for $R>0$, when $N=4$, for radial solutions of \eqref{LWintro}, with initial data of the form $(u_0,0)$, where $u_0\in \dot{H}^1(\{r>R\})$ is radial and orthogonal to $\frac{1}{r^2}$. The proof of this is based on the result of \cite{CoKeSc14} and involved (but elementary) explicit computations, using a different method of proof than in the corresponding results in \cite{KeLaLiSc15}.

When $N=4$, the inequality \eqref{lower_bound} is not valid for radial solutions of \eqref{LWintro} with data of the form $(0,u_1)$, $u_1\in L^2(\RR^4)$, even if $u_1$ is taken in a finite co-dimensional subspace of $L^2$. This is essentially due to the resonant solution $\frac{t}{r^2}$, whose initial data barely fails to be in the energy space. In Subsection \ref{SS:approx} of this paper, we will construct an approximate solution for each of the equations \eqref{WM} and \eqref{NLW}, which is odd in time and whose time derivative is non-radiative. The initial data for this solution again barely fail to be in the energy space, but the solution itself satisfies the same global $L^pL^q$ estimates in the exterior of wave cones, as the global, finite energy solutions of \eqref{WM} or \eqref{NLW}. \emph{Although this solution has similar properties as the linear solution $\frac{t}{r^2}$, it is not a perturbation of this solution, but a truly nonlinear object, which comes from a cancellation between the linear and nonlinear parts and depends on the specific form of the nonlinearity in equations \eqref{WM} and \eqref{NLW}.}

Using the new estimate \eqref{lower_bound} for the linear problem, and the approximate non-radiative solution, we 
obtain in this paper strong rigidity theorems for the equations \eqref{WM} and (radial) \eqref{NLW}, which are identical to the one proved in \cite{DuKeMe13} for radial solutions of \eqref{NLWN} in dimension $N=3$. The proofs, which are much deeper than the one in \cite{DuKeMe13}, are first based on the decoupling of a non-radiative solution of \eqref{WM} or (radial) \eqref{NLW} into its odd and even parts (in the time variable), which satisfy similar approximate equations, with a cubic interaction term. Using this decoupling, we consider two cases. 
\begin{itemize}
 \item 
If the non-radiative solution has constant sign, one easily sees that the even part is dominant. Using that \eqref{lower_bound} holds for initial data of the form $(u_0,0)$, where $u_0$ is taken on a codimension $1$ subspace of $\dot{H}^1(\{r>R\})$, we prove that the solution is equal, up to the invariances of the equation, to one of the nonzero explicit stationary solutions $\pm Q(\lambda \,\cdot)+p\pi$ for \eqref{WM} or $\pm \lambda W(\lambda\, \cdot)$ for \eqref{NLW} for large $r$. We then extend the result for all $r>0$. This strategy can be seen as a dispersive analog of Alexandrov's method of moving plane for elliptic equations.

\item If, on the other hand, $u$ vanishes at some point $(t_0,r_0)$, we first prove using \eqref{lower_bound}  and some kind of maximum principle for non-radiative solutions, that $u(t_0,r)=0$ for $r> r_0$.  
Thus $u$ is after time translation odd in time in the exterior of the wave cone $\{r>|t|+r_0\}$.  In this case we prove by  contradiction that $\partial_tu(t_0,r)=0$ for $r> r_0$.
If not, using \eqref{lower_bound} on the time derivative of the solution, we prove that $u$ is asymptotically close to the non-radiative nonlinear solution mentioned above, which does not have finite energy, giving a contradiction. Extending the result to $r>0$ as in the previous step, we conclude that $u(t,r)$ is the zero solution.
\end{itemize} 

Precisely, we obtain the following two rigidity theorems for equations \eqref{WM} and (radial) \eqref{NLW}.

\begin{theo}[Rigidity for co-rotational wave maps]
 \label{T:rigidityWM}
Let $(\psi_0,\psi_1)\in \Ebf$ (and thus to $\Hbf_{\ell,m}$ for some $(\ell,m)\in \ZZ^2$). Assume that $(\psi_0,\psi_1)$ is not the initial data of a stationary solution of \eqref{WM}, i.e. if $\ell=m$, $(\psi_0,\psi_1)\neq (m\pi,0)$ and otherwise, for all $\lambda>0$, $(\psi_0-m\pi,\psi_1)$ is not equal to $\pm(\pi- Q(\lambda \,\cdot),0)$. Then there exists $\eta>0$ such that the following holds for all $t>0$ or for all $t<0$:
\begin{equation}
 \label{lower_boundWM}
 \int_{|t|}^{\infty} \left((\partial_t\psi(t,r))^2+(\partial_r\psi(t,r))^2+\frac{1}{r^2}\sin^2(\psi(t,r))\right)rdr\geq \eta.
\end{equation} 
\end{theo}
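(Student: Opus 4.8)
The plan is to prove the contrapositive: \emph{every non-radiative solution of \eqref{WM} has stationary initial data.} First I would use the local energy identity for \eqref{WM} --- the flux through the light cone $\{r=|t|\}$ has a fixed sign --- to see that the exterior energy $I(t):=\int_{|t|}^\infty\big((\partial_t\psi)^2+(\partial_r\psi)^2+r^{-2}\sin^2\psi\big)\,r\,dr$ is non-increasing on $(0,\infty)$ and non-decreasing on $(-\infty,0)$; hence the failure of \eqref{lower_boundWM} for every $\eta>0$ on \emph{both} half-lines is exactly the statement $\lim_{t\to\pm\infty}I(t)=0$, i.e. that $\psi$ is non-radiative in the sense of \eqref{non-radiative} adapted to \eqref{WM}. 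Subtracting the integer multiple $m\pi$ given by \eqref{lim_lm} (a symmetry of \eqref{WM}) and fixing $R>0$ so that the energy of $\vec{\psi}$ on $\{r>R\}$ is below the perturbative threshold, finite speed of propagation lets me treat $\psi$ in the exterior region $\Omega_R:=\{r>R+|t|\}$ as a small global solution; the substitution $u=\psi/r$ turns \eqref{WM} in $\Omega_R$ into a radial wave equation on $\RR^4$ with a cubic-type perturbative nonlinearity (so radial \eqref{NLW} is handled in parallel). Since $\psi(-t,r)$ is again a solution, I decompose $u=u^{\mathrm{ev}}+u^{\mathrm{odd}}$ into its even and odd parts in $t$; both are again non-radiative, and each solves the free $4$D wave equation up to a cubic interaction term that is higher order in $\Omega_R$. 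I then split into two cases.

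\textbf{Case 1: $u$ has constant sign.} Then $|u^{\mathrm{odd}}|\le|u^{\mathrm{ev}}|$ pointwise, the even part is dominant, and its Cauchy data at $t=0$ has the form $(v_0,0)$. I would invoke the $4$-dimensional linear exterior energy lower bound established in this paper --- valid for radial data $(v_0,0)$ with $v_0\in\hdot(\{r>R\})$ orthogonal to $r^{-2}$ --- which, the cubic term being negligible, forces $v_0$ to be a multiple of $r^{-2}$ for $r$ large (if that multiple vanishes we are reduced to Case 2). Matching the asymptotics $\pi-Q(\lambda\,\cdot)\sim 2(\lambda r)^{-1}$ of $\psi-m\pi$, one then identifies $\psi$, for $r$ large, with one of the nonzero stationary solutions $\Phi=m\pi\pm\big(\pi-Q(\lambda\,\cdot)\big)$. (This is the dispersive analogue of Alexandrov's moving-plane method mentioned in the introduction.)

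\textbf{Case 2: $u$ vanishes at some $(t_0,r_0)$.} A maximum-principle argument for non-radiative solutions --- again using the $(v_0,0)$ exterior lower bound --- gives $u(t_0,r)=0$ for all $r>r_0$; uniqueness in the cone $\{r>r_0+|t-t_0|\}$ then makes $u$ odd in $t$ about $t_0$ there, so $u(t_0,\cdot)$ has data of the form $(0,v_1)$. I claim $v_1\equiv 0$ for $r>r_0$, and argue by contradiction. Here the linear estimate is unavailable, since \eqref{lower_bound} genuinely fails in dimension $4$ for data $(0,v_1)$ --- the near-resonance $t/r^2$ barely misses the energy space --- so instead I would use the exterior $L^pL^q$ bounds for $\partial_t u$ and the cubic structure to show that $u$ must be asymptotic, as $t\to\pm\infty$, to a nonzero multiple of the odd-in-time approximate non-radiative solution constructed in Subsection~\ref{SS:approx}; but the initial data of that object barely fails to lie in the energy space, contradicting $(\psi_0,\psi_1)\in\Ebf$. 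Hence $u(t_0,\cdot)$ and $\partial_t u(t_0,\cdot)$ vanish for $r>r_0$, i.e. $\psi\equiv m\pi$ in the exterior region.

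In both cases $\psi$ coincides for $r$ large with a stationary solution $\Phi$, and it remains to propagate this inward. I would view $w:=\psi-\Phi$ (through its $4$D-wave avatar) as a finite-energy non-radiative solution of a linear wave equation with a \emph{localized} potential, vanishing for large $r$ at $t=0$, and run a continuation argument on shrinking cones in the spirit of \cite{DuKeMe13} --- the localization keeps the exterior energy estimates applicable and forces $w\equiv 0$, so that $(\psi_0,\psi_1)$ is the initial data of $\Phi$, contradicting the hypothesis. The step I expect to be the main obstacle is the contradiction in Case 2: because the linear exterior estimate must fail on $(0,v_1)$ data in even dimension, odd-in-time non-radiative solutions cannot be ruled out at the linear level, and the substitute --- building the genuinely nonlinear, almost-finite-energy non-radiative solution of Subsection~\ref{SS:approx} and extracting a contradiction from its sharp exterior decay --- is the technical heart of the argument and what makes the even-dimensional case substantially harder than the $N=3$ situation of \cite{DuKeMe13}.
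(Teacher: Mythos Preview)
Your overall strategy matches the paper's: reduce to the generalized equation \eqref{gNLW} via $u=\sqrt{2/3}\,\psi/r$, decompose into even and odd parts, handle the constant-sign case with the new exterior bound for $(v_0,0)$ data (Proposition~\ref{P:FW_R}), treat the odd case via the approximate non-radiative solution of Subsection~\ref{SS:approx}, and then propagate inward (Proposition~\ref{P:compact}).

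The genuine gap is in Case~2. You correctly observe that the exterior lower bound \eqref{lower_bound} fails for data $(0,v_1)$ in dimension~4, but your proposed substitute --- ``exterior $L^pL^q$ bounds for $\partial_t u$ and the cubic structure'' --- is not the mechanism that works, and it is unclear how Strichartz-type bounds alone would pin down the asymptotics of $u_1$ sharply enough to force $u_1\notin L^2$. The paper's key insight, which you miss, is that when $u$ is odd in time, $\partial_t u$ is \emph{even} in time: its data at $t=0$ is $(u_1,0)$, so after a gain-of-regularity step (Lemma~\ref{L:RTb30}, which promotes $u_1$ to $\dot{H}^1(R')$ and shows that $\partial_t u$ is itself non-radiative) the exterior bound for $(v_0,0)$-type data \emph{is} available again --- now applied to the difference $w=\partial_t a_\lambda-\partial_t u$, whose initial data is $(\partial_t a_\lambda(0)-u_1,0)$ with $\lambda$ chosen so that $w(0,\rho)=0$ (hence $\pi_\rho^\perp w(0)=w(0)$). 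This channels-of-energy comparison (Step~2 of \S\ref{SS:proof_odd_rig}) yields the sharp estimate \eqref{RTc23}, from which the differential inequality \eqref{RTc50} and the lower bound $u_1(\rho)\gtrsim \rho^{-2}(\log\rho)^{-1/2}$ follow, contradicting $u_1\in L^2$. Without this time-differentiation trick --- converting the odd problem back into one where Proposition~\ref{P:FW_R} applies --- the argument does not close.
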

We refer to \S \ref{SS:outside} for the definition of solutions outside a wave cone. As proved there, for any initial data $(\psi_0,\psi_1)\in \Ebf$, the solution $\psi$ with initial data $(\psi_0,\psi_1)$ is always well-defined for $r>|t|$.

\begin{theo}[Rigidity for $4D$ radial critical wave]
 \label{T:rigidity}
 Let $(u_0,u_1)\in \HHH$, radial. Assume that $(u_0,u_1)$ is not the initial data of a radial stationary solution of \eqref{NLW}, i.e. that $\forall \lambda \in \RR$, $(u_0,u_1)$ is not equal to $(\lambda W(\lambda \,\cdot),0)$. Then there exists $R>0$ such that the solution $u$ with initial data $(u_0,u_1)$ is defined on $\{|x|>R+|t|\}$ and there exists $\eta>0$ such that the following holds for all $t>0$ or for all $t<0$:
 \begin{equation}
  \label{channel}
  \int_{|x|>R+|t|}(\partial_tu(t,x))^2+|\nabla_xu(t,x)|^2 dx\geq \eta.
 \end{equation} 

 \end{theo}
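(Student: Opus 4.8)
The plan is to follow the strategy sketched in the introduction, reducing the statement to the rigidity theorem for co-rotational wave maps (Theorem \ref{T:rigidityWM}) or, more precisely, establishing it by the same channel-of-energy scheme specialized to the $4D$ radial cubic equation. First I would set up the exterior problem: given radial data $(u_0,u_1)\in\HHH$, by finite speed of propagation and small-data theory in the exterior of a large wave cone, there is $R>0$ so that $u$ is defined and has small energy on $\{|x|>R+|t|\}$; one then \emph{argues by contradiction}, assuming that for \emph{both} time directions the exterior energy in \eqref{channel} tends to $0$ along suitable sequences $t_n^\pm\to\pm\infty$, i.e. $u$ restricted to $\{|x|>R+|t|\}$ is a \emph{non-radiative} solution in the sense of \eqref{non-radiative}. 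The goal is then to show that any such non-radiative solution must, for $|x|>R$, coincide with a stationary solution $\pm\lambda W(\lambda\cdot)$, and afterwards propagate this identity inward to all of $\RR^4$ so as to contradict the hypothesis.

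The core of the argument is the analysis of the non-radiative solution on the exterior region, split into its time-even and time-odd parts $u=u^{\mathrm{ev}}+u^{\mathrm{odd}}$ (after a time translation so the reference time is $0$), which satisfy coupled equations with a cubic interaction. I would treat the two cases from the introduction. \emph{Case 1: $u$ has constant sign on $\{|x|>R\}$ at time $0$.} Then the even part is dominant; applying the new linear outer-energy lower bound \eqref{lower_bound} for $N=4$ — valid for data of the form $(u_0,0)$ with $u_0$ in the codimension-one subspace of $\dot H^1(\{r>R\})$ orthogonal to $1/r^2$ — to the even part, together with the asymptotic expansion of non-radiative solutions, forces $\vec u^{\mathrm{odd}}(0)\equiv 0$ and pins down $\vec u^{\mathrm{ev}}(0)$ up to the single exceptional direction, which the scaling invariance of \eqref{NLW} then identifies with a rescaled $W$. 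Hence $\vec u(0,r)=(\pm\lambda W(\lambda r),0)$ for $r>R$. \emph{Case 2: $u(0,r_0)=0$ for some $r_0>R$.} A maximum-principle argument for non-radiative solutions together with \eqref{lower_bound} gives $u(0,r)=0$ for all $r>r_0$, so $u$ is odd in time outside the cone $\{r>|t|+r_0\}$; one then shows by contradiction that also $\partial_t u(0,r)=0$ for $r>r_0$ — otherwise, by \eqref{lower_bound} applied to $\partial_t u$ and the construction of Subsection \ref{SS:approx}, $u$ would be asymptotic to the nonlinear non-radiative object, which fails to have finite energy, a contradiction. Thus $\vec u(0,r)\equiv 0$ for $r>r_0$, i.e. $u$ is a (trivial) stationary solution on the exterior.

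Once the exterior identification is established, I would propagate it inward: the difference between $u$ and the corresponding stationary solution $W_\lambda$ (respectively $0$) is a solution of \eqref{NLW} vanishing on an exterior region, and a unique-continuation / energy argument (using that $W_\lambda$ is itself a global solution and that the exterior energy is exhausted) extends the equality to all $r>0$, so $(u_0,u_1)=(\lambda W(\lambda\cdot),0)$ (or $(0,0)$, which is $W_\lambda$ in the limit $\lambda\to 0$ and still excluded by hypothesis) — contradiction. Therefore the contradiction hypothesis fails, and \eqref{channel} holds for all $t>0$ or for all $t<0$.

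I expect the main obstacle to be Case 2, and within it the step ruling out $\partial_t u(0,\cdot)\not\equiv 0$ on the exterior. The difficulty is precisely the even-dimensional failure of \eqref{lower_bound} for data $(0,u_1)$: one cannot directly harvest exterior energy from the time-derivative data, and must instead compare $u$ with the truly nonlinear approximate non-radiative solution constructed in Subsection \ref{SS:approx}, controlling the error in global $L^pL^q$ norms in the exterior of wave cones and exploiting the logarithmic divergence that obstructs finite energy. Handling the time oscillations — so that the conclusion holds for \emph{all} times in one direction and not merely along a sequence — is the genuinely new point compared with the sequential resolution results, and rests on combining this approximate-solution mechanism with the constant-sign dichotomy rather than on any monotonicity formula.
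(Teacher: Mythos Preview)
Your overall architecture matches the paper's: reduce to showing that a non-radiative solution on an exterior cone must agree with a stationary solution for large $r$, split into the constant-sign case (Proposition~\ref{P:RTa10}) and the odd-in-time case (Proposition~\ref{P:RTb10}), and in the latter use the approximate nonlinear object of Subsection~\ref{SS:approx} to force a contradiction with $u_1\in L^2$. That part is essentially the paper's route.

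The genuine gap is your final ``propagate inward'' step. You write that the difference $u-W_\lambda$ ``is a solution of \eqref{NLW} vanishing on an exterior region'' and that a ``unique-continuation / energy argument \ldots\ extends the equality to all $r>0$''. Neither claim is correct. The equation \eqref{NLW} is cubic, so $h=u-W_\lambda$ is \emph{not} a solution of \eqref{NLW}; it satisfies instead $\partial_t^2h-\Delta h=(W_\lambda+h)^3-W_\lambda^3$, a perturbed equation with a potential of size $W_\lambda^2$. And there is no unique-continuation theorem available that propagates $\vec h(0)\equiv 0$ from $\{r>R_0\}$ to all of $\RR^4$; unique continuation for wave equations does not work from an exterior region at a single time.

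What the paper does instead is Proposition~\ref{P:compact}: if $(u_0,u_1)$ coincides with $(\lambda W(\lambda\,\cdot),0)$ for $r>R_0$ but not for all $r>0$, then the compactly-supported-in-$\HHH(0^+)$ difference $h$ itself produces a channel of energy. One localizes near the boundary $\rho$ of the support of $(h_0,h_1)$, where $\|(h_0,h_1)\|_{\HHH(\rho-\eps)}$ is small, observes that on $\{r>\rho-\eps+|t|\}$ the right-hand side is bounded by $C r^{-2}|h|$, and invokes \cite[Proposition~4.7]{DuKeMe19Pa} to get a uniform lower bound on the exterior energy of $h$. This \emph{contradicts the non-radiative hypothesis directly}; nothing is propagated inward. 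In other words, the logic is not ``exterior identification $\Rightarrow$ global identification $\Rightarrow$ contradiction with the hypothesis'', but rather ``exterior identification together with the hypothesis forces a compactly supported nontrivial perturbation, and such perturbations are never non-radiative''. Your proposal is missing this mechanism, and the substitute you offer would not go through.
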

Note that Theorems \ref{T:rigidityWM} and \ref{T:rigidity} imply the fact that the collision of two or more solitons yields dispersion, and thus that there is no pure multisoliton solution of equation \eqref{WM} and of equation \eqref{NLW} in the radial case. This was previously known only in the case of equation \eqref{WM}, for two-solitons (see \cite{Rodriguez21}, and also \cite{JendrejLawrie18} for two-soliton  solutions of equation \eqref{WMk} with $k\geq 2$).

As a consequence of the two rigidity theorems, we obtain the soliton resolution for both equations. We first state the result for co-rotational wave maps.

If $f(t)$ and $g(t)$ are positive functions defined in a neighborhood of $a\in \RR\cup\{\pm\infty\}$, we will write $f(t)\ll g(t)$ as $t\to a$ when $\lim_{t\to a} f(t)/g(t)=0$.

\begin{theo}[Soliton resolution for co-rotational wave maps]
\label{T:mainWM}
Consider any solution $\psi$ in the energy space of \eqref{WM} with maximal time of existence $T_+$. Then one of the following holds:

 \textbf{Blow-up.} $T_+<\infty$, and there exist $(\phi_0,\phi_1)\in \Ebf$, an integer $J\in \NN\setminus\{0\}$, and for each $j\in \{1,\ldots,J\}$, a sign $\iota_j\in \{\pm 1\}$, and a positive function $\lambda_j(t)$ defined for $t$ close to $T_+$ such that
\begin{gather}  
\label{hyp_lambda_bup} 
 \lambda_1(t)\ll \lambda_2(t)\ll \ldots \ll\lambda_{J}(t)\ll T_+-t\text{ as }t\to T_+\\
\label{expansion_psi_bup}
\left\|(\psi(t),\partial_t\psi(t))-\left(\phi_{0}+\sum_{j=1}^J\iota_jQ\left(\frac{\cdot }{\lambda_{j}(t)}\right),\phi_{1}\right)\right\|_{\Hbf}\underset{t\to T_+}{\longrightarrow} 0.
\end{gather}
\textbf{Global solution.} $T_+=+\infty$ and there exist $\ell\in \ZZ$, a solution $\psi_{L}$ of 
$$\partial_t^2\psi_L-\partial_r^2\psi_L-\frac{1}{r}\partial_r\psi_L-\frac{1}{r^2}\psi_L=0,$$
with initial data in $\Hbf$,
an integer $J\in \NN$, and for each $j\in \{1,\ldots,J\}$, a sign $\iota_j\in \{\pm 1\}$, and a positive function $\lambda_j(t)$ defined for large $t$ such that 
\begin{gather}
\label{hyp_lambda}
 \lambda_1(t)\ll \lambda_2(t)\ll \ldots \ll\lambda_{J}(t)\ll t\text{ as }t\to +\infty\\
 \label{expansion_psi}
\Bigg\|(\psi(t),\partial_t\psi(t))-\left(\psi_{L}(t)+\ell\pi+\sum_{j=1}^J\iota_j Q\left(\frac{\cdot }{\lambda_{j}(t)}\right),\partial_t\psi_{L}(t)\right)\Bigg\|_{\Hbf}%
\underset{t\to+\infty}{\longrightarrow} 0.
\end{gather}
 \end{theo}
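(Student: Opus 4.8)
The plan is to deduce the soliton resolution from the rigidity Theorem~\ref{T:rigidityWM}, following the scheme of \cite{DuKeMe13} with the specifically three-dimensional inputs replaced by the even-dimensional ones built in this paper. First I would separate the two regimes: the dichotomy $T_+<\infty$ versus $T_+=+\infty$ comes from the local well-posedness theory, and the blow-up case would be reduced to a local-in-space version of the global one by finite speed of propagation. It is classical (see e.g.\ \cite{Struwe03b}) that a co-rotational wave map can only become singular by energy concentration at the origin, i.e.\ at the tip $(T_+,0)$ of a backward light cone; hence for every $\eps>0$ the solution converges strongly in $\Hbf(\{r>\eps(T_+-t)\})$ as $t\to T_+$ to some $(\phi_0,\phi_1)\in\Ebf$, and all concentrating bubbles live at scales $\ll T_+-t$ near the origin. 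From now on I describe the global case, where by conservation of energy $\vec\psi$ remains bounded in $\Hbf$ on $[0,\infty)$.

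\emph{Radiation and resolution along a sequence.} Since outside a sufficiently wide cone $\{r>(1-\delta)t\}$ the solution carries arbitrarily little energy, hence evolves nearly linearly, I would first produce a solution $\psi_L$ of the free equation $\partial_t^2\psi_L-\partial_r^2\psi_L-\frac1r\partial_r\psi_L-\frac1{r^2}\psi_L=0$ with data in $\Hbf$ such that, for every $\eps>0$,
\[
\bigl\|\vec\psi(t)-(\psi_L(t)+\ell\pi,\partial_t\psi_L(t))\bigr\|_{\Hbf(\{r>\eps t\})}\longrightarrow 0\quad\text{as }t\to+\infty,
\]
where $\ell$ is given by $\lim_{r\to0}\psi(t,r)=\ell\pi$. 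Then $\vec g(t):=\vec\psi(t)-(\psi_L(t)+\ell\pi,\partial_t\psi_L(t))$ has its remaining energy concentrated at scales $\ll t$ and sends no channel of energy through expanding light cones. Next, along a sequence of ``good times'' $t_n\to+\infty$ furnished by a virial/monotonicity argument, I would run a Bahouri--G\'erard profile decomposition of $\vec g(t_n)$ (\cite{BaGe99}), pass to nonlinear profiles, and observe that — by the previous step and finite speed of propagation — each nontrivial nonlinear profile is a non-radiative solution of \eqref{WM} on $\{r>|t|\}$ in the sense of \eqref{non-radiative}, hence by Theorem~\ref{T:rigidityWM} equals one of $\pm Q(\cdot/\mu)$, while the scattering profiles and the dispersive remainder vanish, having already been absorbed into $\psi_L$. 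With the number $J$ of bubbles pinned down by energy conservation and the scales separated by a ``no profile in between'' argument, this yields \eqref{hyp_lambda}--\eqref{expansion_psi} along the subsequence $t_n$ (this sequential version is essentially \cite{Cote15}).

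\emph{Upgrading to all times.} This is the crux, and it is exactly where the fact that \eqref{lower_boundWM} holds \emph{for all $t$} of one sign, rather than merely along a sequence, is used. Let $\delta(t)$ be the $\Hbf$-distance from $\vec\psi(t)$ to the family of configurations $(\psi_L(t)+\ell\pi+\sum_{j}\iota_jQ(\cdot/\lambda_j),\partial_t\psi_L(t))$, augmented by the scale-separation defects $\lambda_j/\lambda_{j+1}$ and $\lambda_J/t$; the previous step gives $\delta(t_n)\to0$ and I must show $\delta(t)\to0$. Arguing by contradiction, I would pick $\eps_0>0$ and $s_k\to+\infty$ with $\delta(s_k)\ge\eps_0$, and on each interval joining a good time just below $s_k$ to $s_k$ set up a modulation decomposition $\vec\psi(t)=(\psi_L(t)+\ell\pi+\sum_{j}\iota_jQ(\cdot/\lambda_j(t)),\partial_t\psi_L(t))+\vec\eps(t)$ with orthogonality conditions, valid while $\delta$ stays below a threshold. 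The modulation equations, the energy identity for $\vec\eps$, and — decisively — the outer-energy bound \eqref{lower_boundWM} applied to $\psi$ and to rescaled limits extracted near a would-be degeneracy should force $\delta$ to remain small: the only degenerations are a collision of two consecutive scales ($\lambda_j/\lambda_{j+1}\not\to0$) or the escape of the outermost bubble to the light cone ($\lambda_J/t\not\to0$), and each of these radiates a definite amount of energy outward, creating a nonzero channel of energy for $\vec g$ at later times, contradicting the radiation extraction. Once $\delta(t)\to0$, a final modulation argument produces continuous, well-ordered scales $\lambda_1(t)\ll\cdots\ll\lambda_J(t)\ll t$ and eventually constant signs $\iota_j$, i.e.\ \eqref{hyp_lambda}--\eqref{expansion_psi}; the blow-up case follows the same pattern with $(\phi_0,\phi_1)$ in place of the radiation and $T_+-t$ in place of $t$, giving \eqref{hyp_lambda_bup}--\eqref{expansion_psi_bup}.

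\emph{Expected main obstacle.} I expect the passage from a sequence of times to all times — ruling out the periodic emission and reabsorption of bubbles and the oscillation of their scales — to be by far the hardest point, i.e.\ precisely the ``time oscillation'' difficulty highlighted in the introduction. In this space dimension the modulation equations near a multisoliton degenerate, so the large-odd-dimensional strategy of \cite{DuKeMe19Pc} is unavailable, and making the no-return argument work seems to require both the full strength of the all-times estimate \eqref{lower_boundWM} and the fine properties of the nonlinear non-radiative object constructed in Subsection~\ref{SS:approx}.
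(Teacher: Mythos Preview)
Your outline captures the broad architecture correctly --- extract radiation, run a profile decomposition, invoke the rigidity theorem to force profiles to be $\pm Q$, then upgrade from a sequence to all times --- but the mechanism you propose for the last step is not the one the paper uses, and your sequential step contains an inaccuracy that matters.

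\textbf{The sequential step.} You write that along the good sequence ``the scattering profiles and the dispersive remainder vanish''. In space dimension four this is only half true: the exterior energy estimate \eqref{CKS} (and its channel version, Proposition~\ref{P:FW_R}) only controls data of the form $(u_0,0)$, so the channels argument kills only the \emph{first} component of the dispersive remainder. What the paper actually proves (Proposition~\ref{P:subsequenceWM}) is that along \emph{any} sequence $t_n\to T_+$ one has, after extraction,
\[
\Big\|\psi(t_n)-\varphi_0-\sum_{j=1}^J\iota_jQ(\cdot/\lambda_{j,n})\Big\|_H\to 0
\quad\text{and}\quad
E_M(\psi_0,\psi_1)=E_M(\varphi_0,\varphi_1)+JE_M(Q,0)+\tfrac12\|\partial_t\psi(t_n)-\varphi_1\|_{L^2}^2+o_n(1),
\]
with no a priori control of $\partial_t\psi(t_n)-\varphi_1$. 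The convergence of the time derivative along \emph{one} particular sequence is imported from \cite{Cote15} (see \eqref{derivative0_WMBup}, \eqref{derivative0_WMglobal}), not obtained from the profile argument.

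\textbf{The upgrade to all times.} Here the paper does \emph{not} set up a modulation decomposition or a no-return argument. Instead it uses a short quantization trick based on the energy identity above. From the \cite{Cote15} sequence one reads off the integer $J=(E_M(\psi_0,\psi_1)-E_M(\varphi_0,\varphi_1))/E_M(Q,0)$. If $\|\partial_t\psi(t)-\varphi_1\|_{L^2}$ failed to tend to zero, the intermediate value theorem would produce a sequence $t_n'$ with $\|\partial_t\psi(t_n')-\varphi_1\|_{L^2}=\eps_0$ for an arbitrarily small $\eps_0>0$; applying Proposition~\ref{P:subsequenceWM} along $t_n'$ yields an integer $J'$ with $\tfrac12\eps_0^2=(J-J')E_M(Q,0)$, which is impossible for $0<\eps_0^2<2E_M(Q,0)$. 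This forces $\partial_t\psi(t)\to\varphi_1$ in $L^2$ for all $t$, and then Proposition~\ref{P:subsequenceWM} applied to an arbitrary sequence gives the full resolution; the continuous scales $\lambda_j(t)$ are built afterwards as in \cite[\S3.5]{DuKeMe13}.

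So the ``time oscillation'' difficulty you flag as the crux is dispatched not by modulation analysis (which, as you note, degenerates here) but by the discreteness of $J$ together with the fact that Proposition~\ref{P:subsequenceWM} holds along \emph{every} sequence. The hard work --- the rigidity theorem and the approximate non-radiative solution of \S\ref{SS:approx} --- is entirely absorbed into proving Theorem~\ref{T:rigidityWM}, which feeds into Proposition~\ref{P:subsequenceWM}; once that proposition is available, the endgame is a few lines.
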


 \begin{remark}
In the finite time-blow-up case, if $(\psi_0,\psi_1)\in \Hbf_{\ell,m}$, $(\phi_0,\phi_1)\in \Hbf_{\ell',m'}$, the expansion \eqref{expansion_psi_bup} yields the following constraints:
$$ \ell=\ell',\quad m=m'+\sum_{j=1}^J \iota_j.$$
 Similarly, in the global case, $(\psi_0,\psi_1)\in \Hbf_{\ell,m}$, where $\ell$ is as in \eqref{expansion_psi} and $m=\ell+\sum_{j=1}^J\iota_j$.
\end{remark}

\begin{remark}
We highlight here the fact that Theorem \ref{T:mainWM} is a soliton resolution for \emph{any finite energy initial data} of equation \eqref{WM}, which should be compared with the results for radial solutions of \eqref{NLW}, or \eqref{NLWN}, where we must assume that the solution is bounded in energy norm or global in time, or sometimes both.
\end{remark}
The strategy of the proof of the soliton resolution from the rigidity theorem is the same as in \cite{DuKeMe13}. However, since the exterior energy bound is not valid for data of the form $(0,u_1)$, we need the additional fact, proved in \cite{Cote15}, that there exists a sequence $\{t_n\}_n\to T_+$ such that $\partial_t\psi(t_n)-\phi_1$ in the finite time blow-up case  (respectively $\partial_t\psi(t_n)-\partial_t \psi_L(t_n)$ in the global case) goes to $0$ in $L^2$. Moreover, the proof uses a ``nonlinear profile decomposition'' outside wave cones which was not known earlier for equivariant wave maps (see Proposition \ref{P:approxWM}). We prove this nonlinear profile decomposition using a new space-time bound outside wave cones, involving the nonlinearity (see Lemma \ref{L:L3L6sinus}). These results should be of independent interest for further study of equivariant wave maps.
\begin{remark}
Solutions of \eqref{WM} satisfying \eqref{expansion_psi_bup} with $J=1$ are known: see  \cite{KrScTa08},
\cite{RaphaelRodnianski12} and \cite{GaoKrieger15}. See also \cite{RodnianskiSterbenz10} for solutions of \eqref{WMk} with $J=1$, $k\geq 4$. We do not know any construction of solutions of \eqref{WM} satisfying \eqref{expansion_u_bup} or \eqref{expansion}  with $J\geq 2$, known examples of solutions satisfying \eqref{expansion} with $J=1$ are the stationary solutions and the recently constructed solutions in \cite{Pillai2019P}, \cite{Pillai2020P}. For construction of solutions of \eqref{WMk} with $k\geq 3$, satisfying \eqref{expansion} with $J=2$, see \cite{Jendrej19}.
\end{remark}

\begin{theo}[Soliton resolution for radial $4D$ critical wave]
\label{T:main}
 Let $u$ be a radial solution of \eqref{NLW} and $T_+$ its maximal time of existence. Then one of the following holds:

\noindent\textbf{Type I blow-up.} $T_+<\infty$ and 
\begin{equation}
\label{BUp}
\lim_{t\to T_+} \|(u(t),\partial_tu(t)\|_{\HHH}=+\infty. 
\end{equation} 
\textbf{Type II blow-up.} $T_+<\infty$, and there exist $(v_0,v_1)\in \HHH$, an integer $J\in \NN\setminus\{0\}$, and for each $j\in \{1,\ldots,J\}$, a sign $\iota_j\in \{\pm 1\}$, and a positive function $\lambda_j(t)$ defined for $t$ close to $T_+$ 
such that \eqref{hyp_lambda_bup} holds and
\begin{gather}
\label{expansion_u_bup}
\left\|(u(t),\partial_tu(t))-\left(v_{0}+\sum_{j=1}^J\frac{\iota_j}{\lambda_j(t)}W\left(\frac{x}{\lambda_{j}(t)}\right),v_{1}\right)\right\|_{\HHH}\underset{t\to T_+}{\longrightarrow} 0.
\end{gather}
\textbf{Global solution.} $T_+=+\infty$ and there exist a solution $v_{L}$ of the linear wave equation, an integer $J\in \NN$, and for each $j\in \{1,\ldots,J\}$, a sign $\iota_j\in \{\pm 1\}$, and a positive function $\lambda_j(t)$ defined for large $t$ such that \eqref{hyp_lambda} holds and
\begin{equation}
\label{expansion}
\Bigg\|(u(t),\partial_tu(t))-\left(v_{L}(t)+\sum_{j=1}^J\frac{\iota_j}{\lambda_j(t)}W\left(\frac{x}{\lambda_{j}(t)}\right),\partial_tv_{L}(t)\right)\Bigg\|_{\HHH}%
\underset{t\to+\infty}{\longrightarrow} 0.
\end{equation}
\end{theo}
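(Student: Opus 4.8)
The plan is to deduce the soliton resolution from the rigidity theorem (Theorem~\ref{T:rigidity}) by the scheme of \cite{DuKeMe13}. The genuinely new difficulties here are that in the underlying even dimension $N=4$ the linear exterior energy estimate \eqref{lower_bound} is available only for data of the form $(u_0,0)$ (with $u_0$ orthogonal to $r^{-2}$), that one must use the approximate non-radiative solution constructed in Subsection~\ref{SS:approx}, and that one needs a nonlinear profile decomposition outside wave cones. First I would dispose of the trivial alternative: if $T_+<\infty$ and $\limsup_{t\to T_+}\|(u(t),\partial_tu(t))\|_\HHH=+\infty$ we are in the Type~I case and there is nothing to prove, so assume $M:=\sup_{t\in[0,T_+)}\|(u(t),\partial_tu(t))\|_\HHH<\infty$ (when $T_+=+\infty$ we use, as throughout, that the solution remains bounded in $\HHH$). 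Using finite speed of propagation and the exterior energy bounds, in the global case one first extracts a linear solution $v_L$ carrying the radiation escaping along the light cone, so that $\|(u(t),\partial_tu(t))-(v_L(t),\partial_tv_L(t))\|_{\HHH(|x|>(1-\eps)t)}\to 0$ as $t\to+\infty$ for every $\eps>0$; in the finite-time case set $v_L=0$. Writing $\vec w(t):=(u(t),\partial_tu(t))-(v_L(t),\partial_tv_L(t))$, and letting $d(t)$ be the $\HHH$-distance from $\vec w(t)$ to the set $\big\{(v_0,v_1)+\sum_{j=1}^{J}\tfrac{\iota_j}{\mu_j}W(\cdot/\mu_j):0<\mu_1\ll\cdots\ll\mu_J\big\}$ — where the integer $J$, the signs $\iota_j$ and the fixed profile $(v_0,v_1)$ (with $(v_0,v_1)=0$ in the global case) are those produced by the sequential decomposition recalled below — it then suffices to show $d(t)\to 0$ as $t\to T_+$; standard continuity of the modulation parameters upgrades this to \eqref{expansion_u_bup}, resp. \eqref{expansion}, with the required ordering \eqref{hyp_lambda_bup}, \eqref{hyp_lambda}.

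The input from the literature is the soliton resolution along a sequence of times for radial solutions of \eqref{NLW} bounded in $\HHH$, established in \cite{CoKeLaSc18}: there exist $t_n\to T_+$, $J\in\NN$, signs $\iota_j$, scales $0<\lambda_1(t_n)\ll\cdots\ll\lambda_J(t_n)\ll T_+-t_n$ (resp. $\ll t_n$) and a profile $(v_0,v_1)$ for which $d(t_n)\to 0$. Since \eqref{lower_bound} fails for data $(0,u_1)$, even modulo finite codimension, I would also need — just as in the wave-maps case, where this was supplied by \cite{Cote15} — the additional fact that along a suitable such sequence the time derivative of the remainder tends to $0$ in $L^2$; this is what compensates for the missing exterior estimate on $(0,u_1)$. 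The rest of the argument relies on a nonlinear profile decomposition outside wave cones together with new global space-time bounds in the exterior of wave cones, which ensure that the nonlinear flow there is governed by the linear profiles and, crucially, that limits of rescaled solutions neither lose nor create energy.

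The heart of the proof is to upgrade $d(t_n)\to 0$ to $d(t)\to 0$. Suppose not: there are $\eps_0>0$ and $s_n\to T_+$ with $d(s_n)\ge\eps_0$, and, interlacing with $\{t_n\}$ and using a first-exit-time argument, one chooses $\tau_n\to T_+$ at which $\vec w(\tau_n)$ is still close to a $J$-bubble configuration but carries a defect of definite size. Rescaling $u$ around $\tau_n$ at the scale $\mu_n$ of this defect — chosen so that every identified bubble is sent to scale $0$ or $\infty$ — and passing to the limit by means of the nonlinear profile decomposition outside wave cones and the bound $M$, one obtains (the data of) a solution $u_\infty$ of \eqref{NLW} defined in the exterior of a wave cone with two properties. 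First, $u_\infty$ is non-radiative in the sense of \eqref{non-radiative}: the part of the exterior energy of $\vec w(t)$ not accounted for by the bubbles goes to $0$ in both time directions, which one checks by splitting $u_\infty$ into its even and odd parts in time, applying \eqref{lower_bound} to the $(u_0,0)$-type data coming from the even part (after projecting off $r^{-2}$) and handling the odd part by comparison with the approximate non-radiative solution of Subsection~\ref{SS:approx}. Second, $u_\infty$ is nontrivial, because the defect $d$ is scale invariant, and it is not the data of any stationary solution $(\lambda W(\lambda\,\cdot),0)$, because the bubbles were scaled away. Theorem~\ref{T:rigidity} applied to $u_\infty$ then produces $\eta>0$ with $\int_{|x|>R+|t|}|\nabla_{t,x}u_\infty(t,x)|^2\,dx\ge\eta$ for all $t>0$ or for all $t<0$, contradicting the non-radiativity. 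Hence $d(t)\to 0$ as $t\to T_+$.

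Once $d(t)\to 0$, the passage to the full expansions \eqref{expansion_u_bup}, \eqref{expansion} — including the strong convergence of the regular part $(v_0,v_1)$ in the Type~II case and the ordering of the scales — follows by standard arguments: $J$, the signs $\iota_j$ and the ratios of consecutive scales vary continuously in $t$ and are therefore eventually locked, and a Cauchy-in-$\HHH$ estimate identifies the limiting profile; the fact that the three alternatives are exhaustive is the dichotomy recorded in the first paragraph. I expect the main obstacle to be the construction of the limiting non-radiative solution $u_\infty$ in the third step: this requires a tight nonlinear control of the profile decomposition outside wave cones, so that limits of rescaled solutions commute with the flow and no bubbling occurs in the limit, and — with no counterpart in the three-dimensional argument of \cite{DuKeMe13} — it forces the even/odd-in-time splitting and the approximate non-radiative solution of Subsection~\ref{SS:approx} to be carried through the whole proof.
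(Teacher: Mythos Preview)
Your overall strategy is close to the paper's, but you misplace the role of the even/odd-in-time splitting and the approximate non-radiative solution of Subsection~\ref{SS:approx}. These tools live entirely inside the proof of the rigidity theorem (Theorem~\ref{T:rigidity}); once rigidity is established it is applied as a black box in the resolution step, and neither tool reappears. In particular, one does \emph{not} argue that the limiting object is non-radiative via an even/odd split. The paper instead takes a profile decomposition of $\vec u(t_n)-(v_0,v_1)$ (Proposition~\ref{P:subsequence}) and rules out every non-stationary profile $U^{j_0}$ directly: Theorem~\ref{T:rigidity} says $U^{j_0}$ carries a channel of energy for all $t>0$ or all $t<0$; the nonlinear profile decomposition outside the wave cone (Proposition~\ref{P:NL_profile}) together with pseudo-orthogonality (Lemma~\ref{L:psdo_ortho}) transports this channel to $u$ itself; but $u=v$ outside the light cone (finite-time case), a contradiction. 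Scattering profiles are eliminated the same way using the radiation profile.

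The even-dimensional deficiency of the linear estimate is absorbed as follows --- and this is the point you state correctly early on but then override. The exterior estimate \eqref{CKS} for $(u_0,0)$ data controls only the $\dot H^1$ part of the dispersive remainder, so Proposition~\ref{P:subsequence} concludes only \eqref{CVH1} (convergence in $\dot H^1$) together with the energy expansion~\eqref{dev_energy}. The $L^2$ convergence of $\partial_tu(t)-v_1$ is obtained separately: the full-$\HHH$ sequential resolution of \cite{CoKeLaSc18} yields the quantization $E(u_0,u_1)=E(v_0,v_1)+J\,E(W,0)$, and comparing with~\eqref{dev_energy} along any sequence with $\|\partial_tu(t_n')-v_1\|_{L^2}=\eps_0$ forces $\tfrac12\eps_0^2$ to be an integer multiple of $E(W,0)$, impossible for small $\eps_0\neq 0$. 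The approximate solution $a$ of Subsection~\ref{SS:approx} plays no role here. (A smaller point: the reduction to bounded solutions in the finite-time case is not ``$\limsup=\infty\Rightarrow$ Type~I''; one must show $\liminf<\infty\Rightarrow\sup<\infty$, and this too comes from the energy quantization in Proposition~\ref{P:subsequence}.)
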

As in the wave maps case, Theorem \ref{T:main} follows from the rigidity theorem, Theorem \ref{T:rigidity}, using the strategy of \cite{DuKeMe13} and the extra-fact, proved in \cite{CoKeLaSc18}, that there exists a sequence $\{t_n\}_n\to T_+$ such that $\partial_t u(t_n)-v_1$ (in the finite time blow-up case) or $\partial_t u(t_n)-\partial_tv_L((t_n)$ (in the global case) goes to $0$ with $n$ going to infinity.
\begin{remark}
Except for the scattering solutions (\eqref{expansion} with $J=0$) and the stationary solutions $\pm \lambda W(\lambda\,\cdot)$, the only constructed radial solutions of \eqref{NLW}, which are bounded in $\HHH$, are the blow-up solutions constructed in \cite{HiRa12}, satisfying \eqref{expansion_u_bup} with $J=1$. Note however that the construction in \cite{KrNaSc15} of center-stable manifolds for \eqref{NLWN}, with $N=3$ or $N=5$, should be adaptable to the case $N=4$  in the radial case, using Theorem \ref{T:main}. This scarcity of contructions seems to be the result of the very strong interaction between solitons, due to their slow decay, and the fact that the modulation equations for \eqref{NLWN} degenerate when $N=4$, as was mentioned earlier.
\end{remark}
\begin{remark}
 The results in Theorems \ref{T:mainWM} and \ref{T:main} naturally suggest the possibility of proving analogous results for solutions of \eqref{WMk} with $k\geq 2$, for the radial Yang-Mills equation, and for radial solutions of \eqref{NLWN} in the even dimensional case, $N\geq 6$.  We expect that the ideas in this paper, together with the work in \cite{DuKeMe19Pc}, \cite{DuKeMe19Pa} and \cite{DuKeMe19Pb} will set the path to solve these problems. 
\end{remark}

%
\subsection{Outline of the paper} We conclude the introduction with a very brief outline of the paper. Section \ref{S:preliminaries} introduces notations, reviews the well-posedness theory and relevant space-times estimates (Strichartz estimates) and reviews solutions outside a wave cone. Section \ref{S:linear} is devoted to the proof of our new linear estimate \eqref{lower_bound}. Sections \ref{S:rigidityI} and \ref{S:odd} prove the rigidity theorems, with Section \ref{S:rigidityI} treating constant sign solutions and Section \ref{S:odd} treating odd solutions in time. In Section \ref{S:resolution} we prove the soliton resolution for radial solutions of the non-linear wave equation \eqref{NLW}, while in Section \ref{S:resolutionWM} we prove the necessary results on co-rotational wave maps, and then prove the soliton resolution for them. The Appendices \ref{A:radiation}, \ref{A:psdo_ortho}, \ref{A:anal_harm} collect technical results that are needed in the proofs.

\section{Preliminaries}
\label{S:preliminaries}
\subsection{Notations}
\label{SS:notations}
We will reduce co-rotational wave maps to a radial wave equation in space dimension 
$4$, and work in most of the article with functions that are defined on $\RR^4$ or $\RR_t\times \RR^4_x$ and that are radial in the space variable, i.e. depending only on $r=|x|$, $x\in \RR^4$. Unless explicitly mentioned, the notation $L^p$ will always denote the space $L^p(\RR^4)$. We normalize the $L^p$ norm as follows:
$$ \|f\|_{L^p}^p=\int_{0}^{\infty} |f(r)|^pr^3dr,\quad f\in L^p,\text{ radial.}$$
We will denote by $L^pL^q$ the space $L^p(\RR_t,L^q_x(\RR^4))$.
If $A$ is a space of functions on $\RR^4$, we will denote by $A(R)$ the space of restrictions of radial elements of $A$ to $[R,\infty)$. If $B$ is a space of functions on $\RR_t\times \RR_x^4$, we denote by $B(R)$ the space of restriction of radial elements of $B$ in the space variable $x$ to $\{(t,x)\; |x|>R+|t|\}$. We will endow this space with the usual norms, e.g.
\begin{gather*}
\|\varphi\|_{L^p(R)}^p=\int_R^{+\infty} |\varphi(r)|^pr^3dr,\quad \|\varphi\|_{\dot{H}^1(R)}^2=\int_{R}^{\infty} \left|\frac{d \varphi}{dr}(r)\right|^2r^{3}dr\\
\|f\|_{(L^pL^q)(R)}^p=\int_{\RR}\bigg( \int_{R+|t|}^{+\infty} |f(r)|^qr^{3}dr \bigg)^{\frac pq}dt. 
\end{gather*}
If $f$ is a function of space and time, we write $\nabla_{t,x}f=(\partial_tf,\nabla_xf)$, and (if $f$ is radial), $\partial_{t,r}f=(\partial_tf,\partial_rf)$.
The notation $\HHH$ denotes the energy space for the wave equation $\dot{H}^1(\RR^4)\times L^2(\RR^4)$. The notation $\Ebf$ stands for the space of finite energy data for the co-rotational wave maps equation \eqref{WM}. We recall that $\Ebf$ is the disjoint union 
$$ \Ebf=\bigcup_{\ell,m} \Hbf_{\ell,m}$$
where the affine spaces $\Hbf_{\ell,m}$ parallel to the vector space $\Hbf:=\Hbf_{0,0}$, are defined in the introduction. Note that $\Hbf=H\times (L^2((0,\infty),rdr))$, where $H$ is the space of radial $L^1_{\loc}$ functions $\psi_0$ such that:
\begin{equation}
\label{defH}
\|\psi_0\|_H^2:=\int_{0}^{\infty}  (\partial_r\psi_0)^2rdr+\int_0^{\infty}\frac{1}{r^2}\psi_0^2rdr <\infty.
\end{equation} 
As above, we will use the notations $\HHH(R)$ to denote the space of restriction  to $r>R$ of pair of radial functions in $\HHH$ and $\Hbf(R)$ for the space of restrictions to $r>R$ of pair of functions in $\Hbf$, endowed with the following norm
$$ \|(\psi_0,\psi_1)\|_{\Hbf(R)}^2=\int_{R}^{\infty} (\psi_1)^2rdr+\int_{R}^{\infty} (\partial_r\psi_0)^2rdr+\int_R^{\infty}\frac{1}{r^2}\psi_0^2rdr.$$

If $u$ is a function of space and time, $\vec{u}$ denotes the pair $(u,\partial_tu)$.

In all the article (except in Appendix \ref{A:radiation}), $\Delta=\frac{\partial^2}{\partial r^2}+\frac{3}{r^2} \frac{\partial}{\partial r}$ will denote the radial Laplace operator in dimension $4$. 
\subsection{Strichartz estimates and well-posedness}
\label{SS:Strichartz}
We first recall some results on Strichartz estimates and local well-posedness. We refer to \cite{GiSoVe92}, \cite{GiVe95}, \cite{LiSo95} and \cite{KeMe08} for the details.

Let $I$ be an interval, $t_0\in I$, $(u_0,u_1)\in \HHH$, $f\in L^1(I,L^2(\RR^4)))$. By definition, the solution of the wave equation
\begin{equation}
 \label{inhom_wave}
 \partial_t^2u-\Delta u=f
\end{equation}
with initial data
\begin{equation}
\label{IDt0}
\vec{u}_{\restriction t=t_0}=(u_0,u_1)
\end{equation}
is given by the Duhamel formulation:
$$ u(t,x)=\cos((t-t_0)\sqrt{-\Delta}) u_0+\frac{\sin((t-t_0)\sqrt{-\Delta})}{\sqrt{-\Delta}} u_1+\int_{t_0}^t \frac{\sin\left((t-s)\sqrt{-\Delta}\right)}{\sqrt{-\Delta}} f(s)ds.$$
Note that $\vec{u}\in C^0(I,\mathcal{H})$. Furthermore, $u$ satisfies the following Strichartz estimates (see \cite{St77a}, \cite{GiVe95}):
\begin{equation} 
 \label{Strichartz}
\|u\|_{L^2(I,L^8)}+\|u\|_{L^3(I,L^6)}+\sup_{t\in I} \|\vec{u}(t)\|_{\HHH}\lesssim \|(u_0,u_1)\|_{\HHH}+\|f\|_{L^1(I,L^2)}.
\end{equation} 
Radial solutions of \eqref{inhom_wave} in the case where $I=\RR$ and $f=0$ also satisfy the following dispersive property:
\begin{equation}
\label{Linfty0}
\lim_{t\to\pm \infty} \|r u(t)\|_{L^{\infty}}=0. 
\end{equation} 
This can be proved for smooth compactly supported initial data using the explicit formula for the solution. The general case can be deduced by a density argument and the radial Sobolev inequality:
$$ \|r U\|_{L^{\infty}}\lesssim \|U\|_{\dot{H}^1},\quad U\in \dot{H}^1_{\rad}(\RR^4).$$
If $(u_0,u_1)\in \HHH$, a solution $u$ of the nonlinear wave equation \eqref{NLW} on $I$ with initial data \eqref{IDt0} is a function $u\in L^3_{\loc}(I,L^6)$ which is a solution of \eqref{NLW} in the preceding Duhamel sense, on every compact subinterval of $I$. Using Strichartz estimates and a standard fixed point argument, one can prove that for any initial data $(u_0,u_1)\in  \HHH$, there exists a unique \emph{maximal} solution $u$ such that $\vec{u}\in  C^0\left((T_-(u),T_+(u)),\HHH\right)$ satisfying the blow-up criterion
\begin{multline*}
T_+<\infty\; (\text{ respectively }|T_-|<\infty)\\
 \Longrightarrow \|u\|_{L^3\left((t_0,T_+),L^6\right)}=+\infty \;(\text{respectively }\|u\|_{L^3\left((T_-,t_0),L^6\right)}=+\infty),
 \end{multline*}
where we have denoted $T_{\pm}=T_{\pm}(u)$. 

We say that a solution $u$ of \eqref{NLW} \emph{scatters} forward in time if and only if $T_+(u)=\infty$ and there exists a solution $u_L$ of the free wave equation $\partial_t^2u_L=\Delta u_L$ such that 
$$\lim_{t\to\infty} \|\vec{u}(t)-\vec{u}_L(t)\|_{\HHH}=0,$$
or equivalently, $u\in L^3((t_0,T_+),L^6)$.

\subsection{Solutions outside a wave cone}
\label{SS:outside}
We will now restrict to radial functions and define solutions of wave equations outside wave cones. Let us mention that it is possible to construct a local well-posedness theory in this context (see Subsection 2.3 of \cite{DuKeMe19Pc}) and also the space-time maximal domain of influence of a given solution (in the spirit of \cite{Alinhac95Bo}). We will not need these notions here.

To simplify notations, we will restrict to initial data at $t_0=0$. Recall from \S \ref{SS:notations} above the notations $\HHH(R)=\hdot(R)\times L^2(R)$ and $(L^pL^q)(R)$. 
If $f\in (L^1L^2)(R)$ and $(u_0,u_1)\in \HHH(R)$, we define the solution of \eqref{inhom_wave}, \eqref{ID} on $\{r>R+|t|\}$ as the restriction to $\{r>R+|t|\}$ of a solution $\tilde{u}$ of the equation 
\begin{equation*}
 \partial_t^2\tilde{u}-\Delta \tilde{u}=\tilde{f},\quad 
\vec{\tilde{u}}_{\restriction t=0}=(\tilde{u}_0,\tilde{u}_1),
\end{equation*}
where $(\tilde{u}_0,\tilde{u}_1)\in \HHH$, $\tilde{f}\in L^1L^2$, $(u_0,u_1)(r)=(\tilde{u}_0,\tilde{u}_1)(r)$ for $r>R$, and $\tilde{f}(t,r)=f(t,r)$ for $r>|t|+R$. By finite speed of propagation, $u$ does not depend on the choice of the extensions $\tilde{u}_0,\tilde{u}_1$ and $\tilde{f}$. Furthermore, \eqref{Strichartz} and finite speed of propagation implies the Strichartz estimates outside the wave cone:
\begin{multline}
 \label{Strichartz_cone}
\|u\|_{(L^2L^8)(R)}+\|u\|_{(L^3L^6)(R)}+\sup_{t\in \RR} \|\vec{u}(t)\|_{\HHH(R+|t|)}\\
\lesssim \|(u_0,u_1)\|_{\HHH(R)}+\|f\|_{(L^1L^2)(R)}.
\end{multline} 
Let $(u_0,u_1)\in \HHH(R)$. We say that $u$ is a solution of \eqref{NLW} in $\{r>R+|t|\}$ if $u\in (L^3L^6)(R)$ and $u$ is a solution of \eqref{inhom_wave}, \eqref{ID} with $f=u^3$ on $\{r>R+|t|\}$. 
Using the usual small data theory and finite speed of propagation, one sees that there exists $\delta_0$ (independent of $R$) such that if $\|(u_0,u_1)\|_{\HHH(R)}<\delta_0$, then there exists a (unique) solution $u$ on $\{|x|>R+|t|\}$ with initial data $(u_0,u_1)$. As a consequence, for any $(u_0,u_1)\in \HHH$, there exists $R\geq 0$ and a solution of \eqref{NLW} in $\{|x|>R+|t|\}$ with initial data $(u_0,u_1)$. One also has the following uniqueness properties:
\begin{itemize}
 \item If $(u_0,u_1) \in \HHH$, $R\geq 0$, and there exists a solution of \eqref{NLW} on $\{|x|>R+|t|\}$ with initial data $(u_0,u_1)$, then this solution coincides with the solution of \eqref{NLW} defined in \S \ref{SS:Strichartz} in $\{t\in (T_-(u_0,u_1),T_+(u_0,u_1)\}\cap \{|x|>R+|t|\}$. We will consider $u$ as a solution of \eqref{NLW} with initial data on the domain $\{t\in (T_-(u_0,u_1),T_+(u_0,u_1)\}\cup \{|x|>R+|t|\}$, with obvious modifications of the definitions above.
\item If $(u_0,u_1)\in \HHH$ and $0\leq R\leq R'$, and there exists a solution of \eqref{NLW} with initial data $(u_0,u_1)$ in $\{|x|>R+|t|\}$, then the restriction of this solution to $\{|x|>R'+|t|\}$ is the solution of \eqref{NLW} with initial data $(u_0,u_1)_{\restriction \{r>R'\}}$ in $\{|x|>R'+|t|\}$.
\end{itemize}
We can define the same way solutions of the co-rotational wave maps equation \eqref{WM} outside wave cones. However in this case the solution is always global in time:
\begin{claim}
 \label{Cl:outsideWM}
For any initial data $(\psi_0,\psi_1)\in \Ebf$, the solution $\psi$ of \eqref{WM} is well-defined on $\{r>|t|\}$, and satisfies $\vec{\psi}(t)\in \Hbf(|t|)+(m\pi,0)$ for all $t$, where $m\pi=\lim_{r\to\infty} \psi_0(r)$.
 \end{claim}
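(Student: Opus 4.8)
The plan is to reduce the co-rotational wave map equation \eqref{WM} outside the light cone to a radial semilinear wave equation on $\RR^4$, and then apply the exterior small-data theory recalled in \S\ref{SS:outside} together with finite speed of propagation. Concretely, given $(\psi_0,\psi_1)\in\Ebf$ with $m\pi=\lim_{r\to\infty}\psi_0(r)$, I would set $\phi_0=\psi_0-m\pi$, so that $\vec\phi=(\phi_0,\psi_1)\in\Hbf$ and $\phi_0(r)\to 0$ as $r\to\infty$. Writing $\psi=m\pi+\phi$ in \eqref{WM} and using $\sin(2(m\pi+\phi))=\sin(2\phi)$, one sees $\phi$ solves the same equation \eqref{WM}. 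One then performs the standard change of unknown $\phi=r u$ (the $k=1$ case of $r^k u_L=\psi_L$ mentioned in the introduction): a direct computation turns
$$\partial_t^2\phi-\partial_r^2\phi-\frac1r\partial_r\phi+\frac{\sin(2\phi)}{2r^2}=0$$
into
$$\partial_t^2 u-\Delta u = N(r,u),\qquad N(r,u):=\frac{ru-\tfrac12\sin(2ru)}{r^3},$$
with $\Delta$ the radial $4D$ Laplacian. The key point is that $N(r,u)=O(r^3 u^3/r^3)=O(u^3)$ for $ru$ bounded, since $x-\tfrac12\sin 2x=O(x^3)$ near $0$; more precisely $|N(r,u)|\lesssim |ru|^2\,|u|$ wherever $|ru|\le 1$, and $|N(r,u)|\lesssim |u|$ always (using $|x-\tfrac12\sin 2x|\lesssim |x|$). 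The finite-energy condition gives $\vec u\in\HHH$ and, by the radial Sobolev inequality $\|ru\|_{L^\infty}\lesssim\|u\|_{\dot H^1}$, boundedness of $ru$ away from the origin.

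Next I would prove that $u$ is well-defined on $\{r>|t|\}$ by a continuity/tail argument. For any $\eps>0$ there is $R=R(\eps)$ with $\|\vec u\|_{\HHH(R)}<\eps$; choosing $\eps<\delta_0$ and running the exterior small-data argument for the equation $\partial_t^2 u-\Delta u=N(r,u)$ (which obeys the same Strichartz estimates \eqref{Strichartz_cone}, because on $\{r>R+|t|\}$ one has the pointwise bound $|N(r,u)|\lesssim |ru|^2|u|$, with $ru$ uniformly bounded there, so the nonlinearity is cubic-type exactly as $u^3$) produces a solution on $\{r>R+|t|\}$. Then one slides $R$ down to $0$: the set of $R\ge 0$ for which the solution extends to $\{r>R+|t|\}$ is nonempty, and one shows it is both open and closed in $[0,\infty)$. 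Openness uses the exterior local theory around the boundary $r=R+|t|$ (cf. Subsection 2.3 of \cite{DuKeMe19Pc}), and closedness uses that along a decreasing sequence $R_n\downarrow R_\infty$ the solutions agree on overlaps by finite speed of propagation and the a priori Strichartz bound on $\{r>R_n+|t|\}$ is uniform because $\|\vec u(0)\|_{\HHH(R_n)}$ stays below $\delta_0$ once $R_n$ is small — the only way the procedure could fail is if the exterior Strichartz norm blew up at finite $R_\infty>0$, which is impossible since the data norm on $\{r>R_\infty\}$ is $<\delta_0$. Hence the solution $u$, and therefore $\psi=m\pi+ru$, exists on all of $\{r>|t|\}$.

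Finally, translating back: since $u(t)\in\hdot(R+|t|)\times L^2(R+|t|)$ with the right norms, one has $r u(t)=\phi(t)\in H(|t|)$ and $r\,\partial_t u(t)=\partial_t\phi(t)\in L^2((|t|,\infty),rdr)$, i.e. $\vec\phi(t)\in\Hbf(|t|)$, so $\vec\psi(t)=(m\pi,0)+\vec\phi(t)\in\Hbf(|t|)+(m\pi,0)$ for all $t\in\RR$, which is exactly the assertion of the claim; the contrast with the nonlinear wave equation \eqref{NLW} (where only $\{r>R+|t|\}$ is reached for some $R>0$) comes from the extra factor $r^{-2}$ in $N(r,u)$ near $r=0$, which here makes the effective nonlinearity subcritical-looking at the origin rather than genuinely critical.

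\textbf{Main obstacle.} The delicate point is the globalization in $R$, i.e. showing the exterior solution does not break down before $R$ reaches $0$. One must be careful that the implicit constant in the exterior Strichartz estimate \eqref{Strichartz_cone} is independent of $R$ (which it is, by finite speed of propagation) and that the smallness threshold $\delta_0$ for the cubic-type nonlinearity $N(r,u)$ is uniform in $R$ — this requires the pointwise bound $|N(r,u)|\lesssim|ru|^2|u|$ with $|ru|$ bounded on $\{r>R+|t|\}$ by $\|u(0)\|_{\hdot(R)}\lesssim\eps$, so that on small-norm exterior regions $N$ behaves exactly like $u^3$. Once this is in hand the open-closed argument is routine, but verifying the uniform-in-$R$ nonlinear estimate is the step that genuinely uses the structure of the wave-map nonlinearity.
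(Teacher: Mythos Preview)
Your open--closed argument to slide $R$ down to $0$ has a genuine gap. You assert that closedness holds because ``$\|\vec u(0)\|_{\HHH(R_n)}$ stays below $\delta_0$ once $R_n$ is small'' and that failure at $R_\infty>0$ ``is impossible since the data norm on $\{r>R_\infty\}$ is $<\delta_0$.'' This is backwards: the norm $\|(u_0,u_1)\|_{\HHH(R)}$ is \emph{nondecreasing} as $R$ decreases, and there is no reason it should be small for small $R$. In fact, if $(\psi_0,\psi_1)\in\Hbf_{\ell,m}$ with $\ell\neq m$, then $\phi_0=\psi_0-m\pi$ satisfies $\phi_0(0)=(\ell-m)\pi\neq 0$, so $u_0=\phi_0/r\sim(\ell-m)\pi/r$ near the origin and $\|u_0\|_{\hdot(R)}\to\infty$ as $R\to 0^+$. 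Thus the small-data exterior theory cannot be invoked at small $R$, and your argument does not show why the solution persists on $\{r>R+|t|\}$ once the data norm exceeds $\delta_0$. Your remark that the nonlinearity is ``subcritical-looking at the origin'' points in the right direction but is never turned into a usable estimate.

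The paper proceeds quite differently and exploits precisely the feature you hint at but do not use: the \emph{uniform} bound $|\sin(2\psi)/(2r^2)|\le 1/(2r^2)$, independent of the size of $\psi$. First, local-in-time well-posedness on the full half-line (valid for \emph{all} data in $\Ebf$, not only small data) gives a solution on $[-t_0,t_0]\times(0,\infty)$; this removes any need to reach $R=0$. After time translation one is left with proving that, for fixed $R>0$ and data in $\Hbf(R)+(m\pi,0)$, the solution on $\{r>R+|t|\}$ is global in $t$. If it broke down at finite $T$, then on any annulus $\{R+T<r<R_1\}$ the source term $\sin(2\psi)/(2r^2)$ lies in $L^1((0,T),L^2)$ by the uniform bound above (no smallness, no Strichartz), so standard energy estimates force $\vec\psi(t)$ to converge in $(H^1\times L^2)$ of the annulus as $t\to T$; combining with small-data theory for $r>R_1$ large yields convergence in $\Hbf(R+T)+(m\pi,0)$, contradicting maximality. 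In short, the mechanism that makes wave maps globally defined on $\{r>|t|\}$ --- unlike \eqref{NLW} --- is the boundedness of $\sin$, used directly as an $L^1L^2$ bound on the forcing, not through cubic-type Strichartz estimates.
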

\begin{proof}
We assume $(\psi_0,\psi_1)\in \Hbf_{\ell,m}$.
 By the standard well-posedness theory for \eqref{WM} (see \cite{ShTZ94}), there exist $t_0>0$ and a solution of \eqref{WM} defined on $[-t_0,t_0]\times (0,\infty)$ which satisfies $\vec{\psi}(t)\in \HHH_{\ell,m}$ for all $t\in [-t_0,t_0]$. We are thus reduced to prove the existence of a solution of \eqref{WM} with initial data $\vec{\psi}(t_0)$ at $t=t_0$, defined for $r>t> t_0$, and a solution of \eqref{WM} with initial data $\vec{\psi}(-t_0)$ at $t=-t_0$, defined for $r>-t>t_0$. Translating in time, we see that the conclusion of the claim will follow from the fact that for all $R>0$ and $m\in \ZZ$, for all initial data $(\psi_0,\psi_1)\in \Hbf(R)+(m\pi,0)$, there exists a solution $\psi$ of \eqref{WM} defined for $r>R+|t|$, with initial data $(\psi_0,\psi_1)$ at $t=0$. 
 
 Let $T$ be the positive maximal time of existence of $\psi$ as a solution of \eqref{WM} in $\{r>R+|t|\}$, and assume that $T<\infty$. Let $R_1>T+R$. We have $\indic_{|T|+R<r<R_1} \frac{\sin(2\psi)}{2r^2}\in L^1((0,T),L^2)$. Thus by standard energy estimates, $\vec{\psi}(t)$ has a limit,  as $t\to T$, in $(H^1\times L^2)(\{R+T<r<R_1\})$. Taking $R_1$ large, and combining with small data theory and finite speed of propagation to obtain convergence for $r>R_1$, we obtain that $\vec{\psi}(t)$ converges in $\Hbf(R+T)+(m\pi,0)$ as $t\to T$, a contradiction. 
\end{proof}

\section{Exterior energy estimates}
\label{S:linear}
This section concerns lower bounds on the exterior energy for radial solutions of the free wave equation
\begin{equation}
\label{FW}
\left\{
\begin{aligned}
\partial_t^2u-\Delta u&=0,\quad (t,x)\in \RR\times \RR^4\\
\vec{u}_{\restriction t=0}&=(u_0,u_1)\in \HHH
\end{aligned}\right.
\end{equation}
and of its inhomogeneous counter part
\begin{equation}
\label{LWinhom}
\left\{
\begin{aligned}
\partial_t^2u-\Delta u &=f,\quad (t,x)\in \RR\times \RR^4\\
\vec{u}_{\restriction t=0} &=(u_0,u_1)\in \HHH. 
\end{aligned}
\right.
\end{equation} 
Exterior energy estimates were first etablished for radial solutions of the 3D linear wave equation in \cite{DuKeMe11a} where the following typical result was obtained (see \cite[Lemma~4.2 and proof of Corollary~4.3]{DuKeMe11a}):
\emph{for any $(u_0,v_0)\in \hdot(\RR^3)\times L^2(\RR^3)$ with radial symmetry, the solution of the 
$3$\textnormal{D} linear wave equation $u_L$ satisfies, either for all $t\geq 0$ or for all $t\leq 0$,}
\begin{equation}\label{eq:even}
\|\nabla u_L(t)\|_{L^2(|x|> t)}^2+\|\partial_t u_L(t)\|_{L^2(|x|> t)}^2 \geq \frac 12 \left( \|\nabla u_0\|_{L^2}^2 + \|u_1\|_{L^2}^2\right).
\end{equation}
This result was later extended to any odd dimension $d\geq 3$, see \cite{DuKeMe12} and references therein.
Moreover, \cite{DuKeMe11a,KeLaLiSc15} proved suitable variants of such estimates over \emph{channels},
\emph{i.e.} in space time regions of the form $|x|>t+R$, for any $R>0$:
see \cite[Corollary~1]{KeLaLiSc15} and Proposition~\ref{P:FW_R} below.

The possiblity of extending estimates of the form~\eqref{eq:even} to even dimensions $d\geq 2$ is thoroughly discussed in \cite{CoKeSc14}.
While estimate \eqref{eq:even} is proved to \emph{fail} in any even dimension for general data $(u_0,u_1)$
(even after replacing $\frac 12$ by any constant $C>0$),
it is proved to hold for any data of the form $(u_0,0)$, if $d=0$ mod $4$ and for any data of the form $(0,u_1)$, if $d=2$ mod $4$. 
More specifically, in dimension $4$,
\cite[Corollary 2]{CoKeSc14} first proved the existence of a constant $C>0$ such that any radial solution of the free wave equation \eqref{FW} satisfies either for all $t\geq 0$ or for all $t\leq 0$,
\begin{equation}
\label{CKS}
\|\nabla u_L(t)\|_{L^2(|x|>t)}^2 + \|\partial_t u_L(t)\|_{L^2(|x|>t)}^2 
\geq \frac{1}{C} \|\nabla u_0\|^2_{L^2}.
\end{equation}
Note that by decomposing the solution $u_L$ in even and odd parts (in the time variable), it is equivalent to state
an estimate of the type \eqref{eq:even} for initial data of the form $(u_0,0)$ for both $t\geq 0$ and $t\leq 0$, or to state
\eqref{CKS} for any general initial data $(u_0,u_1)$, either for all $t\geq 0$ or for all $t\leq 0$.
We also recall that the exterior energy being a nonincreasing function of $|t|$, it is also equivalent to formulate an asymptotic estimate, \emph{i.e.} in terms of the limits $\lim_{\pm \infty} \big\{\|\nabla u_L(t)\|_{L^2(|x|>t)}^2 + \|\partial_t u_L(t)\|_{L^2(|x|>t)}^2\big\}$.
In the sequel, we will restrict ourselves to initial data of the form $(u_0,0)$ and we will focus on 
limits of the exterior energy as $t\to \infty$.

The main goal of this section is to obtain lower bounds on the exterior energy over channels 
$\{|x|>t+R\}$ in space dimension $4$, in the same spirit as in \cite{DuKeMe11a,KeLaLiSc15} for odd dimensions. We will also deduce, by a simple trick, an analogous  estimate in space dimension $6$ (see Subsection \ref{SS:N6}). This will leave open the same question for dimensions $d=4n$, $n\geq 2$ integer, and the analogue estimate for initial data of the form $(0,u_1)$ for dimension $d=4n+2$, $n\geq 2$ integer.

\subsection{Exterior energy estimates for the linear equation}
Here, we discuss a slightly more precise version of \eqref{CKS} which follows from combining~\cite{CoKeSc14} and~\cite{DuKeMe19}.

\begin{prop}[\cite{CoKeSc14},\cite{DuKeMe19}]\label{pr:channel4D}
Let $u_0\in \hdot(\RR^4)$ have radial symmetry.
Let $u_L$ be the solution of \eqref{FW} with initial data $(u_0,0)$. Then
\begin{equation}\label{P:exterior}
\lim_{t\to\infty} \|\nabla u_L(t)\|_{L^2(|x|>t)}^2
=\lim_{t\to\infty} \|\partial_t u_L(t)\|_{L^2(|x|>t)}^2
\geq \frac 14 \|\nabla u_0\|_{L^2}^2.
\end{equation}
\end{prop}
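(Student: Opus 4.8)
The plan is to obtain \eqref{P:exterior} by assembling two facts about radial solutions of \eqref{FW} with Cauchy data $(u_0,0)$, both essentially available in the literature. The first, from \cite{CoKeSc14} (with the sharp constant established in \cite{DuKeMe19}), is the lower bound on the \emph{total} exterior energy, $\lim_{t\to\infty}\big(\|\nabla u_L(t)\|_{L^2(|x|>t)}^2+\|\partial_t u_L(t)\|_{L^2(|x|>t)}^2\big)\ge\tfrac12\|\nabla u_0\|_{L^2}^2$, which refines \eqref{CKS}; here one uses that, the second component of the data being zero, $u_L(t)=\cos(t\sqrt{-\Delta})u_0$ is even in $t$, so the alternative ``for all $t\ge 0$ or for all $t\le 0$'' of \eqref{CKS} becomes a statement valid for all $t$, and the limit exists since the sum is nonincreasing in $|t|$ by finite speed of propagation. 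The second fact, from \cite{DuKeMe19}, is the asymptotic equipartition of the exterior energy, $\|\nabla u_L(t)\|_{L^2(|x|>t)}^2-\|\partial_t u_L(t)\|_{L^2(|x|>t)}^2\to 0$ as $t\to\infty$. Combined with the convergence of the sum, this yields that $\|\nabla u_L(t)\|_{L^2(|x|>t)}^2$ and $\|\partial_t u_L(t)\|_{L^2(|x|>t)}^2$ each converge, to a common limit, which by the first fact is at least $\tfrac14\|\nabla u_0\|_{L^2}^2$; this is exactly \eqref{P:exterior}.

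I would also record why the two inputs hold, since they stem from one source: the radial radiation field in dimension $4$. By density (both sides are continuous in $u_0\in\hdot$, as $\|\partial_{t,x}(u_L-\tilde u_L)(t)\|_{L^2(|x|>t)}\le\|\nabla(u_0-\tilde u_0)\|_{L^2}$) one reduces to $u_0$ with $\widehat{u_0}=g(|\cdot|)$ smooth and compactly supported in $(0,\infty)$. Using $\widehat{d\sigma_{S^3}}(\zeta)=c\,|\zeta|^{-1}J_1(|\zeta|)$, one has $u_L(t,r)=c\int_0^\infty\cos(t\rho)\,\frac{J_1(r\rho)}{r\rho}\,g(\rho)\,\rho^3\,d\rho$; on the relevant region $r>t$, inserting the large-argument expansion $J_1(s)=\sqrt{2/(\pi s)}\,\cos(s-\tfrac{3\pi}{4})+O(s^{-3/2})$ and a product-to-sum identity, the phase $(r+t)\rho$ produces a term whose exterior energy tends to $0$, and the cross terms tend to $0$ by weak convergence, so that $r^{3/2}u_L(t,r)$ is governed in the exterior by a single profile $A(r-t)$ (the radiation field), on which $\partial_r$ and $-\partial_t$ agree to leading order. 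Hence $\int_t^\infty(\partial_r u_L)^2\,r^3\,dr$ and $\int_t^\infty(\partial_t u_L)^2\,r^3\,dr$ share the common limit $\int_0^\infty|A'(\sigma)|^2\,d\sigma$ --- this is the equipartition --- and, recalling the radiation-field identity $\int_{\RR}|A'(\sigma)|^2\,d\sigma=\tfrac12\|\nabla u_0\|_{L^2}^2$ (with $A$ suitably normalized), the total-exterior bound amounts to $\int_0^\infty|A'|^2\,d\sigma\ge\int_{-\infty}^0|A'|^2\,d\sigma$, i.e. to the radiation field carrying at least as much mass on the exterior side $\{\sigma>0\}$ as on the interior side.

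The main obstacle is precisely this last inequality, which is where the even dimension genuinely enters: unlike the odd-dimensional case there is a tail behind the light cone, and the profile is of the form $A'=\operatorname{Re}\!\big(e^{-3i\pi/4}\Psi'\big)$ with $\Psi'$ the boundary value of a function holomorphic in the upper half-plane (the Fourier transform of $g(\rho)\rho^{5/2}\indic_{\{\rho>0\}}$); the phase $e^{-3i\pi/4}$ --- specific to $d\equiv 0\bmod 4$, and the reason the analogous estimate fails for $d\equiv 2\bmod 4$ --- is what forces at least half of $\|A'\|_{L^2(\RR)}^2$ onto $\{\sigma>0\}$. Establishing this is the elementary-but-delicate computation of \cite{CoKeSc14}, sharpened in \cite{DuKeMe19}; the remaining points requiring care are the uniform control of the $O(s^{-3/2})$ Bessel error and the disposal of the low-frequency part of $g$ (where $r\rho$ need not be large even as $r\to\infty$), both handled by the density reduction together with routine estimates.
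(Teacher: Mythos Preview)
Your approach is essentially the same as the paper's: both derive the equality of the two exterior limits from the radiation profile (the paper invokes Proposition~\ref{P:radiation} to get $G\in L^2(\RR)$ with $r^{3/2}\partial_r u_L\to G(r-t)$ and $r^{3/2}\partial_t u_L\to -G(r-t)$, exactly your equipartition), and both reduce the inequality to the result of \cite{CoKeSc14} with the sharp constant $c(d)=\tfrac12$. One minor correction: the paper observes that this constant is already present in the proof of \cite[Corollary~2]{CoKeSc14} itself, not established separately in \cite{DuKeMe19}.

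Where the two write-ups differ is in how the hard step is handled. You reformulate it as $\int_0^\infty|G|^2\ge\int_{-\infty}^0|G|^2$ and interpret $G$ as $\operatorname{Re}(e^{-3i\pi/4}\Psi')$ with $\Psi'$ a Hardy-space boundary value, then defer to the references. The paper instead supplies a self-contained computation: it restricts to $\widehat{u_0}$ supported in an annulus, uses the explicit formulas (12)--(16) of \cite{CoKeSc14} to write $\lim_{t\to\infty}\int_t^\infty|\partial_t u_L|^2 r^3\,dr$ as $A_1+A_2+A_3$, evaluates $A_1\to\tfrac{\pi}{4}\int|\widehat{u_0}|^2\rho^5\,d\rho$ and $A_2\to-\tfrac{\pi}{8}\int|\widehat{u_0}|^2\rho^5\,d\rho$ directly, and shows $A_3\ge 0$ by recognising it as a Hankel-operator quadratic form. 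Your Hardy-space heuristic and the paper's Hankel positivity are two faces of the same phenomenon, but the paper's version is what actually closes the argument without further citation.
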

For the reader's convenience, we provide a simplified proof of \eqref{P:exterior} compared to \cite{CoKeSc14}, using computations from \cite{CoKeSc14,KeLaLiSc15} and the notion of \emph{radiation profile} (see \cite[Theorem 2.1]{DuKeMe19} and references therein).

\begin{proof}
Let $u_0\in \hdot(\RR^4)$ with radial symmetry and let $u_L$ be the solution of \eqref{FW} with initial data $(u_0,0)$.
From Proposition \ref{P:radiation} in the case $f=0$, there exists
a function $G\in L^2(\RR)$, called the radiation profile, such that
\begin{equation}\label{rad_prof}\begin{aligned}
&\lim_{t\to\infty} \int_0^\infty \left|r^\frac 32 \partial_r u_L(t,r)-G(r-t)\right|^2 dr=0,\\
&\lim_{t\to\infty} \int_0^\infty \left|r^\frac 32 \partial_t u_L(t,r)+G(r-t)\right|^2 dr=0,
\end{aligned}
\end{equation}
with $c_3 \|G\|_{L^2}^2 = \frac 12 \int |\nabla u_0|^2$, where $c_3$ is the measure of the unit sphere of $\RR^4$.
We observe that the equality of the two limits in~\eqref{P:exterior} is a direct consequence of~\eqref{rad_prof}.
This being known, the result of Proposition~\ref{pr:channel4D} follows from \cite[Corollary~2]{CoKeSc14}, 
since looking carefully at the proof of this result, one sees that the constant $c(d)$ appearing there is exactly $c(d)=\frac12$.
As in \cite{CoKeSc14}, the proof extends to any space dimension $d=4n$, where $n\geq 1$ is integer.

\emph{Notation.}
We use the following notation for the Fourier transform of a Schwartz function~$f$
\[
\hat f (\xi) = \int_{\RR^4} e^{-i x \xi} f(x) dx,\quad f(x) = (2\pi)^{-4} \int_{\RR^4} e^{i x\xi} \hat f(\xi) d\xi.
\]
In particular,
\begin{align*}
\|\hat f\|_{L^2(\RR^4)}^2 &= (2\pi)^4 \|f\|_{L^2(\RR^4)}^2,\tag{Plancherel}\\ 
\int_{\RR^4} \hat f(\xi) \hat g(\xi) d\xi & = (2\pi)^4 \int_{\RR^4} f(x) \bar g(x) dx. \tag{Parseval}
\end{align*}
Recall that if $f$ has the radial symmetry, then $\hat f$ also has the radial symmetry and the Plancherel identity yields
\[
\int_0^\infty |\hat f(\rho)|^2 \rho^3 d\rho = (2\pi)^4 \int_0^{\infty} |f(r)|^2 r^3 dr.
\]
Moreover,
\begin{equation}\label{eq:Fradial}
f(r) = (2\pi)^{-2} \int_0^\infty \hat f(\rho) J_1(r\rho) (r\rho)^{-1} \rho^3 d\rho
\end{equation}
where $J_1$ is the Bessel function of first type of order one.

\emph{Reduction of the proof.}
We assume without loss of generality that $u_0:\RR^4\to \RR$ is a Schwartz function with radial symmetry such that for some 
$0<\rho_*<\rho^*$, it holds
\begin{equation*}
\supp \hat u_0 \subset B_{\rho^*}(0)\setminus B_{\rho_*}(0).
\end{equation*}
Using the formulas (12), (14) of \cite{CoKeSc14} with $f=u_0$ and $g\equiv 0$,
we have
\begin{equation}\label{eq:Aeps}
(2\pi)^4 \lim_{t\to \infty}\int_t^\infty |\partial_t u_L(t,r)|^2 r^3 dr
= \frac 2\pi \lim_{t\to \infty} \lim_{\varepsilon\downarrow 0} A_\varepsilon(t) 
\end{equation}
where
\begin{equation*}
A_\varepsilon(t)
=\int_t^\infty \int_0^\infty\int_0^\infty 
\hat u_0(\rho_1) \overline{\hat u}_0(\rho_2)
k(t,\rho_1,\rho_2) \rho_1^{\frac 52}\rho_2^{\frac 52} e^{-\varepsilon r} d\rho_1\,d\rho_2\,dr
\end{equation*}
the function $k$ being defined by
\[
k(t,\rho_1,\rho_2) = 
\sin(t\rho_1)\sin(t\rho_2)\cos\left(r\rho_1-\frac {3\pi}4\right) \cos\left(r\rho_2-\frac {3\pi}4\right).
\]
Recall that \eqref{eq:Aeps} follows from the Fourier inversion formula for radial functions~\eqref{eq:Fradial},
and the asymptotic behavior of the Bessel functions, see \cite[(10)]{CoKeSc14}.
Now, we use standard trigonometry formulas to obtain
\begin{align*}
2 \cos\left(r\rho_1-\frac {3\pi}4\right) \cos\left(r\rho_2-\frac {3\pi}4\right)
&= \cos r(\rho_1-\rho_2) + \cos\left(r (\rho_1 + \rho_2) -\frac{3\pi}2\right)\\
& = \cos r(\rho_1-\rho_2) - \sin r (\rho_1 + \rho_2).
\end{align*}
Thus,
\[
k(t,\rho_1,\rho_2)
=\frac 12 \left[ \cos r(\rho_1-\rho_2) - \sin r (\rho_1 + \rho_2)\right] 
\sin (t\rho_1)\sin(t\rho_2)\]

Now, applying formulas (15) and (16) from \cite{CoKeSc14} with 
\[
\phi(\rho_1)= \hat u_0(\rho_1) \sin(t\rho_1)\rho_1^\frac 52,
\quad 
\psi(\rho_2)=\overline{\hat u_0(\rho_2)} \sin(t\rho_2)\rho_2^\frac 52,
\]
we obtain
\[
\lim_{\varepsilon\downarrow 0} A_\varepsilon(t) = A_1(t)+A_2(t)+A_3(t)
\]
where
\begin{align*}
A_1(t) &=\frac \pi 2 \int_0^\infty |\hat u_0(\rho)|^2 \sin^2(t\rho) \rho^5 d\rho, \\
A_2(t) &=-\frac 12 \int_0^\infty\int_0^\infty \hat u_0(\rho_1)\overline{\hat u}_0(\rho_2) k_2(t,\rho_1,\rho_2)\rho_1^\frac 52\rho_2^\frac 52d\rho_1\,d\rho_2,
\\
A_3(t) &=-\frac 12 \int_0^\infty\int_0^\infty \hat u_0(\rho_1)\overline{\hat u}_0(\rho_2)k_3(t,\rho_1,\rho_2) \rho_1^\frac 52\rho_2^\frac 52d\rho_1\,d\rho_2,
\end{align*}
the functions $k_2$ and $k_3$ being defined by
\begin{align*}
k_2(t,\rho_1,\rho_2) & =\frac{\sin t(\rho_1-\rho_2)}{\rho_1-\rho_2} \sin(t\rho_1)\sin(t\rho_2),\\
k_3(t,\rho_1,\rho_2) & =\frac{\cos t(\rho_1+\rho_2)}{\rho_1+\rho_2} \sin(t\rho_1)\sin(t\rho_2).
\end{align*}

\emph{Calculation of $\lim_\infty A_1$.}
We use $\sin^2(t\rho)= \frac 12 (1-\cos (2t\rho))$ and we remark that
\[
\lim_{t\to \infty} \int_0^\infty |\hat u_0(\rho)|^2 \cos(2t\rho) \rho^5 d\rho =0,
\]
by integration by parts. Thus, we obtain
\begin{equation}\label{eq:A1}
\lim_{t\to \infty} A_1(t) = \frac \pi 4 \int_0^\infty |\hat u_0(\rho)|^2 \rho^5 d\rho.
\end{equation}

\emph{Calculation of $\lim_\infty A_2$.}
Applying the formula
\[
2 \sin (t\rho_1)\sin(t\rho_2) = \cos t(\rho_1-\rho_2) - \cos t(\rho_1+\rho_2),
\]
and 
\begin{equation*}
\sin t(\rho_1-\rho_2)\cos t(\rho_1-\rho_2) =\frac 12 \sin 2t(\rho_1-\rho_2), 
\end{equation*}
we find
\[
k_2(t,\rho_1,\rho_2)=\frac1{4(\rho_1-\rho_2)} \Big( \sin 2t(\rho_1-\rho_2)- 2 \sin t(\rho_1-\rho_2)\cos t(\rho_1+\rho_2)\Big).
\]
For the first term in the above expression of $k_2$, we use the computation after (19) in \cite{KeLaLiSc15}, which yields
\begin{equation*}
\lim_{t\to \infty} \int_0^\infty\int_0^\infty \hat u_0(\rho_1)\overline{\hat u}_0(\rho_2) \frac{\sin 2t(\rho_1-\rho_2)}{\rho_1-\rho_2}\rho_1^\frac 52\rho_2^\frac 52d\rho_1\,d\rho_2
= \pi \int_0^\infty |\hat u_0(\rho)|^2 \rho^5 d\rho.
\end{equation*}
For the second term in the above expression of $k_2$, we rewrite
\begin{multline*}
\int_0^\infty\int_0^\infty \hat u_0(\rho_1)\overline{\hat u}_0(\rho_2) \frac{\sin t(\rho_1-\rho_2)}{\rho_1-\rho_2}\cos t(\rho_1+\rho_2)\, \rho_1^\frac 52\rho_2^\frac 52d\rho_1\,d\rho_2\\
=\int_0^\infty \int_0^\infty t \sinc t(\rho_1-\rho_2)\cos t(\rho_1+\rho_2)\phi(\rho_1+\rho_2)d\rho_1d\rho_2,
\end{multline*}
where $\sinc \theta=\frac{\sin \theta}{\theta}$, $\phi(\rho_1,\rho_2)=\hat u_0(\rho_1)\overline{\hat u}_0(\rho_2) \rho_1^\frac 52\rho_2^\frac 52$.
By the assumptions on $u_0$ and $\hat u_0$, the function
$\phi$ is smooth and compactly supported on $(0,\infty)^2$. By integration by parts, the above expression is equal to 
\begin{multline*}
 -\frac{1}{4t} \int_0^\infty \int_0^\infty \sinc t(\rho_1-\rho_2)\left(\frac{d}{d\rho_1}+\frac{d}{d\rho_2}\right)^2\Big(\cos t(\rho_1+\rho_2)\Big)\phi(\rho_1,\rho_2)d\rho_1d\rho_2
\\= -\frac{1}{4t} \int_0^\infty \int_0^\infty \sinc t(\rho_1-\rho_2)\cos t(\rho_1+\rho_2)\left(\frac{d}{d\rho_1}+\frac{d}{d\rho_2}\right)^2\Big(\phi(\rho_1,\rho_2)\Big)d\rho_1d\rho_2,
\end{multline*}
which goes to $0$ as $t\to\infty$.
Therefore, we obtain
\begin{equation}\label{eq:A2}
\lim_{t\to \infty} A_2(t) = - \frac \pi 8 \int_0^\infty |\hat u_0(\rho)|^2 \rho^5 d\rho.
\end{equation}

\emph{Estimate for $\lim_\infty A_3$.}
Using trigonometry, we compute
\begin{align*}
- k_3(t,\rho_1,\rho_2) & = 
\frac 1{2(\rho_1+\rho_2)}\cos t(\rho_1+\rho_2) \left(\cos t (\rho_1+\rho_2)-\cos t (\rho_1-\rho_2) \right)\\
&=\frac 1{4(\rho_1+\rho_2)} \left(1+\cos 2t(\rho_1+\rho_2) -\cos 2t\rho_1 -\cos 2t \rho_2\right).
\end{align*}
By integration by parts, the contribution of the last three terms to the limit of $A_3$ is zero.
Therefore, we obtain
\begin{align*}
\lim_{t\to \infty} A_3(t)
&= \frac 18 \int_0^\infty\int_0^\infty \frac 1{\rho_1+\rho_2} \hat u_0(\rho_1)\overline{\hat u}_0(\rho_2)\rho_1^\frac 52\rho_2^\frac 52d\rho_1\,d\rho_2\\
&=\frac 18 \int_0^\infty \hat u_0(\rho_1)\rho_1^\frac 52 H\big(\rho^\frac 52 \overline{\hat u}_0 \big)(\rho_1) d\rho_1,
\end{align*}
where $H$ denotes the Hankel operator, defined by
\[
(H u) (\rho_1)=\int_0^\infty \frac {u(\rho)} {\rho+\rho_1} d\rho.
\]
Since $H$ is a positive operator (see references in the Introduction of \cite{CoKeSc14}), 
and since $\hat u_0$ is real-valued ($u_0$ has radial symmetry), we obtain
\begin{equation}\label{eq:A3}
\lim_{t\to \infty} A_3(t) \geq 0.
\end{equation}

\emph{Conclusion.} Gathering \eqref{eq:Aeps}, \eqref{eq:A1}, \eqref{eq:A2}, \eqref{eq:A3}
and using
\[
\int_0^\infty |\hat u_0(\rho)|^2 \rho^5 d\rho
=(2\pi)^4 \int_0^\infty |\partial_r u_0(r)|^2 r^3 dr,
\]
we have proved
\begin{equation*}
\lim_{t\to \infty} \int_t^\infty |\partial_t u_L(t,r)|^2 r^3 dr 
\geq \frac 14 \int_0^\infty |\partial_r u_0(r)|^2 r^3 dr.
\end{equation*}
Since
\[
\lim_{t\to \infty} \int_t^\infty |\partial_r u_L(t,r)|^2 r^3 dr 
=\lim_{t\to \infty} \int_t^\infty |\partial_t u_L(t,r)|^2 r^3 dr 
\]
as a direct consequence of \eqref{rad_prof}, the proof of \eqref{P:exterior} is complete.
\end{proof}

\subsection{Exterior energy estimates over channels}
We prove a lower bound of the asymptotic energy over channels of the form $\{|x|>|t|+R\}$ for any $R>0$.
 We recall that $1/r^2$ is a solution of \eqref{FW} on the wave cone $\{|x|>R+|t|\}$, with initial data $(1/r^2,0)\in \hdot(R)\times L^2(R)$.
We denote by $\vect_R(1/r^2)$ the one dimensional subspace of $\hdot(R)$ spanned by the function $1/r^2$ and by $\pi_R$ the orthogonal projection on $\vect_R(1/r^2)$ for the $\hdot(R)$ scalar product, explicitly given for any function $f\in \hdot(R)$ by 
\[
\pi_R(f) (r)= \frac {R^2f(R)}{r^2},\quad r>R.
\]
The projection onto the orthogonal complement of $\vect_R(1/r^2)$ is given by
\begin{equation*}
\pi_R^\perp(f)(r)=f(r)-\frac{R^2f(R)}{r^2},\quad r>R.
\end{equation*}
We have the following result.
\begin{prop}
\label{P:FW_R}
Let $R>0$ and let $u_0\in \hdot(R)$ have radial symmetry.
Let $u_L$ be the solution of \eqref{FW} with initial data 
$(u_0,0)$ in the region $\{(t,x)\in [0,\infty)\times \RR^4: |x|>R+|t|\}$. Then
\[
\lim_{t\to\infty} \|\nabla u_L(t)\|_{L^2(|x|>t+R)}^2 =\lim_{t\to\infty} \|\partial_t u_L(t)\|_{L^2(|x|>t+R)}^2
\geq \frac 3{20} \|\nabla \pi_R^\perp u_0\|^2_{L^2(|x|>R)}.
\]
 \end{prop}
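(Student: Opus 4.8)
The plan is to deduce the channel estimate from the full‑cone estimate of Proposition~\ref{pr:channel4D} together with the analysis of the radiation profile, the new point being to control the part of the radiation that falls \emph{behind} the characteristic $\eta=r-t=R$ — a phenomenon absent in odd dimensions, where the strong Huygens principle forces that part to vanish.

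\emph{Reduction.} On $\{r>R\}$ one writes $u_0=\pi_R^\perp u_0+\frac{R^2u_0(R)}{r^2}$; since $r\mapsto \frac{R^2u_0(R)}{r^2}$ is a time‑independent solution of \eqref{FW} on $\{|x|>R+|t|\}$, finite speed of propagation gives $u_L=v_L+\frac{R^2u_0(R)}{r^2}$ there, where $v_L$ is the solution of \eqref{FW} on $\RR^4$ with data $(w_0,0)$ and $w_0\in\hdot(\RR^4)$ is the extension of $\pi_R^\perp u_0$ by $0$ (legitimate since $\pi_R^\perp u_0$ vanishes at $r=R$), with $\|\nabla w_0\|_{L^2(\RR^4)}=\|\nabla\pi_R^\perp u_0\|_{L^2(|x|>R)}$. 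So it is enough to treat radial $w_0\in\hdot(\RR^4)$ with $\supp w_0\subset\{r\ge R\}$. For such $w_0$, Proposition~\ref{P:radiation} provides a radiation profile $G\in L^2(\RR)$; the two limits in the statement coincide and both equal $\int_R^\infty|G(\eta)|^2\,d\eta$ (change of variables $\eta=r-t$ inside the channel), and equipartition of the conserved energy gives $\int_\RR|G|^2=\tfrac12\|\nabla w_0\|_{L^2}^2$. Hence the assertion is equivalent to
\[
\int_R^\infty|G(\eta)|^2\,d\eta\ \ge\ \tfrac{3}{10}\int_{\RR}|G(\eta)|^2\,d\eta
\qquad\Longleftrightarrow\qquad
\int_{-\infty}^R|G(\eta)|^2\,d\eta\ \le\ \tfrac{7}{3}\int_R^\infty|G(\eta)|^2\,d\eta .
\]

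\emph{The channel computation.} One then repeats the Fourier--Bessel computation behind Proposition~\ref{pr:channel4D}, but integrating the energy density $|\partial_tu_L(t,r)|^2r^3$ over $\{r>t+R\}$ rather than over $\{r>t\}$. As $\eps\downarrow0$ the $r$‑integral $\int_{t+R}^\infty\cos(r\rho_1-\tfrac{3\pi}4)\cos(r\rho_2-\tfrac{3\pi}4)e^{-\eps r}\,dr$ produces the same diagonal Dirac mass as in the $R=0$ case, together with the two oscillatory kernels $\tfrac{\sin((t+R)(\rho_1-\rho_2))}{\rho_1-\rho_2}$ and $\tfrac{\cos((t+R)(\rho_1+\rho_2))}{\rho_1+\rho_2}$ in place of $\tfrac{\sin t(\rho_1-\rho_2)}{\rho_1-\rho_2}$, $\tfrac{\cos t(\rho_1+\rho_2)}{\rho_1+\rho_2}$. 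Splitting these via $\sin((t+R)\omega)=\sin t\omega\cos R\omega+\cos t\omega\sin R\omega$ and the analogous identity for the cosine, every genuinely $t$‑oscillating term vanishes as $t\to\infty$ exactly as in \cite{CoKeSc14,KeLaLiSc15} (Riemann--Lebesgue for the $\tfrac{\sin R\omega}{\omega}$--type pieces, the integration‑by‑parts/$\mathrm{sinc}$ argument for the divided‑difference pieces), and what remains is the $R=0$ value plus two \emph{time‑independent} corrections governed by the smooth kernel $\tfrac{\sin R(\rho_1-\rho_2)}{\rho_1-\rho_2}$ and by $\tfrac{\cos R(\rho_1+\rho_2)}{\rho_1+\rho_2}$. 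This yields an explicit expression for $\int_R^\infty|G|^2$ in terms of $\|\nabla w_0\|_{L^2}^2$ and of two bilinear forms in $\widehat{w}_0$: one is nonnegative, being a truncated ($\mathrm{sinc}$‑type) Fourier multiplier, and the other is handled through the positivity of the Hankel operator already used for Proposition~\ref{pr:channel4D}. A density argument propagates this identity to all radial $w_0\in\hdot(\RR^4)$.

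\emph{The one‑dimensional inequality, and the main difficulty.} It remains to bound those correction terms for $w_0$ \emph{supported in $\{r\ge R\}$}; by scaling one may fix $R=1$. This is precisely where the support hypothesis must enter, since the inequality is false for general $w_0$ (as it must be, $1/r^2$ being a stationary non‑radiative solution with $\pi_R^\perp(1/r^2)=0$): the constraint $\supp w_0\subset\{r\ge R\}$ is encoded as a Paley--Wiener/Abel‑type condition on the profile (the wake $G|_{(-\infty,R)}$ being an explicit Abel‑type transform of $w_0|_{\{r>R\}}$), and the correction terms are then estimated against $\int_R^\infty|G|^2$, with the constant $\tfrac3{20}$ emerging from this elementary but delicate computation. \emph{I expect this last step to be the crux.} The adaptation of the Fourier--Bessel/Hankel machinery in the previous paragraph is essentially mechanical; the genuinely new point, compared with the odd‑dimensional channel estimates, is to quantify how much radiation can be concealed behind the cone $\{\eta=R\}$ by a solution whose data is supported away from $\{r=R\}$, and to extract from it a sharp‑enough positive constant.
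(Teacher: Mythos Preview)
Your reduction (splitting off $R^2u_0(R)/r^2$, extending $\pi_R^\perp u_0$ by zero) and your reformulation via the radiation profile are correct, and you have identified the right question: how much of $G$ can sit on $(-\infty,R)$ when the data is supported in $\{r\ge R\}$. But you stop exactly where the work begins. You give no mechanism by which the support condition $\supp w_0\subset\{r\ge R\}$ controls the correction bilinear forms; the appeal to ``Paley--Wiener/Abel-type conditions'' is a label, not a method. In Fourier variables the support condition becomes an analyticity/growth constraint on $\widehat{w}_0$ that does not interact cleanly with the $\sin R(\rho_1-\rho_2)/(\rho_1-\rho_2)$ and $\cos R(\rho_1+\rho_2)/(\rho_1+\rho_2)$ kernels you describe, and I see no route from it to a quantitative constant. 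There is also an internal inconsistency: in your second paragraph you say the two correction forms are favorable (one nonnegative, the other ``handled through the positivity of the Hankel operator''), which would give the bound for \emph{all} $w_0$; in your third paragraph you correctly observe that the inequality fails for general $w_0$. It is the second statement that is right, so the corrections are not sign-definite and genuine work is required.

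The paper takes a completely different route that avoids Fourier space at the crucial step. After the same reduction (and scaling to $R=1$, $w_0$ smooth with $\supp w_0\subset\{1\le r\le A\}$), it bounds the \emph{wake}
\[
\int_0^1|G(\eta)|^2\,d\eta\;=\;\lim_{t\to\infty}\int_t^{t+1}|\partial_r v_L(t,r)|^2 r^3\,dr
\]
directly in physical space, via the explicit Kirchhoff-type representation formula in $\RR^4$ (Lemma~\ref{pr:linearv}). The support hypothesis enters naturally through the integration domain. One isolates the leading $t^{-3/2}$ asymptotic of $v_L(t,t+\eta)$ as an explicit half-space integral of $\partial_{x_1}^2 v_0$ (equivalently an Abel-type transform of $\Delta v_0$, see Lemma~\ref{le:hyper}), reduces to an $L^2_\eta$ bound on its derivative, and shows by a long but elementary computation (Lemma~\ref{le:I}) that the resulting integral operator has norm at most $\tfrac1{10}$ against the Hardy norm of $v_0$. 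Combining $\int_0^1|G|^2\le\tfrac1{10}\|\nabla v_0\|^2$ with $\int_0^\infty|G|^2\ge\tfrac14\|\nabla v_0\|^2$ from Proposition~\ref{pr:channel4D} yields $\tfrac14-\tfrac1{10}=\tfrac3{20}$. This is the ``elementary but delicate computation'' you allude to; it is the entire content of the proof, and it is done in physical rather than Fourier variables.
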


To prove Proposition \ref{P:FW_R}, we will need the following estimate.
\begin{lemma}\label{pr:linearv}
Let $v_0:\RR^4\to\RR$ be a smooth function with radial symmetry such that for some~$A>1$,
\begin{equation}\label{eq:supp}
\supp(v_0)\subset B_A(0)\setminus B_1(0).
\end{equation}
Let $v_L$ be the solution of \eqref{FW} with initial data $(v_0,0)$.
Then,\begin{equation}\label{on:v}
\limsup_{t\to \infty} \|\partial_r v_L(t)\|_{L^2(t< |x|< t+1)}^2
\leq \frac 1{10}\|\partial_r v_0\|_{L^2}^2.
\end{equation}
\end{lemma}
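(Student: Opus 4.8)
The plan is to reduce the estimate~\eqref{on:v} to an explicit computation with the radiation profile of $v_L$, exploiting the compact support assumption~\eqref{eq:supp}. First I would recall from Proposition~\ref{P:radiation} (with $f=0$) that there is a radiation profile $G\in L^2(\RR)$ with
\[
\lim_{t\to\infty}\int_0^\infty \left|r^{\frac32}\partial_r v_L(t,r)-G(r-t)\right|^2dr=0,\qquad
\lim_{t\to\infty}\int_0^\infty \left|r^{\frac32}\partial_t v_L(t,r)+G(r-t)\right|^2dr=0,
\]
and that $c_3\|G\|_{L^2}^2=\frac12\int|\nabla v_0|^2=\frac12\|\partial_r v_0\|_{L^2}^2$, where $c_3$ is the measure of the unit sphere in $\RR^4$. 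Since on the annulus $t<r<t+1$ we have $r^3\sim t^3$ for large $t$, the quantity $\|\partial_r v_L(t)\|_{L^2(t<|x|<t+1)}^2=\int_t^{t+1}|\partial_r v_L(t,r)|^2r^3dr$ is, up to $o(1)$, comparable to $\int_t^{t+1}|r^{3/2}\partial_r v_L(t,r)|^2dr$, hence converges to $\int_0^1 |G(s)|^2ds$. So~\eqref{on:v} becomes the assertion
\[
\int_0^1 |G(s)|^2 ds \leq \frac1{10}\,\|\partial_r v_0\|_{L^2}^2 = \frac{c_3}{5}\,\|G\|_{L^2}^2,
\]
i.e.\ that the ``tail'' of $G$ on the unit interval $[0,1]$ carries at most a fixed fraction of its total mass — but this fraction depends on the support constraint, so I must track how $G$ is constrained by $\supp v_0\subset B_A(0)\setminus B_1(0)$.

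The key point is the support/finite-speed structure. Because $v_0$ is supported in $\{1<r<A\}$ and the datum for $\partial_t v_L$ is zero, standard support properties of the radial wave equation in $\RR^4$ (d'Alembert/Kirchhoff-type representation, or equivalently the explicit odd-dimensional-by-descent formula) force the radiation profile $G$ to be supported in $[-A,-1]$ — more precisely, the outgoing wave emerging to the right of the light cone is governed by the part of the data with $r$ near its maximal value, and the profile variable $s=r-t$ ranges over (minus) the support of $v_0$; hence $G(s)=0$ for $s\notin[-A,-1]$, and in particular $G$ vanishes on a neighborhood of $[0,1]$. Wait — if $G$ is supported in $[-A,-1]$ then $\int_0^1|G(s)|^2ds=0$ and the claimed bound is trivial with $0$ on the left. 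Let me reconsider: the annulus $t<|x|<t+1$ corresponds to $s=r-t\in(0,1)$, which is the \emph{leading} edge of the wave; the profile is concentrated where $s$ records the data, and for data in $\{1<r<A\}$ the relevant profile values sit at $s\in(-A,-1)$ only if one uses the naive light cone $r=t$, but the true leading edge of a wave launched from $\{r\le A\}$ with zero initial velocity travels along $r=t+$(something in $[-A,A]$) depending on convention. The honest way is: represent $v_L$ via the formula for the $4$D radial wave, differentiate, pass to the profile, and read off that $G$ is supported in $[-A,A]$ (not a half-line), with the portion on $[0,1]$ controlled by the values of $v_0$ (and its first derivative) on an interval of length $1$ near $r=A$ — a small piece of the total.

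Concretely, I would use the explicit solution formula. For radial data $(v_0,0)$ in $\RR^4$, writing $w(t,r)=r\,v_L(t,r)$ reduces (after the standard dimensional reduction for even dimensions via the Sonine/descent formula, or directly) to an integral representation; differentiating and taking $t\to\infty$ with $s=r-t$ fixed expresses $G(s)$ as a fixed linear integral operator applied to $v_0$ on $\{r\ge |s|\}$, say $G(s)=\int_{\max(1,|s|)}^{A}\kappa(s,r)\,v_0'(r)\,dr$ (after an integration by parts) for an explicit kernel $\kappa$. Then $\int_0^1|G(s)|^2ds$ is a quadratic form in $v_0'$ that, by Cauchy–Schwarz against the kernel, is bounded by an \emph{explicit constant} times $\|v_0'\|_{L^2(1<r<A)}^2=\|\partial_r v_0\|_{L^2}^2$; the content of the lemma is that this constant comes out $\le \frac1{10}$. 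This mirrors exactly the computation the authors used to extract the constant $\frac12$ (equivalently $\frac14$) in Proposition~\ref{pr:channel4D}: there one integrates the profile over the whole half-line $s>0$ and gets $\frac14\|\partial_r v_0\|^2$; here one integrates only over $s\in(0,1)$ and, because this is a strict sub-interval and the kernel is explicit, the numerical value drops to something $\le\frac1{10}$.

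I would therefore structure the proof as: (1) by density and finite speed of propagation reduce to $v_0$ smooth with support in $B_A\setminus B_1$; (2) invoke the radiation profile $G$ of $v_L$ and record $\lim_{t\to\infty}\|\partial_r v_L(t)\|_{L^2(t<|x|<t+1)}^2=\int_0^1|G(s)|^2ds$ using $r^3=t^3(1+o(1))$ on the annulus together with~\eqref{rad_prof}-type convergence; (3) write the explicit formula for $G$ in terms of $v_0$ via the $4$D radial representation — reusing the Bessel-asymptotics computation of \cite{CoKeSc14,KeLaLiSc15} already recalled in the proof of Proposition~\ref{pr:channel4D}, now keeping the $s$-variable — so that $\int_0^1|G(s)|^2ds$ becomes an explicit, computable quadratic functional of $v_0$; (4) bound that functional by $\frac1{10}\|\partial_r v_0\|_{L^2}^2$ by a direct estimate of the kernel (Cauchy–Schwarz / Schur test with the explicit kernel), noting the constraint $r>1$ on the support is what makes the kernel bounded and the constant explicit. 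The main obstacle I expect is step (4): getting the \emph{sharp numerical constant} $\frac1{10}$ rather than merely \emph{some} constant $<\frac14$ requires carefully organizing the explicit kernel and the trigonometric integrations (the analogues of $A_1,A_2,A_3$ restricted to $s\in(0,1)$), and checking that the positive-operator argument (positivity of the Hankel operator) still applies to discard the cross terms; the support condition $r>1$ (i.e.\ $A>1$ with the inner radius normalized to $1$) is essential precisely to control the kernel near the origin and keep the constant from blowing up.
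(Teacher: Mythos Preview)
Your high-level reduction is correct and is exactly the skeleton of the paper's proof: using the radiation profile one has $\|\partial_r v_L(t)\|_{L^2(t<|x|<t+1)}^2\to c_3\int_0^1|G(\eta)|^2d\eta$ (up to normalization), and the whole lemma is the estimate $\int_0^1|G|^2\le \frac{1}{10}\|\partial_r v_0\|_{L^2}^2$. Two points, however, separate your sketch from an actual proof. First, the paper does \emph{not} obtain the explicit profile via the Bessel/Fourier computation of Proposition~\ref{pr:channel4D}; it uses the physical-space Kirchhoff representation in $\RR^4$, decomposes $v_L=V_1+V_2+V_4+V_6+V_7$, shows $|\partial_s V_k(t,t+\eta)|\lesssim t^{-5/2}$ for $k\ne 7$, and isolates the leading term $V_7(t,s)=t^{-3/2}G(s-t)$ with $G(\eta)=\frac{1}{4\sqrt{2}\pi^2}\int_{x_1>\eta}\frac{\Delta v_0(x)}{(x_1-\eta)^{1/2}}dx$. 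After two integrations by parts this yields $G'(\eta)=-\frac{1}{4\sqrt{2}\pi^2}\frac{1}{\eta}\int_1^\infty \rho^{1/2}v_0(\rho)\,H(\rho/\eta)\,d\rho$ for an explicit function $H$. Second --- and this is the real gap --- your step~(4) is where all the work lies and you have no plan for it beyond ``Cauchy--Schwarz/Schur.'' The paper applies Cauchy--Schwarz to get $\int_0^1 (G')^2\le I\cdot\int_1^\infty v_0^2 r\,dr$ with $I=\frac{1}{32\pi^4}\int_1^\infty H^2(r)\log r\,dr$, and then devotes an entire lemma (Lemma~\ref{le:I}) to the explicit, multi-page verification that $I\le\frac{1}{10}$: computing $H$ from $h(r)=\frac{4\pi}{r^2}\int_1^r\sqrt{\frac{r^2-z^2}{z-1}}\,dz$, expanding, and bounding numerically. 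Finally the Hardy inequality $\int v_0^2 r\,dr\le\int(v_0')^2 r^3\,dr$ converts to the stated norm --- a step you do not mention.

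Your wandering paragraph about $\supp G\subset[-A,-1]$ is a genuine misconception worth flagging: in even dimensions there is no strong Huygens principle, so the profile from compactly supported data is \emph{not} compactly supported, and $G$ does carry mass on $[0,1]$; this is precisely why the lemma is nontrivial. Also, the comparison with Proposition~\ref{pr:channel4D} (``over $s>0$ one gets $\frac14$'') is a lower bound, not an equality, and comes from a different mechanism (positivity of the Hankel operator), so it does not directly help bound $\int_0^1|G|^2$ from above. In short: right destination, but the route you propose (Fourier/Bessel kernel plus a generic Schur test) is different from the paper's physical-space computation, and more importantly you have not supplied the one hard ingredient --- the explicit numerical estimate $I\le\frac{1}{10}$ --- which is the entire content of the lemma.
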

\begin{proof}[Proof of Proposition~\ref{P:FW_R} assuming Lemma~\ref{pr:linearv}]
We start with the case $R=1$.
We define the space-time region $\Sigma_1=\{(t,x)\in [0,\infty)\times \RR^4 : |x|>t+1\}$.
First, consider a smooth initial data $(w_0,0)$ with radial symmetry such that 
\begin{equation}\label{eq:suppu}
\supp(w_0)\subset B_A(0)\setminus B_1(0),
\end{equation}
for some $A>0$,
and denote by $w_L$ the corresponding solution of \eqref{FW} on $\Sigma_1$.
Applying Lemma~\ref{pr:linearv} to $w_L$, we have
\begin{equation*}
\lim_{t\to \infty} \|\nabla w_L(t)\|_{L^2(t< |x|< t+1)}^2
\leq \frac 1{10}\|\nabla w_0\|_{L^2}^2.
\end{equation*}
Thus, by Proposition~\ref{pr:channel4D} and $\|\nabla w_0\|_{L^2}=\|\nabla w_0\|_{L^2(|x|>1)}$, we obtain
\begin{equation}\label{eq:still}
\lim_{t\to \infty} \|\nabla w_L(t)\|_{L^2(|x|> t+1)}^2
\geq \frac 3{20}\|\nabla w_0\|_{L^2(|x|>1)}^2.
\end{equation}

Second, consider a radial function $w_0\in \hdot(\RR^4)$ that vanishes at $r=1$.
By finite speed of propagation, we can assume that $w_0(r)$ is zero on $[0,1]$ without changing the solution $w_L$ on $\Sigma_1$.
Moreover, such a function $w_0$ can be approximated in $\hdot(\RR^4)$ by a sequence of smooth functions $(w_{0,n})_n$ satisfying
\eqref{eq:suppu}, with some $A_n\to\infty$. By this density argument, \eqref{eq:still} also holds for any such $w_0$.

Third, consider a general initial data $u_0 \in \hdot(\RR^4)$ with radial symmetry.
Let
\[
w_0(x) =\pi_1^\perp(u_0)(x) = u_0(x) - \frac{u_0(1)}{|x|^2} .
\]
Since $w_0$ vanishes at $r=1$, if we extend it by $0$ for $r\in [0,1]$, the property~\eqref{eq:still} holds for $w_L$, and 
$w_L= u_L - \frac{u_0(1)}{|x|^2}$ on $\Sigma_1$ since 
 $\frac 1{|x|^2}$ is a solution of \eqref{FW} on $\Sigma_1$, 
 and uniqueness holds on $\Sigma_1$.
Moreover, 
\[\lim_{t\to \infty} \|\nabla u_L(t)\|_{L^2(|x|>t+1)}^2
=\lim_{t\to \infty} \|\nabla w_L(t)\|_{L^2(|x|>t+1)}^2,\]
which implies that
\[
\lim_{t\to\infty} \|\nabla u_L(t)\|_{L^2(|x|>t+1)}^2 \geq  \frac 3{20}\|\nabla \pi_1^\perp u_0\|^2_{L^2(|x|>1)}.
\]
By \eqref{rad_prof}, we have
\[
\lim_{t\to\infty} \|\partial_t u_L(t)\|_{L^2(|x|>t+1)}^2 = \lim_{t\to\infty} \|\nabla u_L(t)\|_{L^2(|x|>t+1)}^2,
\]
which proves the result for $R=1$.

We complete the proof by a scaling argument. Let $R_0>0$ and let $u_L$ be a solution of \eqref{FW}.
Applying the result for $R=1$ to the solution $v_L(t,x)=u_L(R_0 t, R_0 x)$
implies the result for the solution $u_L$ for $R=R_0$.
\end{proof}

\begin{proof}[Proof of Lemma~\ref{pr:linearv}]
Recall that $v_L$ is given explicitly by the representation formula
\begin{equation*}
v_L(t,x') 
=\frac 1{4\pi^2}\left(\frac \partial{\partial t}\right)\left( \frac 1t\frac \partial {\partial t}\right)
\left( \int_{|x-x'|<t} \frac{v_{0}(x)}{\left( t^2-|x-x'|^2 \right)^{\frac 12}} dx\right)
\end{equation*}
(see \cite[(38) of \S2.4]{Evans10Bo} for $n=4$, and then use the fact that the volume of the four dimensional unit ball is $\frac{\pi^2}2$).

Denote by ${\mathbf{e}_1}$ the first vector of the canonical basis of $\RR^4$. 
For any $t,s> 0$, set
\[
V(t,s)=\frac {1}{4\pi^2t^3} \int_{|x-s{\mathbf{e}_1}|<t} \frac{v_0(x)}{\left( t^2-|x-s{\mathbf{e}_1}|^2 \right)^{\frac 12}} dx,
\]
so that
\begin{align*}
v_L(t,s{\mathbf{e}_1}) &=\left(\frac {\partial}{\partial t}\right) \left( \frac 1t \frac {\partial}{\partial t}\right)\left( t^3 V\right)(t,s)\\
&=3 V (t,s)+5 t \partial_t V (t,s)+ t^2 \partial_t^2 V (t,s)\\
&=(V_1+V_2+V_3)(t,s).
\end{align*}
We focus on $V_3=t^2\partial_t^2 V$.
By the change of variable
\begin{equation}
y=\frac{x-s{\mathbf{e}_1}}{t},\quad x=s{\mathbf{e}_1} + t y
\end{equation}
we rewrite $V$ as
\begin{equation}\label{eq:formV}
V(t,s)=\frac 1{4\pi^2} \int_{|y|<1} \frac{v_0(s{\mathbf{e}_1} + t y)}{\left(1-|y|^2 \right)^{\frac 12}} dy.
\end{equation}
Thus,
\begin{align*}
V_3(t,s)& =\frac {t^2} {4\pi^2} \partial_t^2 \left(\int_{|y|<1} \frac{v_0(s{\mathbf{e}_1} + t y)}{\left(1-|y|^2 \right)^{\frac 12}} dy\right)\\
&=\frac {t^2}{4\pi^2} \int_{|y|<1} \frac{\sum_{j,k=1}^4 y_j y_k \partial_{x_j}\partial_{x_k}v_0(s{\mathbf{e}_1} + t y)}{\left(1-|y|^2 \right)^{\frac 12}} dy.
\end{align*}
Turning back to the variable $x$ using
\[
y = -{\mathbf{e}_1} + \frac 1t (x-(s-t){\mathbf{e}_1}),
\]
we obtain
\begin{align*}
V_3=V_4+V_5,
\end{align*}
where
\begin{equation}\label{eq:formV4}
\begin{aligned}
V_4(t,s) & =
-\frac1{2\pi^2t^2} 
\int_{|x-s{\mathbf{e}_1}|<t} \frac {(x_1-(s-t)) \partial_{x_1}^2v_0(x)}{\left( t^2-|x-s{\mathbf{e}_1}|^2 \right)^{\frac 12}} dx\\
&\quad+\frac1{4\pi^2t^3}
\int_{|x-s{\mathbf{e}_1}|<t} \frac{(x_1-(s-t))^2 \partial_{x_1}^2v_0(x)}{\left( t^2-|x-s{\mathbf{e}_1}|^2 \right)^{\frac 12}} dx\\
&\quad -\frac1{2\pi^2t^2}
\int_{|x-s{\mathbf{e}_1}|<t} \frac{\sum_{k=2}^4 x_k \partial_{x_1}\partial_{x_k}v_0(x)}{\left( t^2-|x-s{\mathbf{e}_1}|^2 \right)^{\frac 12}} dx\\
&\quad+\frac1{2\pi^2t^3}
\int_{|x-s{\mathbf{e}_1}|<t} \frac{\sum_{k=2}^4 (x_1-(s-t)) x_k \partial_{x_1}\partial_{x_k}v_0(x)}{\left( t^2-|x-s{\mathbf{e}_1}|^2 \right)^{\frac 12}} dx\\
&\quad +\frac1{4\pi^2t^3}
\int_{|x-s{\mathbf{e}_1}|<t} \frac{\sum_{j,k=2}^4 x_j x_k \partial_{x_j}\partial_{x_k}v_0(x)}{\left( t^2-|x-s{\mathbf{e}_1}|^2 \right)^{\frac 12}} dx
\end{aligned}
\end{equation}
and 
\[
V_5(t,s)=\frac {1}{4\pi^2t}\int_{|x-s{\mathbf{e}_1}|<t} \frac{\partial_{x_1}^2 v_0(x)}{\left( t^2-|x-s{\mathbf{e}_1}|^2 \right)^{\frac 12}} dx.
\]
We focus on $V_5$.
We are interested in the regime $t\gg 1$, for $s\in [t,t+1]$. Thus, we set
\[
\eta = s-t\in [0,1],\quad s=t+\eta.
\]
In particular,
\[
V_5(t,s)
=\frac {1}{4\pi^2t}\int_{|x-(t+\eta){\mathbf{e}_1}|<t} \frac{\partial_{x_1}^2v_0(x)}{\left( t^2-|x-(t+\eta){\mathbf{e}_1}|^2 \right)^{\frac 12}} dx.
\]
We compute
\[
t^2-|x-(t+\eta){\mathbf{e}_1}|^2
=2 t (x_1 - \eta) - |x-\eta {\mathbf{e}_1}|^2.
\]
Since $\eta \in [0,1]$, $|x|\leq A$ and $t\gg 1$, we have $t^2-|x-(t+\eta){\mathbf{e}_1}|^2= 2 t(x_1-\eta)+\mathcal{O}(1)$ and the domain of integration defined by $t^2-|x-(t+\eta){\mathbf{e}_1}|^2>0$ converges asymptotically as $t\to\infty$ to the domain $\{x\in \RR^4: x_1>\eta\}$. This motivates the following decomposition
\[
V_5=V_6 + V_7
\]
with
\[
V_6(t,s)=V_5(t,s)- t^{-\frac 32} G(s-t)
\]
and
\[
V_7(t,s)=t^{-\frac 32} G(s-t)
\]
where the function $G:[0,1]\to \RR$ is defined by 
\[
G(\eta)=\frac {1}{4\sqrt{2}\pi^2}\int_{x_1>\eta} \frac{\partial_{x_1}^2v_0(x)}{\left(x_1-\eta\right)^{\frac 12}} dx
\]
(the notation $\int_{x_1>\eta}$ means integration over the half-space $\{x\in \RR^4: x_1>\eta\}$).
Summarising, we have decomposed $v_L(t,s{\mathbf{e}_1})$ as 
\[
v_L(t,s{\mathbf{e}_1})=\left( V_1+V_2+V_4+V_6+V_7\right)(t,s).
\]

To estimate the main term $V_7$, we start with the following lemma.

\begin{lemma}\label{le:hyper}
Let $f:\RR^4\to \RR$ be a continuous, compactly supported function with radial symmetry.
For any $\eta>0$,
\[
\int_{x_1>\eta} \frac{f(x)}{\left(x_1-\eta\right)^{\frac 12}} dx
= \eta^{-\frac 12} \int_\eta^\infty f(\rho) h\left(\frac \rho\eta\right) \rho^3 d\rho
\]
where
\begin{equation}\label{eq:h}
h(r) = \frac{4\pi}{r^2} \int_1^r \sqrt{\frac{r^2-z^2}{z-1}} dz.
\end{equation}
\end{lemma}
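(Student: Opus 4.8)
The plan is to pass to polar coordinates in $\RR^4$ adapted to the $x_1$-axis and, after a single rescaling in the angular variable, to recognize the definition \eqref{eq:h} of $h$ directly in the resulting angular integral.

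First I would write $x=\rho\omega$ with $\rho=|x|\in(0,\infty)$ and $\omega=(\omega_1,\omega')$ on the unit sphere $S^3\subset\RR^4$, and use the standard slicing of the surface measure of $S^3$ along its first coordinate: for any integrable $g$,
\[
\int_{S^3} g(\omega_1)\,d\sigma(\omega)=4\pi\int_{-1}^1 g(u)\sqrt{1-u^2}\,du ,
\]
where $4\pi$ is the area of $S^2$ and the weight $\sqrt{1-u^2}$ is the factor $(1-u^2)^{(n-3)/2}$ in dimension $n=4$. Since $f$ is radial one has $f(x)=f(\rho)$, the integrand depends only on $\rho$ and on $x_1=\rho u$, and the constraint $x_1>\eta$ becomes $\rho u>\eta$, which forces $\rho>\eta$ and $u\in(\eta/\rho,1]$. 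Hence
\[
\int_{x_1>\eta}\frac{f(x)}{(x_1-\eta)^{1/2}}\,dx
=4\pi\int_\eta^\infty f(\rho)\,\rho^3\left(\int_{\eta/\rho}^1\frac{\sqrt{1-u^2}}{(\rho u-\eta)^{1/2}}\,du\right)d\rho .
\]
The exchange of order of integration is legitimate because $f$ is bounded with compact support and the kernel $(x_1-\eta)^{-1/2}$ is locally integrable, so the double integral is absolutely convergent; the inner $u$-integral converges because the singularity at $u=\eta/\rho$ is of the integrable order $(\rho u-\eta)^{-1/2}$.

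Then, in the inner integral, I would substitute $u=z/r$ with $r:=\rho/\eta>1$ (so $\rho=\eta r$), giving $du=dz/r$, $\rho u-\eta=\eta(z-1)$, $1-u^2=(r^2-z^2)/r^2$, and new endpoints $z=1$ and $z=r$. This turns the inner integral into
\[
\int_{\eta/\rho}^1\frac{\sqrt{1-u^2}}{(\rho u-\eta)^{1/2}}\,du
=\frac{1}{r^2\,\eta^{1/2}}\int_1^r\sqrt{\frac{r^2-z^2}{z-1}}\,dz
=\frac{1}{4\pi\,\eta^{1/2}}\,h(r)=\frac{1}{4\pi\,\eta^{1/2}}\,h(\rho/\eta),
\]
using the definition \eqref{eq:h} of $h$ (note $\sqrt{(r^2-z^2)/(z-1)}$ is integrable on $(1,r)$, being $O((z-1)^{-1/2})$ near $z=1$ and vanishing at $z=r$). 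Substituting back cancels the factor $4\pi$ and yields exactly
\[
\int_{x_1>\eta}\frac{f(x)}{(x_1-\eta)^{1/2}}\,dx=\eta^{-1/2}\int_\eta^\infty f(\rho)\,h(\rho/\eta)\,\rho^3\,d\rho ,
\]
which is the claim. There is essentially no obstacle here: the only points needing a word of care are the justification of Fubini and the integrability of the three singular integrands that appear, all of which are of order $(\cdot)^{-1/2}$ at worst.
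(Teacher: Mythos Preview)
Your proof is correct and follows essentially the same approach as the paper: both pass to spherical coordinates in $\RR^4$ adapted to the $x_1$-axis and then perform a single substitution in the angular variable to arrive at the definition of $h$. The only cosmetic difference is that the paper parametrizes by the polar angle $\psi$ (with $x_1=\rho\cos\psi$) and then substitutes $z=(\rho/\eta)\cos\psi$, whereas you use the slicing variable $u=\omega_1=\cos\psi$ directly and substitute $u=z/r$; these are the same computation.
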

\begin{proof}
We use the hyperspherical coordinates in dimension four:
\begin{align*}
x_1 & = \rho \cos \psi\\
x_2 & = \rho \sin \psi \cos \theta\\
x_3 & = \rho \sin \psi \sin \theta \cos \varphi\\
x_4 & = \rho \sin \psi \sin \theta \sin \varphi
\end{align*}
with $\rho>0$, $\psi\in (0,\pi)$, $\theta\in (0,\pi)$ and $\varphi\in (0,2\pi)$.
Using
\[
dx = \rho^3 \sin^2 \psi \sin\theta \,d\rho \, d\psi\, d\theta \,d\varphi,
\]
the change of variable yields
\begin{equation*}
\int_{x_1>\eta} \frac{f(x)}{\left(x_1-\eta\right)^{\frac 12}} dx
=4\pi \int_\eta^\infty f(\rho) \left(\int_0^{\arccos \eta/\rho} \frac{\sin^2 \psi}{(\rho \cos \psi-\eta)^{\frac 12}} d\psi\right)
\rho^3 d\rho
\end{equation*}
(the constant $4\pi$ comes from the integral of $(\theta,\varphi)\mapsto \sin \theta$ over $(0,\pi)\times (0,2\pi)$).
Next, changing variable $z=(\rho/\eta) \cos \psi$,
\begin{equation*}
4\pi \int_0^{\arccos \eta/\rho} \frac{\sin^2 \psi}{(\rho\cos \psi-\eta)^{\frac 12}} d\psi
= 4\pi \frac {\eta^{\frac 32}} {\rho^2} \int_1^{\rho/\eta} \sqrt{\frac{(\rho/\eta)^2-z^2}{z-1}} dz
=\eta^{-\frac 12} h\left(\frac \rho\eta\right),
\end{equation*}
where the function $h$ is defined in \eqref{eq:h}.
\end{proof}
By integration by parts, it holds for $j=2,3,4$,
\[
\int_{x_1>\eta} \frac{\partial_{x_j}^2v_0(x)}{\left(x_1-\eta\right)^{\frac 12}} dx=0.
\]
This implies that the function $G$ rewrites as
\[
G(\eta) = \frac 1{4\sqrt 2 \pi^2}\int_{x_1>\eta} \frac{\Delta v_0(x)}{\left(x_1-\eta\right)^{\frac 12}} dx.
\]
The function $\Delta v_0$ has radial symmetry and is equal to $r^{-3} \left( r^3 v_0'\right)'$.
Thus, using Lemma~\ref{le:hyper},
\[
G(\eta) 
 = \frac 1{4\sqrt 2 \pi^2}
 \eta^{-\frac 12} \int_\eta^\infty \left( \rho^3 v_0'\right)'(\rho) h\left(\frac \rho\eta\right)d\rho.
\]
Thus, integrating by parts twice and using \eqref{eq:supp},
\begin{equation*}
G(\eta)
=\frac 1{4\sqrt 2 \pi^2} \int_1^\infty \rho^{\frac 12} v_0(\rho) H_1\left(\frac \rho\eta\right) d\rho
\end{equation*}
where we have defined
\[
H_1(r)= r^{-\frac 12} \left( r^3 h'\right)'.
\]

Differentiating in $\eta$, we find
\[
G'(\eta) = - \frac 1{4\sqrt 2 \pi^2} \frac 1\eta \int_1^\infty \rho^{\frac 12} v_0(\rho) H\left(\frac \rho\eta\right) d\rho
\]
where
\begin{equation}\label{def:H}
H(r) =  r H_1'(r) = r \left( r^{-\frac 12} \left( r^3 h'\right)' \right)'.
\end{equation}
The Cauchy-Schwarz inequality and then a change a variable $\rho=r \eta$ yield
\begin{align*}
(G')^2(\eta)
&\leq \frac 1{32\pi^4}
\left( \frac 1{\eta^2} \int_1^\infty H^2\left(\frac {\rho}\eta\right) d\rho\right)
\left(\int_1^\infty v_0^2(r) r dr\right)\\
&\leq \frac 1{32\pi^4}\left( \frac 1{\eta}\int_{1/\eta}^\infty H^2(r) dr\right)
\left(\int_1^\infty v_0^2(r) r dr\right)
.
\end{align*}
Integrating on $[0,1]$ in the variable $\eta$ and then using Tonelli's theorem,
\begin{align*}
\int_{0}^1 (G')^2(\eta) d\eta
&\leq \frac 1{32\pi^4}
\left( \int_{0}^1 \frac 1{\eta}\int_{1/\eta}^\infty H^2(r) dr\, d\eta\right)
\left(\int_1^\infty v_0^2(r) r dr\right)\\
&\leq \frac 1{32\pi^4}
\left( \int_1^\infty H^2(r) \log r dr \right)
\left(\int_1^\infty v_0^2(r) r dr\right).
\end{align*}

\begin{lemma}[Estimate on $I$]\label{le:I}
Let
\[
I=\frac 1{32\pi^4} \int_1^\infty H^2(r) \log r \,dr.
\]
It holds
\[
I\leq \frac 1{10} .
\]
\end{lemma}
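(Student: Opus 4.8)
The plan is to reduce Lemma~\ref{le:I} to an explicit one‑variable computation by recognizing the profile function $h$ of \eqref{eq:h} as a complete elliptic integral, so that $H$ becomes an explicit elementary-times-elliptic-integral function of $r$ whose square can be integrated against $\log r$. First I would massage the operations defining $H$. Writing $\mathcal{A}(r)=\int_1^r\sqrt{(r^2-z^2)/(z-1)}\,dz$ and $\mathcal{B}(r)=\int_1^r dz/\sqrt{(r^2-z^2)(z-1)}$, one has $h=\tfrac{4\pi}{r^2}\mathcal{A}$, and since $\sqrt{r^2-z^2}$ vanishes at $z=r$, differentiation under the integral sign gives $\mathcal{A}'=r\mathcal{B}$ with no boundary term. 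A short computation then yields $r^3h'=4\pi(r^2\mathcal{B}-2\mathcal{A})$ and hence the convenient identity $(r^3h')'=4\pi r^2\mathcal{B}'$; consequently $H_1(r)=r^{-1/2}(r^3h')'=4\pi r^{3/2}\mathcal{B}'(r)$ and
\[
H(r)=rH_1'(r)=4\pi\, r\,\big(r^{3/2}\mathcal{B}'(r)\big)',
\]
so that everything is expressed through $\mathcal{B}$ and its first two derivatives.

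Next I would identify $\mathcal{B}$. The cubic $(r^2-z^2)(z-1)=(r-z)(r+z)(z-1)$ has roots $-r<1<r$, and the standard reduction of $\int_1^r dz/\sqrt{(r-z)(z-1)(z+r)}$ to Legendre normal form gives $\mathcal{B}(r)=\sqrt{2/r}\,K\!\big(k(r)\big)$ with $k(r)^2=\tfrac{r-1}{2r}$, where $K$ is the complete elliptic integral of the first kind. Using the classical derivative formulas for $K$ (and $E$) one then writes $\mathcal{B}'$, $\mathcal{B}''$ and therefore $H$ explicitly in the form $\alpha(r)K(k(r))+\beta(r)E(k(r))$ with $\alpha,\beta$ elementary; along the way one records the boundary behavior, namely that $k\to0$ (so $K\to\pi/2$) and $H$ stays bounded as $r\to1^+$, while $k\to1/\sqrt2$ and $H(r)=O(1/r)$ as $r\to\infty$, which makes $H^2\log r$ integrable at both ends (near $r=1$ the weight $\log r\sim r-1$ vanishes, and at infinity $H^2\log r=O(r^{-2}\log r)$).

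Finally I would estimate $\int_1^\infty H(r)^2\log r\,dr$. After a change of variable to $k$ (or to $m=k^2$, i.e. $r=1/(1-2m)$), one either evaluates the integral in closed form — it should reduce to elementary constants and powers of $\pi$ — or, more robustly, splits $(1,\infty)$ at a convenient point and bounds $H(r)^2$ and $\log r$ on each piece by explicit elementary envelopes whose product integrates to an explicit number, checking that the total is at most $\tfrac{32\pi^4}{10}$, i.e. $I\le\tfrac1{10}$.

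The main obstacle is the length and precision of this computation: $H$ is produced only after differentiating an elliptic-integral expression twice, leaving ample room for algebraic slips, and the target is a fixed number rather than ``some constant'', so the estimates must be kept reasonably sharp — a naive envelope bound (using a crude pointwise bound on $|H|$ and the trivial decay $O(1/r)$ with a large constant) overshoots $\tfrac{32\pi^4}{10}$, so the true asymptotic constant and the behavior near $r=1$ must be tracked with care. There is only a little slack to exploit: Proposition~\ref{P:FW_R} merely requires the constant in Lemma~\ref{pr:linearv} to be strictly below $\tfrac14$ (it is subtracted from the $\tfrac14$ of Proposition~\ref{pr:channel4D}), which is why the clean value $\tfrac1{10}$ is targeted.
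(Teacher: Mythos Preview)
Your reduction is correct and cleaner than what the paper does at the intermediate stage: the identity $(r^3h')'=4\pi r^2\mathcal{B}'$ and the recognition $\mathcal{B}(r)=\sqrt{2/r}\,K(k)$ with $k^2=(r-1)/(2r)$ are right, and they give a compact formula $H=4\pi r(r^{3/2}\mathcal{B}')'$ from which the asymptotics $H=O(1)$ near $r=1$ and $H=O(1/r)$ at infinity indeed follow. This is a genuinely different route from the paper, which never introduces elliptic integrals: instead it substitutes $z=1+\zeta(r-1)$ to write $h(r)=4\pi(r^{-1}-r^{-2})\int_0^1(1-\zeta)^{1/2}\zeta^{-1/2}K(\zeta,r)\,d\zeta$ with the elementary kernel $K(\zeta,r)=(r(1+\zeta)+1-\zeta)^{1/2}$, differentiates under the integral, and after a long but purely algebraic simplification obtains the pointwise envelope $|H(r)|\le 12(r^{-1}+r^{-2}+r^{-3})$; plugging this into $\int_1^\infty H^2\log r\,dr$ and using $\int_1^\infty r^{-k}\log r\,dr=(k-1)^{-2}$ gives $I\le \tfrac{3597}{400\pi^4}\approx 0.0923<\tfrac1{10}$.

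The gap in your proposal is precisely the step that constitutes the lemma: you stop at ``either evaluate in closed form or bound by envelopes'', and neither is carried out. There is no reason to expect $\int_1^\infty H(r)^2\log r\,dr$ to be a clean combination of powers of $\pi$ once $H$ mixes $K$ and $E$ with algebraic prefactors, so the closed-form branch is speculative. On the envelope branch you would have to extract, from the elliptic-integral expression, a pointwise bound on $|H|$ sharp enough that the resulting integral stays below $32\pi^4/10\approx 311.8$; as you note yourself, crude constants overshoot, and the paper's own envelope leaves only about $8\%$ slack. Your framework is sound, but to turn it into a proof you still need an explicit numerical inequality of the type the paper supplies --- either by producing the analogous envelope from the $K,E$ formulas (tracking the constants in $dK/dk$, $dE/dk$ and in $k'(r)$), or by reverting to an elementary parametrization as the paper does.
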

\begin{proof}
We change variable in the expression of $h$ in \eqref{eq:h}
\begin{align*}
& z=1+\zeta(r-1),\quad z-1=\zeta(r-1),\\
& r^2-z^2=(r-z)(r+z)=(1-\zeta)(r-1)(r+1+\zeta(r-1)),
\end{align*}
and we obtain
\begin{align*}
h(r) & = 4\pi\left(\frac 1r-\frac 1{r^2}\right)\int_0^1 (1-\zeta)^{\frac 12} \zeta^{-\frac 12} K(\zeta,r) d\zeta ,\\
K (\zeta,r) &=(r(1+\zeta)+1-\zeta)^{\frac 12}.
\end{align*}
Denote
\begin{align}
K' & =\frac{\partial K}{\partial r} =\frac 12 (1+\zeta) K^{-1},\label{eq:dK}\\
K'' & =\frac{\partial^2 K}{\partial r^2}=- \frac 14 (1+\zeta)^2 K^{-3},\label{eq:ddK}\\
K''' & =\frac{\partial^3 K}{\partial r^3}= \frac 38 (1+\zeta)^3 K^{-5}.\label{eq:dddK}
\end{align}
We compute $H(r)$. First,
\begin{align*}
 h'=4\pi\left(-\frac 1{r^2}+\frac 2{r^3}\right)\int_0^1 (1-\zeta)^{\frac 12} \zeta^{-\frac 12} K d\zeta
+4\pi\left(\frac 1{r}-\frac 1{r^2}\right)\int_0^1 (1-\zeta)^{\frac 12} \zeta^{-\frac 12}K' d\zeta
\end{align*}
and thus
\begin{align*}
r^3 h'=4\pi\left(-r+2\right)\int_0^1 (1-\zeta)^{\frac 12} \zeta^{-\frac 12} K d\zeta
+4\pi\left(r^2-r\right)\int_0^1 (1-\zeta)^{\frac 12} \zeta^{-\frac 12}K' d\zeta.
\end{align*}
Second,
\begin{align*}
(r^3 h')'&= -4\pi\int_0^1 (1-\zeta)^{\frac 12} \zeta^{-\frac 12} K d\zeta
+4\pi\left(r+1\right)\int_0^1 (1-\zeta)^{\frac 12} \zeta^{-\frac 12}K' d\zeta
\\&\quad +4\pi\left(r^2-r\right)\int_0^1 (1-\zeta)^{\frac 12} \zeta^{-\frac 12}K'' d\zeta
\end{align*}
and so
\begin{align*}
r^{-\frac 12} \left( r^3 h'\right)'
&= -4\pi r^{-\frac 12} \int_0^1 (1-\zeta)^{\frac 12} \zeta^{-\frac 12} K d\zeta
+4\pi\left(r^{\frac 12}+r^{-\frac 12}\right)\int_0^1 (1-\zeta)^{\frac 12} \zeta^{-\frac 12}K' d\zeta
\\&\quad +4\pi\left(r^{\frac 32}-r^{\frac 12}\right)\int_0^1 (1-\zeta)^{\frac 12} \zeta^{-\frac 12}K'' d\zeta.
\end{align*}
Third,
\begin{align*}
\left(r^{-\frac 12} \left( r^3 h'\right)'\right)'
&= 2\pi r^{-\frac 32} \int_0^1 (1-\zeta)^{\frac 12} \zeta^{-\frac 12} K d\zeta\\
&\quad -2\pi\left(r^{-\frac 12}+r^{-\frac 32}\right)\int_0^1 (1-\zeta)^{\frac 12} \zeta^{-\frac 12}K' d\zeta
\\&\quad +2\pi\left(5r^{\frac 12}+r^{-\frac 12}\right)\int_0^1 (1-\zeta)^{\frac 12} \zeta^{-\frac 12}K'' d\zeta\\
&\quad +4\pi\left(r^{\frac 32}-r^{\frac 12}\right)\int_0^1 (1-\zeta)^{\frac 12} \zeta^{-\frac 12}K''' d\zeta.
\end{align*}
Thus, we have obtained
\[
 H(r)=2\pi r^{-\frac 12} \int_{0}^1(1-\zeta)^{\frac 12} \zeta^{-\frac 12} Z(\zeta,r)d\zeta
\]
where
\begin{equation*}
Z(\zeta,r)= K-(r+1)K'+(5r^2+r) K''+2(r^3-r^2)K'''.
\end{equation*}
We replace $K'$, $K''$ and $K'''$ by their expressions in \eqref{eq:dK}, \eqref{eq:ddK} and \eqref{eq:dddK},
\begin{multline*}
Z(\zeta,r) = K^{-5} \bigg[K^6-\frac 12 (r+1)(1+\zeta)K^4\\-\frac 14(5r^2+r)(1+\zeta)^2 K^2+\frac 34 (1+\zeta)^3(r^3-r^2)\bigg].
\end{multline*}
Next, we insert
\[
K^2(\zeta,r) =(r(1+\zeta)+1-\zeta)
\]
and we expand in powers of $r$
\begin{align*}
Z(\zeta,r) & = K^{-5} \bigg[
r^3\left\{ (1+\zeta)^3 -\frac 12 (1+\zeta)^3 -\frac 54 (1+\zeta)^3+\frac 34 (1+\zeta)^3\right\}\\
&\qquad +r^2 \biggl\{3(1+\zeta)^2(1-\zeta)-(1+\zeta)^2(1-\zeta) - \frac 12 (1+\zeta)^3\\
&\qquad \qquad-\frac 54 (1+\zeta)^2(1-\zeta) - \frac 14 (1+\zeta)^3 - \frac 34 (1+\zeta)^3
\biggr\}\\
& \qquad + r \biggl\{3(1+\zeta)(1-\zeta)^2-\frac 12 (1+\zeta)(1-\zeta)^2\\
&\qquad\qquad-(1+\zeta)^2(1-\zeta)-\frac 14 (1+\zeta)^2(1-\zeta)\biggr\}\\
& \qquad + \left\{(1-\zeta)^3 - \frac 12 (1+\zeta)(1-\zeta)^2\right\}\bigg].
\end{align*}
Note that  in the first line, the terms in $r^3$ vanish. 
This expression simplifies into
\begin{equation*}
Z(\zeta,r) = \frac 14 K^{-5} \Bigr[ - 3 r^2 (1+\zeta)^2(1+3\zeta)
+ 5 r(1+\zeta)(1-\zeta)(1-3\zeta)
+ 2(1-\zeta)^2(1-3\zeta)\Bigl].
\end{equation*}
We observe that for all $\zeta\in [0,1]$, $|1-3\zeta|\leq 1+\zeta$.
Indeed, for $\zeta\in [0,\frac 13]$, $|1-3\zeta|=1-3\zeta\leq 1$, while for $\zeta\in [\frac 13,1]$,
$|1-3\zeta|=3\zeta-1=1+\zeta-2(1-\zeta)\leq 1+\zeta$.
It follows that 
\begin{equation*}
|Z(\zeta,r)|
\leq \frac 14 K^{-5} \left[3 r^2 (1+\zeta)^2 (1+3\zeta)+ 5r(1+\zeta)^2(1-\zeta)+2(1-\zeta)^2(1+\zeta)\right].
\end{equation*}
Therefore,
\begin{align*}
|H(r)|&\leq\frac{3\pi}2 r^{\frac 32}  \int_0^1(1-\zeta)^{\frac 12} \zeta^{-\frac 12} (1+\zeta)^2 (1+3\zeta)K^{-5}(\zeta,r) d\zeta\\
&\quad + \frac{5\pi}2 r^{\frac 12}  \int_0^1(1-\zeta)^{\frac 32} \zeta^{-\frac 12}  (1+\zeta)^2K^{-5}(\zeta,r) d\zeta\\
&\quad + \pi r^{-\frac 12} \int_0^1(1-\zeta)^{\frac 52} \zeta^{-\frac 12}  (1+\zeta) K^{-5}(\zeta,r) d\zeta
\end{align*}
Now, we estimate $K^{-5}$ to reduce to simple integrals.
Since
\[
K^{-5}(\zeta,r)=r^{-\frac 52} (1+\zeta)^{-\frac 52} \left(1+\frac {1-\zeta}{r(1+\zeta)}\right)^{-\frac 52}
\leq r^{-\frac 52} (1+\zeta)^{-\frac 52},
\]
we have
\[
|H(r)|\leq a r^{-1} + b r^{-2} + c r^{-3}
\]
where
\begin{align*}
a& = \frac{3\pi}2  \int_0^1(1-\zeta)^{\frac 12}\zeta^{-\frac 12} (1+\zeta)^{-\frac 12}(1+3\zeta) d\zeta\\
b& = \frac{5\pi}2 \int_0^1(1-\zeta)^{\frac 32} \zeta^{-\frac 12}  (1+\zeta)^{-\frac 12}  d\zeta\\
c& = \pi \int_0^1(1-\zeta)^{\frac 52} \zeta^{-\frac 12}  (1+\zeta)^{-\frac 32}   d\zeta.
\end{align*}
We estimate $a$, $b$ and $c$.
For $a$, using the change of variable $\tilde \zeta=\zeta^{\frac 12}$ and then the Cauchy-Schwarz inequality,
\begin{multline*}
a = 3 \pi \int_0^1 (1-\tilde \zeta^2)^{\frac 12} (1+\tilde \zeta^2)^{-\frac 12} (1+3\tilde\zeta^2) d\tilde \zeta\\ 
\leq    3\pi \sqrt{\int_0^1 \frac{d\tilde\zeta}{1+\tilde\zeta^2}}\sqrt{\int_0^1 (1-\tilde\zeta^2) (1+3\tilde\zeta^2)^2 d\tilde\zeta}=3\pi\sqrt{\frac{\pi}{4}\left(1+\frac 53+\frac 35-\frac 97\right)} 
\leq 12.
\end{multline*}
We proceed similarly for $b$ and $c$, which yields
\begin{align*}
b &= 5 \pi \int_0^1 (1-\tilde \zeta^2)^{\frac 32} (1+\tilde \zeta^2)^{-\frac 12} d\tilde \zeta\\
&\leq 5\pi \sqrt{\int_0^1 \frac{d\tilde\zeta}{1+\tilde\zeta^2}} \sqrt{\int_0^1 (1-\tilde\zeta^2)^3 d\tilde\zeta}
=5\pi \sqrt{\frac{\pi}{7}}\leq 11,\\
c  &= 2 \pi \int_0^1 (1-\tilde \zeta^2)^{\frac 52} (1+\tilde \zeta^2)^{-\frac 32} d\tilde \zeta\leq 2\pi.
\end{align*}
For the sake of simplicity, we bound $b\leq 12$ and $c\leq 12$, so that we have proved
\[
|H(r)|\leq 12\left( r^{-1} + r^{-2} +  r^{-3} \right).
\]
Using the expression of $I$ and the estimate on $H$, it holds
\begin{align*}
\int_1^\infty H^2(r) \log r dr
&\leq 144 \int_1^\infty ( r^{-1} + r^{-2}+ r^{-3})^2 \log r dr\\
&\leq 144 \int_1^\infty \left( r^{-2} +2 r^{-3}
+ 3 r^{-4}+ 2 r^{-5} + r^{-6}\right) \log r dr.
\end{align*}
Using $\int_1^\infty r^{-k} \log r dr=  (k-1)^{-2}$, we obtain
\begin{equation*}
I \leq \frac {144}{32\pi^4}\left( 1\times 1 + 2 \times \frac 1{4}
+3 \times \frac 19 + 2 \times \frac 1{16} + 1 \times \frac 1{25} \right)
=\frac{3597}{400}\frac 1{\pi^4} \leq \frac 1{10} 
\end{equation*}
where the last estimate, checked numerically, is given for the sake of simplicity.
\end{proof}
 
We continue the proof of Lemma~\ref{pr:linearv}.
By Lemma \ref{le:I} and then the Hardy inequality
(see \emph{e.g.} \cite[Appendix A.4]{Stein70Bo})
\[
\int_0^\infty v_0^2 r dr \leq \int_0^\infty (v_0')^2 r^3 dr,
\] 
we obtain
\begin{equation*}
\int_{0}^1 (G')^2(\eta) d\eta
\leq \frac{1}{10}\int_1^\infty v_0^2(r) r dr
=\frac{1}{10}\int_0^\infty v_0^2(r) r dr
\leq \frac{1}{10} \int_0^\infty (v_0')^2(r) r^3 dr.
\end{equation*}
For the term $V_7$, we conclude
\begin{align*}
\int_t^{t+1} (\partial_s V_7)^2 (t,s) s^3 ds
&=t^{-3} \int_{t}^{t+1} (G')^2(s-t) s^3 ds\\
&=t^{-3} \int_0^1 (G')^2(\eta) (t+\eta)^3 d\eta
\leq \left(1+\frac 1t\right)^3 \int_0^1 (G')^2(\eta) d\eta,
\end{align*}
and so
\begin{equation}\label{on:V17}
\limsup_{t\to \infty} \int_{t\leq |x|\leq t+1} |\nabla V_7|^2 (t,x) dx
\leq \frac{1}{10}\int  |\nabla v_0(x)|^2 dx.
\end{equation}

Now, we claim that for $k=1,2,4,6$, for $t$ large and for any~$\eta \in [0,1]$, it holds
\begin{equation}\label{on:V1to6bis}
|\partial_s V_k(t,t+\eta)| \lesssim t^{-\frac 52}.
\end{equation}
Observe that setting $s=t+\eta$, estimate~\eqref{on:V1to6bis} implies that
\begin{multline*}
\int_t^{t+1} (\partial_s V_k)^2(t,s) s^3 ds 
=\int_0^1 (\partial_s V_k)^2(t,t+\eta) (t+\eta)^3 d\eta \\
\leq (t+1)^3 \int_0^1 (\partial_s V_k)^2(t,t+\eta)\, d\eta\lesssim t^{-2}.
\end{multline*}
Thus, for $k=1,2,4,6$,
\begin{equation}\label{on:V1to6}
\lim_{t\to \infty} \int_t^{t+1} (\partial_s V_k)^2(t,s) s^3 ds =0.
\end{equation}
Since estimates \eqref{on:V17} and \eqref{on:V1to6} imply Lemma~\ref{pr:linearv}, we only have to prove~\eqref{on:V1to6bis}.
We start with the following technical result.
\begin{lemma}\label{le:1}
Let $A>0$ and let $f:\RR^4\to \RR$ be a smooth function such that $\supp f \subset B_A(0)$.
Define
\[
\Omega[f](t) = \int_{|x-t{\mathbf{e}_1}|<t} \frac{f(x)}{\left( t^2-|x-t{\mathbf{e}_1}|^2 \right)^{\frac 12}} dx,
\]
For $t$ large enough,
\begin{equation}\label{eq:asympf}
\left|
(2t)^{\frac 12} \Omega[f](t)
-\int_{x_1>0} \frac{f(x)} {x_1^{\frac 12}} dx
\right|\leq  \frac Ct
\end{equation}
and
\begin{equation}\label{eq:boundf}
\left|\Omega[f](t)\right|\leq C t^{-\frac 12}
\end{equation}
where the constant $C$  depends on $A$, $\|f\|_{L^\infty}$ and $\|\nabla f\|_{L^\infty}$.
\end{lemma}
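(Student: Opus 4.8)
The plan is to reduce the spherical average $\Omega[f](t)$ to an integral over the half-space $\{x_1>0\}$ by an explicit change of variables that straightens the sphere $\{|x-\mathbf{e}_1 t|<t\}$, and then to expand the resulting integrand in powers of $1/t$. Throughout we use that $f$ is supported in $B_A(0)$, so that on the region of integration $|x|<A\ll t$.

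First I would rewrite the weight. Expanding the square,
\[
t^2-|x-t\mathbf{e}_1|^2 = 2tx_1-|x|^2 = (x_1^+-x_1)(x_1-x_1^-), \qquad x_1^\pm(x')=t\pm\sqrt{t^2-|x'|^2},
\]
where $x'=(x_2,x_3,x_4)$. Since $|x|<A\ll t$ on the support of the integrand, the upper root is irrelevant, the integrand is supported in $\{x_1>x_1^-(x'),\ |x'|<A\}$, and $0\le x_1^-(x')=|x'|^2/\bigl(t+\sqrt{t^2-|x'|^2}\bigr)\le A^2/t$. The substitution $u=x_1-x_1^-(x')$ maps this set bijectively onto $\{u>0,\ |x'|<A\}$, and since $x_1^+-x_1^-=2\sqrt{t^2-|x'|^2}$ it gives
\[
(2t)^{\frac12}\,\Omega[f](t) = \int_{|x'|<A}\int_0^\infty \left(\frac{2t}{2\sqrt{t^2-|x'|^2}-u}\right)^{\frac12}\frac{f\bigl(u+x_1^-(x'),x'\bigr)}{u^{\frac12}}\,du\,dx',
\]
where the $u$-integral is effectively over $\{u<2A\}$ because $f$ has compact support.

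Next I would Taylor expand the two $t$-dependent pieces. On the support of the integrand, $0<u<2A$ and $0\le x_1^-(x')\le A^2/t$, so $\bigl(2t/(2\sqrt{t^2-|x'|^2}-u)\bigr)^{1/2}=1+O(1/t)$ uniformly, and by the mean value theorem $f\bigl(u+x_1^-(x'),x'\bigr)=f(u,x')+O\bigl(x_1^-(x')\,\|\nabla f\|_{L^\infty}\bigr)=f(u,x')+O(1/t)$ uniformly. Substituting, the leading term is exactly $\int_{|x'|<A}\int_0^\infty u^{-1/2}f(u,x')\,du\,dx'=\int_{x_1>0}x_1^{-1/2}f(x)\,dx$, while the two remainders, using that $u^{-1/2}$ is integrable near $0$ and that $f$ is bounded and supported in $B_A(0)$, integrate to $O(1/t)$ with a constant depending only on $A$, $\|f\|_{L^\infty}$ and $\|\nabla f\|_{L^\infty}$; this proves \eqref{eq:asympf}. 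Estimate \eqref{eq:boundf} then follows at once, since $\bigl|\int_{x_1>0}x_1^{-1/2}f(x)\,dx\bigr|$ is a fixed finite constant, so $|\Omega[f](t)|\le Ct^{-1/2}$ for $t$ large.

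The step I expect to be the main obstacle is precisely the one that forces the change of variables above. If one naively writes $(2t)^{1/2}\Omega[f](t)-\int_{x_1>0}x_1^{-1/2}f$ as a ``domain error'' (from the slab $\{0<2tx_1<|x|^2\}$) plus a ``weight error'' ($x_1^{-1/2}$ versus $(x_1-|x|^2/(2t))^{-1/2}$), each of these pieces is only $O(t^{-1/2})$; the improvement to $O(t^{-1})$ comes from a \emph{cancellation} between them, which the straightening substitution $u=x_1-x_1^-(x')$ makes transparent. A secondary point requiring care is uniformity in $x'$: one must bound $x_1^-(x')\le A^2/t$ and bound $2\sqrt{t^2-|x'|^2}-u$ from below before integrating out $x'$, so that both expansions hold with constants independent of $x'$ on the whole range $|x'|<A$.
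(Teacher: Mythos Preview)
Your proof is correct and follows essentially the same strategy as the paper: rewrite $t^2-|x-t\mathbf{e}_1|^2=2tx_1-|x|^2$, change variables to straighten the boundary to $\{u=0\}$, and Taylor expand in $1/t$. The only difference is cosmetic: you factor the quadratic in $x_1$ and shift by the root $x_1^-(x')$ (a Jacobian-$1$ substitution, leaving a residual weight factor to expand), whereas the paper uses the four-dimensional map $\Phi(x)=(x_1-|x|^2/(2t),x')$ with Jacobian $1-x_1/t$ and expands $f\circ\Phi^{-1}$ and the Jacobian instead; both produce the same $O(1/t)$ error with constants depending on $A$, $\|f\|_{L^\infty}$, $\|\nabla f\|_{L^\infty}$.
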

\begin{remark}
We do not assume that $f$ has radial symmetry.
\end{remark}
\begin{proof}
Observe that \eqref{eq:boundf} is a direct consequence of \eqref{eq:asympf}.
We prove \eqref{eq:asympf}.
Since 
$t^2-|x-t{\mathbf{e}_1}|^2 = 2tx_1 -|x|^2$,
we rewrite
\[
(2t)^{\frac 12}\Omega[f](t)
=\int_{x_1>\frac{|x|^2}{2t}} \frac{f(x)} {\left( x_1 - \frac{|x|^2}{2t} \right)^{\frac 12}} dx.
\]
We change variables, setting
\[
y=\Phi(x)=\left(x_1-\frac{|x|^2}{2t},x_2,x_3,x_4\right).
\]
For $t$ large enough (depending on $A$), the map $\Phi$ is a diffeomorphism from $B_{2A}(0)$ to its image $\Phi(B_{2A}(0))$,
and $B_{A}(0)\subset \Phi(B_{2A}(0))\subset B_{4A}(0)$.
We compute $\Phi^{-1}$.
Using the notation $\bar x=(x_2,x_3,x_4)$, $\bar y=(y_2,y_3,y_4)$, we have for $y=\Phi(x)$,
\[
|\bar y|^2 = |\bar x|^2,\quad x_1^2 - 2t x_1 + 2t y_1 + |\bar y|^2 =0,
\]
and thus
\[
\Phi^{-1}(y)=\left( t - \sqrt{t^2 -2ty_1-|\bar y|^2},y_2,y_3,y_4\right).
\]
In particular, on $\Phi(B_{2A}(0))$, it holds
\[
|\Phi^{-1}(y)-y|\lesssim \frac 1t.
\]
Let $J_\Phi$ denote the Jacobian matrix of $\Phi$.
Since $\det J_\Phi(x)= 1-\frac {x_1}t$, we find 
\[
\det J_\Phi(\Phi^{-1}(y)) = \sqrt{1 -2t^{-1} y_1-t^{-2} |\bar y|^2},
\]
and thus on $\Phi(B_{2A}(0))$, it holds
\[
|\det J_\Phi(\Phi^{-1}(y)) - 1 |\lesssim \frac 1t.
\]
Let
\begin{align*}
X_A & = \left\{x\in \RR^4: x\in B_{2A}(0) \mbox{ and } x_1-\frac{|x|^2}{2t}>0\right\},\\
Y_A & = \{y\in \RR^4: y \in B_{4A}(0) \mbox{ and } y_1>0\}.
\end{align*}
Since $\supp f \subset B_A(0)$ and $\Phi(X_A)\subset Y_A$,
\begin{align*}
\int_{x_1-\frac{|x|^2}{2t}>0} \frac{f(x)} {\left( x_1 - \frac{|x|^2}{2t} \right)^{\frac 12}} dx
&= \int_{X_A} \frac{f(x)} {\left( x_1 - \frac{|x|^2}{2t} \right)^{\frac 12}} dx\\
&= \int_{\Phi(X_A)} \frac{f(\Phi^{-1}(y))} { y_1^{\frac 12}}\frac {dy}{|\det J_\Phi(\Phi^{-1}(y))|}\\
&\leq \int_{Y_A} \frac{f(\Phi^{-1}(y))} { y_1^{\frac 12}}\frac {dy}{|\det J_\Phi(\Phi^{-1}(y))|}.
\end{align*}
On $Y_A$, one has
\[
|f(\Phi^{-1}(y))-f(y)|\lesssim \frac 1t,\quad
\left|\frac 1{\det J_\Phi(\Phi^{-1}(y))} -1 \right|\lesssim \frac 1t.
\]
Thus,
\[
\left|\int_{Y_A} \frac{f(\Phi^{-1}(y))} { y_1^{\frac 12}}\frac {dy}{|\det J_\Phi(\Phi^{-1}(y))|}
- \int_{y_1>0} \frac{f(y)}{y_1^{\frac 12}} dy \right| \lesssim \frac 1t,
\]
which completes the proof of the lemma.
\end{proof}

\emph{Estimate for $V_1$.}
We observe by the change of variable $x=x'-\eta{\mathbf{e}_1}$ that
\[
V_1(t,t+\eta) = 3 V (t,t+\eta)  = \frac 3{4\pi^2 t^3} \int_{|x-t{\mathbf{e}_1}|<t} \frac {v_0(x+\eta{\mathbf{e}_1})}{(t^2 -|x-t{\mathbf{e}_1}|^2)^{\frac 12}} dx
\]
and so
\[
\partial_s V_1(t,t+\eta)
=\frac 3{4\pi^2 t^3} \Omega[\partial_{x_1}v_0(\cdot+\eta \mathbf{e}_1)] (t)
\]
For $\eta\in [0,1]$, the support of the function $x\mapsto \partial_{x_1}v_0(x+\eta{\mathbf{e}_1})$ is included in $B_{A+1}(0)$ and thus
by~\eqref{eq:boundf}, we have $|\partial_s  V_1(t,t+\eta)|\lesssim t^{-\frac 72}$ for $t$ large.

\emph{Estimate for $V_2$.}
By direct computation using the expression of $V$ in \eqref{eq:formV}, we have
\[
V_2(t,s)=5t \partial_t V (t,s)
=\frac{5t}{4\pi^2} \int_{|y|<1} \frac {\sum_{j=1}^4 y_j \partial_{x_j} v_0(s{\mathbf{e}_1} + ty)}{(1-|y|^2)^{\frac 12}} dy.
\]
By the change of variable $y = \frac {x}t-{\mathbf{e}_1}$,
we rewrite
\begin{align*}
V_2(t,t+\eta) &= \frac{5}{4\pi^2 t^2} \int_{|x-t{\mathbf{e}_1} |<t} \frac {\left( \frac {x_1}t-1\right) \partial_{x_1} v_0(x+\eta{\mathbf{e}_1})}
{(t^2-|x-t{\mathbf{e}_1}|^2)^{\frac 12}} dx\\
&\quad + \frac{5}{4\pi^2 t^3} \int_{|x-t{\mathbf{e}_1} |<t} \frac {\sum_{j=2}^4 x_j \partial_{x_j} v_0(x+\eta{\mathbf{e}_1})}
{(t^2-|x-t{\mathbf{e}_1}|^2)^{\frac 12}} dx.
\end{align*}
Arguing as for $V_1$, using \eqref{eq:boundf}, for any $\eta\in [0,1]$, we find 
$|\partial_s  V_{2}(t,t+\eta)|\lesssim t^{-\frac 52}$.

\emph{Estimate for $V_4$.}
The first term in the expression of $V_4$ in \eqref{eq:formV4} is rewritten as
\[
V_{4,1}(t,t+\eta)=-\frac 1{2\pi^2t^2} \int_{|x-t {\mathbf{e}_1}|<t} \frac{ x_1 \partial_{x_1}^2 v_0(x+\eta{\mathbf{e}_1})}{(t^2 -|x-t{\mathbf{e}_1}|^2)^{\frac 12}} dx.
\]
Arguing as for $V_1$, using \eqref{eq:boundf}, for $\eta\in [0,1]$, we find $|\partial_s  V_{4,1}(t,t+\eta)|\lesssim t^{-\frac 52}$.
The other terms in the expression of $V_4$ are estimated similarly.

\emph{Estimate for $V_6$.}
Recall that
\begin{align*}
V_6(t,t+\eta)
& =  \frac {t^{-\frac 32}}{4\sqrt{2}\pi^2}
\left( (2t)^{\frac 12} \Omega[\partial_{x_1}^2v_0(\cdot+\eta\mathbf{e}_1)](t)
- \int_{x_1>0} \frac{\partial_{x_1}^2v_0(x+\eta\mathbf{e}_1)}{x_1^{\frac 12}} dx\right).
\end{align*}
Thus, by \eqref{eq:asympf} of Lemma \ref{le:1}, applied to the function $x\mapsto \partial_{x_1}^3v_0(x+\eta {\mathbf{e}_1})$, it follows
that for any $\eta\in (0,1)$,
$|\partial_s  V_6(t,t+\eta)|\lesssim t^{-\frac 52}$
and the proof of \eqref{on:V1to6bis} is complete.
\end{proof}
\subsection{Exterior energy in space dimension $6$}
\label{SS:N6}
In this subsection, we consider the linear wave equation in space dimension $6$:
\begin{equation}
\label{FW6}
\partial_t^2u_L-\Delta u_L=0,\quad (t,x)\in \RR\times \RR^6,\quad |x|>R+|t|.
\end{equation}
With initial data of the form:
\begin{equation}
\label{ID6}
\vec{u}_{\restriction t=0}=(0,u_1),\quad u_1\in L^2(\{x\in \RR^6,\; |x|>R\}).
\end{equation}
We note that $t/r^4$ is a radial solution of \eqref{FW6},\eqref{ID6}, with initial data $(0,1/r^4)$. We will denote by $\Pi_R$ the orthogonal projection, on $\mathrm{span}(1/r^4)$ in the space of radial functions in $L^2(\{x\in \RR^6,\; |x|>R\})$, and by $\Pi_R^{\bot}=\mathrm{Id}-\Pi_R$. We will prove:
\begin{prop}
 \label{P:channel6d}
 Let $u$ be a radial solution of \eqref{FW6} with initial data of the form \eqref{ID6}. Then 
 \begin{multline}
\label{ineg6}
 \lim_{t\to\infty}\int_{t+R}^{\infty}  |\nabla u_L(t,r)|^2r^5dr =\lim_{t\to\infty} \int_{t+R}^{\infty} |\partial_t u_L(t,r)|^2r^5dr
\\
\geq \frac 3{20}\int_{R}^{\infty} \left|\Pi_R^\perp u_1(r)\right|^2r^5dr.
\end{multline}
\end{prop}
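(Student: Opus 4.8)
The plan is to deduce the six-dimensional estimate \eqref{ineg6} from the four-dimensional channel estimate of Proposition~\ref{P:FW_R} via the classical dimension-raising transformation for radial waves (the ``simple trick'' announced above). Write $\Delta_d=\partial_r^2+\frac{d-1}{r}\partial_r$ for the radial Laplacian in dimension $d$. For radial functions one has the identity $\partial_r(\Delta_4 w)=r\,\Delta_6\!\big(\frac1r\partial_r w\big)$, so that if $\partial_t^2 w=\Delta_4 w$ then $v:=\frac1r\partial_r w$ satisfies $\partial_t^2 v=\Delta_6 v$; and by finite speed of propagation, $v$ solves the free wave equation on $\{|x|>R+|t|\}\subset\RR\times\RR^6$ whenever $w$ solves the free wave equation on the corresponding region of $\RR\times\RR^4$. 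Moreover the norms match with no loss: $\int_\rho^\infty |v(t,r)|^2 r^5\,dr=\int_\rho^\infty |\partial_r w(t,r)|^2 r^3\,dr$ for all $\rho\ge 0$ and all $t$.

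Given a radial function $u_1\in L^2(\{|x|>R\})$ (in $\RR^6$), I would first introduce $w_0(r):=-\int_r^\infty \rho\, u_1(\rho)\,d\rho$. By Cauchy--Schwarz the integral converges for $r>R$ and $w_0\to 0$ as $r\to\infty$, while $\partial_r w_0=r\,u_1$; hence $w_0\in\hdot(R)$ (viewed as a four-dimensional radial space) with $\|\nabla w_0\|_{L^2(\RR^4,\,|x|>R)}=\|u_1\|_{L^2(\RR^6,\,|x|>R)}$, and $u_1\mapsto w_0$ is a Hilbert-space isometry from the radial $L^2$-space on $\{|x|>R\}\subset\RR^6$ onto the decaying elements of $\hdot(R)$. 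Since this map carries $1/r^4$ to $-1/(2r^2)$, it carries $\mathrm{span}(1/r^4)$ onto $\vect_R(1/r^2)$, and being an isometry preserving these one-dimensional subspaces it intertwines the associated orthogonal projections. In particular $\pi_R^\perp w_0$ is the image of $\Pi_R^\perp u_1$, so $\|\nabla \pi_R^\perp w_0\|_{L^2(\RR^4,\,|x|>R)}=\|\Pi_R^\perp u_1\|_{L^2(\RR^6,\,|x|>R)}$.

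Next I would let $w$ be the solution of the free four-dimensional wave equation with data $(w_0,0)$ on $\{|x|>R+|t|\}$ and set $v:=\frac1r\partial_r w$; by the first paragraph, $v$ solves the free six-dimensional wave equation there, and checking Cauchy data gives $v|_{t=0}=\frac1r\partial_r w_0=u_1$ and $\partial_t v|_{t=0}=0$, so $v=\partial_t u_L$ where $u_L$ solves \eqref{FW6}--\eqref{ID6}. Since $\|\partial_t u_L(t)\|_{L^2(|x|>t+R)}^2=\int_{t+R}^\infty |v(t,r)|^2 r^5\,dr=\|\nabla w(t)\|_{L^2(\RR^4,\,|x|>t+R)}^2$, Proposition~\ref{P:FW_R} applied to $w$ together with the isometry identity above yields
\[
\lim_{t\to\infty}\|\partial_t u_L(t)\|_{L^2(|x|>t+R)}^2=\lim_{t\to\infty}\|\nabla w(t)\|_{L^2(\RR^4,\,|x|>t+R)}^2\ge \frac{3}{20}\|\nabla\pi_R^\perp w_0\|_{L^2(\RR^4,\,|x|>R)}^2=\frac{3}{20}\|\Pi_R^\perp u_1\|_{L^2(\RR^6,\,|x|>R)}^2.
\]
The equality of the two limits in \eqref{ineg6} I would obtain, exactly as in dimension $4$, from the radiation profile of radial solutions of the free wave equation in $\RR^6$ (the analogue of \eqref{rad_prof}; see Appendix~\ref{A:radiation} and \cite{DuKeMe19}), which completes the argument.

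The steps requiring care — rather than a genuine obstacle — are the following. First, one must justify that $v=\frac1r\partial_r w$ really defines a solution outside the wave cone with the stated Cauchy data; I would check this for $u_1$ smooth and compactly supported in $\{r>R\}$, where $w_0$ is eventually constant near $r=R$, vanishes beyond the support, and every manipulation is classical, and then pass to general $u_1\in L^2(\{|x|>R\})$ by the same density argument already used in the proof of Proposition~\ref{P:FW_R}. Second, one needs the bookkeeping that $u_1\mapsto w_0$ intertwines $\Pi_R$ with $\pi_R$, which is automatic once this map is recognized as a unitary carrying $\mathrm{span}(1/r^4)$ onto $\vect_R(1/r^2)$. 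Since the substantive input — the exterior energy lower bound itself — has already been established in dimension $4$, I expect no essentially new difficulty.
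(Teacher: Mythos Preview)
Your proof is correct and follows essentially the same approach as the paper: both reduce the six-dimensional estimate to Proposition~\ref{P:FW_R} via the standard radial dimension-shifting identity, and both identify the intertwining of $\Pi_R$ with $\pi_R$ under the map $u_1\mapsto -\int_r^\infty \rho\,u_1(\rho)\,d\rho$. The paper runs the transformation in the opposite direction, defining directly $v(t,r)=\int_r^\infty \rho\,\partial_t u_L(t,\rho)\,d\rho$ (which is your $-w$) and checking it solves the four-dimensional equation with data $(v(0),0)$; this spares the step of verifying $\tfrac1r\partial_r w=\partial_t u_L$ and the attendant density argument, but the two arguments are otherwise the same.
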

\begin{proof}
This is a simple consequence of Proposition \ref{P:FW_R}.
 By using the radiation profile (see Proposition \eqref{ILW}), it suffices to prove the inequality in \eqref{ineg6}. By scaling, we can assume $R=1$. Consider
 $$ v(t,r)=\int_r^{\infty} \rho\partial_tu_L(t,\rho)d\rho.$$
A calculation shows that $v(t,r)$ solves the linear wave equation in $\RR\times \RR^4$ for $|x|>1+|t|$. Moreover, $\partial_rv(0,r)\in L^2(\{x\in \RR^4,\;|x|>1\})$ and $\partial_tv(0,r)=0$ since $u_L(0,r)=0$. By Proposition \ref{P:FW_R}, we have that  
\begin{equation*}
\lim_{t\to\infty}\int_{1+t}^{\infty}(\partial_tu_L(t,r))^2r^5dr=\lim_{t\to\infty}\int_{1+t}^{\infty}(\partial_rv(t,r))^2r^3dr\geq \frac{3}{20}\int_{1+t}^{\infty} \left|\nabla \pi_1^{\bot}v(0)\right|^2r^3dr.
\end{equation*}
Now $\partial_rv(0,r)=-ru_1(r)$, and
\begin{multline*}
\partial_r (\pi^{\bot}_1v(0,r))=\frac{\partial}{\partial r}\left( v(0,r)-\frac{v(0,1)}{r^2} \right)\\
=-ru_1(r)+\frac{2}{r^3}\int_{1}^{\infty} \rho u_1(\rho)d\rho=-r\left( u_1(r)-\frac{2}{r^4}\int_1^{\infty} \rho u_1(\rho)d\rho \right),
\end{multline*}
which by a simple calculation equals $-r\Pi^{\bot}_1(u_1)$, and the proposition follows.
 \end{proof}

\subsection{Exterior energy for the linear inhomogeneous wave equation}
We deduce from Propositions~\ref{pr:channel4D} and \ref{P:FW_R} similar lower bounds on the exterior energy for the inhomogeneous problem \eqref{LWinhom}.

\begin{lemma}
\label{L:inhomog}
Let $u_0\in \dot{H}^1(\RR^4)$ and $f\in L^1((0,\infty),L^2(\RR^4))$ have radial symmetry.
Let $u$ be the solution of \eqref{LWinhom}. Then
\begin{equation}
\label{bound_inhom}
\|u_0\|_{\hdot}\leq 2 \lim_{t\to +\infty} \|\nabla u(t)\|_{L^2(|x|>t)}+ 2 \left\|\indic_{\{|x|>|t|\}} f\right\|_{L^1((0,\infty),L^2)},
\end{equation} 
and for $R>0$,
\begin{equation}
\label{bound_inhomR}
\|\pi_R^{\bot} u_0\|_{\hdot(R)}\leq \sqrt{\frac {20}3} \left(\lim_{t\to +\infty} \|\nabla u(t)\|_{L^2(|x|>t+R)}
+ \left\|\indic_{\{|x|>|t|+R\}} f\right\|_{L^1((0,\infty),L^2)}\right). 
\end{equation} 
\end{lemma}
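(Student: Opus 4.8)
The plan is to peel off from $u$ the free evolution of the data $(u_0,0)$, to which Proposition~\ref{pr:channel4D} (for \eqref{bound_inhom}) and Proposition~\ref{P:FW_R} (for \eqref{bound_inhomR}) apply directly, and to absorb the remaining, forcing-driven part of $u$ into the $\indic f$ term using finite speed of propagation. So I would write $u=v_L+w$, where $v_L$ is the solution of the free wave equation \eqref{FW} with initial data $(u_0,0)$ and $w$ solves $\partial_t^2w-\Delta w=f$ with vanishing initial data, i.e. $w(t)=\int_0^t\frac{\sin((t-s)\sqrt{-\Delta})}{\sqrt{-\Delta}}f(s)\,ds$.

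First, Proposition~\ref{pr:channel4D} provides the lower bound $\lim_{t\to\infty}\|\nabla v_L(t)\|_{L^2(|x|>t)}\ge \frac12\|\nabla u_0\|_{L^2}$ (the limit existing by that same proposition); for the channel version, after replacing $u_0$ by its restriction to $\{r>R\}$ — which does not alter $v_L$ on $\{|x|>R+|t|\}$, by finite speed of propagation — Proposition~\ref{P:FW_R} gives $\lim_{t\to\infty}\|\nabla v_L(t)\|_{L^2(|x|>t+R)}\ge\sqrt{3/20}\,\|\pi_R^\perp u_0\|_{\dot{H}^1(R)}$. Second, I would bound $w$ by finite speed of propagation: for $(t,x)$ with $|x|>t$, the value $w(t,x)$ depends only on $f(s,y)$ with $0\le s\le t$ and $|y|\ge|x|-(t-s)>s$, so, comparing $w$ with the solution of the same equation with forcing $\indic_{\{|y|>|s|\}}f$ (which agrees with $w$ on $\{|x|>|t|\}$ by finite speed, since their difference has forcing supported in $\{|y|\le|s|\}$ and hence is supported in $\{|x|\le|t|\}$), the standard energy estimate for the Duhamel operator yields $\|\nabla_{t,x}w(t)\|_{L^2(|x|>t)}\le\int_0^t\|\indic_{\{|y|>s\}}f(s)\|_{L^2}\,ds\le\|\indic_{\{|x|>|t|\}}f\|_{L^1((0,\infty),L^2)}$, and the same with $R$ added throughout in the channel case (where the relevant forcing region becomes $\{|y|>s+R\}$). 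Third, from the triangle inequality $\|\nabla v_L(t)\|_{L^2(|x|>t)}\le\|\nabla u(t)\|_{L^2(|x|>t)}+\|\nabla w(t)\|_{L^2(|x|>t)}$, letting $t\to\infty$ and inserting the two previous bounds, I get $\frac12\|\nabla u_0\|_{L^2}\le\lim_{t\to\infty}\|\nabla u(t)\|_{L^2(|x|>t)}+\|\indic_{\{|x|>|t|\}}f\|_{L^1L^2}$; multiplying by $2$ gives \eqref{bound_inhom}, and \eqref{bound_inhomR} follows identically with the constant $2$ replaced by $\sqrt{20/3}$.

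Nothing here should be difficult once Propositions~\ref{pr:channel4D} and \ref{P:FW_R} are in hand. The two points I expect to need the most care are the domain-of-dependence bookkeeping in the second step — in particular checking that in the channel case the relevant forcing region is exactly $\{|y|>|s|+R\}$, which is what reproduces the norm on the right-hand side of \eqref{bound_inhomR} — and the passage to the limit for $u$ itself: I would justify that $\lim_{t\to\infty}\|\nabla u(t)\|_{L^2(|x|>t)}$ exists and coincides with the analogous limit for $\partial_t u$ using that $f\in L^1((0,\infty),L^2)$ forces $u$ to scatter and that the exterior energy of a free solution converges, via the radiation profile (Proposition~\ref{P:radiation}).
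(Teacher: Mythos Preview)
Your proposal is correct and follows essentially the same route as the paper: decompose $u$ into the free evolution of $(u_0,0)$ plus the Duhamel piece, apply Propositions~\ref{pr:channel4D} and~\ref{P:FW_R} to the free part, and use finite speed of propagation together with the standard energy estimate to control the Duhamel part by the truncated forcing norm. The only cosmetic difference is that the paper defines the truncated-forcing solution $v$ first and identifies $u-v$ with $u_L$ on the exterior, whereas you define $v_L$ first and identify $w=u-v_L$ with the truncated-forcing solution on the exterior; these are the same argument.
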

\begin{proof}
 Let $v$ be the solution of 
\begin{equation*}
\left\{
\begin{aligned}
\partial_t^2v-\Delta v & =\indic_{\{|x|>|t|\}}f\\
\vec{v}_{\restriction t=0} & =(0,0) .
\end{aligned}
\right.
\end{equation*} 
On the one hand, by standard energy computations and the Cauchy-Schwarz inequality, one sees that
\[
\left| \frac d{dt} \|\nabla_{t,x}v(t)\|_{L^2}\right| \leq \|\indic_{\{|x|>|t|\}}f(t)\|_{L^2}
\]
and so by integration on $(0,\infty)$, 
\[
\|\nabla_{t,x}v(t)\|_{L^2} \leq \|\indic_{\{|x|>|t|\}}f\|_{L^1L^2},
\]
where in this proof, we denote $\|\cdot\|_{L^1L^2}=\|\cdot\|_{L^1((0,\infty),L^2(\RR^4))}$.

On the other hand, we observe that the function $u-v$ satisfies
\begin{equation*}
\left\{
\begin{aligned}
\partial_t^2(u-v)-\Delta (u-v) &=\indic_{\{|x|\leq |t|\}}f\\
(\vec{u}-\vec{v})_{\restriction t=0} &=(u_0,0)
\end{aligned}
\right.
\end{equation*}
and so $u-v$ coincides in the region \{$|x|>|t|$\} with the solution $u_L$ of the free wave equation \eqref{FW} with initial data $(u_0,0)$. As a consequence, by Proposition \ref{pr:channel4D},
\begin{align*}
\|u_0\|_{\hdot} &\leq 2 \lim_{t\to+\infty} \|\nabla u_L\|_{L^2(|x|>t)}\\
&\leq 2 \lim_{t\to+\infty} \|\nabla u \|_{L^2(|x|>t)} + 2 \lim_{t\to+\infty} \|\nabla v \|_{L^2(|x|>t)}\\
&\leq 2 \lim_{t\to+\infty} \|\nabla u \|_{L^2(|x|>t)}+ 2\, \|\indic_{\{|x|>|t|\}}f\|_{L^1L^2},
\end{align*}
which completes the proof of \eqref{bound_inhom}.

The proof of \eqref{bound_inhomR} is similar, using the solution $v$ of 
\begin{equation*}
\left\{
\begin{aligned}
\partial_t^2v-\Delta v & =\indic_{\{|x|>|t|+R\}}f\\
\vec{v}_{\restriction t=0} & =(0,0) .
\end{aligned}
\right.
\end{equation*} 
and Proposition~\ref{P:FW_R} instead of Proposition~\ref{pr:channel4D}.
\end{proof}

\section{Rigidity theorem part I: compactly supported pertubations and constant sign solutions}
\label{S:rigidityI}

In this section, we will start the proof of Theorems \ref{T:rigidity} and \ref{T:rigidityWM}. We will prove the two theorems at the same time, reducing the co-rotational wave maps equation to a critical wave equation in four space dimension (see \S \ref{SS:reduction}). In \S \ref{SS:compact}, we consider solutions with initial data that are equal to a stationary solution for large $r$. In \S \ref{SS:sign} we will treat the case of solutions that have a constant sign outside a wave cone. Section \ref{S:odd} concerns solutions that are odd in time. 

\subsection{From co-rotational wave maps to four dimensional waves}
\label{SS:reduction}
Let $R> 0$, $(\psi_0,\psi_1)\in \Hbf(R)$. Let $\psi$ be the solution of \eqref{WM} on $\{|x|>R+|t|\}$. Let 
$$u(t,r)=\sqrt{\frac{2}{3}} \frac{\psi}{r},\;r>R+|t|\quad \text{and}  \quad (u_0,u_1)(r)=\vec{u}_{\restriction t=0}(r),\;r>R.$$
Then $(u_0,u_1)\in \HHH(R)$ and 
$$ \partial_t^2u-\partial_r^2u -\frac{3}{r}\partial_ru -\frac{1}{r^2}u+\frac{\sin(\sqrt{6}ru)}{\sqrt{6}ru}=0,\quad r>R+|t|.$$
We write this equation as
\begin{equation}
 \label{gNLW}
 \partial_t^2u-\partial_r^2u-\frac{3}{r}\partial_ru=\frac{1}{r^3}\Lambda(ru),
\end{equation} 
where 
\begin{equation}
 \label{defLambda}
 \Lambda(U)=U-\frac{\sin(\sqrt{6}U)}{\sqrt{6}}.
\end{equation} 
We note that $\Lambda$ is odd, smooth, and that it satisfies:
\begin{equation}
 \label{propLambda}
 \forall k\in \{0,1,2\}, \quad \left|\left(\frac{d}{dU}\right)^k \left(\Lambda(U)-U^3\right)\right|\lesssim \min \left( U^{5-k},U^{3-k} \right).
\end{equation} 
As a consequence, we have the following Lipschitz bound:
\begin{equation}
 \label{LipschitzLambda}
 \left|\Lambda(U)-\Lambda(V)\right|\lesssim |U-V|(U^2+V^2).
\end{equation} 
Note that \eqref{gNLW} with $\Lambda(U)=U^3$ is exactly the $4D$ radial wave equation (and that in this case, \eqref{propLambda} is trivially satisfied). 
We have the following small data statement:
\begin{prop} 
 \label{P:smalldata}
Let $\Lambda\in C^{\infty}(\RR)$, odd, such that \eqref{propLambda} holds. Then there exists $\eps>0$ with the following property. For all $(u_0,u_1)\in \HHH(R)$ such that $\|u_L\|_{(L^3L^6)(R)}\leq \eps$, the solution $u$ of \eqref{gNLW} with initial data $(u_0,u_1)$ is defined for $r>R+|t|$ and 
$$ \sup_{t\in \RR} \|\vec{u}(t)-\vec{u}_L(t)\|_{\HHH(R+|t|)} +\|u-u_L\|_{(L^3L^6)(R)}\lesssim \|u_L\|^3_{(L^3L^6)(R)},$$
where $u_L$ is the solution of the linear wave equation \eqref{FW} with initial data $(u_0,u_1)$.
\end{prop}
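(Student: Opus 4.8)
The plan is to run the standard Strichartz-estimate fixed point argument for the energy-critical wave equation, transcribed to the exterior-cone framework of \S\ref{SS:outside}, exploiting that the nonlinearity $F(u):=\frac{1}{r^3}\Lambda(ru)$ is estimated exactly as the cubic term $u^3$, with constants that do not depend on $r$, hence not on $R$. Since $\Lambda$ is odd, $F(0)=0$, and \eqref{LipschitzLambda} gives the pointwise bounds
\[
|F(u)|\lesssim |u|^3,\qquad |F(u)-F(v)|\lesssim |u-v|\,(u^2+v^2)\qquad(r>0)
\]
with absolute constants. Combining these with H\"older's inequality in $L^1_tL^2_x$ (three factors, $1/3+1/3+1/3=1$ in time and $1/6+1/6+1/6=1/2$ in space) and integrating over the cone, one obtains the nonlinear estimates
\[
\|F(u)\|_{(L^1L^2)(R)}\lesssim \|u\|_{(L^3L^6)(R)}^3,
\]
\[
\|F(u)-F(v)\|_{(L^1L^2)(R)}\lesssim \|u-v\|_{(L^3L^6)(R)}\Big(\|u\|_{(L^3L^6)(R)}^2+\|v\|_{(L^3L^6)(R)}^2\Big).
\]

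Set $M=\|u_L\|_{(L^3L^6)(R)}\le\eps$ and let $\Phi$ send a function $u$ on $\{r>R+|t|\}$ to the solution $w$ of $\partial_t^2 w-\Delta w=F(u)$ with data $(u_0,u_1)$ at $t=0$, understood as a solution on $\{r>R+|t|\}$ in the sense of \S\ref{SS:outside} (the source $F(u)\in(L^1L^2)(R)$ being extended arbitrarily to $\RR\times\RR^4$; by finite speed of propagation the restriction of $w$ to the cone does not depend on the extension). Applying the exterior Strichartz estimate \eqref{Strichartz_cone} to $w-u_L$, which solves the same equation with zero data,
\[
\|w-u_L\|_{(L^3L^6)(R)}+\sup_{t\in\RR}\|\vec w(t)-\vec u_L(t)\|_{\HHH(R+|t|)}\lesssim \|F(u)\|_{(L^1L^2)(R)}.
\]
Working on the ball $B=\{u:\ \|u-u_L\|_{(L^3L^6)(R)}\le C_0 M^3\}$, for $u\in B$ one has $\|u\|_{(L^3L^6)(R)}\le M+C_0 M^3\le 2M$ once $\eps$ is small, so by the first nonlinear estimate the right-hand side above is $\lesssim M^3$; choosing $C_0$ large and then $\eps$ small forces $\Phi(B)\subset B$, while the difference estimate (with $\|u\|,\|v\|\lesssim\eps$) makes $\Phi$ a contraction. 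Its fixed point $u$ is the desired solution of \eqref{gNLW} on $\{r>R+|t|\}$, with $\|u-u_L\|_{(L^3L^6)(R)}\le C_0 M^3$; inserting this back into the displayed bound also gives $\sup_t\|\vec u(t)-\vec u_L(t)\|_{\HHH(R+|t|)}\lesssim M^3=\|u_L\|_{(L^3L^6)(R)}^3$, which is the claimed estimate. Uniqueness in the class $(L^3L^6)(R)$ follows as usual, applying the contraction on subintervals of $\RR$ and patching.

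There is no serious obstacle here; the argument is essentially the Kenig--Merle small-data theory for \eqref{NLW}. The only points requiring (minor) care are that \eqref{LipschitzLambda} is scale-invariant, so all the nonlinear estimates, and hence $\eps$, are independent of $R$, and that one consistently uses the notion of solution on $\{r>R+|t|\}$ from \S\ref{SS:outside}, invoking finite speed of propagation to move between the equation on the cone and a globally defined realization of it. The higher-order information in \eqref{propLambda} (the cases $k=1,2$) is not needed for this statement.
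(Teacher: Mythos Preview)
Your proof is correct and follows essentially the same approach as the paper: both rely on the exterior Strichartz estimate \eqref{Strichartz_cone} together with the cubic-type bounds $|F(u)|\lesssim|u|^3$ and \eqref{LipschitzLambda} to close a small-data argument. The only cosmetic difference is that the paper phrases it as a bootstrap (first obtaining $\|u\|_{(L^3L^6)(R)}\lesssim\|u_L\|_{(L^3L^6)(R)}$, then reading off the difference estimate), whereas you run an explicit contraction mapping on a ball around $u_L$; these are interchangeable implementations of the same mechanism.
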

\begin{proof}[Sketch of proof]
Using equation \eqref{gNLW}, the Strichartz estimate outside the cone \eqref{Strichartz_cone} and a straightforward bootstrap argument, one first obtains:
$$ \|u\|_{(L^3L^6)(R)}\lesssim \|u_L\|_{(L^3L^6)(R)}.$$
The statement then follows from \eqref{gNLW} and \eqref{Strichartz_cone}.
\end{proof}
Recall that $Q(r)=2\arctan r$ is a stationary solution of \eqref{WM}. By the above reduction, we obtain a solution $\tW=\sqrt{\frac{2}{3}} \frac{1}{r}(\pi-Q)$ of \eqref{gNLW} (with $\Lambda$ is defined by \eqref{defLambda}), such that for large $r$,
\begin{equation*}
 \left|\tW(r)-\frac{2\sqrt{2}}{ \sqrt{3}r^2}\right|\lesssim \frac{1}{r^4}, \quad r\geq 1.
\end{equation*} 
Rescaling, we obtain a solution $W(r)=\lambda \tW(\lambda r)$ of \eqref{gNLW} such that 
\begin{equation}
 \label{gW} \left|W(r)-\frac{8}{r^2}\right|\leq \frac{C}{r^4},
\end{equation} 
that is, with exactly the same asymptotic behaviour as the stationary solution $W$ of \eqref{NLW} defined in the introduction. We denote by $\HHH(0^+)$ the set of functions $(u_0,u_1)$ defined on $(0,\infty)$ such that 
$$\forall R>0,\quad (u_0,u_1)_{\restriction (R,\infty)}\in \HHH(R),$$
and define similarly $\dot{H}^1(0^+)$.
Theorems \ref{T:rigidity} and \ref{T:rigidityWM} reduce to the following rigidity theorem:
\begin{theoint}
\label{T:g_rigidity} 
Let $\Lambda\in C^{\infty}(\RR)$ odd, such that \eqref{propLambda} holds. Assume that the wave equation \eqref{gNLW} admits a stationary solution $W \in \dot{H}^1(0^+)$  such that \eqref{gW} holds. Let $(u_0,u_1)\in \HHH(0^+)$, and assume that $\forall \lambda \in \RR$, $(u_0,u_1)$ is not equal to $(\lambda W(\lambda \cdot),0)$.
Then there exists $R>0$ such that the solution $u$ with initial data $(u_0,u_1)$ is defined on $\{|x|>R+|t|\}$ and
 \begin{equation}
  \label{g_channel}
  \sum_{\pm}\lim_{t\to\pm\infty} \int_{|x|>R+|t|}|\nabla_{t,x}u(t,x)|^2dx>0.
 \end{equation} 
\end{theoint}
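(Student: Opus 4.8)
Since Theorems~\ref{T:rigidity} and~\ref{T:rigidityWM} reduce to Theorem~\ref{T:g_rigidity}, I work directly with \eqref{gNLW}. I argue by contraposition: assume that for every sufficiently large $R$ the solution $u$ on $\{|x|>R+|t|\}$ is \emph{non-radiative}, i.e. $\sum_{\pm}\lim_{t\to\pm\infty}\int_{|x|>R+|t|}|\nabla_{t,x}u|^2\,dx=0$ (the monotonicity of the exterior energy in $|t|$ shows this persists when $R$ is enlarged), and I must produce $\lambda\in\RR$ with $(u_0,u_1)=(\lambda W(\lambda\cdot),0)$; here $\lambda=0$ gives $u\equiv0$ and $\lambda<0$ gives a sign-flipped rescaled copy, both genuine stationary solutions since $\Lambda$ is odd and \eqref{gNLW} is scale invariant. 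By Proposition~\ref{P:smalldata} and finite speed of propagation, after enlarging $R$ we may assume $u$ is global and small in the channel, that its forcing $r^{-3}\Lambda(ru)$ lies in $(L^1L^2)(R)$ with norm $\lesssim\|u\|_{(L^3L^6)(R)}^3$, and that its radiation profile (see Proposition~\ref{P:radiation}), whose $L^2$-norm squared is proportional to the asymptotic exterior energy, vanishes — so the channel asymptotics of $u$, $\partial_r u$ and $\partial_t u$ are all trivial. Since the channel $\{|x|>R+|t|\}$ is symmetric in $t$, I decompose $u=u_e+u_o$ with $u_e(t)=\tfrac12(u(t)+u(-t))$, $u_o(t)=\tfrac12(u(t)-u(-t))$; using \eqref{propLambda} these solve coupled equations $\partial_t^2u_e-\Delta u_e=\tfrac1{r^3}\Lambda_e$, $\partial_t^2u_o-\Delta u_o=\tfrac1{r^3}\Lambda_o$ whose cubic parts are $(ru_e)^3+3(ru_e)(ru_o)^2$ and $3(ru_e)^2(ru_o)+(ru_o)^3$ (a cubic interaction term), each is non-radiative, with data $\vec u_{e\restriction t=0}=(u_0,0)$ and $\vec u_{o\restriction t=0}=(0,u_1)$.

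\textbf{Case 1: $u$ keeps a constant sign on the channel.} Say $u>0$; then $u_e>0$ and $|u_o|\le u_e$ pointwise, so $u_e$ is dominant and both $u_o$ and the full forcing $r^{-3}\Lambda_e$ are controlled in $(L^1L^2)(R)$ by a superlinear power of the tail norm $\|u_0\|_{\hdot(R)}$. Applying the inhomogeneous channel estimate~\eqref{bound_inhomR} to $u_e$ (data $(u_0,0)$): the $\lim_t$ term vanishes by non-radiativity, leaving
$$\|\pi_R^\perp u_0\|_{\hdot(R)}\lesssim \big\|\indic_{\{|x|>|t|+R\}}\,r^{-3}\Lambda_e\big\|_{L^1((0,\infty),L^2)}\lesssim \|u_0\|_{\hdot(R)}^{\,2}\,\|\pi_R^\perp u_0\|_{\hdot(R)}+(\text{lower order}),$$
where I use that the exceptional direction $1/r^2$ — the static solution of the free $4$D wave spanning $\ker\pi_R^\perp$ — is precisely the leading behaviour of $\partial_\lambda\big(\lambda W(\lambda\cdot)\big)$ by \eqref{gW}. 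A bootstrap in $R$ then forces $\pi_R^\perp(u_0-Z_\lambda)=0$ for a suitably chosen $\lambda$, where $Z_\lambda=\lambda W(\lambda\cdot)$; hence $u_0=Z_\lambda$ on $(R,\infty)$, and re-running the argument on the coupled equation for $u_o$ yields $u_1\equiv0$ on $(R,\infty)$. Finally, moving the base radius of the cone inward and invoking uniqueness for the regular second-order ODE $-\Delta u_0=r^{-3}\Lambda(ru_0)$ satisfied by stationary solutions away from $r=0$, I upgrade this to $(u_0,u_1)=(\lambda W(\lambda\cdot),0)$ on all of $(0,\infty)$, contradicting the hypothesis. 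This step is the dispersive analogue of Alexandrov's moving-plane method.

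\textbf{Case 2: $u$ vanishes at some $(t_0,r_0)$ in the channel.} First, a maximum principle for non-radiative solutions — extracted from the channel inequality~\eqref{bound_inhomR} applied to the even part, which carries a one-signed forcing once a one-sided bound holds near the zero — gives $u(t_0,r)=0$ for all $r>r_0$. Translating time to $t_0=0$, the solution is odd in $t$ on $\{|x|>r_0+|t|\}$, so $w:=\partial_t u$ is even in $t$ with $w(0,\cdot)=u_1$ and $\partial_t w(0,r)=\partial_t^2 u(0,r)=\Delta u(0,r)+r^{-3}\Lambda(ru(0,r))=0$ for $r>r_0$, $w$ solving the linear-in-$w$ equation $\partial_t^2 w-\Delta w=r^{-2}\Lambda'(ru)\,w$. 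Since the channel estimate fails for data of the form $(0,u_1)$, I argue by contradiction: if $u_1\not\equiv0$, I compare $u$ with the approximate non-radiative nonlinear solution of \S\ref{SS:approx} (odd in $t$, velocity $\sim c\,r^{-2}$ at $t=0$, and — crucially — satisfying the same global exterior $(L^3L^6)$ bound as finite-energy solutions), and show $u$ is asymptotically close to a rescaled copy of it in the channel, forcing $u_1(r)\sim c\,r^{-2}$ with $c\ne0$; but $c\,r^{-2}\notin L^2$ near infinity, contradicting $u_1\in L^2$. Hence $u_1\equiv0$ on $(r_0,\infty)$, $u\equiv0$ on the channel, and inward continuation as in Case~1 gives $(u_0,u_1)=(0,0)$, again the excluded situation.

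\textbf{Main obstacle.} The two genuinely hard points both lie in Case~2: (i) the maximum principle for non-radiative solutions, which has no classical counterpart and must be squeezed out of the exterior-energy inequality itself; and (ii) the asymptotic comparison with the approximate nonlinear non-radiative solution of \S\ref{SS:approx}, which is \emph{not} a perturbation of the linear resonance $t/r^2$ but a truly nonlinear object arising from a cancellation specific to the nonlinearities in \eqref{WM} and \eqref{NLW} — the delicate error analysis showing that $u$ would have to inherit its non-integrable $r^{-2}$ velocity tail is the crux of ruling out pure multisolitons. By contrast, the even-part analysis, the bootstrap in $R$, and the inward ODE continuation in Case~1 are comparatively routine given the channel estimates \eqref{bound_inhom}–\eqref{bound_inhomR} and the small-data theory outside cones established above.
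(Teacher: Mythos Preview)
Your overall architecture matches the paper's: reduce to non-radiative solutions, split into the constant-sign case and the case where $u$ vanishes somewhere, and in the latter compare with the approximate nonlinear solution of \S\ref{SS:approx}. But two steps you label as routine are in fact genuine gaps.

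\textbf{Inward continuation in Case~1.} You propose to extend $(u_0,u_1)=(\lambda W(\lambda\cdot),0)$ from $r>R$ to all $r>0$ by ``invoking uniqueness for the regular second-order ODE $-\Delta u_0=r^{-3}\Lambda(ru_0)$.'' But $u_0$ is merely initial data; it does not satisfy the stationary ODE for $r<R$ unless $u$ is already known to be time-independent there, which is what you are trying to prove. The paper handles this step by a separate channels-of-energy argument (Proposition~\ref{P:compact}): if $(u_0,u_1)$ agrees with a stationary solution for $r>R_0$ but not for all $r>0$, one works near the inner edge of the support of $h=u-W$, where the effective potential is $\lesssim r^{-2}$, and a localized exterior-energy lower bound (Proposition~4.7 of \cite{DuKeMe19Pa}) forces radiation. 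This is a dispersive argument, not an ODE one. Relatedly, your plan to obtain $u_1\equiv0$ by ``re-running the argument on the coupled equation for $u_o$'' would require the channel estimate for data of the form $(0,u_1)$, which is precisely what fails in dimension $4$; in the paper the conclusion $u_1\equiv0$ comes for free once one shows $u(t,r)=W(r)$ on the \emph{entire} channel (all $t$), via the doubling estimate $M(2\rho)\ge\tfrac12 M(\rho)$ on $M(\rho)=\sup_{t_0,\sigma\ge\rho+|t_0|}\sigma^2|u(t_0,\sigma)-W(\sigma)|$ established in Lemmas~\ref{L:RTa20}--\ref{L:RTa80} and the proof of Proposition~\ref{P:RTa10}.

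\textbf{Regularity of $\partial_t u$ in Case~2.} You apply the channel estimate to $w=\partial_t u$ with data $(u_1,0)$. This requires $u_1\in\dot H^1(R')$ and that $\partial_t u$ be a genuine finite-energy, non-radiative solution of the linearized equation --- neither is given a priori, since we only have $u_1\in L^2$. The paper spends Lemma~\ref{L:RTb30} on this gain of regularity: one integrates in time to gain decay on $r\nabla u(T)$, passes to a weak limit as $T\to0$ to obtain $ru_1\in\dot H^1(R')$ and $|u_1(\rho)|\approx\rho^{-1}\|u_1\|_{\dot H^1(\rho)}$, and then shows $\partial_tu$ is itself non-radiative. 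Without this the comparison with $\partial_t a_\lambda$ cannot even be set up.

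Two smaller corrections. The velocity of the approximate solution is $a_1(r)=\tfrac{2}{\sqrt3}\,r^{-2}(\log r)^{-1/2}$, not $\sim c\,r^{-2}$; the logarithmic factor is the whole point (it makes $a_1$ \emph{just} miss $L^2$), and the paper's contradiction goes through a differential inequality on $\|u_1\|_{L^4(\rho)}^4$ yielding $u_1(\rho)\gtrsim\rho^{-2}(\log\rho)^{-1/2}$. And your displayed bootstrap inequality in Case~1 is only heuristic: the nonlinear term is controlled by $\|u_0\|_{\dot H^1(\rho)}^3$, not by $\|u_0\|^2\|\pi_\rho^\perp u_0\|$, so one cannot absorb directly; the actual route is the $\ell(t)$/doubling argument mentioned above.
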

\begin{claim}
 Theorem \ref{T:g_rigidity} implies Theorems \ref{T:rigidity} and \ref{T:rigidityWM}.
\end{claim}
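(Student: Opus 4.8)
We must show that Theorem~\ref{T:g_rigidity} (the abstract rigidity statement for \eqref{gNLW} with a nonlinearity $\Lambda$ satisfying \eqref{propLambda} and admitting a stationary solution $W$ with the asymptotics \eqref{gW}) implies both Theorem~\ref{T:rigidity} (radial $4D$ critical wave) and Theorem~\ref{T:rigidityWM} (co-rotational wave maps).

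**Plan of proof.** The two implications are carried out separately, but both amount to checking that the concrete equations fall into the abstract framework and that the conclusion \eqref{g_channel} translates back into \eqref{channel}, resp.\ \eqref{lower_boundWM}.

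First I would treat the wave equation \eqref{NLW}. Here one takes $\Lambda(U)=U^3$, so that \eqref{gNLW} becomes exactly the $4D$ radial wave equation $\partial_t^2 u-\Delta u = u^3$; condition \eqref{propLambda} is then trivially satisfied (the difference $\Lambda(U)-U^3$ is identically zero), and the stationary solution is $W(x)=(1+|x|^2/8)^{-1}$, which manifestly satisfies \eqref{gW}. Given radial $(u_0,u_1)\in\HHH$ that is not $(\lambda W(\lambda\cdot),0)$ for any $\lambda\in\RR$, we have $(u_0,u_1)\in\HHH(0^+)$ and the hypothesis of Theorem~\ref{T:g_rigidity} applies verbatim. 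Its conclusion gives $R>0$ such that the solution is defined on $\{|x|>R+|t|\}$ and $\sum_{\pm}\lim_{t\to\pm\infty}\int_{|x|>R+|t|}|\nabla_{t,x}u|^2\,dx>0$. Since the exterior energy $\int_{|x|>R+|t|}(|\nabla u|^2+(\partial_t u)^2)\,dx$ is a nonincreasing function of $|t|$ (standard energy identity and finite speed of propagation — this is recalled in \S\ref{S:linear}), a strictly positive limit in one of the two time directions forces the bound \eqref{channel} to hold with some $\eta>0$ for all $t>0$ or for all $t<0$. This is exactly Theorem~\ref{T:rigidity}.

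Next I would treat the co-rotational wave maps equation \eqref{WM}. Fix $(\psi_0,\psi_1)\in\Ebf$, so $(\psi_0,\psi_1)\in\Hbf_{\ell,m}$ for some $(\ell,m)\in\ZZ^2$, with $\psi_0(r)\to m\pi$ as $r\to\infty$. The reduction of \S\ref{SS:reduction}, applied after subtracting the constant $m\pi$ and using \eqref{lim_lm}, associates to $\psi-m\pi$ the function $u=\sqrt{2/3}\,(\psi-m\pi)/r$, which for every $R>0$ lies in $\HHH(R)$ (since $\psi-m\pi\in\Hbf(R)$ near infinity and is smooth away from $r=0$, finiteness of the energy near $r=0$ being encoded in the $H$-norm \eqref{defH}), hence $(u_0,u_1)\in\HHH(0^+)$, and solves \eqref{gNLW} with $\Lambda$ given by \eqref{defLambda}. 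One checks \eqref{propLambda} directly from the Taylor expansion of $\sin$, as already noted after \eqref{defLambda}, and the stationary solution built from $\pi-Q$ — namely the rescaled $W(r)=\lambda\tW(\lambda r)$ — satisfies \eqref{gW}, as recorded there. The nondegeneracy hypothesis of Theorem~\ref{T:rigidityWM} — that $(\psi_0,\psi_1)$ is not the data of a stationary solution of \eqref{WM} — translates, under this reduction, into: $(u_0,u_1)$ is not of the form $(\lambda W(\lambda\cdot),0)$ for any $\lambda\in\RR$ (the case $\ell=m$ with $(\psi_0,\psi_1)\ne(m\pi,0)$ corresponds to $u\ne 0$, i.e.\ $\lambda=0$ excluded, while the case $\ell\ne m$ corresponds, after rescaling, to the nonzero stationary solutions $\pm W(\lambda\cdot)$). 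Thus Theorem~\ref{T:g_rigidity} applies and yields $R>0$ and strict positivity of the exterior energy of $u$ in at least one time direction. Finally I would translate this back: a direct computation (using $\psi-m\pi=\sqrt{3/2}\,r u$, so $\partial_r\psi=\sqrt{3/2}(u+r\partial_r u)$ and $\partial_t\psi=\sqrt{3/2}\,r\partial_t u$) shows that the wave-maps exterior energy density $(\partial_t\psi)^2+(\partial_r\psi)^2+r^{-2}\sin^2\psi$ integrated against $r\,dr$ over $\{r>|t|\}$ is comparable, up to controlled lower-order terms coming from cross terms $u\,\partial_r u$ and from the difference between $\sin^2(\sqrt6 r u)/(6r^2)$ and $u^2$, to $\int_{|x|>R+|t|}|\nabla_{t,x}u|^2\,dx$ plus a boundary contribution; combined with the monotonicity in $|t|$ of the exterior energy (Claim~\ref{Cl:outsideWM} guarantees $\psi$ is globally defined outside the cone), strict positivity of the limit forces \eqref{lower_boundWM} with some $\eta>0$, either for all $t>0$ or for all $t<0$.

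**Main obstacle.** The only nonroutine point is the bookkeeping in the wave-maps case: one must verify carefully that the abstract nondegeneracy condition ``$(u_0,u_1)\ne(\lambda W(\lambda\cdot),0)$ for all $\lambda\in\RR$'' is equivalent to the list of excluded stationary data in the statement of Theorem~\ref{T:rigidityWM}, across both the diagonal case $\ell=m$ and the off-diagonal case $\ell\ne m$, and that the change of unknown $u=\sqrt{2/3}\,(\psi-m\pi)/r$ maps the exterior-cone region and the relevant function spaces correctly, so that the quantitative bound \eqref{lower_boundWM} really follows from \eqref{g_channel} (rather than merely a positive lower bound without the monotonicity in $|t|$). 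Once these translations are in place, both implications are immediate. I would expect this to be a short, essentially formal argument — it is the content of the unproved ``Claim'' at the end of the excerpt and is dispatched in a few lines in the paper itself.
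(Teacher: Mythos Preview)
Your reduction to the abstract Theorem~\ref{T:g_rigidity} is correct in both cases, but there is a genuine gap in the passage from \eqref{g_channel} (a statement about limits as $t\to\pm\infty$) to \eqref{channel} (a lower bound for all $t>0$ or all $t<0$) in the NLW case. You invoke monotonicity in $|t|$ of the exterior energy $\int_{|x|>R+|t|}\big(|\nabla u|^2+(\partial_t u)^2\big)\,dx$, citing \S\ref{S:linear}; however that section treats only the \emph{linear} wave equation, and for the focusing nonlinear equation \eqref{NLW} this quantity is \emph{not} monotone: the flux identity picks up the term $\int_{|x|>R+|t|}u^3\partial_t u\,dx$, which has no sign. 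The paper closes this gap by a different route: it argues the contrapositive, observing that if the infimum over $t>0$ and over $t<0$ of the exterior energy are both zero, then by finite speed of propagation and small data well-posedness (restarting the solution at a time where the exterior $\HHH$-norm is below the small-data threshold and using Proposition~\ref{P:smalldata}), the exterior energy stays small forever in that time direction, forcing both limits in \eqref{g_channel} to vanish---a contradiction.

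For wave maps the monotonicity of the outer energy $E_{\out}$ (with the $\sin^2\psi$ potential) \emph{is} valid and the paper does use it, so your strategy is correct in spirit there. But the translation is carried out in two separate steps, which you compress into one: first the elementary one-sided inequality
\[
\int_{R+|t|}^\infty\!\big((\partial_t u)^2+(\partial_r u)^2\big)r^3\,dr
\;\lesssim\;
\int_{R+|t|}^\infty\!\Big((\partial_{t,r}\psi)^2+\frac{(\psi-m\pi)^2}{r^2}\Big)r\,dr
\]
yields \eqref{channelWM}, i.e.\ positivity of the limit of the $(\psi-m\pi)^2$ version. Then, \emph{separately}, the passage from $(\psi-m\pi)^2$ to $\sin^2\psi$ in \eqref{lower_boundWM} requires knowing that $\psi-m\pi$ is small on $\{r>|t|\}$ for large $|t|$; the paper obtains this from Claim~\ref{Cl:smallWM} together with the monotonicity of $E_{\out}$, arguing by contradiction that $\lim_{t\to\infty}E_{\out}(\vec\psi(t))>0$. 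Your ``comparable up to controlled lower-order terms'' shortcut does not work directly, because $\sin^2\psi$ and $(\psi-m\pi)^2$ are only comparable once smallness of $\psi-m\pi$ has been established.
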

\begin{proof}
The wave equation \eqref{NLW} is \eqref{gNLW} with $\Lambda(U)=U^3$. The ground state $W(x)=(1+|x|^2/8)^{-1}$ is a stationary solution of \eqref{NLW} that satisfies \eqref{gW}. Thus Theorem \ref{T:g_rigidity} implies that for any solution $u$ of \eqref{NLW} that is not a stationary solution, there exists $R>0$ such that $u$ is defined for $\{|x|>R+|t|\}$ and
\begin{equation}
\label{channelbis}
  \sum_{\pm}\lim_{t\to\pm\infty} \int_{|x|>R+|t|}(\partial_tu(t,x))^2+|\nabla_xu(t,x)|^2 dx>0.
 \end{equation} 
Next, observe that from finite speed of propagation and small data theory, if 
\begin{equation*}
  \sum_{\pm}\inf_{t\to\pm\infty} \int_{|x|>R+|t|}(\partial_tu(t,x))^2+|\nabla_xu(t,x)|^2 dx=0,
 \end{equation*} 
then \eqref{channelbis} cannot hold, concluding the proof of Theorem \ref{T:rigidity}.

Let $\psi$ as in Theorem \ref{T:g_rigidity}. Thus $(\psi_0,\psi_1)\in \Hbf_{\ell,m}$, which implies $(\psi_0-\pi m,\psi_1)\in \Hbf_{\ell-m,0}$.
Let 
$$u(t,r)=\sqrt{\frac{2}{3}}\frac{\psi(t,r)-\pi m}{r}, \quad (u_0,u_1)(r)=\vec{u}(0,r).$$
Since $(\psi_0-\pi m,\psi_1)\in \Hbf(R)$ for all $R$, we have $(u_0,u_1)\in \HHH(0^+)$. By the assumption on $(\psi_0,\psi_1)$, we have that $(u_0,u_1)$ is not identically $0$, and that it is not equal to $\pm \left(\frac{\sqrt{2}}{\sqrt{3}r}(\pi-Q(\lambda \cdot)),0\right)$ for all $\lambda>0$. By the definition of $W$, we deduce that $(u_0,u_1)$ satisfies the assumptions of Theorem \ref{T:g_rigidity}. Furthermore,
\begin{multline*}
\int_{R+|t|}^{+\infty} \left((\partial_t u)^2+(\partial_r u)^2\right)r^3dr=\frac{2}{3}\int_{R+|t|}^{+\infty} \left( (\partial_t \psi)^2+\left(\partial_r\psi-r^{-1}(\psi-\pi m) \right)^2 \right)r dr\\
\lesssim
\int_{R+|t|} \left(\partial_t\psi)^2+(\partial_r\psi)^2+\frac{1}{r^2}(\psi-\pi m)^2\right)rdr,
\end{multline*}
which shows that the conclusion \eqref{g_channel} of Theorem \ref{T:g_rigidity} implies 
 \begin{equation}
  \label{channelWM}
  \sum_{\pm}\lim_{t\to\pm\infty} \int_{R+|t|}^{\infty} \left((\partial_t\psi)^2+(\partial_r\psi(t,r))^2+\frac{1}{r^2}(\psi(t,r)-\pi m)^2\right)rdr>0.
 \end{equation} 
It remains to prove that \eqref{channelWM} implies the conclusion \eqref{lower_boundWM} of Theorem \ref{T:rigidityWM}. This is elementary, but requires some preliminaries on equation \eqref{WM} and we postpone the proof to Section \ref{S:resolutionWM}, after the proof of Claim \ref{Cl:smallWM}.
 \end{proof}
The remainder of this section and the next section are dedicated to  the proof of Theorem \ref{T:g_rigidity}. Until the end of Section \ref{S:odd}, $\Lambda$ is an odd, smooth function on $\RR$ satisfying \eqref{propLambda} and such that there exists a stationary solution $W\in \dot{H}^1$ of \eqref{gNLW} satisfying \eqref{gW}.

\subsection{Compactly supported perturbation of a stationary solution}
\label{SS:compact}
We first prove:
\begin{prop}
 \label{P:compact}
 Let $(u_0,u_1)\in \HHH(0^+)$, radial. Assume that there exists $R_0>0$ and $\lambda\in \RR$ such that 
 $$ \forall r>R_0,\quad (u_0,u_1)(r)=\left(\lambda W(\lambda r),0\right),$$ 
 and that the preceding equality does not hold for all $r>0$. 
 Then there exists $R>0$ such that the solution $u$ of \eqref{gNLW} with initial data $(u_0,u_1)$ is defined for $r>R+|t|$ and 
 $$\sum_{\pm} \lim_{t\to \pm \infty} \int_{|x|>R+|t|}|\nabla_{t,x}u(t,x)|^2dx>0.$$
\end{prop}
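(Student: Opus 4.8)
The plan is to argue by contradiction: suppose $\sum_{\pm}\lim_{t\to\pm\infty}\int_{|x|>R+|t|}|\nabla_{t,x}u|^2\,dx=0$ for every $R$ on which the exterior solution is defined, and deduce that $(u_0,u_1)$ is the data of the stationary solution on all of $(0,\infty)$, a contradiction. If $\lambda=0$ the linearization below has no potential and the argument simplifies, so assume $\lambda\neq0$ and, using the scaling invariance of \eqref{gNLW} and the sign symmetry ($-W$ is also stationary), normalize $\lambda=1$; write $W:=W(\,\cdot\,)$. By finite speed of propagation $u(t,r)=W(r)$ for $r>R_0+|t|$. Set $v:=u-W$, which solves $\partial_t^2v-\Delta v=F$, $F:=r^{-3}\bigl(\Lambda(r(W+v))-\Lambda(rW)\bigr)$, on the region where it is defined; its initial datum $\vec v(0)=(u_0-W,u_1)$ is compactly supported and not identically zero, and $v\equiv 0$ for $r>\rho_1+|t|$, where $\rho_1:=\inf\{\rho>0:(u_0,u_1)=(W,0)\text{ a.e.\ on }(\rho,\infty)\}\in(0,R_0]$. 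From \eqref{LipschitzLambda} and $|W(r)|\lesssim r^{-2}$ one gets $|F|\lesssim r^{-4}|v|+|v|^3$, so $F\in(L^1L^2)(R)$ on every admissible channel, $v\in(L^3L^6)(R)$ coming from the definition of a solution outside a cone. One first checks, via the exterior-cone stability of $W$ (the data being exactly $(W,0)$ near $r=\rho_1$) together with finite speed, that the exterior solution is defined on some channel $\{r>R+|t|\}$ with $R<\rho_1$; and since $\int_{|x|>R+|t|}|\nabla_{t,x}W|^2\lesssim (R+|t|)^{-2}\to 0$, the contradiction hypothesis forces $v$ to be non-radiative on every admissible channel, i.e.\ $\|\partial_{t,r}v(t)\|_{L^2(|x|>R+|t|)}\to0$ as $t\to\pm\infty$.

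The heart of the argument is a descent in $R$. Let $\rho:=\inf\{R:v\equiv0\text{ on }\{r>R+|t|\}\}$, so $\rho\le\rho_1$ and $v\equiv0$ on $\{r>\rho+|t|\}$; let $R_{\min}$ denote the infimum of the admissible $R$'s (we know $R_{\min}<\rho_1$). If $\rho>R_{\min}$, I will produce an admissible $R<\rho$ with $v\equiv0$ on $\{r>R+|t|\}$, contradicting the definition of $\rho$; hence $\rho=R_{\min}$, so $v\equiv0$ on $\{r>R_{\min}+|t|\}$, which forces $(u_0,u_1)=(W,0)$ on $(R_{\min},\infty)$ and thus $\rho_1\le R_{\min}$, the desired contradiction. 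To run the descent step, fix $R$ just below $\rho$: then the restriction of $\vec v(0)$ to $(R,\infty)$ is supported in the thin annulus $(R,\rho)$, so $\|\vec v(0)\|_{\HHH(R)}\to0$ as $R\uparrow\rho$; by exterior-cone stability of $W$, $\|v\|_{(L^3L^6)(R)}\lesssim\|\vec v(0)\|_{\HHH(R)}$ and $\bigl\|\indic_{\{|x|>|t|+R\}}(|v|^3)\bigr\|_{L^1L^2}\lesssim\|\vec v(0)\|_{\HHH(R)}^3$.

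Decompose $v=v^{e}+v^{o}$ into its time-even and time-odd parts, with data $(v_0,0)$ and $(0,v_1)$ and sources $F^{e,o}$ inheriting the bounds on $F$; both are non-radiative on $\{r>R+|t|\}$ in both time directions since $v$ is. For $v^{e}$, the inhomogeneous channel estimate \eqref{bound_inhomR} (Lemma~\ref{L:inhomog}, built on Proposition~\ref{P:FW_R}) controls $\|\pi_R^\perp v_0\|_{\dot H^1(R)}$ by $\lim_{t\to\infty}\|\nabla v^{e}(t)\|_{L^2(|x|>t+R)}=0$ plus $\|\indic_{\{|x|>|t|+R\}}F^{e}\|_{L^1L^2}$; once $\pi_R^\perp v_0\equiv0$ one gets $v_0\equiv0$ on $(R,\infty)$, because $v_0$ is compactly supported while $1/r^2$ is not. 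For $v^{o}$ (data $(0,v_1)$, where no linear exterior lower bound holds — the obstruction recalled in the introduction) I would use that $v_1$ is compactly supported: the vanishing of the exterior energy of $v^{o}$ in both time directions, combined with the radiation-profile asymptotics of Proposition~\ref{P:radiation} (which identify those limits with the tails on $(R,\infty)$ of the two radiation profiles of $v^{o}$) and the injectivity, on such tails, of the map sending a compactly supported $L^2$ datum to its radiation profile, yields $v_1\equiv0$ on $(R,\infty)$. Hence $\vec v(0)\equiv0$ on $(R,\infty)$, closing the descent.

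The step I expect to be the genuine obstacle is controlling the term $r^{-4}v$ in $F$ — equivalently the potential term $\tfrac1{r^2}\Lambda'(rW)\,v$ produced by linearizing the nonlinearity at $W$. It is only \emph{linear} in $v$, hence not absorbed by the Strichartz smallness of $v$: the bound on $\|\indic_{\{|x|>|t|+R\}}F^{e}\|_{L^1L^2}$ holds only with an $R$‑dependent constant that is not small, so Proposition~\ref{P:FW_R} by itself does not force $\pi_R^\perp v_0=0$ (the $1/r^2$‑component of $v_0$ and the datum $v_1$ reappear on the right-hand side). Overcoming this requires exploiting the strong $O(r^{-4})$ decay of the potential, most cleanly by replacing Proposition~\ref{P:FW_R} in the argument above with a dedicated exterior energy lower bound for the linearized operator $\partial_t^2-\Delta-\tfrac1{r^2}\Lambda'(rW)$ — valid modulo the (explicit, slowly decaying, hence non‑compactly‑supported) finite‑dimensional subspace on which it degenerates — so that non-radiativity together with the compact support of $\vec v(0)$ forces $\vec v(0)\equiv0$ on $(R,\infty)$ at each level of the descent. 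A secondary technical point is the unconditional extendability of the exterior solution below $\rho_1$, which rests on the exterior-cone stability of $W$ rather than on global small-data theory.
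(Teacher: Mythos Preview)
Your diagnosis of the obstacle is exactly right: the linear potential term $\tfrac{1}{r^2}\Lambda'(rW)\,v$ cannot be absorbed by smallness of $v$ in Strichartz norms, so Proposition~\ref{P:FW_R} alone (via Lemma~\ref{L:inhomog}) does not close the estimate. And your proposed cure --- an exterior energy lower bound for the linearized operator at $W$ --- is precisely what the paper invokes. The point you are missing is that this result already exists off the shelf: the paper simply cites Proposition~4.7 of \cite{DuKeMe19Pa}, which gives a channel-of-energy lower bound for solutions of $(\partial_t^2-\Delta)h=f$ with $|f|\lesssim V|h|$ and $V\lesssim r^{-2}$, once the initial data are small enough in $\HHH(R)$. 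With that in hand, the paper's argument is a single direct stroke: set $h=u-W$, pick $R=\rho-\eps$ with $\eps$ so small that $\|(h_0,h_1)\|_{\HHH(\rho-\eps)}$ is below the threshold of that proposition, observe that $h^2+W^2\le C/r^2$ on the channel (radial Sobolev for $h$, the decay \eqref{gW} for $W$), and conclude $\int_{|x|>\rho-\eps+|t|}|\nabla_{t,x}h|^2\ge \tfrac18\|(h_0,h_1)\|_{\HHH(\rho-\eps)}^2>0$ for all $t\ge 0$ or all $t\le 0$. No contradiction argument, no descent in $R$, no even/odd split.

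Two further comments on your scaffolding. First, once the linearized channel estimate is available it handles both components of the data at once, so the even/odd decomposition and the separate treatment of $v_1$ are unnecessary. Second, the step you sketch for the odd part --- ``injectivity, on such tails, of the map sending a compactly supported $L^2$ datum to its radiation profile'' --- is not a standard fact and you have not justified it; in even space dimension there is no strong Huygens principle, and vanishing of the radiation profile on a half-line $(R,\infty)$ does not obviously force a compactly supported $u_1$ to vanish. This would be a genuine gap if you actually needed it, but fortunately the cited proposition makes the whole detour superfluous.
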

\begin{proof}
Rescaling, we may assume that $\lambda=1$. Let 
$$h=u-W,\quad (h_0,h_1)=(u_0-W,u_1)=\vec{h}_{\restriction t=0}.$$
Then the equation \eqref{gNLW} satisfied by $u$ is equivalent to
\begin{equation}
 \label{gNLW_h}
 \partial_t^2h-\Delta h=\frac{1}{r^3}\left(\Lambda(r(W+h))-\Lambda(rW)\right).
\end{equation} 
Furthermore by \eqref{propLambda}
\begin{equation}
\label{bound_h_right}
\frac{1}{r^3}\left|\Lambda(r(W+h))-\Lambda(rW)\right|\lesssim |h|(h^2+W^2).
\end{equation} 
We define 
$$\rho=\inf\left\{ \sigma>0,\; \int_{|x|>\sigma} |\nabla h_0|^2+h_1^2=0\right\}$$
By the assumptions of the proposition, $0<\rho<\infty$.
We fix a small $\eps>0$ such that 
\begin{equation}
 \label{small}
 0<\|(h_0,h_1)\|_{\HHH(\rho-\eps)}\ll 1.
\end{equation} 
Using the Strichartz estimates \eqref{Strichartz_cone}, and standard small data theory, we obtain that \eqref{gNLW_h} has a solution $h$ with initial data $(h_0,h_1)$ defined for $\{|x|>\rho-\eps+|t|\}$, and that 
$$\sup_{t\in \RR} \|\vec{h}\|_{\HHH(\rho-\eps+|t|)}\lesssim \eps_0.$$
By the radial Sobolev embedding and the property \eqref{gW} of $W$, we obtain that there exists a constant $C>0$ such that 
$$\forall t\in \RR,\;\forall r>\rho-\eps+|t|,\quad h^2+W^2\leq \frac{C}{r^2}.$$
Combined with \eqref{bound_h_right}, and taking a smaller $\eps$ if necessary, we see that Proposition 4.7 in \cite{DuKeMe19Pa} implies that the following holds for all $t>0$ or for all $t<0$:
\begin{equation}
 \int_{|x|\geq \rho-\eps+|t|}|\nabla_{t,x}h|^2dx\geq \frac{1}{8}\int_{|x|\geq \rho-\eps} \left(|\nabla h_0|^2+h_1^2\right)dx>0,
\end{equation} 
which concludes the proof, since $W\in \dot{H}^1(0^+)$.
\end{proof}

Proposition \ref{P:compact} reduces the proof of Theorem \ref{T:g_rigidity} to
\begin{prop}
\label{P:rigidity}
 Let $R_0>0$ and $u$ be a radial solution of \eqref{gNLW} for $|x|>|t|+R_0$, with initial data in $\HHH(R_0)$. Assume
$$ \sum_{\pm }\lim_{t\to\pm\infty}\int_{|x|>R_0+|t|}|\nabla_{t,x}u(t,x)|^2dx=0.$$
Then there exists $R\geq R_0$ such that one of the following holds:
 \begin{equation}
  \label{conclu_rigidity1}
 \forall r>R,\quad (u_0,u_1)(r)=\left(0,0\right),
 \end{equation} 
or 
 \begin{equation}
  \label{conclu_rigidity2}
 \exists \lambda>0, \; \iota\in \{\pm 1\},\; \forall r>R,\quad (u_0,u_1)(r)=\iota \left(\lambda W(\lambda r),0\right).
 \end{equation} 
 \end{prop}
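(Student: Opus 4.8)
The plan is to first reduce to a small-data situation. By finite speed of propagation we may freely enlarge $R_0$, so we take it large enough that $\|(u_0,u_1)\|_{\HHH(R_0)}$ lies below the threshold of Proposition~\ref{P:smalldata}; then $u$ is globally defined on $\{|x|>R_0+|t|\}$, stays $\delta$-close to the linear solution with the same data for a small $\delta$, and the radial Sobolev inequality together with \eqref{gW} gives $u(t,r)^2\lesssim \delta^2/r^2$, whence by \eqref{propLambda} the source $f=r^{-3}\Lambda(ru)$ of \eqref{gNLW} satisfies $\|\indic_{\{|x|>|t|+R\}}f\|_{L^1((0,\infty),L^2)}\lesssim \|u\|_{(L^3L^6)(R)}^3\lesssim \delta^3$ for every $R\ge R_0$. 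I would then split $u=u_e+u_o$ into its even and odd parts in time. Since \eqref{gNLW} is invariant under $t\mapsto-t$ and $\Lambda$ is odd with $\Lambda(U)=U^3+O(\min(|U|^5,|U|^3))$, each of $u_e$, $u_o$ solves a wave equation of the same form with an extra cubic \emph{interaction} term (schematically $3u_eu_o^2$ for $u_e$ and $3u_e^2u_o$ for $u_o$, plus higher order pieces), and the right-hand side of the $u_e$-equation vanishes wherever $u_e$ does, by oddness of $\Lambda$. Moreover the exterior energies of $u_e$ and $u_o$ both tend to $0$ as $|t|\to\infty$, since those of $u$ do. In particular, applying Lemma~\ref{L:inhomog} to $u_e$, whose Cauchy data at $t=0$ is $(u_0,0)$, gives $\|\pi_R^\perp u_0\|_{\dot{H}^1(R)}\lesssim \delta^3$ for every $R\ge R_0$.

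Since $\{r>R_0+|t|\}$ is connected and $u$ continuous, I would distinguish two cases: either $u$ has a constant sign on $\{r>R_0+|t|\}$, or $u(t_0,r_0)=0$ for some $r_0>R_0+|t_0|$, in which case we assume $t_0=0$ after a time translation. \emph{Constant sign case.} Say $u>0$. Then $u_o$ is small relative to $u_e$, the interaction term in the $u_e$-equation is of lower order, and $u_e$ is, up to a controlled error, a positive even-in-time non-radiative solution of \eqref{gNLW}. Using the outer-energy lower bound for data of the form $(\cdot,0)$ (Propositions~\ref{pr:channel4D} and~\ref{P:FW_R}) on all channels $\{|x|>|t|+R\}$, $R\ge R_0$, together with $W(r)\sim 8/r^2$, one identifies the leading behaviour of $u_0$ at infinity: first that $u_0(r)=c\,r^{-2}+o(r^{-2})$ for some $c\in\RR$, then, picking $\iota\in\{\pm1\}$ and $\lambda>0$ so that $\iota\lambda W(\lambda\cdot)$ has the same leading term, that $h:=u-\iota\lambda W(\lambda\cdot)$ is a small non-radiative perturbation with $h_0(r)=o(r^{-2})$. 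Feeding $h$ into the exterior-energy coercivity used to prove Proposition~\ref{P:compact} (namely \cite[Proposition~4.7]{DuKeMe19Pa}) forces $h_0\equiv0$ outside some cone, which is exactly \eqref{conclu_rigidity2}. The subtle point, carried out in \S\ref{SS:sign}, is upgrading the \emph{approximate} identification coming from the channel estimate to an \emph{exact} one; this is a dispersive analogue of Alexandrov's moving-plane method and requires iterating the estimate while keeping track of the interaction term.

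\emph{Vanishing case.} Suppose $u(0,r_0)=0$. The first step is a maximum-principle statement for non-radiative solutions: applying the $(\cdot,0)$ channel estimate anchored at radius $r_0$, where $\pi_{r_0}^\perp u_0=u_0$ on $\{r>r_0\}$ because $u_0(r_0)=0$, one obtains $u_0(r)=0$ for all $r>r_0$. Then $\vec{u}_e$ has zero Cauchy data on $\{r>r_0\}$ and the right-hand side of the $u_e$-equation vanishes with $u_e$, so uniqueness gives $u_e\equiv0$ on $\{r>|t|+r_0\}$ and $u=u_o$ is odd in time there. The main step is then to show $u_1\equiv0$ on $\{r>r_0\}$ as well, by contradiction: if $u_1\not\equiv0$, then $\partial_tu$ is even in time with data $(u_1,0)$ at $t=0$ (here $f(0)=r^{-3}\Lambda(ru_0)=0$ because $u_0=0$), and combining the channel estimate for the time-differentiated problem with the cubic structure of the $u_o$-equation, one shows that $u$ is, in the exterior of the cone, asymptotic to a nonzero multiple of the approximate non-radiative nonlinear solution constructed in \S\ref{SS:approx} — the genuinely nonlinear analogue of the resonant solution $t/r^2$, whose exterior energy is infinite. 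This contradicts $\vec{u}(0)\in\HHH(r_0)$. Hence $u_1\equiv0$ on $\{r>r_0\}$, and with $u_0\equiv0$ there, finite speed of propagation yields $(u_0,u_1)(r)=0$ for $r>r_0+|t_0|$ back in the original time variable, which is \eqref{conclu_rigidity1}. This case occupies Section~\ref{S:odd}.

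I expect the main obstacle to be precisely the contradiction step in the vanishing case: it is here that time oscillations must be handled — monotonicity after time averaging is no longer enough — and that a sharp comparison with the infinite-energy non-radiative object is needed, which itself has to be constructed (in \S\ref{SS:approx}) from a cancellation between the linear part of \eqref{gNLW} and the $u^3$ nonlinearity, so it is a truly nonlinear object rather than a perturbation of $t/r^2$. The maximum-principle step, and the moving-plane iteration in the constant-sign case, are also substantially heavier than the corresponding arguments in \cite{DuKeMe13}.
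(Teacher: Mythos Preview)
Your outline matches the paper's strategy (Proposition~\ref{P:RTa10} for the constant-sign dichotomy, Proposition~\ref{P:RTb10} for the odd case), but two points need correction or completion.

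In the constant-sign case, invoking Proposition~4.7 of \cite{DuKeMe19Pa} as in Proposition~\ref{P:compact} does not work for $h=u-\iota\lambda W(\lambda\cdot)$: that result is applied there with data that can be made arbitrarily small by pushing toward the boundary of a compact support, whereas here $h_0$ is only $o(r^{-2})$. The paper does \emph{not} pass through Proposition~\ref{P:compact}; it carries out the moving-plane iteration directly. After Lemmas~\ref{L:RTa50}--\ref{L:RTa80} give $|u(t,r)-W(r)|\le K/r^4$, one sets $M(\rho)=\sup_{t_0,\,\sigma\ge\rho+|t_0|}\sigma^2|u(t_0,\sigma)-W(\sigma)|$ and shows, via the even-part channel estimate centered at each $t_0$, that $M(2\rho)\ge\tfrac12 M(\rho)$; together with $M(\rho)\le K/\rho^2$ this forces $M\equiv0$. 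Running the supremum over \emph{all} $t_0$ is exactly what handles the $u_1$ component, which a single application of the $(\cdot,0)$ channel estimate cannot do.

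In the odd case, you write ``apply the channel estimate to the time-differentiated problem with data $(u_1,0)$''. That presupposes $u_1\in\dot H^1(R')$, which is not given---only $u_1\in L^2$. The paper proves this gain of regularity as a separate, nontrivial step (Lemma~\ref{L:RTb30}): from the non-radiativity and the already-established bound $|u(T,\rho)|\approx\rho^{-1}\|\nabla u(T)\|_{L^2(\rho)}$ one gets $r\nabla u(T)\in L^2(2\rho)$ uniformly for small $T$, and then a weak-limit argument on $T^{-1}\partial_r u(T)$ as $T\to0$ transfers this to $r\partial_r u_1\in L^2$. One must also check that $\partial_t u$ is itself non-radiative (via Proposition~\ref{P:radiation}). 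Only after this can the channel estimate be applied to $\partial_t u$ and the comparison with the approximate solution $a$ of \S\ref{SS:approx} be run.
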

 The proof of Proposition \ref{P:rigidity} is divided into \S \ref{SS:sign}, and Section \ref{S:odd}.
\subsection{Constant sign solutions}
\label{SS:sign}
In this section, we prove
\begin{prop}
 \label{P:RTa10}
Let $R_0>0$ and $u$ be a radial solution of \eqref{gNLW} defined for $\{|x|>|t|+R_0\}$ with 
\begin{equation*}
 \vec{u}_{\restriction t=0}=(u_0,u_1)\in \HHH(R_0).
\end{equation*} 
Assume
\begin{equation}
 \label{RTa10}
\sum_{\pm} \lim_{t\to \pm\infty} \int_{|x|>|t|+R_0}|\nabla_{t,x}u(t,x)|^2dx=0.
\end{equation} 
Then one of the following holds:
\begin{gather}
 \label{RTa11} 
\exists t_0\in \RR,\;\exists R>R_0,\quad \forall r>|t_0|+R,\quad u(t_0,r)=0\\
\label{RTa12} 
\exists \mu \in \RR\setminus\{0\},\; \exists R>R_0,\; \forall r>R,\quad (u_0,u_1)(r)=\left( \mu W(\mu r),0 \right).
\end{gather}
\end{prop}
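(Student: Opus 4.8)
The plan is to exploit the exterior energy lower bound of Lemma~\ref{L:inhomog} together with a maximum-principle-type argument, working in the spirit of Alexandrov's moving plane method in the dispersive setting. First I would observe that under hypothesis \eqref{RTa10} the nonlinearity $\frac{1}{r^3}\Lambda(ru)$ is integrable in $L^1L^2$ over any exterior channel: by the radial Sobolev inequality and \eqref{Strichartz_cone}, $u$ is uniformly small for $|x|>R+|t|$ with $R$ large, so $u^2+W^2\lesssim r^{-2}$ and $\frac{1}{r^3}|\Lambda(ru)|\lesssim |u|^3\in (L^1L^2)(R)$; moreover \eqref{RTa10} forces $\lim_{t\to\pm\infty}\|\nabla u(t)\|_{L^2(|x|>t+R)}=0$. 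Applying the inhomogeneous estimate \eqref{bound_inhomR} to $u$ (and to $u$ started from any time $t_0$, after time-translating) then yields $\|\pi_R^\perp u_0\|_{\dot H^1(R)}\lesssim \|\indic_{\{|x|>|t|+R\}}u^3\|_{L^1L^2}$, and this quantity itself is controlled by $\|u_0\|_{\dot H^1(R)}^3$ via the small-data bound. The key point: the same applies at every time $t_0$, so $\pi_{r}^\perp\big(u(t_0,\cdot)\big)$ near the tip $r$ of the cone is cubically small in the local energy.

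Next I would set up the dichotomy. Suppose \eqref{RTa11} fails, i.e. for every $t_0$ and every $R>R_0$, $u(t_0,\cdot)$ does not vanish identically for $r>|t_0|+R$. I claim then that $u$ must have a constant sign in the whole exterior region $\{|x|>R+|t|\}$ for $R$ large. Indeed, suppose $u$ changed sign at some point. Then at the corresponding time slice the profile $u(t_0,\cdot)$ would be small in $\dot H^1$ near the cone tip but not a multiple of $1/r^2$; a careful quantitative version of the above estimates — comparing the energy that must "come back" from infinity (forced to be zero by \eqref{RTa10}) with the energy present at time $t_0$ outside the cone — forces $u(t_0,\cdot)$ to be asymptotically proportional to $W(\mu\,\cdot)$ to leading order, which has a definite sign near the tip. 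Propagating this (finite speed of propagation, uniqueness, and continuity in $t_0$) gives a constant sign throughout. This is the heart of the argument: turning the vanishing of the exterior energy at $t\to\pm\infty$ into a rigidity statement that the only admissible profiles are $0$ or $\pm W(\mu\,\cdot)$.

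Granting the constant sign, say $u>0$ for $|x|>R+|t|$ with $R$ large, I would then run the ODE/comparison argument to pin down the profile. Because $u$ is non-radiative and of one sign, $h:=u-\mu W(\mu\,\cdot)$ (for the unique $\mu$ making $\pi_R^\perp$ of the data match at $r=R$, using the asymptotics \eqref{gW}) satisfies \eqref{gNLW_h}, is non-radiative, and $\|(h_0,h_1)\|_{\HHH(R)}$ is controlled by its own cube by \eqref{bound_inhomR} applied to $h$ — hence must vanish. Concretely: by Lemma~\ref{L:inhomog} applied to $h$, $\|\pi_R^\perp h_0\|_{\dot H^1(R)}\lesssim \|\indic h\text{-nonlinearity}\|_{L^1L^2}\lesssim \|(h_0,h_1)\|_{\HHH(R)}^2\cdot\|(h_0,h_1)\|_{\HHH(R)}$; combined with the one remaining free direction (the $1/r^2$ component) being exactly matched by the choice of $\mu$ and the scaling freedom, one gets $\|(h_0,h_1)\|_{\HHH(R)}\lesssim \|(h_0,h_1)\|_{\HHH(R)}^3$, hence $h\equiv 0$ for $r>R$, which is \eqref{RTa12}. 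The choice of $\mu$ also requires checking $u_1\equiv 0$ on $r>R$: this follows because a one-signed non-radiative solution whose profile is stationary to leading order cannot carry a nontrivial time derivative without violating \eqref{RTa10} (the odd-in-time part would radiate).

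The main obstacle I anticipate is the middle step: rigorously extracting, from the fact that \emph{both} forward and backward exterior energies vanish, the conclusion that the time-slice profiles are \emph{exactly} (not just approximately) of the form $\mu W(\mu\,\cdot)$ or $0$ — in particular handling the failure of the linear estimate \eqref{eq:even} for data of the form $(0,u_1)$, which is precisely why the one-signed hypothesis is used (to guarantee the even-in-time part dominates and the $(u_0,0)$-type estimate \eqref{CKS}, i.e. Proposition~\ref{pr:channel4D} and \ref{P:FW_R}, is the relevant one). Making the maximum-principle heuristic ("if $u(t_0,r_0)>0$ then $u(t_0,r)>0$ for all $r>r_0$, or more precisely $u$ cannot vanish further out than where it is positive") precise, and then upgrading "approximately $\mu W$" to "exactly $\mu W$" via the self-improving cubic estimate, will require care with the remaining nondegenerate direction $1/r^2$ and with the behaviour as $r\to\infty$. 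The reduction of \eqref{RTa12} from "for $r>R$" down to "$u$ is a genuine rescaled stationary solution globally" is then handled separately (via Proposition~\ref{P:compact} and unique continuation), so here I only need the conclusion on an exterior region.
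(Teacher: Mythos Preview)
Your outline captures the right themes (even/odd split, the $(u_0,0)$-type channel estimate, matching with $W$), but the closing step contains a genuine gap.

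\textbf{The self-improving cubic estimate does not close.} For $h=u-\mu W(\mu\cdot)$ the nonlinearity in \eqref{gNLW_h} is bounded by $|h|(h^2+W^2)$, so Lemma~\ref{L:inhomog} gives
\[
\|\pi_R^{\bot}h_0\|_{\dot H^1(R)}\lesssim \|h\|_{(L^3L^6)(R)}^3+\|W\|_{(L^3L^6)(R)}^2\,\|h\|_{(L^3L^6)(R)},
\]
not a pure cubic bound. The second term is linear in $h$; it is small because $\|W\|_{(L^3L^6)(R)}$ is small, but $\|h\|_{(L^3L^6)(R)}$ is controlled by $\|(h_0,h_1)\|_{\HHH(R)}$, which includes $h_1$. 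Since \eqref{bound_inhomR} only sees the first component (this is exactly the point of Proposition~\ref{P:FW_R}: it applies to data $(u_0,0)$), you obtain at best $\|h_0\|_{\dot H^1(R)}\lesssim \eps(R)\|h_1\|_{L^2(R)}+\text{cubic}$, which says nothing about $h_1$ and does not force $h_0=0$. Your sentence ``the odd-in-time part would radiate'' is precisely the statement that \emph{fails} in dimension $4$ (cf.\ the discussion around \eqref{lower_bound} and the whole of Section~\ref{S:odd}); you cannot invoke it here.

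\textbf{How the paper circumvents this.} The proof does not try to match $u$ with $\mu W(\mu\cdot)$ at a single $R$ and a single time. Instead it proceeds in two stages. First (Lemmas~\ref{L:RTa20}--\ref{L:RTa80}) it applies the exterior bound to the \emph{even part} $u_+$ centred at an arbitrary time $t_0$, which has vanishing time derivative at $t_0$; this yields $|u(t_0,\rho)|\approx\rho^{-1}\|u(t_0)\|_{\dot H^1(\rho)}$ (giving the constant-sign dichotomy cleanly) and, for positive $u$, the asymptotic $r^2u(t,r)\to\ell$ with $\ell$ \emph{independent of $t$}. Second, after rescaling $\ell=8$, it sets
\[
M(\rho)=\sup_{t_0\in\RR,\ \sigma\ge\rho+|t_0|}\sigma^2\,|u(t_0,\sigma)-W(\sigma)|,
\]
and uses the even-part channel estimate at every $t_0$ and every $\sigma$ to prove the doubling inequality $M(2\rho)\ge\tfrac12 M(\rho)$. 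Combined with the a~priori decay $M(\rho)\lesssim\rho^{-2}$ (from \eqref{RTa100}) this forces $M\equiv0$, i.e.\ $u(t,r)=W(r)$ for all $r>R'+|t|$; then $u_1=\partial_tu(0,\cdot)=0$ follows automatically, with no appeal to an odd-data channel estimate. The point is that the supremum over $t_0$ in the definition of $M$ is what replaces the missing control of $u_1$: one never isolates $u_1$, one shows instead that the full space--time function equals $W$.

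Your constant-sign heuristic is also too vague; the clean argument is that $|u(t_0,\rho)|\approx\rho^{-1}\|u(t_0)\|_{\dot H^1(\rho)}$ (Lemma~\ref{L:RTa20}) forces $u(t_0,r)\equiv0$ on $\{r>r_0\}$ whenever $u(t_0,r_0)=0$, which is exactly \eqref{RTa11}.
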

Proposition \ref{P:RTa10} reduces the proof of Proposition \ref{P:rigidity} to the classification of solutions satisfying \eqref{RTa10} and \eqref{RTa11}, which we will carry out in the next section. Note that \eqref{RTa11} is not sufficient to conclude immediately that $u$ is identically $0$ outside a wave cone, since $\partial_tu(t_0,r)$ does not have to vanish for $r>|t_0|+R$.

We split the proof of Proposition \ref{P:RTa10} in a few lemmas.
\begin{lemma}
 \label{L:RTa20}The exists a small $\eps_0>0$ with the following property. Let $u$ be a radial solution of \eqref{gNLW} on $\{|x|>R+|t|\}$, where $R\geq 0$. Assume
\begin{gather}
\label{RTa20} 
\int_{|x|>R} |\nabla_{t,x}u(0,x)|^2dx\leq \eps_0^2\\
\label{RTa21}
\sum_{\pm} \lim_{t\to \pm\infty}\int_{|x|>R+|t|} |\nabla_{t,x}u(t,x)|^2dx=0.
\end{gather}
Then
\begin{gather}
 \label{RTa22}
\forall \rho\geq R,\quad |u_0(\rho)|\approx \frac{1}{\rho}\|u\|_{L^4(\rho)}\approx \frac{1}{\rho}\|u_0\|_{\dot{H}^1(\rho)}\\
\label{RTa23}
\forall r\geq \rho\geq R,\quad |u_0(r)|\leq 2\left( \frac{\rho}{r} \right)^{\frac{11}{6}} |u_0(\rho)|. 
\end{gather}
\end{lemma}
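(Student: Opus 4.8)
\emph{Sketch of the argument.} The plan is to use the small‑data theory to treat the nonlinearity of \eqref{gNLW} as a cubic perturbation, apply the linear exterior‑energy estimate over channels to the time‑even part of $u$ so as to see that the only surviving "$W$‑like'' component of $u_0$ is its $1/r^2$‑part (all the rest being cubically small at every scale), and then bootstrap. First, by \eqref{RTa20} and Proposition~\ref{P:smalldata}, if $\eps_0$ is small enough $u$ is globally defined on $\{|x|>R+|t|\}$ with
\[
\sup_{t\in\RR}\|\vec u(t)\|_{\HHH(R+|t|)}+\|u\|_{(L^3L^6)(R)}\lesssim\eps_0,\qquad \|u-u_L\|_{(L^3L^6)(R)}\lesssim\eps_0^3,
\]
$u_L$ being the solution of \eqref{FW} with the same data; the same holds with $R$ replaced by any $\rho\geq R$, and $\|u\|_{(L^3L^6)(\rho)}\lesssim\|u_L\|_{(L^3L^6)(\rho)}\lesssim\|(u_0,u_1)\|_{\HHH(\rho)}$ by \eqref{Strichartz_cone}. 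The radial Sobolev inequality gives $|r\,u(t,r)|\lesssim\|u(t)\|_{\dot{H}^1(|t|+\rho)}\lesssim\eps_0$ for $r>|t|+\rho$, so by \eqref{propLambda} the right‑hand side $f=r^{-3}\Lambda(ru)$ of \eqref{gNLW} obeys the pointwise bound $|f(t,r)|\lesssim|u(t,r)|^3$ there.

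Next, fix $\rho\geq R$ and let $w(t,x)=\tfrac12\big(u(t,x)+u(-t,x)\big)$ be the time‑even part of $u$; it solves $\partial_t^2w-\Delta w=\tilde f$ with $\tilde f(t,x)=\tfrac12\big(f(t,x)+f(-t,x)\big)$, $|\tilde f|\lesssim|u|^3$, and $\vec w_{\restriction t=0}=(u_0,0)$. Since $\|\nabla w(t)\|_{L^2(|x|>t+\rho)}\leq\tfrac12\big(\|\nabla u(t)\|_{L^2(|x|>t+\rho)}+\|\nabla u(-t)\|_{L^2(|x|>t+\rho)}\big)$, hypothesis \eqref{RTa21} forces $\lim_{t\to+\infty}\|\nabla w(t)\|_{L^2(|x|>t+\rho)}=0$. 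Extending $u_0$ and $\tilde f$ off the cone $\{|x|>|t|+\rho\}$ (harmless by finite speed of propagation) and applying the channel estimate \eqref{bound_inhomR} of Lemma~\ref{L:inhomog} to $w$, I get, for every $\rho\geq R$,
\[
\|\pi_\rho^\perp u_0\|_{\dot{H}^1(\rho)}\lesssim\big\|\indic_{\{|x|>|t|+\rho\}}\tilde f\big\|_{L^1((0,\infty),L^2)}\lesssim\int_\RR\|u(t)\|_{L^6(|x|>|t|+\rho)}^3\,dt=\|u\|_{(L^3L^6)(\rho)}^3.
\]

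Now the decomposition $u_0(r)=\pi_\rho^\perp u_0(r)+\rho^2 u_0(\rho)\,r^{-2}$ for $r>\rho$ is orthogonal in $\dot{H}^1(\rho)$, with $\|\rho^2 u_0(\rho)\,r^{-2}\|_{\dot{H}^1(\rho)}=\sqrt2\,\rho|u_0(\rho)|$, and the elementary bound $|U(r)|\leq(\sqrt2\,r)^{-1}\|U\|_{\dot{H}^1(r)}$ (applied to $u_0$, and to $\pi_\rho^\perp u_0$, which vanishes at $r=\rho$) gives $\rho|u_0(\rho)|\leq\tfrac1{\sqrt2}\|u_0\|_{\dot{H}^1(\rho)}$ and $\big|\,\|u_0\|_{\dot{H}^1(\rho)}-\sqrt2\,\rho|u_0(\rho)|\,\big|\leq\|\pi_\rho^\perp u_0\|_{\dot{H}^1(\rho)}\lesssim\|u\|_{(L^3L^6)(\rho)}^3\lesssim\eps_0^2\,\|u\|_{(L^3L^6)(\rho)}$. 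A continuity/bootstrap argument in $\rho$ — in which $\|u\|_{(L^3L^6)(\rho)}$ is controlled by $\rho|u_0(\rho)|$ up to a cubic error — then upgrades this to $\|u\|_{(L^3L^6)(\rho)}\approx\rho|u_0(\rho)|\approx\|u_0\|_{\dot{H}^1(\rho)}$; since $\|\pi_\rho^\perp u_0\|_{L^4(\rho)}\lesssim\|\pi_\rho^\perp u_0\|_{\dot{H}^1(\rho)}$ is negligible next to $\|\rho^2 u_0(\rho)\,r^{-2}\|_{L^4(\rho)}=\tfrac1{\sqrt2}\rho|u_0(\rho)|$, the analogous equivalence with $\|u\|_{L^4(\rho)}$ follows, which is \eqref{RTa22}. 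For \eqref{RTa23}, using the displayed estimate at scale $r\geq\rho$ together with \eqref{RTa22} and the same decomposition gives the recursion $\|u_0\|_{\dot{H}^1(r)}\leq\big(\rho/r+C\eps_0^2\big)\|u_0\|_{\dot{H}^1(\rho)}$; iterating it over dyadic scales between $\rho$ and $r$ yields $\|u_0\|_{\dot{H}^1(r)}\lesssim(\rho/r)^{1-C\eps_0^2}\|u_0\|_{\dot{H}^1(\rho)}$, and passing back to pointwise values via \eqref{RTa22}, with $1-C\eps_0^2>\tfrac56$ for $\eps_0$ small, gives $|u_0(r)|\leq2(\rho/r)^{11/6}|u_0(\rho)|$ after absorbing the constant.

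The genuinely hard part will be closing the bootstrap: it requires controlling, uniformly in $\rho\geq R$, the size of the time‑\emph{odd} part of $u$ — equivalently of $u_1$ — which is \emph{not} accessible from the linear estimate above, since in space dimension four there is no exterior‑energy lower bound for data of the form $(0,u_1)$ (not even on a finite‑codimension subspace of $L^2$). One has to squeeze it out of the non‑radiative hypothesis \eqref{RTa21} by a separate device — for instance by passing to an auxiliary quantity such as $\zeta(t,r)=\int_r^\infty s\,u(t,s)\,ds$, which solves a radial wave equation in a dimension for which the channel estimate \emph{does} cover velocity‑type data, modulo the $r^{-2}$ resonance, which is precisely the $W$‑direction — so as to rule out $u_1$ dominating $u_0$ at any scale. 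Once this is in hand, every estimate above is self‑improving because the nonlinearity of \eqref{gNLW} enters only at cubic order, and the bootstrap closes for $\eps_0$ small.
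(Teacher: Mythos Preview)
Your decomposition into the time-even part $w=u_+$ and the application of Lemma~\ref{L:inhomog} are exactly the right moves, and your route to \eqref{RTa23} via dyadic iteration of the $\dot H^1$ norm is a perfectly good alternative to the paper's differential-inequality argument on $f(\rho)=\|u_0\|_{L^4(\rho)}^4$.

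However, the difficulty you flag in your last paragraph --- controlling the odd part / the size of $u_1$ --- is not real, and this is the one idea you are missing. Because $\Lambda$ is \emph{odd}, the symmetrized forcing is not merely $O(|u|^3)$ but is \emph{linear in $u_+$}: writing
\[
\Lambda(ru(t))+\Lambda(ru(-t))=\Lambda(ru(t))-\Lambda(-ru(-t)),
\]
the Lipschitz bound \eqref{LipschitzLambda} gives $|\tilde f|\lesssim |u_+|\,(u(t)^2+u(-t)^2)\lesssim |u_+|\,(u_+^2+u_-^2)$. Now Strichartz on the equation for $u_+$ with data $(u_0,0)$ yields, for every $\rho\geq R$,
\[
\|u_+\|_{(L^3L^6)(\rho)}\lesssim \|u_0\|_{\dot H^1(\rho)}+\|u\|_{(L^3L^6)(\rho)}^2\,\|u_+\|_{(L^3L^6)(\rho)}\lesssim \|u_0\|_{\dot H^1(\rho)}+\eps_0^2\,\|u_+\|_{(L^3L^6)(\rho)},
\]
so $\|u_+\|_{(L^3L^6)(\rho)}\lesssim\|u_0\|_{\dot H^1(\rho)}$. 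Plugging this back into the channel estimate gives
\[
\|\pi_\rho^\perp u_0\|_{\dot H^1(\rho)}\lesssim \|u_+^3\|_{(L^1L^2)(\rho)}+\|u_-^2 u_+\|_{(L^1L^2)(\rho)}\lesssim \eps_0^2\,\|u_0\|_{\dot H^1(\rho)},
\]
which is a closed inequality in $\|u_0\|_{\dot H^1(\rho)}$ alone; $u_1$ never enters. From here \eqref{RTa22} and \eqref{RTa23} follow exactly as you sketch. No bootstrap on the odd part, no passage to dimension~6, no auxiliary $\zeta$ is needed at this stage --- those devices appear later in the paper, for a different purpose (Section~\ref{S:odd}).
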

\begin{proof}
 Let 
$$u_+(t)=\frac{u(t)+u(-t)}{2},\quad u_-(t)=\frac{u(t)-u(-t)}{2}.$$
Then
\begin{equation}
 \label{RTa24}
\left\{ 
\begin{aligned}
\left|\partial_t^2u_+-\Delta u_+\right|&\lesssim u_+(u_+^2+u_-^2)\\ 
\vec{u}_{+\restriction t=0}&=(u_0,0),
\end{aligned}
\right.
\end{equation} 
where we have used that, $\Lambda$ being odd,
\begin{multline*}
\left|\Lambda(r u(t))+\Lambda(r u(-t))\right|=\left|\Lambda(ru(t))-\Lambda(-ru(-t))\right|\\
\lesssim r^3|u(t)+u(-t)|\left(u^2(t)+u^2(-t)\right), 
\end{multline*}
by \eqref{LipschitzLambda}.
By the small data theory for \eqref{gNLW} (see Proposition \ref{P:smalldata}), the condition \eqref{RTa20} implies $\|u\|_{(L^3L^6)(R)}\lesssim \eps_0^2$. Combining with the equation \eqref{RTa24} and Strichartz estimates, we deduce that for all $\rho\geq R$,
\begin{equation}
 \label{RTa30}
\|u_+\|_{(L^3L^6)(\rho)}+\sup_{t\in \RR} \left\| \vec{u}_+(t)\right\|_{\HHH(\rho+|t|)}\lesssim \|u_0\|_{\dot{H}^1(\rho)}.
\end{equation} 
Using the lower bound for the exterior energy (Lemma \ref{L:inhomog}) and equation \eqref{RTa24} again, we obtain 
\begin{equation}
 \label{RTa31}
\left\| \pi_{\rho}^{\bot} u_0\right\|_{\dot{H}^1(\rho)}\leq C\left( \|u_+^3\|_{(L^1L^2)(\rho)}+\|u_-^2u_+\|_{(L^1L^2)(\rho)} \right)\leq C\eps_0^2 \|u_0\|_{\dot{H}^1(\rho)}.
\end{equation} 
Since 
$$ \pi_{\rho}^{\bot} u_0(r)=u_0(r)-\frac{\rho^2}{r^2}u_0(\rho),\quad \left\| \frac{1}{r^2}\right\|_{\dot{H}^1(\rho)}=\frac{1}{\sqrt{2}\rho},$$
we deduce from \eqref{RTa31}, taking $\eps_0$ small enough,
\begin{equation}
 \label{RTa32}
\|u_0\|_{\dot{H}^1(\rho)} \leq C\rho|u_0(\rho)|.
\end{equation} 
Next, we deduce from \eqref{RTa31}, \eqref{RTa32} and Sobolev embedding that for $\rho\geq R$,
\begin{equation}
 \label{RTa33}
\left\|u_0-\frac{\rho^2}{r^2} u_0(\rho)\right\|_{L^4(\rho)}\leq C\eps^2_0 \rho|u_0(\rho)|.
\end{equation} 
This implies, noting that $\left\| \frac{1}{r^2}\right\|_{L^4(\rho)}=\frac{1}{\sqrt{2}\rho}$,
\begin{equation}
 \label{RTa40}
\left|\|u_0\|_{L^4(\rho)}-\frac{\rho |u_0(\rho)|}{\sqrt{2}}\right|\leq C\eps^2_0\rho |u_0(\rho)|.
\end{equation} 
Combining with \eqref{RTa32} and Sobolev embedding, we obtain \eqref{RTa22}. We also obtain, from \eqref{RTa40},
\begin{equation}
 \label{RTa41}
\|u_0\|^4_{L^4(\rho)}\leq \frac{\rho^4}{4^-} |u_0(\rho)|^4,
\end{equation} 
where $4^-$ is a positive constant, smaller than $4$, that can be chosen arbitrarily close to $4$ provided $\eps_0$ is small enough. Letting 
$$ f(\rho)=\int_{\rho}^{+\infty} |u_0(r)|^4r^3dr=\|u_0\|^4_{L^4(\rho)},$$
we can write \eqref{RTa41} as 
\begin{equation}
 \label{RTa42}
f(\rho)\leq -\frac{\rho}{4^-} f'(\rho),\quad \rho\geq R,
\end{equation} 
that is 
\begin{equation}
 \label{RTa43}
\frac{f'(\rho)}{f(\rho)} \leq -\frac{4^-}{\rho},\quad \rho \geq R.
\end{equation}
Integrating, we deduce that for $r\geq \rho\geq R$,
\begin{equation}
 \label{RTa44}
\frac{f(r)}{f(\rho)}\leq \frac{\rho^{4^-}}{r^{4^-}},
\end{equation}  
that is 
$$ \|u_0\|^4_{L^4(r)}\leq \left( \frac{\rho}{r} \right)^{4^-}\|u_0\|^4_{L^4(\rho)}.$$
Combining with \eqref{RTa40}, we deduce \eqref{RTa23}.
\end{proof}
\begin{lemma}
 \label{L:RTa50} 
Let $u$, $R$ and $\eps_0$ be as in Lemma \ref{L:RTa20} and assume that for all $t$, $u(t)$ is not identically $0$ on $[R+|t|,+\infty)$. Then $u$ never vanishes and has a constant sign on $\{|x|>R+|t|\}$. Furthermore, for all $t\in \RR$, there exists $\ell(t)\neq 0$ such that 
\begin{equation}
 \label{RTa51}
\forall r>R+|t|,\quad \left| r^2u(t,r)-\ell(t)\right|\leq Cr^4|u(t,r)|^3.
\end{equation} 
\end{lemma}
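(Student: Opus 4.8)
\emph{Plan of proof.}
By finite speed of propagation and time translation we may fix $t\in\RR$, reduce to $t=0$, and write $v=u(0,\cdot)$, $v_1=\partial_tu(0,\cdot)$; the cone region becomes $\{|x|>(R+|t|)+|s|\}$ after re-centering, and, after shrinking the smallness threshold in Lemma~\ref{L:RTa20} so that $\sup_s\|\vec u(s)\|_{\HHH(R+|s|)}$ stays below it (which follows from \eqref{RTa30} and the analogous bound for the odd part), the hypotheses \eqref{RTa20}--\eqref{RTa21} hold at every time. Consequently the conclusions \eqref{RTa22}, \eqref{RTa23}, as well as the projection bound $\|\pi_\rho^\bot u(t)\|_{\dot H^1(\rho)}\leq C\eps_0^2\|u(t)\|_{\dot H^1(\rho)}$ obtained in the proof of Lemma~\ref{L:RTa20}, are available at all times and all scales $\rho\geq R+|t|$. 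In particular, writing $g(r)=r^2v(r)$, we get $\|u(0)\|_{\dot H^1(\rho)}\approx\rho|v(\rho)|=|g(\rho)|/\rho$ and $|g(r)|\leq 2(r/\rho)^{1/6}|g(\rho)|$ for $r\geq\rho\geq R$.

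The heart of the matter is the existence of $\ell(0):=\lim_{r\to\infty}g(r)$ together with the quantitative rate \eqref{RTa51}. The idea is to run the exterior linear energy inequality on the even part. As in the proof of Lemma~\ref{L:RTa20}, $u_+=\tfrac12(u(s)+u(-s))$ solves \eqref{RTa24} with data $(v,0)$; since $u_+$ is even in time and $u=u_++u_-$ has vanishing exterior energy as $s\to\pm\infty$, the parallelogram identity forces the exterior energies of both $u_+$ and $u_-$ to vanish as well. Applying \eqref{bound_inhomR} to $u_+$ at scale $\rho$ then gives
\[
\|\pi_\rho^\bot v\|_{\dot H^1(\rho)}\leq \sqrt{\tfrac{20}{3}}\,\big\|\indic_{\{|x|>|s|+\rho\}}F_+\big\|_{L^1((0,\infty),L^2)},\qquad |F_+|\lesssim (|u(s)|+|u(-s)|)^3 .
\]
Since $\pi_\rho^\bot v(r)=\big(g(r)-g(\rho)\big)/r^2$ and, using $\pi_\rho^\bot v(\rho)=0=\pi_\rho^\bot v(\infty)$, the radial Sobolev inequality yields $|g(r)-g(\rho)|\lesssim r\,\|\pi_\rho^\bot v\|_{\dot H^1(\rho)}$, one controls the oscillation of $g$ on each dyadic annulus $[2^k\rho_0,2^{k+1}\rho_0]$ by the tail of the nonlinear source there. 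Using the decay of $u$ at every time (from \eqref{RTa22}--\eqref{RTa23}) to see that the cubic source is integrable over the exterior and that its dyadic tails are summable, one obtains that $(g(2^k\rho_0))_k$ is Cauchy, hence $g(r)\to\ell(0)$, with $|g(r)-\ell(0)|$ bounded by the corresponding tail. A final bootstrap — once $v\sim\ell(0)/r^2$ is known, the source $\tfrac1{r^3}\Lambda(ru_\pm)=u_\pm^3+O(r^2u_\pm^5)$ decays like $r^{-6}$ — upgrades the tail bound to $|g(r)-\ell(0)|\lesssim r^{-2}\approx r^4|v(r)|^3$, which is \eqref{RTa51}. \emph{This is the step I expect to be the main obstacle}: getting uniform-in-scale control of the nonlinear interaction, in particular the mixed term $u_-^2u_+$ coming from a possibly nontrivial $\partial_tu(0,\cdot)$, and extracting a decay rate (not merely smallness) from the vanishing of the exterior energy.

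Finally, non-vanishing and constant sign. If $u(0,r_0)=0$ for some $r_0>R$, then $\|u(0)\|_{\dot H^1(r_0)}\approx r_0|v(r_0)|=0$, so $v$ is constant, hence $\equiv 0$, on $[r_0,\infty)$; the zero set of $v$ in $(R,\infty)$ is therefore of the form $[\rho_0,\infty)$ and, by \eqref{RTa51}, $\ell(0)=0$. Since $\Lambda$ is odd, uniqueness (small data, finite speed) gives $u(s,r)=-u(-s,r)$ on $\{r>\rho_0+|s|\}$, i.e. $u$ is, after a time translation, odd in time outside a wave cone, so $u(0,r)=0$ for $r>\rho_0$ with $\rho_0>R_0$ — this is exactly case \eqref{RTa11}, which is excluded under the standing hypothesis of the lemma. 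Hence $u$ never vanishes on $\{|x|>R+|t|\}$; as this set is connected and $u$ is continuous there, $u$ has a fixed sign, and \eqref{RTa51} holds with $\ell(t)\neq 0$ for every $t$.
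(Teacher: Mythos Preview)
There is a genuine gap, and it is precisely the step you yourself flag as ``the main obstacle''. The paper's proof hinges on reversing your order of argument.

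In the paper, non-vanishing and constant sign come \emph{first} and are immediate from \eqref{RTa22} applied at every time: the two-sided relation $|u(t,\rho)|\approx\frac1\rho\|u(t)\|_{L^4(\rho)}$ forces $f(\rho)=\|u(t)\|_{L^4(\rho)}^4$ to stay positive on $[R+|t|,\infty)$ whenever it is positive at the endpoint, so $u(t,\cdot)$ never vanishes there. Positivity then gives the pointwise inequality $|u_-|\le u_+$, and this is the whole point: it upgrades the projection bound from the merely small-coefficient estimate of Lemma~\ref{L:RTa20} to the \emph{cubic} estimate
\[
\|\pi_\rho^\bot u_0\|_{\dot H^1(\rho)}\;\le\; C\|u_+^3\|_{(L^1L^2)(\rho)}\;\le\; C\|u_0\|_{\dot H^1(\rho)}^3
\]
(see \eqref{RTa62}). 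From the cubic bound one gets $\big|f(\rho)+\tfrac{\rho}{4}f'(\rho)\big|\lesssim\rho^6|u_0(\rho)|^6$, which integrates to a finite limit for $\rho^4f(\rho)$ and hence for $r^2u_0(r)$, with exactly the rate \eqref{RTa51}.

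Your route tries to obtain \eqref{RTa51} before knowing the sign, and this cannot close. The initial data for $u_-$ is $(0,u_1)$, so $\|u_-\|_{(L^3L^6)(\rho)}$ is governed by $\|u_1\|_{L^2(\rho)}$, which bears no a priori relation to $\|u_0\|_{\dot H^1(\rho)}$. The mixed term $u_-^2u_+$ therefore only yields $\|\pi_\rho^\bot u_0\|_{\dot H^1(\rho)}\lesssim\eps_0^2\|u_0\|_{\dot H^1(\rho)}$ --- the same linear-with-small-coefficient bound that produced the exponent $11/6$ in \eqref{RTa23}. Translated to $g(r)=r^2v(r)$, the dyadic oscillations satisfy $|g(2\rho)-g(\rho)|\lesssim\eps_0^2|g(\rho)|$, which is bounded but not summable; $g$ is bounded (since $f$ is nonincreasing) but need not converge, and your bootstrap has no starting point.

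Your non-vanishing argument is also not self-contained: you appeal to case \eqref{RTa11}, which belongs to Proposition~\ref{P:RTa10} and is \emph{not} among the hypotheses of this lemma. Deducing that $u(0,\cdot)\equiv 0$ on $[\rho_0,\infty)$ for some $\rho_0>R$ does not contradict ``$u(0)$ is not identically $0$ on $[R,\infty)$''. The fix is to do non-vanishing first, directly from \eqref{RTa22}, and then use $|u_-|\le u_+$ as the paper does.
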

\begin{remark}
\label{R:smallru}
 Recall that by the radial Sobolev inequality, for all $t$, $\lim_{r\to\infty} ru(t,r)=0$. Thus \eqref{RTa51}, dividing by $r^2u(t,r)$, implies 
 \begin{equation}
  \label{limit}
  \lim_{r\to\infty} r^2u(t,r)=\ell(t).
 \end{equation} 
\end{remark}

\begin{proof}
 Taking a smaller $\eps_0>0$ if necessary, we see by finite speed of propagation and the small data theory that for every time $t_0$, the function $(t,r)\mapsto u(t+t_0,r)$ satisfies the assumptions of Lemma \ref{L:RTa20} with $R$ replaced by $R+|t_0|$. By the conclusion of this lemma and the assumption that for all $t$, $u(t,r)$ is not identically $0$ for $r>R+|t|$, we obtain that $u$ does not vanish for $r>R+|t|$. Since $u$ is a continuous function, we deduce that it has a constant sign for $r>R+|t|$. We will assume to fix ideas $u(t,r)>0$ for all $r>|t|+R$. By \eqref{RTa23},
\begin{equation}
 \label{RTa61}
\forall r>\rho\geq R,\quad 0<u_0(r)\leq 2\left( \frac{\rho}{r} \right)^{\frac{11}{6}} u_0(\rho).
\end{equation} 
Next, we observe that for all $(t,r)$ with $r>t+R$, 
$$|u_-(t,r)|=\frac{1}{2}\left|u(t)-u(-t)\right| \leq \frac{u(t)+u(-t)}{2}=u_+(t),$$
where we have used that $u$ is positive. The first inequality in \eqref{RTa31}, together with \eqref{RTa30} implies, for $\rho\geq R$,
\begin{equation}
 \label{RTa62}
\left\| \pi_{\rho}^{\bot}u_0\right\|_{\dot{H}^1(\rho)}\leq C\|u_+^3\|_{(L^1L^2)(\rho)}\leq C\|u_0\|^3_{\dot{H}^1(\rho)}.
\end{equation} 
Using \eqref{RTa22} and Sobolev inequalities as in the proof of Lemma \ref{L:RTa20}, we deduce
\begin{equation}
 \label{RTa63}
\left| \|u_0\|_{L^4(\rho)} -\frac{\rho}{\sqrt{2}} u_0(\rho)\right|\leq C\rho^3 u_0(\rho)^3,\quad \rho\geq R.
\end{equation} 
Letting as before $f(\rho)=\|u_0\|^4_{L^4(\rho)}$, we deduce, for $r\geq \rho$,
$$
\left|f(r)+\frac{r}{4} f'(r)\right|\leq C r^6 |u_0(r)|^6\leq \frac{C}{r^5} \rho^{11} u_0(\rho)^6,
$$
that is 
$$ \left|\frac{d}{dr} \left( r^4f(r) \right)\right|\leq \frac{C}{r^2}\rho^{11} u_0(\rho)^6.$$
As a consequence, $r^4f(r)$ has a limit $\ell'\geq 0$ as $r\to\infty$ and 
$$\left|\ell' -\rho^4f(\rho)\right|\leq C\rho^{10} u_0(\rho)^6,\quad \rho\geq R.$$
Recall that by \eqref{RTa22}, $\rho u_0(\rho)\approx f(\rho)^{1/4}$ goes to $0$ as $\rho\to\infty$. By \eqref{RTa63}, 
$$\left| f(\rho)-\frac{\rho^4}{4}u_0(\rho)^{4}\right|\leq C\rho^6 u_0(\rho)^6,$$
and thus
$$\left|\frac{4\ell'}{\rho^8} -u_0(\rho)^4\right|\leq C\rho^2 |u_0(\rho)|^6.$$
This implies $\ell'>0$ since $u_0(\rho)\neq 0$ and $\rho u_0(\rho)$ is small for $\rho$ large.
Using the bound $|A-B|\leq \frac{|A^4-B^4|}{A^3}$, $A,B>0$, we also deduce
$$\left|\frac{\sqrt{2}{\ell'}^{\frac{1}{4}}}{\rho^2}-u_0(\rho)\right|\leq C\rho^2|u_0(\rho)|^3,\quad \rho\geq R.$$
This yields \eqref{RTa51} for $t=0$. Applying this to the solution $(t,r)\mapsto u(t+t_0,r)$ which satisfies the same assumptions as $u$ with $R$ replaced by $R+|t|$, we deduce \eqref{RTa51} for all $t\in \RR$.
\end{proof}
\begin{lemma}
\label{L:RTa80}
 Let $u$, $R$, $\eps_0$ be as in Lemmas \ref{L:RTa20} and 
\ref{L:RTa50}. Then $\ell$ is independent of $t$ and
\begin{equation}
 \label{RTa80}
 \forall t\in \RR,\; \forall r\geq R+|t|,\quad |u(t,r)|+\frac{1}{r}\|u(t)\|_{\HHH(R)}\lesssim \frac{|\ell|}{r^2}.
\end{equation} 
 \end{lemma}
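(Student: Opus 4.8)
The plan is to prove, in this order: \textbf{(a)} the pointwise bound $|u(t,r)|\le 2|\ell(t)|/r^2$ for $r>R+|t|$; \textbf{(b)} that $\ell$ is independent of $t$; \textbf{(c)} the bound $\|u(t)\|_{\dot{H}^1(r)}\lesssim|\ell|/r$; and \textbf{(d)} the bound $\|\partial_t u(t)\|_{L^2(r)}\lesssim|\ell|/r$; then (a), (c), (d) give \eqref{RTa80}. For \textbf{(a)} I would combine \eqref{RTa51} with the radial Sobolev inequality. By \eqref{RTa20}, finite speed of propagation and Proposition~\ref{P:smalldata}, $\|\vec u(t)\|_{\HHH(R+|t|)}\lesssim\eps_0$ for every $t$, so radial Sobolev gives $|u(t,r)|\lesssim\eps_0/r$ for $r>R+|t|$; inserting this into the right-hand side of \eqref{RTa51} as $Cr^4|u(t,r)|^3\lesssim\eps_0^2\,r^2|u(t,r)|$ yields $|r^2u(t,r)-\ell(t)|\le C'\eps_0^2\,r^2|u(t,r)|$, hence $r^2|u(t,r)|\le(1-C'\eps_0^2)^{-1}|\ell(t)|\le 2|\ell(t)|$ for $\eps_0$ small. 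In particular \eqref{RTa51} improves to $|u(t,r)-\ell(t)/r^2|\le C|\ell(t)|^3/r^4$ on $\{r>R+|t|\}$.

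For \textbf{(b)}, which is the conceptual heart of the lemma, I would argue by contradiction. If $\ell(t_1)\ne\ell(t_2)$ with $t_1<t_2$, fix $\rho_0\ge R+\max(|t_1|,|t_2|)$ so large that, by (a), $|u(t_i,\rho)-\ell(t_i)/\rho^2|\le|\ell(t_2)-\ell(t_1)|/(4\rho^2)$ for all $\rho\ge\rho_0$ and $i=1,2$. Then $\bigl|\int_{t_1}^{t_2}\partial_t u(t,\rho)\,dt\bigr|=|u(t_2,\rho)-u(t_1,\rho)|\ge\tfrac12|\ell(t_2)-\ell(t_1)|\,\rho^{-2}$, so Cauchy--Schwarz in $t$ gives $\int_{t_1}^{t_2}|\partial_t u(t,\rho)|^2\,dt\ge(t_2-t_1)^{-1}|\ell(t_2)-\ell(t_1)|^2\rho^{-4}/4$; multiplying by $\rho^3$ and integrating over $\rho\ge\rho_0$ makes the right-hand side diverge (since $\rho^{-4}\cdot\rho^3$ is not integrable at $\infty$), whereas by Tonelli the left-hand side equals $\int_{t_1}^{t_2}\|\partial_t u(t)\|_{L^2(\rho_0)}^2\,dt\le\eps_0^2(t_2-t_1)<\infty$, a contradiction. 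Hence $\ell$ is a constant, which we still denote $\ell$. The point is that $\partial_t u(t)\in L^2$ near infinity while $1/\rho^2\notin L^2(\rho^3\,d\rho)$ there, so $u$ cannot change its leading coefficient in time without creating infinite energy.

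Assertion \textbf{(c)} is then immediate from \eqref{RTa22} applied at time $t$ (with $R$ replaced by $R+|t|$): $\|u(t)\|_{\dot{H}^1(r)}\approx r|u(t,r)|\le 2|\ell|/r$. For \textbf{(d)} I would use the energy-flux identity on the forward light cone with vertex $(t,r)$: since the exterior energy tends to $0$ as $s\to+\infty$ by \eqref{RTa21}, $\tfrac12\|\vec u(t)\|_{\HHH(r)}^2=\tfrac12\int_t^\infty\!\!\int_{|x|=r+(s-t)}|\partial_s u+\partial_\rho u|^2\,dS\,ds-\int_t^\infty\!\!\int_{|x|>r+(s-t)}\partial_s u\,\rho^{-3}\Lambda(\rho u)\,dx\,ds$, and the cubic term is $\lesssim\eps_0|\ell|^3/r^3\lesssim|\ell|^2/r^2$ by (a), H\"older and $|\ell|/r\lesssim\eps_0$. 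Writing $\partial_s u+\partial_\rho u=\rho^{-2}(\partial_s+\partial_\rho)(\rho^2u)-2\rho^{-1}u$, the piece $2\rho^{-1}u$ contributes $\lesssim|\ell|^2/r^2$ to the flux directly from (a), while for the other piece one uses that $\rho^2u$ converges to the constant $\ell$ along each outgoing characteristic at rate $\rho^{-2}$, together with a derivative estimate $|\partial_{t,\rho}(u-\ell/\rho^2)|\lesssim|\ell|^3/\rho^5$, to get $|(\partial_s+\partial_\rho)(\rho^2u)|\lesssim|\ell|^3/\rho^3$ along characteristics, whence its flux contribution is $\lesssim|\ell|^6/r^6\le|\ell|^2/r^2$. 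This gives $\|\partial_t u(t)\|_{L^2(r)}\lesssim|\ell|/r$ and, with (a) and (c), the lemma.

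I expect \textbf{(d)} — more precisely, the derivative estimate $|\partial_{t,\rho}(u-\ell/\rho^2)|\lesssim|\ell|^3/\rho^5$ — to be the main obstacle, since it requires upgrading the pointwise decay of $u-\ell/\rho^2$ to a matching decay for its first derivatives. This should follow from interior regularity for the wave equation on $\{r>R+|t|\}$: differentiating the Duhamel representation of $u$ and invoking the Strichartz estimates \eqref{Strichartz_cone}, the pointwise bound (a), and the fast decay $\rho^{-3}\Lambda(\rho u)=O(|u|^3)=O(|\ell|^3/\rho^6)$ of the source, one propagates the decay rate to the derivatives. Throughout, time translation reduces matters to $t=0$, and by (b) the constant $\ell$ is common to all time translates of $u$, so the constants in $\lesssim$ are uniform in $t$.
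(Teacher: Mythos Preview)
Your parts (a)--(c) are correct and coincide with the paper's argument. The paper proves (b) first, by exactly your contradiction: if $\ell(t)\ne\ell(\tau)$ then for large $r$ one has $u(t,r)-u(\tau,r)\ge c/r^2$, whereas $u(t)-u(\tau)=\int_\tau^t\partial_s u(s)\,ds\in L^2\bigl((\rho,\infty),r^3dr\bigr)$ and $r\mapsto 1/r^2$ is not. Then (a) follows from \eqref{RTa51} together with the smallness of $r|u(t,r)|\lesssim\eps_0$, and (c) is \eqref{RTa22} applied at time $t$.

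Part (d), however, has a real gap. The pointwise derivative estimate $|\partial_{t,\rho}(u-\ell/\rho^2)|\lesssim|\ell|^3/\rho^5$ on which your flux argument rests does not follow from ``differentiating Duhamel and invoking Strichartz \eqref{Strichartz_cone}'': those tools yield $L^2$-based control on $(\partial_t u,\partial_t^2u)$, and radial Sobolev then gives at best $\rho\,|\partial_t u(t,\rho)|\lesssim\|\partial_t u(t)\|_{\dot H^1(\rho)}$, not the rate $\rho^{-5}$. Knowing that the source decays like $|\ell|^3/\rho^6$ pointwise does not, by itself, force the derivatives of the solution to decay one order faster than the solution; that is precisely the step you flag as ``the main obstacle'', and the sketch you give does not close it.

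That said, you are fighting a phantom. The expression $\|u(t)\|_{\HHH(R)}$ in the printed statement cannot be literal (with $R$ fixed and $r\to\infty$ it would force the norm to vanish); it is a typo for the $\dot H^1(r)$ norm. The paper's own proof of \eqref{RTa80} just quotes \eqref{RTa22} and stops --- it never bounds $\|\partial_t u(t)\|_{L^2(r)}$ separately --- and the only downstream use of this lemma (deriving \eqref{RTa91} in the proof of Proposition~\ref{P:RTa10}) needs only the pointwise estimate $|u(t,r)|\lesssim|\ell|/r^2$, which your (a) already provides. So drop (d): the lemma that is actually required, and actually proved in the paper, is your (a)+(b)+(c).
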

\begin{proof}
 We prove that $\ell$ is constant by contradiction. Let $(\tau,t)\in \RR^2$, and assume $\ell(t)>\ell(\tau)$. Since $\ell$ does not vanish, and $u$ has a constant sign, we can also assume without loss of generality  (using Remark \ref{R:smallru}), $\ell(\tau)>0$.
 
 Fix a small $\delta>0$, to be specified. By \eqref{limit}, we have that for large $r$
 $$0<(1-\delta)r^2u(t,r)\leq \ell(t)\leq (1+\delta)r^2u(t,r),$$
 and similarly
 $$0<(1-\delta)r^2u(\tau,r)\leq \ell(\tau)\leq (1+\delta)r^2u(\tau,r).$$
 As a consequence,
$$r^2(u(t,r)-u(\tau,r))\geq \frac{1}{1+\delta} \ell(t)-\frac{1}{1-\delta} \ell(\tau)=c>0$$ 
for $\delta >0$ small enough. Thus for large $r$,
\begin{equation}
 \label{lwr_bnd_diff}
u(t,r)-u(\tau,r)\geq \frac{c}{r^2}.
 \end{equation} 
Since $u$ is a finite energy solution of \eqref{gNLW}, on $\{r>R+|t|\}$, we have
$$u(t)-u(\tau)=\int_{\tau}^t\partial_tu(s)ds\in L^2(\rho)$$
for large $\rho$, contradicting \eqref{lwr_bnd_diff}. Thus $\ell$ is independent of $t$.

Going back to \eqref{RTa51}, we obtain
$$\forall \rho>R+|t|,\quad |\rho^2u(t,\rho)|-C\rho^4|u(t,\rho)|^3\leq \ell.$$
Since by \eqref{RTa22}
$$\rho|u(t,\rho)|\approx \|\vec{u}(t)\|_{\HHH(\rho)}\lesssim \eps_0,\quad \rho\geq R+|t|,$$
we deduce \eqref{RTa80} using that $\eps_0$ is small.
\end{proof}

\begin{proof}[Proof of Proposition \ref{P:RTa10}]
 We let $u$ be as in the proposition, and choose $R$ large, so that
 \begin{equation}
  \label{RTa90}
  \int_{|x|>R}|\nabla u_0|^2+|u_1|^2dx\leq \eps_0,
 \end{equation} 
 where $\eps_0$ is small as in the preceding lemmas. We assume that \eqref{RTa11} does not hold so that (taking a larger $R$ if necessary), $u$ satisfies the assumptions of Lemmas \ref{L:RTa50} and \ref{L:RTa80}. We assume to fix ideas that $u$ is positive. Let $\ell$ be as in the two Lemmas, so that by \eqref{RTa51}, \eqref{RTa80},
 \begin{equation}
  \label{RTa91}
  \forall t\in \RR,\; \forall r\geq R+|t|,\quad \left|u(t,r)-\frac{\ell}{r^2}\right|\leq C\frac{\ell^3}{r^4},
 \end{equation}
 for some absolute constant $C>0$.
 Rescaling, we may assume that $\ell=8$, so that, by the assumption \eqref{gW} on $W$,  
 \begin{equation}
\label{RTa100}
\forall t\in \RR,\;\forall r\geq R+|t|,\quad \left|u(t,r)-W(r)\right|\leq \frac{K}{r^4},
\end{equation} 
for some absolute constant $K>0$. Let, for $\rho\geq R$,
$$M(\rho)=\sup_{\substack{t_0\in \RR\\ \sigma\geq \rho+|t_0|}} \sigma^2|u(t_0,\sigma)-W(\sigma)|.$$
By \eqref{RTa100}, 
\begin{equation}
 \label{RTa101}
\forall \rho\geq R,\quad M(\rho)\leq \frac{K}{\rho^2}.
 \end{equation} 
We will show that for $R'\geq R$ large enough,
\begin{equation}
\label{RTa101'}
\forall \rho\geq R', \quad M(2\rho)\geq \frac{1}{2}M(\rho).
\end{equation} 
Note that \eqref{RTa101} and \eqref{RTa101'} would imply, for $\rho>R'$ and $k\in \NN$,
$$ M(\rho)\leq 2^kM(2^{k}\rho)\leq K\frac{1}{2^k\rho^2}$$
and thus $M(\rho)=0$. Thus $u(t,r)=W(r)$ for $r\geq R'+|t|$, as desired. It remains to prove \eqref{RTa101'}.

We fix $t_0\in \RR$ and $\rho\geq R'$. We let 
$$ u(t)=W+h(t),\quad h_{\pm}(t)=\frac{h(t)\pm h(2 t_0-t)}{2}.$$
Using the equation $-\Delta W=\frac{1}{r^3} \Lambda(rW)$,  and the Taylor expansion
$$ \Lambda(w+\eps)=\Lambda(w)+\Lambda'(w)\eps+\Lambda_2(w,\eps)\eps^2,$$
where 
$$ \Lambda_2(w,\eps)=\int_0^1 \Lambda''(w+\theta\eps)(1-\theta)d\theta,\quad |\Lambda_2(w,\eps)|\lesssim |w| +|\eps|,$$
by \eqref{propLambda}.  As a consequence, denoting $h_{t_0}(t)=h(2t_0-t)$,
\begin{multline}
 \label{RTa102}
  \partial_t^2h_+-\Delta h_+\\
  =\frac{1}{2r^3}\Big\{\Lambda'(rW)(rh_+)+\Lambda_2(rW,rh)(rh)^2+\Lambda _2(rW,rh_{t_0})(rh_{t_0})^2\Big\},
\end{multline} 
with the initial data $ \vec{h}_{+\restriction t=0}=(u(t_0)-W,0)$.

We have 
\begin{equation}
 \label{SomeBndLmbd}
|\Lambda'(rW)|\lesssim r^2W^2,\quad |\Lambda_2(rW,rh)|\lesssim r|W|+r|h|.
 \end{equation} 
Let $\sigma\geq \rho+|t_0|$. We have 
$$r\geq \sigma +|t-t_0|\Longrightarrow r \geq \max(\rho+|t|,\rho+|2t_0-t|),$$
and thus 
\begin{equation}
 \label{RTa110}
 r\geq \sigma+|t-t_0|\Longrightarrow
|h_+(t,r)|+|h_{t_0}(t,r)|+|h(t,r)|\lesssim \frac{M(\rho)}{r^2}.
\end{equation} 
Combining \eqref{RTa102}, \eqref{SomeBndLmbd}, \eqref{RTa110} and the lower bound for the exterior energy (Lemma \ref{L:inhomog}), we obtain
\begin{multline}
 \label{truc}
\left\| \pi_{\sigma}^{\bot} h_+(t_0)\right\|_{\dot{H}^1(\sigma)} \\
\lesssim \sum_{k=1,2,3} \left\| W\right\|_{(L^3L^6)(\sigma+|t-t_0|)}^{3-k}\left( \int_{\RR} \left( \int_{\sigma+|t-t_0|}^{\infty} \frac{M(\rho)^{6}}{r^9}dr \right)^{\frac{1}{2}}dt \right)^{\frac k3}.
 \end{multline} 
Let $\eps(\sigma)= \left\| W\right\|_{(L^3L^6)(\sigma+|t|)}^{2}=\left\| W\right\|_{(L^3L^6)(\sigma+|t-t_0|)}^{2}$.
Note that $\eps(\sigma)$ is finite, independent of $t_0$, and goes to $0$ as $\sigma$ goes to infinity. The inequality \eqref{truc} implies
\begin{equation}
 \label{RTa111}
\left\|h_+(t_0,r)-\frac{\sigma^2}{r^2} h_+(t_0,\sigma)\right\|_{\hdot_r(\sigma)}\leq \frac{C\eps(\sigma)}{\sigma}M(\rho)+\frac{CM(\rho)^3}{\sigma^3},\quad \sigma \geq \rho+|t_0|.
\end{equation} 
By the radial Sobolev inequality, 
\begin{equation*}
\left|h_+(t_0,2\sigma)-\frac{\sigma^2}{(2\sigma)^2} h_+(t_0,\sigma)\right|\leq \frac{C\eps(\sigma)}{\sigma^2}M(\rho)+\frac{CM(\rho)^3}{\sigma^4}.
\end{equation*} 
As a consequence
\begin{multline*}
\frac{1}{4} |h_+(t_0,\sigma)|\leq \frac{C\eps(\sigma)}{\sigma^2}M(\rho)+\frac{CM(\rho)^3}{\sigma^4} +|h_+(t_0,2\sigma)|\\
\leq \frac{C\eps(\sigma)}{\sigma^2}M(\rho)+\frac{CM(\rho)^3}{\sigma^4} +\frac{M(2\sigma)}{4\sigma^2} , 
\end{multline*}
where we have used \eqref{RTa110} and the fact that $2\sigma \geq 2\rho+2|t_0|\geq 2\rho+|t_0|$, so that $|h_+(t_0,2\sigma)|\leq \frac{1}{(2\sigma)^2}M(2\sigma)$.

Going back to the definition of $h_+$, we deduce
$$ \sigma^2|u(t_0,\sigma)-W(\sigma)|\leq M(2\rho)+C\tilde{\eps}(\rho)M(\rho),$$
where $\tilde{\eps}(\rho)=\eps(\rho)+\frac{M(\rho)^2}{\rho^2}$ tends to $0$ as $\rho$ tends to infinity. 
This holds for all $t_0\in \RR$ and $\sigma \geq \rho+|t_0|$. Hence
$$M(\rho)\leq M(2\rho)+C\tilde{\eps}(\rho)M(\rho).$$
Taking $\rho$ large enough, we obtain \eqref{RTa101'}, concluding the proof.
\end{proof}

\section{Rigidity theorem, part II: odd solutions}
\label{S:odd}
In this section, we conclude the proof of Proposition \ref{P:rigidity}, and thus of the rigidity theorem (Theorem \ref{T:g_rigidity}) by the following proposition:
\begin{prop}
 \label{P:RTb10}
 Let $R> 0$ and $u$ a radial solution of \eqref{gNLW} on $\{r\geq R+|t|\}$, such that
 \begin{gather}
  \label{RTb10} (u,\partial_tu)_{\restriction t=0}=(0,u_1),\quad u_1\in L^2(R)\\
  \label{RTb11} \sum_{\pm}\lim_{t\to\pm\infty} \int_{|x|>R+|t|} |\nabla_{t,x}u(t,x)|^2dx=0.
 \end{gather}
Then
$$\forall t\in \RR,\; \forall r>R+|t|,\quad u(t,r)=0.$$
\end{prop}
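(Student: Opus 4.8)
The plan is to use that, by \eqref{RTb10}, the oddness of $\Lambda$ in \eqref{gNLW} and uniqueness outside the wave cone, the solution $u$ is \emph{odd in time}, so that $v:=\partial_t u$ is \emph{even in time} with initial data $\vec{v}_{\restriction t=0}=(u_1,0)$ — the ``good'' form for which the four--dimensional exterior energy estimate is available. First I would take $R$ large so that, since $u_0\equiv0$, the quantity $\|u_1\|_{L^2(R)}=\|\vec{u}_{\restriction t=0}\|_{\HHH(R)}$ is as small as needed; by Proposition~\ref{P:smalldata} and finite speed of propagation $u$ then exists on $\{r>R+|t|\}$ with
\[
\|u\|_{(L^3L^6)(R)}+\sup_{t\in\RR}\|\vec{u}(t)\|_{\HHH(R+|t|)}\lesssim\|u_1\|_{L^2(R)},
\]
and by the radial Sobolev inequality $|u(t,r)|\lesssim r^{-1}\|u_1\|_{L^2(R)}$ for $r>R+|t|$. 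I argue by contradiction, assuming $u\not\equiv0$, equivalently $u_1\not\equiv0$ on $(R,\infty)$.

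Next, $v=\partial_t u$ solves $\partial_t^2 v-\Delta v=r^{-2}\Lambda'(ru)\,v=:g$, with $|g|\lesssim u^2|v|$ pointwise by \eqref{propLambda}. Because $u$ is non-radiative in the sense of \eqref{RTb11}, its radiation profile vanishes (radiation profiles being available for the inhomogeneous equation with source in $L^1L^2$, see Proposition~\ref{P:radiation}), hence so does that of $v=\partial_t u$; in particular $\lim_{t\to\pm\infty}\|\nabla v(t)\|_{L^2(|x|>|t|+R)}=0$. Using also
\[
\big\|\indic_{\{|x|>|t|+R\}}g\big\|_{L^1L^2}\lesssim\Big(\sup_t\|r\,u(t)\|_{L^\infty}\Big)^2\|v\|_{L^\infty L^2}\int\frac{dt}{(R+|t|)^2}\lesssim\frac{1}{R}\|u_1\|_{L^2(R)}^3,
\]
the channel estimate \eqref{bound_inhomR} of Lemma~\ref{L:inhomog}, applied to $v$ and, after time translation and finite speed of propagation, to $v(\,\cdot+t_0,\cdot\,)$ for every $t_0$, yields
\[
\|\pi_\rho^\perp u_1\|_{\dot H^1(\rho)}\lesssim\rho^{-1}\|u_1\|_{L^2(\rho)}^3\qquad(\rho\ge R);
\]
a routine limiting argument (for instance applying the estimate to difference quotients in time, whose data lie in $\HHH$) legitimizes working with $\vec{v}_{\restriction t=0}=(u_1,0)$ although a priori only $u_1\in L^2$.

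Arguing dyadically in $\rho$ with the radial Sobolev inequality, exactly as in the chain \eqref{RTa31}--Lemma~\ref{L:RTa80}, the last bound shows that $\rho^2u_1(\rho)$ converges to a limit $\ell$ as $\rho\to\infty$; since $\ell\,r^{-2}\notin L^2(\{r>R\})$ while $u_1\in L^2(R)$, necessarily $\ell=0$, so $u_1(\rho)=o(\rho^{-2})$. This is not yet enough, because the nonlinearity $g$ contributes at exactly the resonant order $r^{-2}$. To finish I would invoke the approximate non-radiative solution $T$ constructed in \S\ref{SS:approx}: it is odd in time, solves \eqref{gNLW} on some $\{r>|t|+R_1\}$ up to a fast-decaying error, obeys the same global $(L^3L^6)$ bounds outside cones, and has data $(0,T_1)$ with $T_1(r)\sim c_0r^{-2}$ (so $\vec{T}_{\restriction t=0}\notin\HHH$ — it ``barely fails'' the energy space). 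Writing $u=\mu T+w$ for the appropriate coefficient $\mu$ and running the monotonicity/bootstrap of the proof of Proposition~\ref{P:RTa10} on $M(\rho):=\sup_{t_0,\ \sigma\ge\rho+|t_0|}\sigma^2|u(t_0,\sigma)-\mu T(t_0,\sigma)|$ — combining the channel estimate for $\partial_t w$ (again of good data type) with the smallness of the quadratic and cubic interaction terms — gives $M(2\rho)\ge\frac12M(\rho)$ together with $M(\rho)=O(\rho^{-2})$, forcing $u\equiv\mu T$ on $\{r>R'+|t|\}$ for some $R'\ge R$. Since $u$ has finite energy but $T$ does not ($\partial_t T(t,r)\sim c_0r^{-2}\notin L^2$), we obtain $\mu=0$, i.e. $u\equiv0$ outside a cone, hence, using $u(0,\cdot)\equiv0$ and finite speed of propagation, on all of $\{r>R+|t|\}$ — a contradiction.

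The main obstacle is exactly the failure, emphasized in the introduction, of the linear exterior energy estimate for data of the form $(0,u_1)$ in dimension $4$, caused by the borderline resonance $t/r^2$: there is no purely linear rigidity to quote. The point is that the cubic term enters at the same order as this resonance, so one must work with the genuinely nonlinear object $T$, which encodes the cancellation between the linear resonance and the nonlinearity $\Lambda$ (this is where the precise algebraic form of $\Lambda$, not merely the size bounds \eqref{propLambda}, is used), and then propagate the vanishing of the leading coefficient down all dyadic scales, uniformly in the time-translation parameter, via the Section~\ref{S:rigidityI}-style bootstrap. Carrying out that bootstrap with the correct weights for each interaction term is the heaviest part of the argument.
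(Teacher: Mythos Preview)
Your outline is right in spirit up to the point where you establish $r^2u_1(r)\to 0$, but the final step — the attempt to force $u\equiv\mu T$ by a dyadic bootstrap in the style of Proposition~\ref{P:RTa10} — does not work, and the paper's actual mechanism is different.

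First, two preliminary corrections. The ``routine limiting argument'' you invoke to justify working with $\vec v_{\restriction t=0}=(u_1,0)$ in $\dot H^1\times L^2$ is precisely Lemma~\ref{L:RTb30}: one must first prove $ru_1\in\dot H^1(R')$ via time difference quotients, and that $\partial_t u$ is itself non-radiative. This is not a formality. Second, you mis-state the data of the approximate solution: in Lemma~\ref{L:approx} one has $a_1(r)=\frac{2}{\sqrt{3}\,r^2(\log r)^{1/2}}$, \emph{not} $\sim c_0r^{-2}$. That logarithm is not cosmetic; it is the whole point.

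Now the real gap. The bootstrap of Proposition~\ref{P:RTa10} works because $W$ is an \emph{exact} stationary solution, so the equation for $h=u-W$ has a right-hand side controlled purely by $M(\rho)$ and small $W$-terms. Your $T$ (the paper's $a$) is only an approximate solution: the error $b=\partial_t^2 a-\Delta a-r^{-3}\Lambda(ra)$ satisfies $\|\partial_t b_{[\lambda]}\|_{(L^1L^2)(\rho)}\sim \rho^{-1}(\log(\lambda\rho))^{-5/2}$, and this term cannot be absorbed into a bootstrap of the form $M(2\rho)\ge\tfrac12 M(\rho)$ — it does not vanish with $M$. Moreover, since $r^2u_1(r)\to 0$ while $r^2\partial_t a_\lambda(0,r)=\frac{2}{\sqrt 3}(\log(\lambda r))^{-1/2}\to 0$ as well, there is no single scale $\mu$ (or $\lambda$) at which to compare: the ``appropriate coefficient'' depends on $\rho$.

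What the paper does instead is exploit this scaling freedom: for \emph{each} $\rho$ one chooses $\lambda=\lambda(\rho)$ so that $u_1(\rho)=\partial_t a_\lambda(0,\rho)$ exactly; then $w(0):=\partial_t a_\lambda(0)-u_1$ satisfies $w(0,\rho)=0$, hence $\pi_\rho^\perp w(0)=w(0)$, and the channel estimate \eqref{bound_inhomR} applied to $w=\partial_t a_\lambda-\partial_t u$ (together with the delicate bounds of Lemmas~\ref{L:Carlos1}--\ref{L:Carlos2} for the interaction terms) gives
\[
\|u_1-\partial_t a_\lambda(0)\|_{\dot H^1(\rho)}\lesssim \rho^{-1}(\log(\lambda\rho))^{-5/2}.
\]
From this and the explicit form of $a_\lambda$ one extracts an approximate differential equation
\[
\Big|\,\|u_1\|_{L^4(\rho)}^4-\tfrac{\rho^4}{4}u_1(\rho)^4\,\Big|\lesssim \rho^8u_1(\rho)^6,
\]
which, setting $g(\rho)=\rho^4\|u_1\|_{L^4(\rho)}^4$, integrates to $g(\rho)\gtrsim(\log\rho)^{-2}$ and hence $u_1(\rho)\gtrsim \rho^{-2}(\log\rho)^{-1/2}$. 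This is a \emph{lower bound}, contradicting $u_1\in L^2$. So the error term $b$ is not an obstacle to be bootstrapped away; its precise logarithmic decay is what produces the quantitative lower bound. Your identification strategy replaces this by an equality $u=\mu T$ that neither makes sense ($T$ is not a solution) nor can be reached by the Section~\ref{S:rigidityI} machinery.
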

We first check that Proposition \ref{P:rigidity} (and thus Theorem \ref{T:g_rigidity}) follows from Propositions \ref{P:RTa10} and \ref{P:RTb10}. Let $u$ be as in Proposition \ref{P:rigidity}. Then by Proposition \ref{P:RTa10}, one of the properties \eqref{RTa11} or \eqref{RTa12} holds. We must prove that on of the conclusions \eqref{conclu_rigidity1} or \eqref{conclu_rigidity2} of Proposition \ref{P:rigidity} hold. Property \eqref{RTa12} implies immediately \eqref{conclu_rigidity2}. If \eqref{RTa11} holds, we can assume, translating in time and taking a larger $R$, that $u_0(0,r)=0$ for all $r>R$. Thus $u$ satisfies (after a time translation), the assumptions of Proposition \ref{P:RTb10}, and the conclusion of this proposition implies  \eqref{conclu_rigidity1}.

We will prove Proposition \ref{P:RTb10} in \S \ref{SS:proof_odd_rig} after giving two preliminary results.
In \S \ref{SS:regul}, we prove a gain of regularity for a solution $u$ of \eqref{gNLW} satisfying the assumptions of this proposition. More precisely, we prove that for such a solution, $(\partial_tu,\partial_t^2u)(t)$ is in $\HHH(R'+|t|)$ for all $t$ (where $R'$ is large) and that $\partial_tu$ is non-radiative, in the sense that 
\begin{equation}
\label{NRdtu}
\sum_{\pm}\lim_{t\to\pm\infty} \int_{|x|>R'+|t|} |\nabla_{t,x}\partial_tu(t,x)|^2dx=0. 
\end{equation} 
In \S \ref{SS:approx}, we give an explicit, radial, smooth approximate solution $a$ of \eqref{gNLW} on $|x|>|t|$, which satisfies the non-radiative property 
\eqref{NRdtu} and whose initial data is of the form$(0,a_1(r))$, where $a_1\notin L^2(R)$ for $R>0$ (indeed $a_1(r)$ equals $\frac{1}{r^2(\log r)^{1/2}}$ up to a multiplicative constant). 

The proof of Proposition \ref{P:RTb10} in \S \ref{SS:proof_odd_rig} consists in showing that the initial data $u_1(r)$ of a solution as in Proposition \ref{P:RTb10} is of the same order as or greater than $a_1(r)$ for large $r$, contradicting the fact that $u_1\in L^2(R)$. This is done by carefully comparing $u_1$ with an appropriate rescaling of $a_1$, using again the improved lower energy bound of Lemma \ref{L:inhomog}.

\subsection{Gain of regularity}
\label{SS:regul}
In this subsection we prove:
\begin{lemma}
 \label{L:RTb30} Let $u$ be as in Proposition \ref{P:RTb10}. Then there exists a large constant $R'\geq R$ such that 
 \begin{gather}
  \label{RTb30} ru_1\in \dot{H}^1(R')\\
\label{RTb32} \forall \rho \geq R',\quad \frac{1}{\rho}\|u_1\|_{\dot{H}^1(\rho)} \approx |u_1(\rho)|
  \end{gather}
Furthermore, $(\partial_tu,\partial_t^2u)$ is the restriction to $\{r>R'+|t|\}$ of a $C^0(\RR,\dot{H}^1\times L^2)$ function and
  \begin{equation}
  \label{RTb31} \sum_{\pm}\lim_{t\to\pm \infty} \int_{|x|>R'+|t|}|\nabla_{t,x}\partial_t u|^2dx=0.
 \end{equation}
\end{lemma}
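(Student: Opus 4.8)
The plan is to first establish the spatial decay estimates \eqref{RTb30}--\eqref{RTb32} for the initial data $u_1$ by an argument parallel to Lemma \ref{L:RTa20}, and then to bootstrap this into a statement about $(\partial_t u, \partial_t^2 u)$ by differentiating the equation \eqref{gNLW} in time. For the first part, I would exploit that $u$ is odd in time (since $u_0=0$): writing $u = u_- $ (the even part vanishes as in the proof of Lemma \ref{L:RTa20}, except here the roles are reversed), the function $\partial_t u$ has initial data $(u_1, \Delta \cdot 0 + \text{nonlinearity at }t=0) = (u_1, 0)$ up to controlled terms, since $\partial_t^2 u(0) = \Delta u(0) + r^{-3}\Lambda(ru(0)) = 0$ because $u(0)=0$. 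So $\partial_t u$ at $t=0$ is $(u_1,0)$ exactly. Applying the small-data theory (Proposition \ref{P:smalldata}) after choosing $R'$ large so that $\|(0,u_1)\|_{\HHH(R')}$ is below the threshold $\eps_0$, and then the inhomogeneous exterior energy bound \eqref{bound_inhomR} of Lemma \ref{L:inhomog} applied to the equation satisfied by $\partial_t u$, one gets, exactly as in \eqref{RTa31}--\eqref{RTa32}, that $\|\pi_\rho^\perp u_1\|_{\dot H^1(\rho)} \leq C\eps_0^2 \|u_1\|_{\dot H^1(\rho)}$, hence $\|u_1\|_{\dot H^1(\rho)} \lesssim \rho |u_1(\rho)|$, and by the reverse (Sobolev) inequality the comparison \eqref{RTb32}; in particular $r^{-2}$ is not a component of $u_1$ near infinity and $ru_1 \in \dot H^1(R')$, which is \eqref{RTb30}.

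For the regularity gain, the key observation is that differentiating \eqref{gNLW} in $t$ gives
\[
\partial_t^2(\partial_t u) - \Delta(\partial_t u) = \frac{1}{r^3}\Lambda'(ru)\, r\,\partial_t u = \frac{1}{r^2}\Lambda'(ru)\,\partial_t u,
\]
and by \eqref{propLambda} together with the pointwise bound $|ru(t,r)| \lesssim \eps_0$ valid for $r > R'+|t|$ (radial Sobolev plus small-data), one has $|\Lambda'(ru)| \lesssim r^2 u^2 \lesssim \eps_0^2$, so the right-hand side is an $L^\infty$-small multiple of $\partial_t u$ in the relevant norms. Thus $\partial_t u$ solves a linear wave equation outside the cone with a small, lower-order potential; a fixed-point / energy argument in the space $C^0(\RR, \dot H^1(R'+|t|) \times L^2(R'+|t|))$ — using the Strichartz estimates \eqref{Strichartz_cone} outside the cone and the smallness of the potential — shows that $\vec{\partial_t u} = (\partial_t u, \partial_t^2 u)$ is (the restriction of) a $C^0(\RR, \dot H^1 \times L^2)$ function, once we know the initial data $(u_1, 0)$ lies in $\HHH(R')$, which is \eqref{RTb30}. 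Finally, \eqref{RTb31} follows by applying the exterior energy bound \eqref{bound_inhomR} (or rather its hypothesis, the non-radiativity) to $\partial_t u$: since $\partial_t u$ satisfies an equation with right-hand side $f = r^{-2}\Lambda'(ru)\partial_t u$ obeying $\|\indic_{\{|x|>|t|+R'\}} f\|_{L^1 L^2} \lesssim \eps_0^2 \sup_t \|\vec{\partial_t u}(t)\|_{\HHH(R'+|t|)} < \infty$, and the exterior energy of $\partial_t u$ is a bounded monotone quantity, one shows it must tend to $0$ in both time directions — here I would argue that if it did not, then applying the linear lower bound would force $u_1$ to have a nontrivial $\pi_{R'}^\perp$ component with a slower-than-$L^2$ tail, but more directly: the assumption \eqref{RTb11} that $u$ itself is non-radiative, combined with the relation between the exterior energies of $u$ and $\partial_t u$ through the equation, propagates non-radiativity to the derivative.

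The main obstacle I expect is the last point — passing from non-radiativity of $u$ to non-radiativity of $\partial_t u$, \eqref{RTb31}. Unlike the decay estimates, which are a routine adaptation of §\ref{SS:sign}, this requires controlling how the exterior energy flux of $\partial_t u$ relates to that of $u$; the clean way is probably to note that $\partial_t u$ also has vanishing even part at the relevant base time (so it is, up to the small nonlinear correction, an odd solution with data $(u_1,0)$), invoke the structure already set up, and use that the exterior energy over $\{|x|>R'+|t|\}$ of the linear evolution of $(u_1,0)$ equals, in the limit $t\to\pm\infty$, a fixed fraction of $\|u_1\|_{\dot H^1}^2$ by Proposition \ref{pr:channel4D}; reconciling this with \eqref{RTb11} forces the flux to degenerate. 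Care is needed because the potential term, while small in norm, is not compactly supported, so the perturbative comparison between $\partial_t u$ and a genuine free solution must be done uniformly on $\{|x|>R'+|t|\}$ rather than globally.
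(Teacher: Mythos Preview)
Your argument for \eqref{RTb30}--\eqref{RTb32} is circular. You propose to apply the exterior-energy bound \eqref{bound_inhomR} to the equation satisfied by $\partial_t u$ with initial data $(u_1,0)$, mimicking \eqref{RTa31}--\eqref{RTa32}. But Lemma~\ref{L:inhomog} requires the first component of the data to lie in $\dot H^1$, and that is exactly the conclusion \eqref{RTb30}; a priori one only knows $u_1\in L^2(R)$, so neither $\|u_1\|_{\dot H^1(\rho)}$ nor the pointwise value $u_1(\rho)$ is defined. In addition, to make the exterior-energy term on the right of \eqref{bound_inhomR} vanish you would already need \eqref{RTb31}, also not yet available. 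The paper breaks the circle by working with $u$ rather than $\partial_t u$: Lemma~\ref{L:RTa20} applied at each fixed time $T>0$ (where $\vec u(T)\in\HHH(R+|T|)$ and non-radiativity are known) gives $|u(T,\rho)|\approx\rho^{-1}\|\nabla u(T)\|_{L^2(\rho)}$; combining with $\|u(T)\|_{L^2(\rho)}\le T\sup_{0\le t\le T}\|\partial_t u(t)\|_{L^2(\rho)}$ and Tonelli yields the weighted bound $\|r\,\partial_r u(T)\|_{L^2(2\rho)}\lesssim T$. One then extracts a weak limit of $T^{-1}\partial_r u(T)$ in $L^2(r^5\,dr)$ as $T\to 0$ and identifies it with $\partial_r u_1$, which produces \eqref{RTb30} (and, after dividing the pointwise estimate by $T$, also \eqref{RTb32}) without ever invoking the equation for $\partial_t u$.

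Your sketch for \eqref{RTb31} points in the right direction but misses the actual mechanism. Once \eqref{RTb30} is in hand, the paper extends $u_1$ to $\tilde u_1\in H^1(\RR^4)$ with small norm, solves \eqref{gNLW} globally with this datum, and invokes persistence of regularity so that $\overrightarrow{\partial_t\tilde u}\in C^0(\RR,\HHH)$. The passage from non-radiativity of $u$ to that of $\partial_t u$ is then Proposition~\ref{P:radiation}: the radiation profile of $\partial_t\tilde u$ equals $-G'$ where $G$ is the radiation profile of $\tilde u$, and $G$ vanishes on the relevant half-line because $u$ (hence $\tilde u$ for $r>R'+|t|$) is non-radiative. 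The extension to full space is precisely what lets one use this radiation-profile machinery and standard persistence of regularity, instead of the flux comparison on a truncated cone that you attempt to outline.
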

\begin{proof}[Proof of Lemma \ref{L:RTb30}]
 \noindent \emph{Step 1: gain of decay.} We let, for $\rho\geq R+|T|$, $T\in \RR$,
 \begin{equation}
  \label{RTb40}
  \eta(T,\rho)=\max_{0\leq t\leq T} \|\partial_tu(t)\|_{L^2(\rho)}.
 \end{equation} 
 In this step, we prove that if $R'$ is large enough and $\rho \geq R'+|T|$, then $r\nabla u(T) \in L^2(2\rho)$ and 
 \begin{equation}
  \label{RTb41}
  \left\|r\nabla u(T)\right\|_{L^2(2\rho)}\lesssim \sqrt{\int_{\rho}^{\infty} |u(T,r)|^2r^3dr}\lesssim T\eta(T,\rho).
 \end{equation} 
 Indeed, using that $u(t,r)=\int_0^{t} \partial_tu(\tau,r)d\tau$, we have, for all $t\geq 0$ and $\rho \geq R+|t|$, 
 \begin{equation}
  \label{RTb42}
  \|u(t)\|_{L^2(\rho)} \leq \int_0^{t} \|\partial_tu(\tau)\|_{L^2(\rho)}d\tau\leq |t|\eta (t,\rho).
 \end{equation}
 By Lemma \ref{L:RTa20}, for $\rho>R'+|T|$, $R'$ large, 
 \begin{equation}
 \label{Carlos}
 |u(T,\rho)|\approx \frac{1}{\rho}\|\nabla u(T)\|_{L^2(\rho)}.
 \end{equation} 
 Integrating, we obtain, for $\sigma >R'+|T|$,
 $$\int_{\sigma}^{+\infty}|u(T,\rho)|^2\rho^3d\rho\approx \int_{\sigma}^{+\infty} \int_{\rho}^{+\infty} |\partial_ru(T,r)|^2r^3dr\rho \,d\rho,$$
 and thus, by Tonelli's Theorem,
 $$ \int_{\sigma}^{\infty} |u(T,\rho)|^2\rho^3d\rho\approx \int_{\sigma}^{+\infty} (\partial_r u(T,r)|^2(r^5-\sigma^2r^3)dr.$$
 Combining with \eqref{RTb42}, we obtain, for $\sigma>R'+|t|$, 
 $$ \int_{2\sigma}^{\infty} |\partial_r u(T,r)|^2r^5dr\lesssim \int_{\sigma}^{\infty} |u(T,r)|^2r^3dr\lesssim T^2 \eta(T,\sigma)^2,$$
 which is exactly \eqref{RTb41}.
 
 \medskip
 
 \noindent\emph{Step 2. Gain of regularity.}
 In this step, we prove \eqref{RTb30} and \eqref{RTb32}. By \eqref{RTb41}, for $T\in [0,1]$, $\rho>R'+|T|$, 
 \begin{equation}
  \label{RTb50} 
  \int_{2\rho}^{\infty} r^5\left|\frac{1}{T} \partial_r u(T,r)\right|^2dr\lesssim \eta(T,\rho)^2\lesssim \eta(1,R')^2.
 \end{equation} 
 As a consequence, there exist $v_1\in L^2([2R',\infty),r^5 dr)$ and a sequence $\{T_n\}_n$ going to $0$ as $n$ goes to infinity such that 
 \begin{equation}
  \label{RTb51}
  \frac{1}{T_n}\partial_ru(T_n) \xrightharpoonup[n\to\infty]{} v_1 \text{ weakly in }L^2\left([2R',\infty), r^5dr\right).
 \end{equation} 
 We have 
 $$\frac{1}{T_n}u(T_n,2\rho)=-\frac{1}{T_n}\int_{2\rho}^{\infty}\partial_r u(T_n,r)dr,$$
 and thus, using the weak convergence of $\partial_ru(T_n)$,
 \begin{equation}
  \label{RTb60}
  \lim_{n\to\infty} \frac{1}{T_n}u(T_n,2\rho)=-\int_{2\rho}^{\infty} v_1(r)dr,
 \end{equation} 
 for all $\rho \geq R'$. Next, we observe that by \eqref{RTb50} and a radial Sobolev inequality,
 $$\frac{1}{T_n^2} |u(T_n,2\rho)|^2\lesssim \frac{1}{\rho^4} \eta(1,R')^2$$
 for large $n$. This yields by the dominated convergence theorem that the convergence \eqref{RTb60} also holds locally in $L^2(R',\infty)$. Since 
 $$\lim_{t\to 0^+} \frac{1}{t}u(t)\to u_1 \text{ in }L^2(2R),$$
 we deduce, by uniqueness of the limit that for $\rho>R'$
 \begin{equation}
  \label{RTb61}
  u_1(2\rho)=-\int_{2\rho}^{\infty} v_1(r)dr,
 \end{equation} 
where $\int_{R'}^{\infty} (v_1(r))^2r^5dr<\infty$. Hence \eqref{RTb30} (taking a larger $R'$). 

The estimate \eqref{RTb32} follows from \eqref{Carlos}, dividing by $T$ and letting $T$ go to $0$.

\medskip

\noindent\emph{Step 3. Proof that $\partial_tu$ is non-radiative.} In this step we prove \eqref{RTb31}.
We fix a small constant $\eps_0>0$ to be specified. Taking a larger $R'$ if necessary, we can assume $\|r\partial_r u_1\|_{L^2}+\|u_1\|_{H^1(R')}\leq \eps_0$. We define $\tu_1$ by 
$$\tu_1(r)=u_1(R')\text{ if }r<R',\quad \tu_1(r)=u_1(r)\text{ if }r\geq R'.$$
Using that by the radial Sobolev embedding 
$$|u_1(R')|\lesssim \frac{1}{(R')^2} \|r\partial_r u_1\|_{L^2(R')}\lesssim \frac{\eps_0}{(R')^2},$$ 
we obtain
\begin{equation}
 \label{smalltu}
 \|\tu_1\|_{H^1}\lesssim \eps_0.
\end{equation} 
Let $\tu$ be the solution of \eqref{gNLW} with initial data $(0,\tu_1)$ at $t=0$. Since $(0,\tu_1)\in (\dot{H}^2\cap \dot{H}^1)\times H^1)$, a standard persistence of regularity argument yields
$$\overrightarrow{\partial_t u}\in C^0\left(\RR, (\dot{H}^2\cap \dot{H}^1)\times H^1\right).$$
We observe that $\partial_t \tu$ satisfies the equation
\begin{equation}
 \label{RTb70}
 \partial_t^2(\partial_t\tu)-\Delta \partial_t\tu=\frac{1}{r^2} \Lambda'(r\tu)\partial_t\tu,
\end{equation} 
with initial data 
\begin{equation}
 \label{RTb71}
 \overrightarrow{\partial_t \tu}_{\restriction t=0} =(\tu_1,0)\in \dot{H}^1\times L^2.
\end{equation} 
Noting that 
$$ \frac{1}{r^2}\left|\Lambda'(r\tu)\right|\lesssim \tu^2,\quad  \left\| \tu^2\right\|_{(L^{3/2}L^3)}=\left\| \tu\right\|_{(L^{3}L^6)}^2\lesssim \eps_0,$$
we deduce by Strichartz estimates, and taking $\eps_0$ small enough,
\begin{equation}
 \label{RTb72}
 \|\partial_t\tu\|_{(L^3L^6)}\lesssim \|\tu_1\|_{\dot{H}^1}.
\end{equation} 
As a consequence, $\frac{1}{r^2}\Lambda'(r\tilde{u})
\partial_t \tu\in (L^1L^2)$. By finite speed of propagation, 
\begin{equation}
 \label{RTb73}
 r>R+|t|\Longrightarrow u(t,r)=\tu(t,r).
\end{equation} 
Thus the assumption \eqref{RTb11} implies
$$\sum_{\pm}\lim_{t\to\pm\infty}\int_{|x|>R+|t|} |\nabla_{t,x}\tu(t,x)|^2dx=0.$$
Combining with Proposition \ref{P:radiation} in the  appendix, we obtain \eqref{RTb31} for $\tu$ and thus, by \eqref{RTb73}, for $u$. 
%
%
\end{proof}

\subsection{Approximate non-radiative solution}
\label{SS:approx}

\begin{lemma}
\label{L:approx}
There exists a $C^{\infty}$ function $a(t,r)$ defined for $t\in \RR$, $r>2$ such that $\partial_t a\in C^0(\RR,\HHH(1))$ and
\begin{gather}
\label{cond_a1}
\forall t\in \RR,\; \forall r>2,\quad a(t,r)=-a(-t,r)\\
\label{cond_a2}
\forall r>2, \quad a_1(r):=\partial_ta(0,r)=\frac{2}{\sqrt{3}r^2(\log r)^{1/2}}\\
\label{cond_a3}
\forall t\in \RR,\; \forall r>\max(|t|,2), \quad |a(t,r)|+|t\partial_ta(t,r)|\lesssim \frac{|t|}{r^2(\log r)^{1/2}}\\
\label{cond_a4}
\sum_{\pm}\lim_{t\to \pm\infty} \int_{|x|>|t|} |\nabla_{t,x}\partial_t a(t,x)|^2dx=0
\end{gather}
and $b=\partial_t^2 a-\Delta a-\frac{1}{r^3}\Lambda(ra)$ satisfies
\begin{equation}
\label{cond_b}\forall t\in \RR,\; \forall r>\max(|t|,2), \quad |b(t,r)|+|t\partial_tb(t,r)|\lesssim \frac{|t|}{r^4(\log r)^{5/2}}.
\end{equation}
\end{lemma}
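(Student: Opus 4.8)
\emph{Overview of the plan.} The idea is to write $a=a_{0}+(\text{correction})$ with $a_{0}$ an explicit leading profile, and to arrange that the remaining error comes essentially only from the super‑cubic part of $\Lambda$, so that it acquires the extra factor $(\log r)^{-1}$ needed for \eqref{cond_b}.

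\emph{Leading profile.} For $r>2$ I would set $a_{0}(t,r)=\dfrac{2}{\sqrt3}\,\dfrac{t}{r^{2}(\log r)^{1/2}}$. It is odd in $t$ (so \eqref{cond_a1} follows if the correction is odd in $t$), and $\partial_t a_{0}(0,r)=\dfrac{2}{\sqrt3 r^{2}(\log r)^{1/2}}$ is exactly the $a_1$ prescribed in \eqref{cond_a2}, so the correction must be taken with $\partial_t(\cdot)_{|t=0}\equiv 0$. The decisive feature is that $\partial_t a_{0}$ is \emph{independent of $t$}, namely the static function $a_1\in \dot H^{1}(\{r>2\})$; hence $\int_{|x|>|t|}|\nabla\partial_t a_{0}|^{2}$ is finite and tends to $0$ as $|t|\to\infty$, which gives \eqref{cond_a4} for $a_0$, while \eqref{cond_a3} holds trivially for $a_0$. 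The coefficient $2/\sqrt3$ is forced by \eqref{cond_a2}; it is also exactly the value that makes the correction step below close.

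\emph{Error of the leading profile and the correction.} Since $a_{0}$ is affine in $t$, $\partial_t^{2}a_{0}=0$; an elementary computation (using $\Delta\big(r^{-2}G(\log r)\big)=r^{-4}(G''-2G')$ with $'=d/d\log r$) gives $\Delta a_{0}=\tfrac{2}{\sqrt3}\,t\big(r^{-4}(\log r)^{-3/2}+\tfrac34 r^{-4}(\log r)^{-5/2}\big)$, and, writing $\Lambda(U)=U^{3}+O(U^{5})$ and $ra_{0}=\tfrac2{\sqrt3}\tfrac{t}{r(\log r)^{1/2}}$, one gets $\tfrac1{r^{3}}\Lambda(ra_{0})=a_{0}^{3}+O\big(r^{2}a_{0}^{5}\big)$. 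So the residual $\partial_t^{2}a_{0}-\Delta a_{0}-\tfrac1{r^{3}}\Lambda(ra_{0})$ is of size $\frac{|t|}{r^{4}(\log r)^{3/2}}$ on $\{r>|t|\}$ (from $\Delta a_{0}$, and from $a_{0}^{3}$ since $\frac{|t|^{3}}{r^{6}}\le\frac{|t|}{r^{4}}$ there), one power of $\log r$ short of \eqref{cond_b}. I would remove this by adding, in $\{r>\max(|t|,2)\}$, a correction so that $a=a_{0}+v$ solves exactly the cubic model $\partial_t^{2}a-\Delta a=a^{3}$ ($=\tfrac1{r^{3}}(ra)^{3}$): one builds $a$ by an expansion respecting the scaling of the equation, with zeroth term $a_{0}$, the higher terms being solved for by a triangular recursion in which the Laplacian of each term and the cubic self‑interaction of the lower ones feed the next; here the precise cubic coefficient of $\Lambda$ and the value $2/\sqrt3$ enter to give, at each step, an extra decay upgrading the $(\log r)^{-3/2}$ residual to $(\log r)^{-5/2}$. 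One checks this expansion converges on $\{r>2,\,r>|t|\}$ (including up to the light cone) with sum obeying \eqref{cond_a3}, the corrections being odd in $t$, vanishing at $t=0$ to the needed order, and decaying in $r$ fast enough that their contribution to the exterior energy of $\partial_t a$ vanishes, so that \eqref{cond_a4} is preserved. Since the resulting $a$ satisfies $\partial_t^{2}a-\Delta a=\tfrac1{r^{3}}(ra)^{3}$ exactly, its error for \eqref{gNLW} is $b=-\tfrac1{r^{3}}\big(\Lambda(ra)-(ra)^{3}\big)$, and \eqref{propLambda} yields $|b|\lesssim r^{2}|a|^{5}\lesssim\frac{|t|^{5}}{r^{8}(\log r)^{5/2}}\lesssim\frac{|t|}{r^{4}(\log r)^{5/2}}$ on $\{r>|t|\}$, with the same bound for $t\partial_t b$ using $|\partial_t a|\lesssim\frac1{r^{2}(\log r)^{1/2}}$ and $|\Lambda'(U)-3U^{2}|\lesssim|U|^{4}$ from \eqref{propLambda}. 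This is \eqref{cond_b}.

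\emph{Conclusion and the main difficulty.} Collecting the above gives \eqref{cond_a1}–\eqref{cond_b}; smoothness of $a$ for $r>2$ and $\partial_t a\in C^{0}(\RR,\HHH(1))$ follow from the local‑uniform convergence of the expansion, after extending $a$ by a fixed smooth function into $\{r<|t|\}$ (irrelevant for \eqref{cond_a4}). The crux is the doubly‑critical character of the problem: $t/r^{2}$ sits exactly at the threshold of the energy space in dimension $4$, and the weight $(\log r)^{-1/2}$ is only logarithmically decaying, so the scheme does not close by a crude power count — one must keep precise track of powers of $\log r$, exploit the exact cancellation between the Laplacian on the profile and the cubic nonlinearity (which pins the constant $2/\sqrt3$) to gain the single extra $\log r$, and, most delicately, control the construction \emph{uniformly as $r\downarrow|t|$}, where the nonlinear interaction is only logarithmically small. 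Proving the sharp pointwise estimates \eqref{cond_a3}–\eqref{cond_b} (including on $t\partial_t(\cdot)$) uniformly up to the light cone is the bulk of the work.
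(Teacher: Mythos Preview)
Your overall strategy—start from the leading profile $a_{0}(t,r)=\dfrac{2t}{\sqrt{3}\,r^{2}(\log r)^{1/2}}$, add a correction, and push the residual onto the super-cubic part of $\Lambda$—is reasonable, but it is considerably more elaborate than what the paper does, and the step you flag as ``the bulk of the work'' is in fact a genuine gap. The paper does \emph{not} try to solve the cubic model $(\partial_t^{2}-\Delta)a=a^{3}$ exactly. It simply writes down a single explicit two-term ansatz,
\[
a(t,r)=\frac{2t}{\sqrt{3}\,r^{2}(\log r)^{1/2}}-\frac{t^{3}}{3\sqrt{3}\,r^{4}(\log r)^{3/2}},
\]
and verifies \eqref{cond_a1}–\eqref{cond_b} by direct computation using the formula \eqref{laplace_log}. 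The point is that the second term is chosen so that its $\partial_t^{2}$ and its $\Delta$ simultaneously cancel the two leading $(\log r)^{-3/2}$ residuals coming from $-\Delta a_{0}$ and from $a_{0}^{3}$—a \emph{double} algebraic cancellation that pins the constant $2/\sqrt{3}$. No infinite recursion, no convergence argument; everything reduces to listing the finitely many monomials $t^{p}r^{-q}(\log r)^{-s}$ that appear and checking they all satisfy \eqref{cond_b} on $\{r>\max(|t|,2)\}$.

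Your route instead asks for a convergent series solving the cubic equation exactly on $\{r>|t|\}$, and you assert convergence uniformly up to the light cone. But at $r\sim|t|$ the dimensionless quantity $ra_{0}\sim(\log r)^{-1/2}$ is only logarithmically small, so successive terms in a Picard-type iteration gain only a factor $(\log r)^{-1}$, not a power of $t/r$; summability of the resulting series—together with the sharp pointwise bounds \eqref{cond_a3}, \eqref{cond_b} including the $t\partial_t(\cdot)$ estimates—is then a real issue, not a routine check, and you do not carry it out. Since a finite explicit ansatz already does the job, the natural fix is to abandon the exact-solution idea and instead look for the single correction (of the form $\gamma\,t^{3}r^{-4}(\log r)^{-3/2}$) that produces the double cancellation; the whole proof then becomes an elementary calculation.
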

\begin{proof}
Let 
$$ a(t,r)=\frac{2t}{\sqrt 3 r^2(\log r)^{1/2}} -\frac{t^3}{3\sqrt{3} r^4(\log r)^{3/2}}.$$
It is easy to see that $a$ satisfies \eqref{cond_a1}, \eqref{cond_a2}, \eqref{cond_a3} and \eqref{cond_a4}. Using the formula
\begin{equation}
\label{laplace_log}
\Delta \left( \frac{1}{r^{\alpha}(\log r)^{\beta}} \right)=\frac{1}{r^{\alpha+2}} \left( \frac{\alpha(\alpha-2)}{(\log r)^{\beta}}+\frac{(2\alpha-2)\beta}{(\log r)^{\beta+1}}+\frac{\beta(\beta+1)}{(\log r)^{\beta+2}} \right) 
\end{equation} 
we can prove that $b$ satisfies \eqref{cond_b}. Indeed by \eqref{propLambda}, 
\begin{gather*}
\left| a^3-\frac{1}{r^3} \Lambda(ra)\right|\lesssim r^2|a|^5\lesssim \frac{|t|}{r^{4}(\log r)^{5/2}},\quad r>\max(|t|,2),\\
\left| \partial_t\left(a^3-\frac{1}{r^3} \Lambda(ra)\right)\right|\lesssim r^2a^4|\partial_t a|\lesssim \frac{1}{r^{4}(\log r)^{5/2}},\quad r>\max(|t|,2),
\end{gather*}
and 
$\partial_t^2a-\Delta a-a^3$ is a linear combination of:
\begin{gather*}
\frac{t}{r^4 (\log r)^{5/2}},\quad \frac{t^3}{r^6 (\log r)^{5/2}},\quad \frac{t^3}{r^6 (\log r)^{7/2}},\quad \frac{t^5}{r^8 (\log r)^{5/2}}\\
\frac{t^7}{r^{10} (\log r)^{7/2}},\quad \frac{t^9}{r^{12} (\log r)^{9/2}}.
\end{gather*}
The key point is the double (approximate) cancellation given by:
$$\partial_t^2\left( \frac{t^3}{3\sqrt{3} r^4(\log r)^{3/2}} \right)=-\Delta \left( \frac{2t}{\sqrt 3 r^2(\log r)^{1/2}} \right)+l.o.t$$
and
$$\Delta \left( \frac{t^3}{3\sqrt{3} r^4(\log r)^{3/2}} \right)=\left( \frac{2t}{\sqrt 3 r^2(\log r)^{1/2}} \right)^3+l.o.t.,$$
where $l.o.t.$ is dominated by $t/r^4(\log r)^{5/2}$ for $r\geq |t|$.
The nonlinearity appears in the last line: the approximate solution $a$ is not the perturbation of a linear solution, but a fully nonlinear object.
\end{proof}

\begin{remark}
 An explicit computation shows that 
$$\indic_{\{|x|>1+|t|\}} a \in L^3L^6.$$
 However $\partial_ta(0)$ barely fails to be in $L^2$. This fact will be crucial in the proof of Proposition \ref{P:RTb10}.
 \end{remark}
\subsection{Proof of the rigidity result}
\label{SS:proof_odd_rig}
We are now in position to prove Proposition \ref{P:RTb10}. 

\emph{Step 1. Preliminaries.}
We argue by contradiction, assuming that $u_1$ is not identically $0$ for $r>R$. By \eqref{RTb32}  in Lemma \ref{L:RTb30}, $u_1$ does not change sign, and we can assume that $u_1(r)>0$ for large $r$. Furthermore, by \eqref{RTb30} in Lemma \ref{L:RTb30}, and the radial Sobolev inequality,
\begin{equation}
 \label{RTc12}
\lim_{r\to\infty} r^2u_1(r)=0.
\end{equation} 
We fix $\rho\gg 1$ and choose $\lambda=\lambda(\rho)$ such that $u_1(\rho)=\partial_t a_{\lambda}(0,\rho)$, where $a$ is the non-radiative  approximate solution given by Lemma \ref{L:approx} and 
 $$a_{\lambda}(t,r)=\lambda a(\lambda t,\lambda r).$$ In other words,
 \begin{equation}
  \label{RTc13}
  u_1(\rho)=\frac{2}{\sqrt{3} \rho^2\log(\rho\lambda)^{1/2}}.
 \end{equation} 
Note that the equation \eqref{RTc13} always has a unique solution $\lambda(\rho)>1/\rho$, and that by \eqref{RTc12} and \eqref{RTc13}, 
\begin{equation}
 \label{RTc20}
 \lim_{\rho\to\infty} \rho \lambda(\rho)=\infty.
\end{equation} 
We also note that by \eqref{cond_a3}, 
\begin{equation}
\label{bound_alambda}
r>\max(|t|,2/\lambda)\Longrightarrow |a_{\lambda}(t,r)|+|t\partial_ta_{\lambda}(t,r)|\lesssim \frac{|t|}{r^2\log^{1/2}(r\lambda)}.
\end{equation} 
We have 
\begin{equation}
 \label{RTc21}
 \partial_t^2 a_{\lambda}-\Delta a_{\lambda}-\frac{1}{r^3}\Lambda(r a_{\lambda})=b_{[\lambda]},
\end{equation} 
where $b_{[\lambda]}(t,r)=\lambda^3 b(\lambda t,\lambda r)$ satisfies, by \eqref{cond_b},
\begin{equation}
 \label{RTc22}
r>\max(|t|,2/\lambda)\Longrightarrow  \left| b_{[\lambda]}(t,r)\right|+\left|t\partial_t b_{[\lambda]}(t,r)\right|\lesssim \frac{|t|}{r^4\log(\lambda r)^{5/2}}.
\end{equation} 
Let 
$w =\partial_t a_{\lambda}-\partial_t u$,
so that $w$ satisfies the equation 
\begin{equation}
 \label{eq_w}
 \partial_t^2w-\Delta w=\frac{1}{r^2}\Lambda'(r a_{\lambda})\partial_ta_{\lambda} -\frac{1}{r^2} \Lambda'(r u)\partial_tu+\partial_tb_{[\lambda]}
\end{equation} 
with initial data $w(0)=(\partial_t a_{\lambda}(0)-u_1,0)$.
By explicit computation, using \eqref{RTc22},
\begin{equation}
 \label{estim_blambda}
 \|\partial_t b_{[\lambda]}\|_{(L^1L^2)(\rho)} \lesssim \frac{1}{\rho(\log(\lambda \rho))^{5/2}}.
\end{equation} 
\emph{Step 2. Channels of energy.} Fix a small $\eps_0>0$, to be specified later. In this step we prove that if $\rho$ is chosen large enough, 
\begin{equation}
 \label{RTc23}
 \left\| u_1-\partial_ta_{\lambda}(0)\right\|_{\hdot(\rho)}\lesssim \frac{1}{\rho (\log(\lambda \rho))^{5/2}}.
\end{equation} 
By the equation \eqref{eq_w}, Strichartz inequalities and finite speed of propagation,
\begin{multline}
 \label{RTd50}
 \|w\|_{(L^2L^8)(\rho)} +\|w\|_{(L^3L^6)(\rho)} +\sup_{t\in\RR}\left\|\vec{w}(t)\right\|_{\HHH(\rho+|t|)}\\
 \lesssim \|w(0)\|_{\hdot(\rho)}+\left\| \frac{1}{r^2}\Lambda'(r a_{\lambda})\partial_ta_{\lambda} -\frac{1}{r^2} \Lambda'(r u)\partial_tu+\partial_tb_{[\lambda]}\right\|_{(L^1L^2)(\rho)}.
\end{multline}
By the exterior energy lower bound of Lemma \ref{L:inhomog} (which also applies to linear solutions outside a wave cone by finite speed of propagation)
\begin{equation}
 \label{RTd41}
 \|\pi_{\rho}^{\bot}w(0)\|_{\hdot(\rho)} \lesssim \left\| \frac{1}{r^2}\Lambda'(r a_{\lambda})\partial_ta_{\lambda} -\frac{1}{r^2} \Lambda'(r u)\partial_tu+\partial_tb_{[\lambda]}\right\|_{(L^1L^2)(\rho)}.
\end{equation} 
Since by the choice of $\lambda$, $w(0,\rho)=0$, we obtain $w(0)=\pi_{\rho}^{\bot}w(0)$. Combining with \eqref{estim_blambda}, \eqref{RTd50} and \eqref{RTd41}, we deduce
\begin{multline}
\label{interm_w}
 \|w\|_{(L^2L^8)(\rho)} +\|w\|_{(L^3L^6)(\rho)} +\left\|w_0\right\|_{\hdot(\rho)}\\
 \lesssim \left\| \frac{1}{r^2}\Lambda'(r a_{\lambda})\partial_ta_{\lambda} -\frac{1}{r^2} \Lambda'(r u)\partial_tu\right\|_{(L^1L^2)(\rho)}+ \frac{1}{\rho(\log(\lambda \rho))^{5/2}},
\end{multline} 
and we are left with bounding from above the first term in the second line of \eqref{interm_w}. We have
\begin{equation}
\label{decomposition}
\frac{1}{r^2}\Lambda'(r a_{\lambda})\partial_ta_{\lambda} -\frac{1}{r^2} \Lambda'(r u)\partial_tu=\frac{1}{r^2}\Lambda'(ru)w+\frac{1}{r^2}\left( \Lambda'(ra_{\lambda})-\Lambda'(ru) \right)\partial_ta_{\lambda}.
\end{equation}
By the bound $|\Lambda'(U)|\lesssim U^2$ and H\"older's inequality,
\begin{equation}
 \label{RTd51}
 \left\| \frac{1}{r^2} \Lambda'(r u)w\right\|_{(L^1L^2)(\rho)}\lesssim \|w\|_{(L^3L^6)(\rho)} \|u\|^2_{(L^3L^6)(\rho)}\leq \frac{\eps_0}{3} \|w\|_{(L^3L^6)(\rho)}
\end{equation} 
for large $\rho$. Next, we have, by the bound $|\Lambda''(U)|\lesssim |U|$ which implies $|\Lambda'(U)-\Lambda'(V)|\lesssim |U-V|\left( |U|+|V| \right)$,
\begin{equation}
\label{RTdLipschitz}
\frac{1}{r^2}\left| \Lambda'(ra_{\lambda})-\frac{1}{r^2} \Lambda'(ru)\right|\lesssim |u||u-a_{\lambda}|+|a_{\lambda}| |u-a_{\lambda}|.
\end{equation} 
We will bound separately $\left\|u(u-a_{\lambda})\partial_ta_{\lambda}\right\|_{(L^1L^2)(\rho)}$ and 
$\left\|a_{\lambda}(u-a_{\lambda})\partial_ta_{\lambda}\right\|_{(L^1L^2)(\rho)}$;
By H\"older inequality,
$$ \left\|u\partial_ta_{\lambda}(u-a_{\lambda})\right\|_{(L^1L^2)(\rho)} \lesssim \|u\|_{(L^2L^8)(\rho)} \|\partial_ta_{\lambda}(u-a_{\lambda)}\|_{(L^2L^{8/3})(\rho)}.$$
Furthermore, using the bound \eqref{bound_alambda}, and the fact that $w=\partial_t (a_{\lambda}-u)$, we obtain for large $\rho$
$$\left|\partial_t a_{\lambda}(u-a_{\lambda})\right| \lesssim \frac{1}{r^2\log(r\lambda)^{1/2}}\left|\int_0^{t} w(\tau,r)d\tau\right|\lesssim
\frac{\eps_0}{r^2}\left|\int_0^{t} w(\tau,r)d\tau\right|,$$
where we have used \eqref{RTc20}. By Lemma \ref{L:Carlos2} in Appendix \ref{A:anal_harm}, 
$$\left\|\partial_t a_{\lambda}(u-a_{\lambda})\right\|_{(L^2L^{8/3})(\rho)}\lesssim \eps_0\|w\|_{(L^2L^8)(R)},$$
and thus
\begin{equation}
\label{RTd60}
\left\|u\partial_ta_{\lambda} (u-a_{\lambda})\right\|_{(L^1L^2)(\rho)}\lesssim \eps_0\|w\|_{(L^2L^8)(\rho)}.
\end{equation}
Using again the bound \eqref{bound_alambda} on $a_{\lambda}$ and $\partial_ta_{\lambda}$, we obtain
$$\left|a_{\lambda}\partial_ta_{\lambda} (u-a_{\lambda})\right|\lesssim \frac{|t|}{r^4\log(\lambda r)}\left|\int_0^t w(\tau,r)d\tau\right|.$$
By Lemma \ref{L:Carlos1} in Appendix \ref{A:anal_harm}, 
\begin{equation}
 \label{RTd61}
 \left\|a_{\lambda}\partial_ta_{\lambda}(u-a_{\lambda})\right\|_{(L^1L^2)(\rho)}\lesssim \frac{1}{(\log (\lambda \rho))^{1/3}} \|w\|_{(L^3L^6)(\rho)}.
\end{equation} 
Combining \eqref{interm_w}, \eqref{decomposition}, \eqref{RTd51},  \eqref{RTd60} and \eqref{RTd61}, we deduce that for large $\rho$,
 \begin{multline}
 \|w\|_{(L^2L^8)(\rho)} +\|w\|_{(L^3L^6)(\rho)} +\left\|w_0\right\|_{\hdot(\rho)}
 \lesssim \\
 \eps_0\left(\|w\|_{(L^2L^8)(\rho)} +\|w\|_{(L^3L^6)(\rho)}  \right)+\frac{1}{\rho(\log(\lambda \rho))^{5/2}}.
 \end{multline}
 This implies \eqref{RTc23} if $\eps_0$ is small enough.
 
 \medskip

 \noindent \emph{Step 3. Differential inequality.} 
 In this step, we deduce from \eqref{RTc23} the following inequality for large $\rho$:
 \begin{equation}
 \label{RTc50}
 \left| \|u_1\|_{L^4(\rho)}^4 -\frac{\rho^4u_1(\rho)^4}{4}\right|\lesssim \rho^{8} u_1(\rho)^6.
\end{equation}
Recall that $\rho^2u_1(\rho)$ is small when $\rho$ is large (see \eqref{RTc12}), so that $ \rho^8u_1(\rho)^6\ll \rho^4u_1(\rho)^4$.
The inequality \eqref{RTc50} can be seen as an approximate differential equation on the $L^4$ norm of $u$. We will deduce a contradiction from this inequality in the next step.

By \eqref{RTc23} and the critical Sobolev inequality, we obtain, for $\rho$ large
\begin{equation}
 \label{RTc31}
 \left\|u_1-\partial_t a_{\lambda}(0)\right\|_{L^4(\rho)} \lesssim \frac{1}{\rho (\log(\lambda \rho))^{5/2}}. 
\end{equation}
Using the explicit value of $\partial_ta_{\lambda}(0)$, we have
\begin{equation*}
 \left\|\partial_ta_{\lambda}(0)\right\|_{L^4(\rho)}^4 =\left( \frac{2}{\sqrt{3}} \right)^4\int_{\rho}^{\infty} \frac{dr}{r^5 \log^2(r\lambda)},
\end{equation*}
which yields, after an integration by parts,
\begin{equation}
 \label{RTc40}
 \left\| \partial_ta_{\lambda}\right\|_{L^4(\rho)}^4=\frac{4}{9\rho^4\log^2(\rho\lambda)}+\mathcal{O}\left( \frac{1}{\rho^4\log^3(\rho\lambda)} \right).
\end{equation} 
By \eqref{RTc31}, $\left| \|u_1\|_{L^4(\rho)}-\|\partial_ta_{\lambda}(0)\|_{L^4(\rho)}\right|\lesssim \frac{1}{\rho \log(\lambda\rho)^{5/2}}$. Thus 
$$\|u_1\|_{L^4(\rho)} \sim \sqrt{\frac{2}{3}} \frac{1}{\rho \left(\log(\rho\lambda)\right)^{1/2}}$$
and we deduce from \eqref{RTc31}
\begin{equation}
 \label{RTc41}
 \left|\|u_1\|_{L^4(\rho)}^4-\frac{4}{9\rho^4 \log^2(\rho\lambda)}\right|\lesssim \frac{1}{\rho^4\log^3(\rho\lambda)}.
\end{equation} 
Going back to the definition of $\lambda$ (see \eqref{RTc13}), we deduce \eqref{RTc50}).

\medskip

\noindent\emph{Step 4. Lower bound on $u_1$ and contradiction.}

In this step, we deduce from \eqref{RTc50} and the fact that $\rho^2u_1(\rho)$ goes to $0$ that for some constant $C>0$, 
\begin{equation}
 \label{RTc51}
 u_1(\rho)\geq  \frac{1}{C\rho^2(\log\rho)^{1/2}},\quad \rho\to\infty,
\end{equation} 
a contradiction since $\rho\mapsto \frac{1}{\rho^2(\log\rho)^{1/2}}$ is not in $L^2(r^3dr)$. 

Let 
$$f(\rho)=\|u_1\|^4_{L^4(\rho)}.$$
Since by \eqref{RTc50}, $f(\rho)\sim \frac{\rho^4 u_1(\rho)^4}{4}$, we have
\begin{equation}
 \label{RTc52}
 \lim_{\rho\to\infty}\rho^4f(\rho)=0. 
\end{equation} 
Also, we can rewrite \eqref{RTc50} as
\begin{equation}
 \label{RTc60} 
 \left|f(\rho)+\frac{\rho}{4}f'(\rho)\right|\lesssim f(\rho)^{3/2}\rho^2.
\end{equation} 
Next, we let $g(\rho)=\rho^4f(\rho)$ and note that by \eqref{RTc52}, 
\begin{equation}
 \label{RTc61} 
 \lim_{\rho\to\infty} g(\rho)=0.
\end{equation} 
Since $f'(\rho)=-\frac{4}{\rho^5}g(\rho)+\frac{1}{\rho^4}g'(\rho)$, the estimate \eqref{RTc60} reads
\begin{equation}
 \label{RTc63}
 \left|\frac{g'(\rho)}{g(\rho)^{3/2}}\right|\lesssim \frac{1}{\rho}.
\end{equation} 
Integrating between $\rho_0$ and $\rho>\rho_0$, where $\rho_0$ is large, we deduce that for large $\rho$,
$$\frac{1}{g(\rho)^{1/2}} \leq C\log \rho,$$
where $C$ is a large positive constant (depending on $g$). This implies that for large $\rho$
$$g(\rho)\geq \frac{1}{C(\log \rho)^2}.$$
Since $g(\rho)=\rho^4f(\rho)$ and by \eqref{RTc50}, $f(\rho)\approx \rho^4u_1(\rho)^4$, we deduce \eqref{RTc51} as announced.
\qed

\section{Soliton resolution for the wave equation}
\label{S:resolution}
In this Section we prove Theorem \ref{T:main}. We start with some preliminaries on profile decomposition for the wave equations \eqref{FW} and \eqref{NLW}.
\subsection{Profile decomposition}
\label{SS:profile}
Let $\big\{(u_{0,n},u_{1,n})\big\}_n$ be a bounded sequence of radial functions in $\HHH$. We say that it admits a profile decomposition if for all $j\geq 1$, there exist a solution $U^j_L$ to the free wave equation with initial data in $\HHH$ and sequences of parameters $\{\lambda_{j,n}\}_n\in (0,\infty)^{\NN}$, $\{t_{j,n}\}_n\in \RR^{\NN}$ such that
\begin{equation}
 \label{psdo_orth}
 j\neq k \Longrightarrow \lim_{n\to\infty}\frac{\lambda_{j,n}}{\lambda_{k,n}}+\frac{\lambda_{k,n}}{\lambda_{j,n}}+\frac{|t_{j,n}-t_{k,n}|}{\lambda_{j,n}}=+\infty,
\end{equation} 
and, denoting 
\begin{gather}
 \label{rescaled_lin}
 U^j_{L,n}(t,r)=\frac{1}{\lambda_{j,n}}U^j_L\left( \frac{t-t_{j,n}}{\lambda_{j,n}},\frac{r}{\lambda_{j,n}} \right),\quad j\geq 1\\
 w_{n}^J(t)=S_L(t)(u_{0,n},u_{1,n})-\sum_{j=1}^J U^j_{L,n}(t),
\end{gather} 
one has 
\begin{equation}
 \label{wnJ_dispersive}
 \lim_{J\to\infty}\limsup_{n\to\infty}\|w_n^J\|_{L^3L^6}=0.
\end{equation} 
We recall (see \cite{BaGe99}, \cite{Bulut10}) that any bounded sequence in $\HHH$ has a subsequence that admits a profile decomposition. The properties above imply that the following weak convergences hold:
\begin{equation}
 \label{wlim_w}
 j\leq J\Longrightarrow \left( \lambda_{j,n}w_n^J\left(t_{j,n},\lambda_{j,n}\cdot \right),\lambda_{j,n}^{2}\partial _tw_n^J\left(t_{j,n},\lambda_{j,n}\cdot \right)\right) \xrightharpoonup[n\to\infty]{} 0 \text{ in }\HHH.
\end{equation} 
Furthermore we have the Pythagorean expansions valid, for all $J\geq 1$
\begin{align}
 \label{Pyt1}
\|(u_{0,n},u_{1,n})\|^2_{\HHH}&=\sum_{j=1}^J \left\|\vec{U}^j_L(0)\right\|^2_{\HHH}+\left\| \vec{w}_n^J(0)\right\|^2_{\HHH}+o_n(1)\\
\label{Pyt2}
E(u_{0,n},u_{1,n})&=\sum_{j=1}^J E(\vec{U}_{L,n}^j(0))+E(\vec{w}_n^J(0))+o_n(1).
\end{align}

If $\{(u_{0,n},u_{1,n})\}_n$ admits a profile decomposition, we can assume, extracting subsequences and time-translating the profiles if necessary, that the following limits exist:
$$\lim_{n\to\infty}\frac{-t_{j,n}}{\lambda_{j,n}}=\tau_j\in \{-\infty,0,\infty\}.$$
If $\tau_j=0$, we will assume without loss of generality $t_{j,n}=0$ for all $n$.
If $\tau_j=0$, we define $U^j$ as  the unique solution to the nonlinear wave equation \eqref{NLW} such that
\begin{equation}
\label{NL_profiles}
\vec{U}^j(0)=\vec{U}^j_L(0). 
\end{equation} 
If $\tau_j\in \{\pm\infty\}$, we simply let $U^j=U_L^j$. We denote by $U^j_n$ the rescaled profile:
$$ U^j_n(t,r)=\frac{1}{\lambda_{j,n}}U^j\left( \frac{t-t_{j,n}}{\lambda_{j,n}},\frac{r}{\lambda_{j,n}} \right).$$

We will now state a superposition principle (sometime called \emph{nonlinear profile decomposition}) outside wave cones using the profiles $U^j$. We denote by $\JJJ$ the set of indices $j\geq 1$ such that $\tau_j=0$ and the solution $U^j$ of \eqref{NLW} with initial data $(U_0^j,U_1^j)$ cannot be extended to a solution with finite $L^3L^6$ norm to the wave cone $\{|x|>|t|\}$. By the small data theory and \eqref{Pyt1}, $\JJJ$ is finite. If $j\notin \JJJ$, then the solution $U_n^j(t,x)$ is well defined for $\{|x|>|t|\}$ and 
\begin{equation}
\label{unif_bound}
\sup_{n\to\infty} \|U_n^j\|_{(L^3L^6)(\{|x|>|t|\})}<\infty. 
\end{equation}

If $\JJJ$ is empty, we let $R_n=0$ for all $n$. If not,  after extraction, \eqref{psdo_orth} implies that there is a unique $j_0\in \JJJ$ such that
$$\forall j\in \JJJ\setminus \{j_0\}, \quad \lim_{n\to\infty}\frac{\lambda_{j,n}}{\lambda_{j_0,n}}=0.$$
In this case, we fix a $R>0$ such that $U^{j_0}$ is defined (with a finite $L^3L^6$ norm) in $\{|x|>R+|t|\}$, and we let $R_n=R \lambda_{j_0,n}$ for all $n$.
\begin{prop}
 \label{P:NL_profile}
 Let $\{(u_{0,n},u_{1,n})\}_n$, $U^j_n$, $w_n^J$, $R_n$ be a above. Then for large $n$, there is a solution $u_n$ of \eqref{NLW} defined in $\{r>R_n+|t|\}$ with initial data $\{(u_{0,n},u_{1,n})\}_n$ at $t=0$. Furthermore, denoting, for $J\geq 1$, $r>R_n+|t|$,
 $$\epsilon_n^J(t,r)=u_n(t,r)-\sum_{j=1}^J U_n^j(t,r)-w_n^J(t,r),$$
 we have 
 $$\lim_{J\to\infty} \lim_{n\to\infty}\left[\|\epsilon_n^J\|_{(L^3L^6)(R_n)}+\sup_{t\geq 0}\left\|\vec{\epsilon}_n^J(t)\right\|_{\HHH(t+R_n)}\right]=0.$$
\end{prop}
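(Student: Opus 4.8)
The plan is to prove Proposition~\ref{P:NL_profile} by the now-standard Bahouri--Gérard nonlinear profile decomposition argument, adapted to the exterior of a wave cone via finite speed of propagation and the exterior Strichartz estimate \eqref{Strichartz_cone}. First I would set up an approximate solution: for $J\geq 1$ and $n$ large define
\[
v_n^J=\sum_{j=1}^J U_n^j+w_n^J \quad\text{on }\{r>R_n+|t|\},
\]
and compute its error as a solution of \eqref{NLW}, namely $\mathrm{eq}(v_n^J):=\partial_t^2 v_n^J-\Delta v_n^J-(v_n^J)^3$. The heart of the matter is to show that $v_n^J$ is a good approximate solution, i.e. that, after restricting to $\{r>R_n+|t|\}$,
\[
\lim_{J\to\infty}\limsup_{n\to\infty}\Bigl(\|v_n^J\|_{(L^3L^6)(R_n)}+\bigl\|\mathrm{eq}(v_n^J)\bigr\|_{(L^1L^2)(R_n)}\Bigr)<\infty \text{ (for the norm) and }\to 0\text{ (for the error)},
\]
together with the uniform bound on $\|v_n^J\|_{(L^3L^6)(R_n)}$ coming from the Pythagorean expansion \eqref{Pyt1}, the pseudo-orthogonality \eqref{psdo_orth}, the definition of $\JJJ$ and $R_n$, the uniform bound \eqref{unif_bound} for $j\notin\JJJ$, and the small-data theory for the few profiles in $\JJJ$ (which are defined with finite $L^3L^6$ norm in $\{r>R_n+|t|\}$ by the choice of $R_n$). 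Then a standard perturbation/bootstrap lemma for \eqref{NLW} in the exterior of a wave cone — which follows from \eqref{Strichartz_cone} exactly as in the whole-space case — produces, for $n$ large, the genuine solution $u_n$ on $\{r>R_n+|t|\}$ with $\vec u_n(0)=(u_{0,n},u_{1,n})$, and controls $\epsilon_n^J=u_n-v_n^J$ by $\bigl\|\mathrm{eq}(v_n^J)\bigr\|_{(L^1L^2)(R_n)}$, giving the claimed double limit.

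The key estimate to carry out is the expansion of the nonlinear error. Writing $v_n^J=\sum_{j\le J}U_n^j+w_n^J$, one expands $(v_n^J)^3$ and groups terms: the ``diagonal'' terms $(U_n^j)^3$ are cancelled (up to the equation each $U_n^j$ solves, which is either the free equation for $j\notin\JJJ$ with $\tau_j=0$—no, those solve \eqref{NLW}—or exactly \eqref{NLW} by \eqref{NL_profiles}, and for $\tau_j=\pm\infty$ the free equation, whose nonlinear error $\|(U_n^j)^3\|$ is small by \eqref{Linfty0}-type decay and scaling); the cross terms $U_n^j U_n^k$ with $j\neq k$ go to $0$ as $n\to\infty$ for each fixed $J$ by the pseudo-orthogonality \eqref{psdo_orth} of the parameters (the usual ``profiles don't interact'' computation, valid in $L^3L^6$ restricted to the exterior cone since the supports/concentration scales separate); the cross terms involving $w_n^J$ are handled using \eqref{wnJ_dispersive} — $\|w_n^J\|_{L^3L^6}\to 0$ as first $n\to\infty$ then $J\to\infty$ — together with the uniform $L^3L^6$ bound on $\sum_{j\le J}U_n^j$. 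I would treat the profiles in $\JJJ$ with $\tau_j=0$ separately: by construction $R_n=R\lambda_{j_0,n}$ is chosen so that the ``worst'' profile $U^{j_0}$ has finite $(L^3L^6)(\{|x|>R+|t|\})$ norm after rescaling, and all other profiles in $\JJJ$ are strictly smaller in scale, hence also extend to $\{r>R_n+|t|\}$ with controlled norm by \eqref{psdo_orth}.

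The main obstacle, and the step requiring the most care, is the interaction estimate for the cross terms in a \emph{cone-restricted} space-time norm: in the whole-space Bahouri--Gérard argument one uses density and change of variables freely, but here every estimate must be compatible with the truncation $\{r>R_n+|t|\}$, and the truncation parameter $R_n$ itself depends on $n$ through $\lambda_{j_0,n}$. One must check that rescaling $U^j_n$ by $\lambda_{j,n}$ maps the cone $\{r>R_n+|t|\}$ into a fixed cone (or a cone with $n$-independent vertex) up to errors controlled by \eqref{psdo_orth}, so that the uniform bound \eqref{unif_bound} and the exterior small-data theory apply uniformly in $n$. A secondary technical point is the vanishing of the cross terms $\int U^j_n U^k_n \cdots$ when the time-translations $t_{j,n},t_{k,n}$ differ: here one uses the standard decoupling lemma (Appendix~\ref{A:psdo_ortho} presumably contains the relevant pseudo-orthogonality statements) combined with the finite speed of propagation to stay inside the exterior region. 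Once these interaction and truncation bookkeeping issues are dispatched, the conclusion follows by first letting $n\to\infty$ (killing cross terms and using \eqref{Pyt1}) and then $J\to\infty$ (killing $w_n^J$ and the tail of the profile sum), exactly as in \cite{DuKeMe13} and \cite{DuKeMe19Pc}.
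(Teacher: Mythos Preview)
Your proposal is correct and follows essentially the same approach as the paper. The paper's own proof is in fact only a two-line sketch invoking the uniform bound $\limsup_n\|U^j_n\|_{(L^3L^6)(R_n)}<\infty$, the vanishing of cross terms $\|U^j_nU^k_nU^\ell_n\|_{(L^1L^2)(R_n)}\to 0$ for $j\neq k$, and long-time perturbation theory, referring to \cite{BaGe99}, \cite{DuKeMe11a} and the detailed wave-maps analog (Proposition~\ref{P:approxWM}) for the details; your outline is thus more explicit than what the paper itself writes. One small correction: for profiles with $\tau_j=\pm\infty$ (where $U^j=U^j_L$ is linear), the reason the cubic error $\|(U^j_n)^3\|_{(L^1L^2)(R_n)}$ vanishes is not really \eqref{Linfty0}-type decay but rather that, after rescaling, the exterior cone $\{r>R_n+|t|\}$ becomes $\{\rho>|t_{j,n}/\lambda_{j,n}+\tau|+R_n/\lambda_{j,n}\}$, which escapes to infinity since $|t_{j,n}/\lambda_{j,n}|\to\infty$, and one concludes by dominated convergence (this is exactly the computation labelled \eqref{CVD} in the wave-maps proof).
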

\begin{proof}
The proof uses
\begin{gather*}
\forall j\geq 1,\quad \limsup_{n\to\infty}\|U_n^j\|_{(L^3L^6)(R_n)}<\infty\\
\forall j,k,\ell\geq 1,\quad j\neq k\Longrightarrow \left\|U_n^jU_n^kU_n^{\ell}\right\|_{(L^1L^2)(R_n)}=0,
\end{gather*}
and long-time perturbation theory arguments.

We omit the details, since they are similar to the proof when the solution is not restricted to the exterior of a wave cone (see e.g. the Main Theorem p. 135 in \cite{BaGe99} in the defocusing case, and a sketch of proof in the focusing case right after Proposition 2.8 in \cite{DuKeMe11a}). The proof is also similar to the one of the corresponding result for wave maps which is detailed below. 
\end{proof}
We next state a pseudo-orthogonality property of the profiles that will be needed in the proof of Theorem \ref{T:main}. See Appendix \ref{A:psdo_ortho} for the proof.
\begin{lemma}
\label{L:psdo_ortho}
  Let $\{(u_{0,n},u_{1,n})\}_n$, $U^j_n$, $w_n^J$, $R_n$ be a above. Let $\{s_n\}_n$ be a sequence of times, $\rho_n,\rho'_n>0$ be such that $R_n+|s_n|\leq \rho_n<\rho'_n$ ($\rho_n'=\infty$ is allowed). Then
  \begin{align}
   \label{Ps10}
   j\neq k&\Longrightarrow \lim_{n\to\infty} \int_{\rho_n<|x|<\rho_n'}\nabla_{t,x}U_n^j(s_n,x)\cdot \nabla_{t,x}U_n^k(s_n,x)dx=0\\
  \label{Ps11}
   j\leq J&\Longrightarrow \lim_{n\to\infty} \int_{\rho_n<|x|<\rho_n'}\nabla_{t,x}U_n^j(s_n,x)\cdot \nabla_{t,x}w_n^J(s_n,x)dx=0.
  \end{align}
\end{lemma}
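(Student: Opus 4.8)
The plan is to follow the classical argument for pseudo-orthogonality of profiles in a Bahouri--Gérard type decomposition, adapted here to the exterior of the (moving, $n$-dependent) wave cones $\{|x|>R_n+|t|\}$ and to the extra truncation by the annuli $\{\rho_n<|x|<\rho_n'\}$ (with $\rho_n'=\infty$ allowed).

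First I would reduce \eqref{Ps11} to the analysis underlying \eqref{Ps10}. Since $w_n^J$ is a \emph{free} solution, the weak convergence \eqref{wlim_w}, which is stated at time $t_{j,n}$, can be transported to time $s_n$. Passing to a subsequence, either $(s_n-t_{j,n})/\lambda_{j,n}$ stays bounded, in which case this transport is a bounded time-evolution by the unitary strongly continuous linear flow on $\HHH$, so the rescaled $\vec w_n^J(s_n,\lambda_{j,n}\cdot)$ still converges weakly to $0$; pairing against $\nabla_{t,x}U_n^j(s_n)$, whose rescaling converges \emph{strongly} in $L^2$ (along a further subsequence, to a time-translate of $\vec U^j$), makes the inner product vanish, the annulus indicator being harmless because strong convergence of one factor renders the pairing continuous. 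Or $(s_n-t_{j,n})/\lambda_{j,n}\to\pm\infty$, and then $U^j$ (necessarily scattering in the exterior region since $\rho_n\geq R_n+|s_n|$ forces $j\notin\JJJ$) is evaluated at a time tending to infinity: by the radiation profile of Proposition \ref{P:radiation} the restricted energy of $U_n^j(s_n)$ has escaped to the light cone $\{|x|\sim|s_n-t_{j,n}|\}$, and a Cauchy--Schwarz estimate against the $\HHH$-bounded sequence $w_n^J(s_n)$ concludes, using \eqref{wnJ_dispersive} and the pseudo-orthogonality of the competing profiles to see that the mass of $w_n^J$ near that light cone is negligible.

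For \eqref{Ps10} with $j\neq k$ I would first replace each nonlinear profile $U^j$ with $\tau_j=0$ by its asymptotic free profiles as $t\to\pm\infty$: in the exterior region $U^j$ has finite $L^3L^6$ norm, hence scatters by the exterior Strichartz estimate \eqref{Strichartz_cone} and Proposition \ref{P:smalldata}, so $\|\vec U^j(t)-\vec V^j_\pm(t)\|_{\HHH}\to0$ for suitable free solutions $V^j_\pm$; for $\tau_j=\pm\infty$ one has $U^j=U^j_L$ already. This reduces the claim to inner products of rescaled \emph{free} radial solutions, $\int_{\rho_n<|x|<\rho_n'}\nabla_{t,x}V_n(s_n)\cdot\nabla_{t,x}V_n'(s_n)\,dx$, with pseudo-orthogonal parameter sequences. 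If the scales separate, $\lambda_{j,n}/\lambda_{k,n}+\lambda_{k,n}/\lambda_{j,n}\to\infty$, I would approximate the data in $\HHH$ by smooth data with Fourier support in a fixed compact annulus, for which the solution has, at every time, its energy quantitatively concentrated in a $\lambda$-dilate of a fixed neighborhood of its light cone (a uniform form of \eqref{rad_prof}); the two concentration regions then live at incommensurable scales after rescaling, so Cauchy--Schwarz on the annulus yields the vanishing. If the scales are comparable, rescale to $\lambda_{j,n}=\lambda_{k,n}=1$, so that $|t_{j,n}-t_{k,n}|\to\infty$; writing the inner product as $\langle S_L(t_{k,n}-t_{j,n})f_n,g_n\rangle_{\HHH}$ with $f_n,g_n$ precompact in $\HHH$ and $S_L(\tau_n)\rightharpoonup0$ as $|\tau_n|\to\infty$, the annulus indicator is again absorbed by the strong convergence of one factor.

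The main obstacle, and the reason this is more than a one-line argument, is the combination of the \emph{even} spatial dimension (no strong Huygens principle, so a compactly supported datum does not produce a solution supported near the light cone — only the \emph{energy} concentrates there, in the limit) with the fact that both the truncating annuli and the wave cones move with $n$. The routine ``asymptotically disjoint supports'' mechanism must therefore be replaced by a quantitative statement: for data bounded in $\HHH$ with frequency support in a fixed compact annulus, the fraction of energy outside a neighborhood of the light cone is small uniformly in large time. This is a uniform version of the radiation-profile statement \eqref{rad_prof}, and making it hold uniformly over the range of rescaled times $(s_n-t_{j,n})/\lambda_{j,n}$ that actually occur — after splitting into the subcases ``bounded'' and ``$\to\pm\infty$'' — is the technical heart of the proof. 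The reduction of nonlinear profiles to linear ones in the exterior region and the transport of \eqref{wlim_w} from $t_{j,n}$ to $s_n$ are comparatively routine given \eqref{Strichartz_cone}, Proposition \ref{P:smalldata}, and the finiteness of $\JJJ$.
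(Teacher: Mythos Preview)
Your overall strategy (reduce the nonlinear profiles to their scattering linear limits in the exterior region, then split into cases according to whether $(s_n-t_{j,n})/\lambda_{j,n}$ stays bounded or not) matches the paper's. But there is a genuine gap in your treatment of \eqref{Ps11} in the unbounded case. You write that ``the mass of $w_n^J$ near that light cone is negligible'' and appeal to \eqref{wnJ_dispersive}; this is not justified. The remainder $w_n^J$ is only \emph{bounded} in $\HHH$, and \eqref{wnJ_dispersive} controls a Strichartz norm, not the energy in a thin annulus around $|x|\sim |s_n-t_{j,n}|$. There is no reason that energy should be small --- $w_n^J$ may well contain higher-index profiles whose light cones pass through exactly that region. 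So Cauchy--Schwarz on the annulus gives you only a bounded quantity, not a vanishing one.

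The paper's mechanism here is different and is the actual key step: having used the radiation profile to replace $\indic_{\{|x|>\rho_n\}}\nabla_{t,x}U^j_n(s_n)$ by $g^j(r-s_n+t_{j,n})$ (possibly truncated, according to the limit of $\rho_n+t_{j,n}-s_n$), one observes that this is $\nabla_{t,x}V_L^j(s_n-t_{j,n})$ for a \emph{specific} free wave $V_L^j$. Since both $V_L^j$ and $w_n^J$ are free, their $\HHH$ inner product is conserved in time, so the pairing at time $s_n$ equals the pairing at time $t_{j,n}$, and \emph{there} the weak convergence \eqref{wlim_w} applies verbatim. No smallness of $w_n^J$ in any region is used.

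Two smaller remarks. First, your ``technical heart'' --- a uniform-in-time version of the radiation profile concentration --- is not needed: the paper only invokes Proposition~\ref{P:radiation} along the sequence $s_n-t_{j,n}\to\pm\infty$, which is the ordinary (non-uniform) statement. Second, for \eqref{Ps10} in the bounded/bounded case your frequency-localization detour is unnecessary: after normalizing $\lambda_{j,n}=1$, the condition \eqref{psdo_orth} forces $\lambda_{k,n}\to 0$ or $\infty$, and the rescaled $\nabla_{t,x}U^k(T_k,\cdot/\lambda_{k,n})$ goes weakly to $0$ in $L^2$, which suffices against the strongly convergent $\nabla_{t,x}U^j(T_j,\cdot)$.
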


\subsection{Finite time blow-up case}
In this subsection we deduce the finite time blow-up case of Theorem \ref{T:main} from the rigidity theorem, Theorem \ref{T:rigidity}. The proof is inspired by the one of the analogous result in space dimension 3 (see \cite{DuKeMe13}). However, unlike in space dimension 3, we will need the fact (proved in \cite{CoKeLaSc18}) that the soliton resolution holds for a sequence of times.

We consider a solution $u$ of \eqref{NLW}, with initial data \eqref{ID} such that $T_+=T_+(u)$ is finite and 
\begin{equation}
\label{liminf}
 \liminf_{t\overset{<}{\longrightarrow} T_+}\left\|\vec{u}(t)\right\|_{\HHH}<\infty
\end{equation} 
\subsubsection{Convergence of the solution}
We first recall that the solution $u$ converges outside the wave cone:
\begin{lemma}
\label{L:v0v1}
 There exists $(v_0,v_1)\in \HHH$ such that 
$$\lim_{t\to T_+}\int_{|x|>|T_+-t|} |\nabla (u(t,x)-v_0(x))|^2+(\partial_tu(t,x)-v_1(x))^2dx=0.$$
\end{lemma}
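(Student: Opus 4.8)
The plan is to use finite speed of propagation together with the Strichartz/energy control that comes from the condition $\liminf_{t\to T_+}\|\vec u(t)\|_{\HHH}<\infty$, in the now-standard way (as in \cite{DuKeMe13,KeMe08}). First I would fix a sequence $t_n\uparrow T_+$ along which $\|\vec u(t_n)\|_{\HHH}\le C<\infty$, and pass to a weak limit: extracting a subsequence, $\vec u(t_n)\rightharpoonup (v_0,v_1)$ weakly in $\HHH$ for some $(v_0,v_1)\in\HHH$. The goal is to upgrade this to the stated strong convergence on the exterior region $\{|x|>|T_+-t|\}$ as $t\to T_+$.

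The key step is a finite-speed-of-propagation argument combined with the small-data/perturbation theory outside light cones from \S\ref{SS:outside}. Fix $x_0$ with $|x_0|>0$. For $\tau$ close to $T_+$, the truncated data $\indic_{\{|x-x_0|>\delta\}}\vec u(\tau)$ (for appropriate $\delta$ depending on $T_+-\tau$) has small $\HHH$-norm on the relevant annuli once $\tau$ is close enough to $T_+$: indeed, if the exterior energy $\int_{|x|>|T_+-\tau|}|\nabla_{t,x}u(\tau)|^2\,dx$ did not go to zero, one could not have boundedness of $\|\vec u(t_n)\|_{\HHH}$ together with the blow-up of $\|u\|_{L^3((t,T_+),L^6)}$ being concentrated inside the cone --- more precisely, by Strichartz and finite speed of propagation, the part of the solution with data supported in $\{|x|>|T_+-\tau|+\epsilon\}$ extends past $T_+$ and is continuous in $\HHH$ up to $T_+$ near any such $x_0$. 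So for each fixed $\rho>0$, the restriction $\vec u(t)\restriction_{\{|x|>\rho\}}$ converges in $\HHH(\rho)$ as $t\to T_+$ to some limit; call it $(v_0,v_1)$ on $\{|x|>\rho\}$, and by compatibility as $\rho\downarrow 0$ we get a single $(v_0,v_1)$, which by the uniform $\HHH$ bound lies in $\HHH$. The weak limit computed above must agree with this.

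To conclude, I would combine: (i) for any fixed $\rho>0$, $\lim_{t\to T_+}\|\vec u(t)-(v_0,v_1)\|_{\HHH(\rho)}=0$ (from the previous step, applied at scale $\rho$); and (ii) the exterior energy decay $\lim_{t\to T_+}\int_{\rho<|x|<|T_+-t|}\big(|\nabla(u(t)-v_0)|^2+(\partial_t u(t)-v_1)^2\big)dx$ can be made small by first choosing $\rho$ small --- here one uses that $(v_0,v_1)\in\HHH$ so $\int_{|x|<\rho}|\nabla v_0|^2+v_1^2\to 0$, and that the portion of $\vec u(t)$ on $\{\rho<|x|<|T_+-t|\}$ has energy bounded by $\int_{|x|>\rho}|\nabla_{t,x}u(t)|^2$, which is controlled uniformly and whose "inner part" near $|x|\sim\rho$ is small uniformly in $t$ by finite speed of propagation from the data at an earlier time. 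A standard $\varepsilon/2$ argument (choose $\rho$ small, then $t$ close to $T_+$) then yields the claim. The main obstacle is making the second step rigorous: controlling the energy of $u(t)$ on the thin shell $\{\rho<|x|<|T_+-t|\}$ uniformly as $t\to T_+$, which is where finite speed of propagation applied from a fixed earlier time slice (using that $\vec u$ at that slice is a genuine $\HHH$ function) together with the monotonicity/continuity of the exterior energy does the work; this is exactly the argument carried out in \cite[Section 3]{DuKeMe13} and I would follow it, adapting the dimension from $3$ to $4$ and invoking the local Cauchy theory of \S\ref{SS:Strichartz}--\S\ref{SS:outside}.
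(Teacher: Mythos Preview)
Your proposal is correct and follows the same route as the paper, which simply cites \cite[Lemma~4.1]{DuKeMe13} and notes that the argument (weak limit along a bounded sequence, finite speed of propagation combined with small-data Cauchy theory to get convergence on each $\{|x|>\rho\}$, then an $\varepsilon/2$ argument) is dimension-independent. Your shell in step~(ii) should read $\{|T_+-t|<|x|<\rho\}$ rather than $\{\rho<|x|<|T_+-t|\}$, but this is only a notational slip and does not affect the argument.
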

Lemma \ref{L:v0v1} is identical to Lemma 4.1 in \cite{DuKeMe13}. We refer to this article for the proof. Note that \cite[Lemma 4.1]{DuKeMe13} is stated in space dimension $3$, however the proof is independent of the dimension.
\subsubsection{Analysis along a sequence of times}
The core of the proof of Theorem \ref{T:main} in the finite time blow-up case is the following proposition, that we will deduce from Theorem \ref{T:rigidity}:
\begin{prop}
 \label{P:subsequence}
Let $\{t_n\}_n$ be a sequence of times such that 
\begin{gather*}
\lim_{n\to\infty} t_n=T_+,\quad \forall n,\; 0\leq t_n<T_+\\
\limsup_{n\to\infty} \|\vec{u}(t_n)\|_{\HHH}<\infty.
\end{gather*}
Then there exists a subsequence of $\{t_n\}_n$ (that we still denote by $\{t_n\}_n$), $J\geq 0$, $(\iota_j)_j\in \{\pm 1\}^J$, sequences $\{\lambda_{j,n}\}_n$, $1\leq j\leq J$ with 
$$\lambda_{1,n}\ll \ldots \ll \lambda_{J,n}\ll T_+-t_n,$$
such that
\begin{gather}
\label{CVH1}
\lim_{n\to \infty} \left\|u(t_n)-v_0 -\sum_{j=1}^J \frac{\iota_j}{\lambda_{j,n}} W\left( \frac{\cdot}{\lambda_{j,n}} \right)\right\|_{\dot{H}^1}=0\\
\label{dev_energy}
E(u_0,u_1)=E(v_0,v_1)+ J E(W,0)+ \frac{1}{2}\left\|\partial_t u(t_n)-v_1\right\|_{L^2}^2+o_n(1),
\end{gather}
where $(v_0,v_1)$ are given by Lemma \ref{L:v0v1}.
\end{prop}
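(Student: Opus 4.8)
The plan is to combine the profile decomposition machinery of Subsection \ref{SS:profile} with the rigidity theorem, Theorem \ref{T:rigidity}, following the template of \cite{DuKeMe13} but inserting the a priori information that soliton resolution holds along \emph{some} sequence of times, which is the content of \cite{CoKeLaSc18}. Concretely, I would first replace the given sequence $\{t_n\}_n$ by one along which \cite{CoKeLaSc18} applies, so that after extraction we have a decomposition of $\vec u(t_n)$ into $\vec v(T_+)$ (the exterior limit from Lemma \ref{L:v0v1}, extended inside the cone by a fixed pair $(v_0,v_1)\in\HHH$), a finite number of rescaled copies $\iota_j W(\cdot/\mu_{j,n})$ with ordered scales, and a remainder going to $0$ in $\HHH$; the Pythagorean expansion \eqref{Pyt1}–\eqref{Pyt2} gives \eqref{dev_energy} once we know the remainder's $L^2$-part of the time derivative matches $v_1$. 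The subtlety is that \cite{CoKeLaSc18} by itself only gives convergence along a further subsequence and does not directly address the $\dot H^1$-only statement \eqref{CVH1}; the improvement to $\dot H^1$ of the spatial component (rather than $\HHH$) is exactly what one extracts from the exterior energy estimate and the non-radiative structure, so I would keep track of the fact that $\partial_t u(t_n)-v_1\to 0$ in $L^2$ along the relevant sequence, which is the ``extra fact'' quoted from \cite{CoKeLaSc18} in the discussion after Theorem \ref{T:main}.

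Next, I would rule out any ``bad'' profile — i.e.\ a profile $U^j$ with $\tau_j=0$ whose nonlinear evolution does not have finite $L^3L^6$ norm in the exterior cone $\{|x|>|t|\}$, and more generally any profile that is not, up to scaling and sign, the stationary solution $W$. Here is where Theorem \ref{T:rigidity} enters: by finite speed of propagation and the concentration of a putative non-$W$ profile, one produces from it a solution of \eqref{NLW} which is non-radiative outside a wave cone and is not a rescaled $W$; Theorem \ref{T:rigidity} says such a solution must emit energy in one time direction, i.e.\ \eqref{channel} holds with $\eta>0$. Combined with the nonlinear profile decomposition outside wave cones (Proposition \ref{P:NL_profile}) and the pseudo-orthogonality of Lemma \ref{L:psdo_ortho}, this channel of energy would survive into the solution $u$ near the blow-up time, contradicting Lemma \ref{L:v0v1} (which forces the exterior energy to vanish as $t\to T_+$). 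Hence every profile with $\tau_j=0$ is $\pm W$ up to scaling, the set $\JJJ$ is empty, profiles with $\tau_j=\pm\infty$ must be zero by the same exterior-energy argument applied to the linear flow, and the dispersive remainder $w_n^J$ has vanishing $L^3L^6$ norm, so by the local theory it contributes nothing.

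With only $\pm W$ profiles surviving, I would then organize the scales: relabel so $\lambda_{1,n}\ll\cdots\ll\lambda_{J,n}$, and show $\lambda_{J,n}\ll T_+-t_n$. The last inequality follows because if $\lambda_{J,n}$ were comparable to (or larger than) $T_+-t_n$, then a fixed fraction of the energy of $W$ would sit outside the cone $\{|x|>T_+-t_n\}$, contradicting Lemma \ref{L:v0v1}; this uses the slow decay $W\sim 8/r^2$ and an explicit computation of $\|\nabla W(\cdot/\lambda)\|_{L^2(|x|>c\lambda)}$. The profile decomposition of $\vec u(t_n)$ then reads exactly \eqref{CVH1} for the spatial component (with $\mu_{j,n}=\lambda_{j,n}$), while the temporal component converges to $v_1$ in $L^2$ by the extra fact; combining the two Pythagorean expansions with $E(\vec W_L(0))=E(W,0)$ and the vanishing of the remainder's energy yields \eqref{dev_energy}.

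The main obstacle, as in \cite{DuKeMe13}, is the channel-of-energy step: showing that a non-$W$ profile genuinely produces a persistent exterior energy flux in the full solution $u$, despite possible cancellations between different profiles, and that this flux cannot be absorbed by the remainder or by interactions. This requires the nonlinear profile decomposition outside the wave cone (Proposition \ref{P:NL_profile}) to control the error $\epsilon_n^J$ in $(L^3L^6)(R_n)$ and in $\HHH(t+R_n)$ uniformly, the pseudo-orthogonality identities \eqref{Ps10}–\eqref{Ps11} to decouple the exterior energies of the profiles along the relevant sequence of times and radii, and a careful choice of the cutoff radius $R_n$ and the exterior cone so that Theorem \ref{T:rigidity}'s lower bound \eqref{channel} applies to the innermost ``bad'' profile after rescaling. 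Everything downstream — the ordering of scales, the identification of $(v_0,v_1)$, and the energy expansion — is then essentially bookkeeping, modulo the $\dot H^1$ versus $\HHH$ distinction which is handled by quoting the $L^2$-convergence of $\partial_t u(t_n)-v_1$ from \cite{CoKeLaSc18}.
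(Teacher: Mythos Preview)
Your proposal has two genuine gaps, and they stem from a confusion about what role \cite{CoKeLaSc18} plays.

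First, you cannot ``replace the given sequence $\{t_n\}_n$ by one along which \cite{CoKeLaSc18} applies.'' The proposition is stated for an \emph{arbitrary} sequence $t_n\to T_+$ with $\limsup\|\vec u(t_n)\|_{\HHH}<\infty$; you may pass to a subsequence of \emph{that} sequence, but not jump to a different one. Moreover, the main result of \cite{CoKeLaSc18} requires $\sup_{t<T_+}\|\vec u(t)\|_{\HHH}<\infty$, which is not yet known at this stage --- in the paper this uniform bound is deduced \emph{from} Proposition \ref{P:subsequence} (see Step 1 of the ``End of the proof''). So invoking \cite{CoKeLaSc18} here is circular. Relatedly, the proposition does \emph{not} assert that $\partial_t u(t_n)-v_1\to 0$ in $L^2$; that term sits in \eqref{dev_energy} as a residual, and its vanishing (along a special sequence) is a separate input used later.

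Second, and more seriously, you are missing the argument that kills the dispersive remainder in $\dot H^1$. After ruling out non-$W$ compact profiles and scattering profiles (your channel-of-energy sketch for this part is essentially correct and matches the paper's Steps 3--4), you are left with
\[
\vec u(t_n)=\vec v(t_n)+\sum_{j=1}^J\Big(\tfrac{\iota_j}{\lambda_{j,n}}W(\cdot/\lambda_{j,n}),0\Big)+(w_{0,n},w_{1,n}),
\]
with $\|w_n\|_{L^3L^6}\to 0$. This does \emph{not} imply $\|w_{0,n}\|_{\dot H^1}\to 0$; one can have a bounded sequence in $\HHH$ with no profiles at all but nonzero $\dot H^1$ norm. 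The paper's Step 5 is exactly this point: one applies the $4D$ exterior energy lower bound \eqref{CKS} (which only sees the $\dot H^1$ part of the data) to the free evolution $w_n$ of $(w_{0,n},w_{1,n})$, obtaining a channel
\[
C\int_{|t|<|x|<|t|+(1-t_n)}|\nabla_{t,x}w_n(t,x)|^2\,dx\;\ge\;\|w_{0,n}\|_{\dot H^1}^2
\]
for all $t\ge 0$ or all $t\le 0$, and then runs the same pseudo-orthogonality contradiction as in Step 3 to force $\|w_{0,n}\|_{\dot H^1}\to 0$. This is precisely why the conclusion \eqref{CVH1} is only in $\dot H^1$ and not in $\HHH$: the $4D$ linear estimate gives you nothing on $w_{1,n}$. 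Your proposal treats this as ``bookkeeping'' and tries to patch it with \cite{CoKeLaSc18}, but that is not available here; the $\dot H^1$ control of $w_{0,n}$ has to come from the exterior energy.
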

Proposition \ref{P:subsequence} is the analog in space dimension 4 of Proposition 4.2 of \cite{DuKeMe13}. However the conclusion \eqref{CVH1} of Proposition \ref{P:subsequence} is weaker than the one of Proposition 4.2 of \cite{DuKeMe13}, which states in addition that $\partial_tu(t_n)-v_1$ goes to $0$ as $n$ goes to infinity in $L^2$. This weaker conclusion is due to the fact that in space dimension $4$, the lower bound for the asymptotic linear energy \eqref{CKS} is weaker than its analog in space dimension $3$.

\begin{proof}
To simplify notations, we assume without loss of generality in all the proof that
$$T_+(u)=1.$$
\noindent\emph{Step 1.} We first prove that we can assume
\begin{equation}
 \label{v0v1small}
 \|(v_0,v_1)\|_{\HHH}\leq \eps,
\end{equation} 
where $\eps$ is a fixed small constant. Let $v$ be the solution of \eqref{NLW} such that $\vec{v}(1)=(v_0,v_1)$. By finite speed of propagation and the definition of $(v_0,v_1)$, $v(t,r)=u(t,r)$ for $r>1-t$, $t<1$ close to $1$. Thus
\begin{multline*}
\int_{1-t}^{3-3t} \left( (\partial_{t,r}u(t,r))^2+\frac{1}{r^2}(u(t,r))^2 \right)r^3dr\\=\int_{1-t}^{3-3t} \left( (\partial_{t,r}v(t,r))^2+\frac{1}{r^2}(v(t,r))^2 \right)r^3dr\underset{t\to 1}{\longrightarrow}0. 
\end{multline*}
Hence, we can choose $t_0<1$ close to $1$ such that 
\begin{equation}
 \label{Add10}
 t_0<t<1\Longrightarrow \int_{1-t}^{3-3t} \left( (\partial_{t,r}u(t,r))^2+\frac{1}{r^2}(u(t,r))^2 \right)r^3dr\leq \eps^2.
\end{equation} 
We fix $(\tu_0,\tu_1)\in \HHH$ such that 
\begin{gather}
 \label{tu_u_egal}
 r\leq 3-3t_0\Longrightarrow (\tu_0,\tu_1)(r)=(u(t_0,r),\partial_t u(t_0,r))\\
 \label{Add11}
 \int_{1-t_0}^{\infty} \left( (\partial_{r}\tu_0(r))^2+(\tu_1(r))^2+\frac{1}{r^2}(\tu_0(r))^2 \right)r^3dr\lesssim \eps^2.
\end{gather} 
Let $\tu$ be the solution of \eqref{NLW} such that $(\tu(t_0),\partial_t\tu(t_0))=(\tilde{u}_0,\tilde{u}_1)$. By \eqref{tu_u_egal} and finite speed of propagation,
\begin{equation}
 \label{Add12}
 \tilde{u}(t,r)=u(t,r),\quad r\leq 3-2t_0-t,\quad t_0<t<1.
\end{equation} 
By finite speed of propagation, small data theory and \eqref{Add11}, 
\begin{equation}
 \label{Add13}
 \left\|(\tilde{u},\partial_t\tilde{u})\right\|_{\HHH(1-2t_0+t)}\lesssim \eps,\quad t_0<t<1.
\end{equation} 
Combining \eqref{Add10}, \eqref{Add12} and \eqref{Add13}, we see that 
$$\|(\tilde{u},\partial_t\tilde{u})(t)\|_{\HHH(1-t)}\lesssim \eps,\quad t_0<t<1.$$
Replacing $u$ by $\tu$, we obtain a solution of \eqref{NLW} that blows up at $t=1$, has not changed in $\{r<1-t\}$ and satisfies the additional condition \eqref{v0v1small}. Note that the fact that $u=\tilde{u}$ in $\{r<1-t\}$ implies (defining $\tv$ in the same way as $v$ with $u$ replaced by $\tu$)
$$\lim_{t\to 1} \big\|(u(t)-v(t))-(\tilde{u}(t)-\tilde{v}(t)),\partial_tu(t)-\partial_tv(t))-(\partial_t\tilde{u}(t)-\partial_t\tilde{v}(t))\big\|_{\HHH}=0,$$
so that if \eqref{CVH1}, \eqref{dev_energy} hold for $\tu$, then it holds for the original $u$.

\smallskip

\noindent\emph{Step 2. Profile decomposition.} Extracting subsequences, we assume that $\vec{u}(t_n)-(v_0,v_1)$ admits a profile decomposition as in \S \ref{SS:profile}. We use the same notations $U^j_L$, $U^j$, $\{\lambda_{j,n}\}_n$, $\{t_{j,n}\}_n$, $w_n^J$ as in \S \ref{SS:profile}. Extracting subsequences again and rescaling the profiles, we can assume that there is a partition $\NN\setminus\{0\}=\III_s\cup \III_c\cup\III_+\cup \III_-$ of the set of indices such that 
\begin{align*}
 j\in \III_s&\iff \forall n,\; t_{j,n}=0, U_L^j(0)\in \{W,-W,0\}\text{ and } \partial_t U_L^j(0)=0\\
 j\in \III_c&\iff \forall n,\; t_{j,n}=0\text{ and } U^j\text{ is not a stationary solution of \eqref{NLW}}\\
 j\in \III_{\pm}&\iff \lim_{n\to\infty}\frac{-t_{j,n}}{\lambda_{j,n}}=\pm \infty.
\end{align*}
In Step 3, we prove that $\III_c=\emptyset$. In Step 4, we prove that $\III_{\pm}=\emptyset$. In Step 5, we conclude the proof, by showing that the first component of the dispersive remainder $w_n^J$ goes to $0$ in $\hdot$ as $n\to\infty$.

\medskip

\noindent\emph{Step 3. Compact profiles.} In this step we prove by contradiction that $\III_c$ is empty. Assume that $\III_c$ is not empty. As in Subsection \ref{SS:profile}, we denote by $\JJJ$ the set of indices $j\geq 1$ such that $\forall n$, $t_{j,n}=0$ and the solution $U^j$ cannot be extended to $\{r>|t|\}$ with finite $L^3L^6$ norm. We recall that by the small data theory, $\JJJ$ is finite, and note that $\JJJ\subset \III_c$. If $\JJJ$ is not empty, we can extract subsequences such that there exists a unique $j_0\in \JJJ$ such that 
$$\forall j\in \JJJ\setminus \{j_0\},\quad \lim_{n\to\infty} \frac{\lambda_{j,n}}{\lambda_{j_0,n}}=0.$$
By Theorem \ref{T:rigidity}, there exists $R>0$ such that the solution $U^{j_0}$ is defined for $|x|>R+|t|$ and
\begin{equation}
 \label{seq20}
 \sum_{\pm \infty} \lim_{t\to\pm\infty} \int_{|x|>R+|t|} \left|\nabla_{t,x}U^{j_0}(t,x)\right|^2\,dx=\eta_0>0.
\end{equation} 
If $\JJJ$ is empty, we let $j_0\in \III_c$ and $R=0$, so that $U^{j_0}$ is defined on $\{|x|>|t|\}=\{|x|>R+|t|\}$ and, by Theorem \ref{T:rigidity}, \eqref{seq20} holds. 

In both cases, combining \eqref{seq20} with finite speed of propagation and the small data theory, we see that 
the following holds for all $t>0$ or for all $t<0$:
\begin{equation}
 \label{seq32}
 \int_{|x|>R+|t|} |\nabla_{t,x}U^{j_0}(t,x)|^2dx\geq \eta_1>0. 
\end{equation} 
We let $R_n=R\lambda_{j_0,n}$ for all $n$. By Proposition \ref{P:NL_profile}, for large $n$, the solution $u$ is defined for $\{r>R_n+|t-t_n|\}$ and 
\begin{equation}
 \label{seq30'}
 u(t_n+\tau,r)=v(1+\tau,r)+\sum_{j=1}^JU_n^j(\tau,r)+w_n^J(\tau,r)+\eps_n^J(\tau,r),
\end{equation} 
where 
\begin{equation}
 \label{seq31} \lim_{J\to\infty} \limsup_{n\to\infty} \sup_{\tau} \left\|\eps_n^J\right\|_{\HHH(R_n+|\tau|)}=0,
\end{equation}
and $v$ is as above the solution of \eqref{NLW} with initial data $(v_0,v_1)$ at $t=1$, which is globally defined and scattering by the small data theory. Using the continuity of $v$, we can rewrite \eqref{seq30'} as 
\begin{equation}
 \label{seq30}
 u(t_n+\tau,r)=v(t_n+\tau,r)+\sum_{j=1}^JU_n^j(\tau,r)+w_n^J(\tau,r)+\eps_n^J(\tau,r),
\end{equation}  
where the remainder $\eps_n^J$ has slightly changed but still satisfies \eqref{seq31}. 
We distinguish two cases.

\smallskip

\noindent\emph{Case 1. Channels of energy in the future.} We assume that \eqref{seq32}  holds for all $t>0$. Recall that by finite speed of propagation $u(t,r)=v(t,r)$ for $r>1-t$, $t$ close to $1$. 
By \eqref{seq30} at $\tau=1-t_n$, we obtain, denoting $\rho_n=R\lambda_{j_0,n}+1-t_n$,
\begin{multline}
 \label{seq41}
 0=\sum_{1\leq j\leq J} \int_{|x|>\rho_n} \nabla_{t,x}U_n^{j}(1-t_n,x)\cdot\nabla_{t,x} U_n^{j_0}(1-t_n,x)\,dx\\
 +\int_{|x|>\rho_n} \nabla_{t,x} w_n^J(1-t_n,x)\cdot \nabla_{t,x} U_n^{j_0}(1-t_n,x)dx\\+\int_{|x|>\rho_n} \nabla \eps_n^J(1-t_n,x)\cdot\nabla_{t,x} U_n^{j_0}(1-t_n,x)dx.
\end{multline}
By \eqref{seq32}, at $t=(1-t_n)/\lambda_{j_0,n}$, 
\begin{multline}
 \label{seq42}
 \int_{|x|>\rho_n} |\nabla_{t,x} U_n^{j_0}(1-t_n,x)|^2dx\\
 =\int_{|y|>R+\frac{1-t_n}{\lambda_{j_0,n}}}  \left|\nabla_{t,x}U^{j_0}\left( \frac{1-t_n}{\lambda_{j_0,n}},y \right)\right|^2dy\geq \frac{\eta_1}{2}
\end{multline} 
for large $n$.  On the other hand, by Lemma \ref{L:psdo_ortho},
\begin{align}
 \label{seq50} j\neq j_0&\Longrightarrow \lim_{n\to\infty}\int_{|x|>\rho_n} \nabla_{t,x} U_n^{j}(1-t_n,x)\cdot\nabla_{t,x} U_n^{j_0}(1-t_n,x)\,dx=0\\
 \label{seq51}
 J\geq j_0&\Longrightarrow \lim_{n\to\infty}\int_{|x|>\rho_n} \nabla_{t,x} w_n^{J}(1-t_n,x)\cdot \nabla_{t,x}U_n^{j_0}(1-t_n,x)\,dx=0.
\end{align}
Combining \eqref{seq41}, \eqref{seq42}, \eqref{seq50}, \eqref{seq51} with the property \eqref{seq31} of $\eps_n^J$, we obtain a contradiction.

\smallskip

\noindent \emph{Case 2. Channels of energy in the past.} Next, we assume that \eqref{seq32} holds for all $t\leq 0$. We fix $t_0<1$ in the domain of existence of $u$. By \eqref{seq30}, at $\tau=t_0-t_n$, we have, letting $\tau_n=t_0-t_n$, $\rho_n=R\lambda_{j_0,n}+|\tau_n|$. 
\begin{multline}
 \label{seq52}
 \sum_{j=1}^J\int_{|x|>\rho_n} \nabla U_n^{j}(\tau_n,x)\cdot\nabla U_n^{j_0}(\tau_n,x)dx\\
 +\int_{|x|>\rho_n} \nabla_{t,x} w_n^J(\tau_n,x)\cdot \nabla_{t,x} U_n^{j_0}(\tau_n,x)dx+\int_{|x|>\rho_n} \nabla \eps_n^J(\tau_n,x)\cdot\nabla_{t,x} U_n^{j_0}(\tau_n,x)dx\\
 =\int_{|x|>\rho_n}\left(\nabla_{t,x}u(t_0,x)-\nabla_{t,x}v(t_0,x)\right) \cdot\nabla_{t,x}U_n^{j_0}(\tau_n,x)dx\underset{n\to\infty}{\longrightarrow} 0,
\end{multline}
where we have used that $\lim\tau_n=1-t_0$, and that $\vec{u}(t_0,x)-\vec{v}(t_0,x)=0$ for $|x|>1-t_0$.
Since \eqref{seq32} holds for all $t\leq 0$, we have
\begin{equation}
 \label{seq60}
 \forall n,\quad \int_{|x|>\rho_n}|\nabla U_n^{j_0}(\tau_n,x)|^2dx\geq \eta_1.
\end{equation} 
 By Lemma \ref{L:psdo_ortho}, 
\begin{align}
 \label{seq63} j\neq j_0&\Longrightarrow \lim_{n\to\infty}\int_{|x|>\rho_n} \nabla_{t,x} U_n^{j}(\tau_n,x)\cdot\nabla_{t,x} U_n^{j_0}(\tau_n,x)\,dx=0\\
 \label{seq64}
 J\geq j_0&\Longrightarrow \lim_{n\to\infty}\int_{|x|>\rho_n} \nabla_{t,x} w_n^{J}(\tau_n,x)\cdot \nabla_{t,x}U_n^{j_0}(\tau_n,x)\,dx=0.
\end{align}
 Combining \eqref{seq31}, \eqref{seq52}, \eqref{seq60}, \eqref{seq63} and \eqref{seq64} we obtain a contradiction. 
 
 \medskip
 
 \noindent \emph{Step 4. Scattering profiles.} We next prove that $\III_+$ and $\III_-$ are empty. The proof is close to the proof of the fact that $\III_c$ is empty, and we only sketch it, focusing on the arguments that are new with respect to Step 3. By Step 3, $\JJJ\subset \III_c=\emptyset$, and thus \eqref{seq30} and \eqref{seq31} hold with $R_n=0$. 
 
 Assume that $\III_+\neq \emptyset$, and let $j_0\in \III_+$. Recall that $U^{j_0}$ is a nonzero solution of the linear wave equation. By Proposition \ref{P:radiation} and Remark \ref{R:radiation},  we obtain that there exist $A\in \RR$, $\eta_1>0$ and $T$ such that
 $$\forall t\geq T,\quad \int_{|x|>t+A} |\nabla_{t,x}U^{j_0}(t,x)|^2dx\geq \eta_1>0.$$
Changing variables, we deduce
$$\forall \tau\geq \lambda_{j_0,n} T+t_{j_0,n},\quad \int_{|y|>\tau-t_{j_0,n}+\lambda_{j_0,n}A} |\nabla_{\tau,y}U_n^{j_0}(\tau,y)|^2dy\geq \eta_1.$$
We will use the preceding bound with $\tau=1-t_n$, which is possible since $-t_{j_0,n}/\lambda_{j_0,n}\to \infty$ as $n\to\infty$. Indeed, $\lambda_{j_0,n}T+t_{j_0,n}<0$, and thus $1-t_n\geq \lambda_{j_0,n} T+t_{j_0,n}$ for large $n$. Denoting $\rho_n=1-t_n-t_{j_0,n}+\lambda_{j_0,n}A$, we obtain that for large $n$
$$\int_{|y|>\rho_n} |\nabla_{t,y}U_n^{j_0}(1-t_n,y)|^2dy \geq \eta_1.$$
We note that $\rho_n>1-t_n$ for large $n$. The end of the proof is the same as the one of Case 1 of Step 3 above, and we omit it.

The proof of the fact that $\III_-=\emptyset$ is along the same lines (following Case 2 of Step 3) and is also omitted.

\medskip

\noindent \emph{Step 5.} We have proved the existence of $J\geq 0$, $\{\iota_j\}_{1\leq j\leq J}\in \{\pm 1\}^J$, $0<\lambda_{1,n}\ll \ldots \ll \lambda_{J,n}$ such that 
\begin{equation}
 \label{seq80}
 \vec{u}(t_n)=\vec{v}(t_n)+\sum_{j=1}^{J} \left( \frac{\iota_j}{\lambda_{j,n}}W\left( \frac{\cdot}{\lambda_{j,n}} \right),0 \right)+(w_{0,n},w_{1,n}),
\end{equation} 
where, denoting by $w_n$ the solution of the linear wave equation with initial data $(w_{0,n},w_{1,n})$,
$$ \lim_{n\to\infty} \|w_n\|_{L^3(\RR,L^6)}=0.$$
 Using that the support of $u-v$ is included in $\{r<1-t\}$, that, by \eqref{seq80},
 $$ \lambda_{J,n} (u-v)(t_n,\lambda_{J,n}\cdot)\xrightharpoonup[n\to\infty]{}\iota_{J} W,$$
 and that $W$ is not compactly supported, we obtain 
 \begin{equation}
  \label{noSS}
 \lim_{n\to\infty}\frac{\lambda_{J,n}}{1-t_n}=0.
 \end{equation} 
 In this step, we prove that 
 \begin{equation}
  \label{conclu_Step4}
\lim_{n\to\infty}\|w_{0,n}\|_{\hdot}=0,
\end{equation} 
 which will conclude the proof of Proposition \ref{P:subsequence}. By \eqref{CKS} and the time decay of the exterior energy for the linear wave equation, the following holds for all $t\geq 0$ or for all $t\leq 0$:
 \begin{equation}
  \label{channelwn}
  C\int_{|x|>|t|} |\nabla_{t,x}w_n(t,x)|^2dx\geq \|w_{0,n}\|_{\hdot}^2.
 \end{equation} 
 On the other hand, since by \eqref{seq80}, 
 $$\lim_{n\to\infty} \int_{|x|\geq 1-t_n} \left(|\nabla w_{0,n}(x)|^2+w_{1,n}^2(x)\right)dx=0,$$
 we see that for any $\eps>0$, if $n$ is large enough, 
 $$ \forall t,\quad \int_{|x|\geq 1-t_n+|t|} |\nabla_{t,x} w_n(t,x)|^2dx\leq \eps.$$
 Thus for large $n$, the following holds for all $t\geq 0$ or for all $t\leq 0$.
 \begin{equation}
  \label{channelwn'}
  C\int_{|t|<|x|<|t|+(1-t_n)} |\nabla_{t,x}w_n(t,x)|^2dx\geq \|w_{0,n}\|_{\hdot}^2.
 \end{equation} 
 Arguing by contradiction along the same lines as in Step 3, we obtain \eqref{conclu_Step4}. This and \eqref{seq80} give \eqref{CVH1} and \eqref{dev_energy}.
 \end{proof}
\subsubsection{End of the proof}
We are now in position to prove Theorem \ref{T:main} in the finite time blow-up case.
We consider $u$ as above, and let $(v_0,v_1)$ be given by Lemma \ref{L:v0v1}.

\noindent\emph{Step 1. Boundedness.} We first prove that $u$ remains bounded in $\HHH$ as $t\to T_+$. For this, we let $\{t_n\}_n\to T_+$ be a sequence of times such that $\{\vec{u}(t_n)\}_n$ is bounded in $\HHH$. By Proposition \ref{P:subsequence},  there exists $J$ such that 
\begin{gather}
 \label{devH1}
\|u(t_n)\|^2_{\hdot}=\|v_0\|^2_{\hdot}+J\|W\|^2_{\hdot} +o_n(1)\\
\label{dev_energy'}
E(u_0,u_1)=E(v_0,v_1)+ J E(W,0)+ \frac{1}{2}\left\|\partial_t u(t_n)-v_1\right\|_{L^2}^2+o_n(1).
\end{gather}
From \eqref{dev_energy'}, we deduce 
$$\limsup_{n\to\infty} \left\|\partial_t u(t_n)-v_1\right\|_{L^2}^2\leq E(u_0,u_1)-E(v_0,v_1)$$
and
$$ J\leq \frac{E(u_0,u_1)-E(v_0,v_1)}{E(W,0)}.$$
From the equation $-\Delta W=W^3$, we obtain $\int |\nabla W|^2=\int |W|^4$ and thus $\|W\|^2_{\hdot}=4E(W,0)$. Using also \eqref{devH1}, we deduce
$$\limsup_{n\to\infty} \|u(t_n)\|^2_{\hdot} \leq \|v_0\|^2_{\hdot}+ 4(E(u_0,u_1)-E(v_0,v_1)).$$
Combining, we see that there exists a constant $C_0$, depending only on $u$ (but independent of the choice of the sequence $\{t_n\}_n$) such that if $\{\vec{u}(t_n)\}_n$ is bounded in $\HHH$, 
$$ \limsup_{n\to\infty}\|\vec{u}(t_n)\|_{\HHH}\leq C_0.$$
Using that 
$$\liminf_{t\to\infty} \|\vec{u}(t)\|_{\HHH}<\infty$$
we deduce immediately, since $t\mapsto \|\vec{u}(t)\|_{\HHH}$ is continuous,
$$\limsup_{t\to T_+}\|\vec{u}(t)\|_{\HHH}\leq C_0.$$ 

\medskip

\noindent\emph{Step 2. Convergence to $0$ of the time derivative inside the wave cone.} In this step we prove
\begin{equation}
 \label{BUp30}
\lim_{t\to T_+}\|\partial_tu(t)-v_1\|_{L^2}=0.
\end{equation} 
By Step 1 and the soliton resolution for a sequence of times proved in \cite{CoKeLaSc18}, there exists a sequence $\{t_n\}_n$, $J\geq 1$, $(\iota_j)_j\in \{\pm 1\}^J$, sequences $\{\lambda_{j,n}\}_n$, $1\leq j\leq J$ with 
\begin{equation}
\label{cond_lambda}
\lambda_{1,n}\ll \ldots \ll \lambda_{J,n}\ll T_+-t_n, 
\end{equation} 
such that
\begin{equation}
\label{BUp31}
\lim_{n\to \infty} \left\|\vec{u}(t_n)-(v_0,v_1) -\sum_{j=1}^J \left(\frac{\iota_j}{\lambda_{j,n}} W\left( \frac{\cdot}{\lambda_{j,n}} \right),0\right)\right\|_{\HHH}=0.
\end{equation}
Note that the conclusion of Step 1 is necessary here, since the main result of \cite{CoKeLaSc18} is valid assuming that $\vec{u}(t)$ is bounded in $\HHH$, a stronger statement than our assumption \eqref{liminf}.

As an immediate consequence of this, we obtain
\begin{equation}
 \label{BUp32}
E(u_0,u_1)=E(v_0,v_1)+JE(W,0).
\end{equation} 
We prove \eqref{BUp30} by contradiction. If it does not hold, using \eqref{BUp31}, we can find an arbitrarily small $\eps_0>0$ and a sequence $t_n'\to T_+$ such that $\|\partial_tu(t_n')-v_1\|_{L^2}=\eps_0$. By Proposition \ref{P:subsequence}, there exists an integer $J'\geq 0$ such that
$$E(u_0,u_1)=E(v_0,v_1)+J'E(W,0)+\frac{1}{2}\eps_0^2.$$
Combining with \eqref{BUp32}, we deduce that 
$$\frac{1}{2}\eps_0^2=(J-J')E(W,0),$$
a contradiction if $\eps_0^2$ is smaller than $E(W,0)$. 

\medskip

\noindent\emph{Step 3. End of the proof.} Combining Steps $1$ and $2$ with Proposition \ref{P:subsequence}, we see that for any sequence $\{t_n\}_n\to T_+$, extracting subsequences if necessary, there exist $J\geq 1$, $(\iota_j)_j\in \{\pm 1\}^J$, sequences $\{\lambda_{j,n}\}_n$, $1\leq j\leq J$ such that \eqref{cond_lambda} and \eqref{BUp31} hold. We note that 
$$ J=\frac{E(u_0,u_1)-E(v_0,v_1)}{E(W,0)}$$
is independent of the choice of $\{t_n\}_n$.  The case $J=0$ is excluded since $T_+$ is the maximal time of existence of $u$. 

It remains to construct the scaling parameters $\lambda_j(t)$ such that the expansion \eqref{expansion_u_bup} holds. 
This can be done as in \cite{DuKeMe13}, Section 3.5
defining, for $j=1\ldots J$ and $t<1$ close to $1$,
$$ B_j:=(j-1) \|\nabla W\|_{L^2}^2+\int_{|x|\leq 1}|\nabla W(x)|^2\,dx$$
and
\begin{equation*}
\lambda_{j}(t):=\inf\left\{ \lambda>0\text{ s.t. } \int_{|x|\leq \lambda} \left|\nabla(u-v)(t,x)\right|^2\,dx\geq B_j\right\}.
\end{equation*} 
We refer to Section 3.5 of \cite{DuKeMe13} for the proof that the conclusion of Theorem \ref{T:main} in the finite time blow-up case  holds with this choice of $\lambda_j(t)$ 

\subsection{Comments on the proof in the global case}
We recall:
\begin{prop}
\label{P:global}
Let $u$ be a solution of \eqref{NLW} such that $T_+(u)=+\infty$. Then
\begin{itemize}
 \item $\ds \liminf_{t\to \infty} \|\vec{u}(t)\|_{\HHH}<\infty$.
 \item There exists a finite energy solution $v_L$ of the free wave equation such that
 $$ \forall A\in \RR,\quad \lim_{t\to\infty} \int_{|x|>t+A} |\nabla_{t,x}(u-v_L)(t,x)|^2dx=0.$$
\end{itemize}
\end{prop}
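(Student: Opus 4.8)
Proposition~\ref{P:global} recalls, for the $4$D cubic equation, facts known for the energy-critical wave equation (see \cite{DuKeMe12b,DuKeMe13}); I would argue as follows. For the \emph{first item}, conservation of energy gives $\|\vec u(t)\|_{\HHH}^2=2E(u_0,u_1)+\frac12\|u(t)\|_{L^4}^4$, so it suffices to bound $\liminf_{t\to\infty}\|u(t)\|_{L^4}$. When $E(u_0,u_1)<E(W,0)$, I would use the sharp Sobolev inequality (extremiser $W$, with $\|\nabla W\|_{L^2}^2=\|W\|_{L^4}^4=4E(W,0)$) and the variational characterisation of $W$: then $\|\nabla u(t)\|_{L^2}$ stays strictly on one side of $\|\nabla W\|_{L^2}$ for all $t\ge0$; if below, $\sup_{t\ge0}\|\vec u(t)\|_{\HHH}<\infty$, and if above, $u$ blows up in finite time (as in \cite{KeMe08}), contradicting $T_+=\infty$. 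When $E(u_0,u_1)\ge E(W,0)$, I would argue by contradiction: assuming $\|u(t)\|_{L^4}\to\infty$, apply the profile decomposition of \cite{BaGe99} along a sequence of times, together with the perturbation theory of \S\ref{SS:Strichartz}, to extract — following the concentration--compactness scheme of \cite{KeMe06,KeMe08} — a nonzero global-forward solution of \eqref{NLW} with the compactness property, i.e.\ with precompact $\HHH$-trajectory modulo the scaling of \eqref{NLW}; since that scaling preserves the $\HHH$-norm in dimension $4$, a precompact trajectory is bounded, contradicting $\|u(t)\|_{L^4}\to\infty$.

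For the \emph{second item}, let $\delta_0$ be the small-data threshold for solutions outside a wave cone (\S\ref{SS:outside}) and put $R_*=\inf\{R>0:\|(u_0,u_1)\|_{\HHH(R)}<\delta_0\}<\infty$. For every $R>R_*$ the solution exists on $\{|x|>R+|t|\}$ with finite Strichartz norm, hence $u^3\in(L^1L^2)(R)$; feeding this into $\partial_t^2u-\Delta u=u^3$ and using \eqref{Strichartz_cone} produces a free solution $v_L^{(R)}$ with $\sup_t\|\vec u(t)-\vec v_L^{(R)}(t)\|_{\HHH(R+|t|)}\lesssim\delta_0^3$ and $\lim_{t\to+\infty}\|\vec u(t)-\vec v_L^{(R)}(t)\|_{\HHH(R+t)}=0$. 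By finite speed of propagation and uniqueness, the Cauchy data of $v_L^{(R)}$ on $\{|x|>R\}$ are independent of $R>R_*$, so they glue to a pair $(v_0,v_1)$ on $\{|x|>R_*\}$ with $\|(v_0,v_1)\|_{\HHH(R)}\lesssim\delta_0$ there; extending $(v_0,v_1)$ arbitrarily with finite energy inside $\{|x|<R_*\}$, I obtain the finite-energy free solution $v_L$. For $A>R_*$, choosing $R\in(R_*,A]$ yields $\{|x|>t+A\}\subset\{|x|>t+R\}$, whence $\lim_{t\to\infty}\int_{|x|>t+A}|\nabla_{t,x}(u-v_L)(t)|^2\,dx=0$; the extension to arbitrary $A\in\RR$, i.e.\ down to and below $R_*$, I would obtain exactly as in \cite[Lemma~4.1]{DuKeMe13}, controlling the remaining shells $\{t+A<|x|<t+R_*\}$ via the monotonicity of the energy flux through light cones (there the radiation has already decoupled from the interior).

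The delicate point is the \emph{first item} in the range $E(u_0,u_1)\ge E(W,0)$: the Sobolev/variational dichotomy is then useless, and one must run the full concentration--compactness reduction to a compactness-property solution — handling the focusing nonlinearity and the possible bubbling along the sequence of times — before invoking the scale invariance of $\|\cdot\|_{\HHH}$. Everything in the second item is comparatively standard exterior analysis.
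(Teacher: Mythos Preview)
Your treatment of the second item is essentially the argument in \cite[\S3.3]{DuKeMe13} and \cite[\S4.1]{CoKeLaSc18}: build $v_L$ from the exterior small-data scattering for $R>R_*$, glue by finite speed of propagation, and push down to arbitrary $A$ via the monotonicity of the energy flux across cones. (Your reference to \cite[Lemma~4.1]{DuKeMe13} is to the finite-time analogue rather than the global one, but the mechanism is the same.)

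The first item, however, has a genuine gap in the regime $E(u_0,u_1)\ge E(W,0)$. You assume $\|u(t)\|_{L^4}\to\infty$ and propose to run the Bahouri--G\'erard profile decomposition on $\{\vec u(t_n)\}_n$; but that decomposition requires a \emph{bounded} sequence in $\HHH$, and your own identity $\|\vec u(t)\|_{\HHH}^2=2E+\tfrac12\|u(t)\|_{L^4}^4$ forces $\|\vec u(t_n)\|_{\HHH}\to\infty$. Since in dimension $4$ the scaling $u\mapsto\lambda u(\lambda t,\lambda x)$ preserves both $\|\cdot\|_{\HHH}$ and $\|\cdot\|_{L^4}$, there is no renormalisation that produces a bounded sequence of solutions. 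And even if one could extract some limiting compact object, its boundedness says nothing about the original $u$: the Kenig--Merle scheme manufactures a \emph{minimal} counterexample to a threshold-type statement, which is not the assertion here.

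The proof the paper cites (\cite[\S3.2]{DuKeMe13}) is instead a localised virial/variance argument. One differentiates $y_R(t)=\int\varphi(x/R)\,u\,\partial_t u\,dx$ and finds
\[
y_R'(t)=\int\varphi(x/R)\bigl(|\partial_t u|^2-|\nabla u|^2+|u|^4\bigr)\,dx+O\!\left(\int_{|x|\sim R}\frac{|u|^2}{|x|^2}\right),
\]
while $|y_R(t)|\lesssim R\,\|\vec u(t)\|_{\HHH}^2$ by Hardy. Using energy conservation to rewrite the integrand as $2|\partial_t u|^2+\tfrac12|u|^4-2E$ (up to the tails of $\varphi$), one sees that if $\|u(t)\|_{L^4}\to\infty$ then $\int_0^T y_R'\,dt$ grows faster than $R\cdot\sup_{[0,T]}\|\vec u\|_{\HHH}^2$ for $R\sim T$, a contradiction. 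This convexity/blow-up mechanism is what replaces your compactness step.
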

See Subsections 3.2 and 3.3 in \cite{DuKeMe13} for the proofs. The proofs are written in space dimension $3$ there, but are indeed independent of the dimension. See also \cite{CoKeLaSc18}, Subsection 4.1, for the proof of the second point in space dimension $4$.

In view of Proposition \ref{P:global}, the proof of Theorem \ref{T:main} in the global case is very close to the corresponding proof in the finite time blow-up case, replacing $(v_0,v_1)$ and $t=1$ by $\vec{v}_L(t)$ and $t=+\infty$, and we omit it. 

\section{Soliton resolution for corotional wave maps}
\label{S:resolutionWM}
\subsection{Preliminaries on wave maps}
\subsubsection{Miscellaneous results on wave maps}
We gather here a few standard results on wave maps. 
\begin{claim}
 \label{Cl:smallWM}
 Let $(\psi_0,\psi_1)\in \Ebf$, such that $E_M(\psi_0,\psi_1)$  is small. Then there exists $\ell$ such that $(\psi_0,\psi_1)\in \Hbf_{\ell,\ell}$ and
 $$ \|(\psi_0-\ell\pi,\psi_1)\|_{\Hbf}^2\approx E_M(\psi_0,\psi_1).$$
 Similarly, if $R>0$, $(\psi_0-\ell\pi,\psi_1)\in \Hbf(R)$ for some $\ell\in \ZZ$, and 
 $$ \int_{R}^{\infty} \left(\psi_1^2+(\partial_r\psi_0)^2 +\frac{1}{r^2}\sin^2\psi_0(r)\right)rdr$$ 
 is small, then
 \begin{multline*}
\int_{R}^{\infty} \left(\psi_1^2+(\partial_r\psi_0)^2 +\frac{1}{r^2}\sin^2\psi_0(r)\right)rdr\\
\approx \int_{R}^{\infty} \left(\psi_1^2+(\partial_r\psi_0)^2 +\frac{1}{r^2}(\psi_0(r)-\ell\pi)^2\right)rdr.
 \end{multline*}
\end{claim}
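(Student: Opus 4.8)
The plan is to use the classical topological (Bogomol'nyi-type) lower bound for the co-rotational energy. Let $G(\psi)=\int_0^\psi|\sin s|\,ds$. Then $G$ is an odd, increasing $C^1$ diffeomorphism of $\RR$ with $G'(\psi)=|\sin\psi|$, satisfying $G(\psi+\pi)-G(\psi)=2$ and $G(\ell\pi+s)-G(\ell\pi)=G(s)$ for every $\ell\in\ZZ$ (since $|\sin(s+\ell\pi)|=|\sin s|$), vanishing quadratically at each $\ell\pi$ (so $|G(\ell\pi+s)-G(\ell\pi)|\gtrsim s^2$ for $|s|\le1$) and growing linearly at infinity (so $|G(\ell\pi+s)-G(\ell\pi)|\gtrsim|s|$ for $|s|\ge1$). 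The only inequality I would use is the pointwise arithmetic--geometric mean bound
$$2\,|\sin\psi_0|\,|\partial_r\psi_0|\le (\partial_r\psi_0)^2\,r+\frac{\sin^2\psi_0}{r},$$
which, integrated in $r$ over any $(a,b)\subset(0,\infty)$, gives $\int_a^b\big|\partial_r(G\circ\psi_0)\big|\,dr=\int_a^b|\sin\psi_0|\,|\partial_r\psi_0|\,dr\lesssim E_M(\psi_0,\psi_1)$, and similarly $\lesssim \mathcal E_R:=\int_R^\infty\big(\psi_1^2+(\partial_r\psi_0)^2+\tfrac{\sin^2\psi_0}{r^2}\big)\,r\,dr$ when $a,b>R$. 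Finally, recall that $\psi_0$ is absolutely continuous on $(0,\infty)$ (on compact subintervals $r\,dr$ and $dr$ are comparable, so $\partial_r\psi_0\in L^2_{\loc}$), hence $G\circ\psi_0$ is absolutely continuous and the fundamental theorem of calculus applies to it.

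For the first statement, letting $a\to0$, $b\to\infty$ and using $\lim_{r\to0}\psi_0=\ell\pi$, $\lim_{r\to\infty}\psi_0=m\pi$ gives $2|m-\ell|=|G(m\pi)-G(\ell\pi)|\lesssim E_M$, so $m=\ell$ once $E_M$ is small. Subtracting $\ell\pi$ from $\psi_0$ — which changes neither $\partial_r\psi_0$, nor $\psi_1$, nor $\sin^2\psi_0$ — we may assume $\ell=m=0$, i.e.\ $\psi_0\to0$ at both ends. Letting only $a\to0$ for fixed $r_0$ we get $|G(\psi_0(r_0))|\lesssim E_M$ for every $r_0$; by continuity of $G^{-1}$ and its quadratic vanishing at $0$, for $E_M$ small this forces $\|\psi_0\|_{L^\infty((0,\infty))}\lesssim E_M^{1/2}$, in particular small. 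Since $\tfrac2\pi|s|\le|\sin s|\le|s|$ for $|s|\le\tfrac\pi2$, on this range $\sin^2\psi_0\approx\psi_0^2$ pointwise, whence $\int_0^\infty\tfrac{\sin^2\psi_0}{r^2}\,r\,dr\approx\int_0^\infty\tfrac{\psi_0^2}{r^2}\,r\,dr$; the latter is therefore finite, so $(\psi_0,\psi_1)\in\Hbf$, i.e.\ originally $(\psi_0,\psi_1)\in\Hbf_{\ell,\ell}$. Adding the two terms $\int\psi_1^2\,r\,dr$ and $\int(\partial_r\psi_0)^2\,r\,dr$, common to $\|(\psi_0-\ell\pi,\psi_1)\|_\Hbf^2$ and to $2E_M$, yields $\|(\psi_0-\ell\pi,\psi_1)\|_\Hbf^2\approx E_M(\psi_0,\psi_1)$.

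The second statement is proved the same way, working on $(R,\infty)$ from the endpoint $r=\infty$. From $(\psi_0-\ell\pi,\psi_1)\in\Hbf(R)$ one first records $\lim_{r\to\infty}\psi_0(r)=\ell\pi$ (with $f=\psi_0-\ell\pi$ one has $\int_R^\infty|(f^2)'|\,dr<\infty$ by Cauchy--Schwarz, so $f^2$ has a limit at $\infty$, necessarily $0$ since $\int^\infty\tfrac{f^2}{r}\,dr<\infty$). Then, letting $b\to\infty$ with $a=r_0>R$ fixed, $|G(\psi_0(r_0))-G(\ell\pi)|\le\int_{r_0}^\infty|\sin\psi_0|\,|\partial_r\psi_0|\,dr\lesssim\mathcal E_R$; when $\mathcal E_R$ is small, strict monotonicity of $G$ together with $|G(s)-G(\ell\pi)|\gtrsim1$ outside a fixed neighbourhood of $\ell\pi$ first traps $\psi_0(r_0)$ in that neighbourhood, and the quadratic lower bound there then gives $\sup_{r>R}|\psi_0(r)-\ell\pi|\lesssim\mathcal E_R^{1/2}$. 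Since $\sin^2\psi_0=\sin^2(\psi_0-\ell\pi)$ and $\psi_0-\ell\pi$ is now small on $(R,\infty)$, we get $\sin^2\psi_0\approx(\psi_0-\ell\pi)^2$ pointwise there, and adding the terms $\psi_1^2$ and $(\partial_r\psi_0)^2$ (identical on both sides) gives the claimed equivalence.

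I expect no serious obstacle: the whole argument is this one-line AM--GM estimate together with elementary calculus for $G$ and $G^{-1}$. The single delicate point is extracting uniform $L^\infty$ smallness of $\psi_0$ (resp.\ of $\psi_0-\ell\pi$ on $(R,\infty)$) from smallness of the energy, which requires first trapping $\psi_0$ in a single ``well'' of $G$ before invoking the quadratic lower bound — handled by the strict monotonicity of $G$ as above.
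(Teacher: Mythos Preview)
Your proof is correct and follows essentially the same route as the paper: both arguments use the antiderivative $G(\psi)=\int_0^\psi|\sin s|\,ds$ together with the pointwise bound $|\sin\psi_0|\,|\partial_r\psi_0|\lesssim (\partial_r\psi_0)^2\,r+\tfrac{\sin^2\psi_0}{r}$ to conclude $|G(\psi_0(r))-G(\ell\pi)|\lesssim E_M$, hence uniform smallness of $\psi_0-\ell\pi$, and then the pointwise equivalence $\sin^2\psi_0\approx(\psi_0-\ell\pi)^2$. Your write-up is more detailed (you spell out the trapping argument and the second assertion explicitly), but the underlying idea is identical.
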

\begin{proof}
 (See also \cite[Lemma 5.3]{JiaKenig17}). Let $(\psi_0,\psi_1)\in \Ebf$, and $(\ell,m)\in \ZZ^2$ such that $(\psi_0,\psi_1)\in \Hbf_{\ell,m}$. Since $\psi_0(0)=\ell\pi$, we have, for any $r>0$, 
 $$ \left|\int_{\ell\pi}^{\psi_0(r)} |\sin(\sigma)|d\sigma\right|\leq \int_{0}^r |\sin(\psi_0(\sigma))|\,|\partial_r\psi_0(\sigma)|d\sigma\leq E_M(\psi_0,\psi_1).$$
 Since $E_M(\psi_0,\psi_1)$ is small, we deduce that $|\psi_0(r)-\ell\pi|$ is small for all $r$. This implies in particular $\psi_0\in \Hbf_{\ell,\ell}$, $\frac{1}{2}\left|\psi_0(r)-\ell\pi\right|\leq |\sin(\psi_0(r))|\leq |\psi_0(r)-\ell\pi|$ and the first assertion of the claim follows. The proof of the second assertion is similar and we omit it.
\end{proof}
Recalling that the outer energy
 $$E_{\out}(\vec{\psi}(t))=\frac{1}{2}\int_{|t|}^{\infty} \left((\partial_t \psi(t,r))^2+(\partial_r \psi(t,r))^2+\frac{\sin^2 \psi(t,r)}{r^2}\right)rdr$$
 is nonincreasing for $t>0$ (and nondecreasing for $t<0$), we can now complete the proof of Theorem \ref{T:rigidityWM}. 
\begin{proof}[End of the proof of the rigidity theorem]
Let $\psi$ be a solution of \eqref{WM} which is not a stationary solution, with data $(\psi_0,\psi_1)\in \Hbf_{\ell,m}$. We have already proved \eqref{channelWM}. Assume to fix ideas that 
\begin{equation}
\label{WM19}
\lim_{t\to +\infty} \int_{|t|}^{\infty} \left((\partial_{t,r}\psi(t,r))^2+\frac{1}{r^2}(\psi(t,r)-m\pi)^2 \right)rdr>0. 
\end{equation} 
Let 
\begin{equation*}
\eta=\lim_{t\to\infty}\int_{t}^{\infty} \left((\partial_{t,r}\psi(t,r))^2+\frac{1}{r^2}\sin^2\psi(t,r)\right)rdr=\lim_{t\to\infty}E_{\out}(\vec{\psi}(t)). 
\end{equation*} 
Since $E_{\out}$ is nonincreasing for $t\geq 0$, we are reduced to prove that $\eta>0$. However if $\eta=0$, then for large $t$, $E_{\out}(\vec{\psi}(t))$ is small and the second assertion of Claim \ref{Cl:smallWM} applies. This yields a contradiction with \eqref{WM19}, concluding the proof of \eqref{lower_boundWM}.
\end{proof}

We will need the following estimates on the sinus functions:
\begin{claim}
\label{Cl:sin}
Let $J\geq 2$, $(a_j)_{1\leq j\leq J}\in \RR^J$. Then
\begin{gather}
\label{sin1}
 \left|\sin\Big( 2\sum_{j=1}^J a_j \Big)-\sum_{j=1}^J\sin(2a_j)\right|\lesssim \sum_{j\neq k}|\sin(2a_j)|\,\sin^2(a_k)\lesssim \sum_{j\neq k}|\sin(a_j)|\,\sin^2(a_k)\\
 \label{sin2}
 \left|\sin^2\Big( \sum_{j=1}^J a_j \Big)-\sum_{j=1}^J\sin^2(a_j)\right|\lesssim 
\sum_{j\neq k}\left| \sin a_j\sin a_k\right|
\end{gather}
\end{claim}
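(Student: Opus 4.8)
The plan is to prove Claim~\ref{Cl:sin} by elementary trigonometry and induction on $J$, reducing everything to the two addition formulas for $\sin$ and $\sin^2$ applied to a pair of angles, together with the elementary bounds $|\sin\theta|\le\min(1,|\theta|)$ and $1-\cos\theta=2\sin^2(\theta/2)$. Throughout write $S_k=\sum_{j=1}^k a_j$.

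First I would dispose of the two base cases $J=2$. For \eqref{sin1}, use $\sin(2a_1+2a_2)-\sin(2a_1)-\sin(2a_2)=\sin(2a_1)(\cos(2a_2)-1)+\cos(2a_1)\sin(2a_2)-\sin(2a_2)=-2\sin(2a_1)\sin^2 a_2+(\cos(2a_1)-1)\sin(2a_2)=-2\sin(2a_1)\sin^2 a_2-2\sin^2 a_1\sin(2a_2)$, so that the left side is bounded by $2|\sin(2a_1)|\sin^2 a_2+2|\sin(2a_2)|\sin^2 a_1$, which is exactly of the claimed form $\sum_{j\ne k}|\sin(2a_j)|\sin^2 a_k$; the final inequality in \eqref{sin1} then follows from $|\sin(2a_j)|\le 2|\sin a_j|$. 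For \eqref{sin2}, use $\sin^2(a_1+a_2)-\sin^2 a_1-\sin^2 a_2=\tfrac12(1-\cos 2(a_1+a_2))-\tfrac12(1-\cos 2a_1)-\tfrac12(1-\cos 2a_2)=\tfrac12(\cos 2a_1+\cos 2a_2-1-\cos2(a_1+a_2))$, and a product-to-sum computation (or: this equals $2\sin a_1\sin a_2\cos a_1\cos a_2=\tfrac12\sin 2a_1\sin 2a_2$ after simplification, but more directly $\sin^2(a_1+a_2)-\sin^2a_1-\sin^2a_2=-2\sin a_1\cos a_1\sin a_2\cos a_2\cdot(\text{sign bookkeeping})$); in any case the right-hand side is manifestly bounded by $C|\sin a_1\sin a_2|$.

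Next I would run the induction on $J$. Write $\sin(2S_J)=\sin(2S_{J-1}+2a_J)$ and apply the $J=2$ estimate to the pair of angles $(S_{J-1},a_J)$, giving $|\sin(2S_J)-\sin(2S_{J-1})-\sin(2a_J)|\lesssim |\sin(2S_{J-1})|\sin^2 a_J+|\sin(2a_J)|\sin^2 S_{J-1}$. Then apply the induction hypothesis to $\sin(2S_{J-1})-\sum_{j=1}^{J-1}\sin(2a_j)$. The only nontrivial point is to control the two ``new'' error terms: $|\sin(2S_{J-1})|\sin^2 a_J\lesssim\big(\sum_{j<J}|\sin a_j|\big)\sin^2 a_J\le\sum_{j<J}|\sin a_j|\sin^2 a_J$ after bounding $|\sin(2S_{J-1})|\le 2|S_{J-1}|$ is not uniform, so instead I would bound $|\sin(2S_{J-1})|\le\sum_{j<J}|\sin(2a_j)|+(\text{lower order})$ using \eqref{sin1} itself at level $J-1$, or more cleanly just use $|\sin(2S_{J-1})|\le\sum_{j<J}|\sin(2a_j)|+C\sum_{j\ne k, j,k<J}|\sin a_j|\sin^2 a_k$ and absorb; and $|\sin(2a_J)|\sin^2 S_{J-1}\lesssim|\sin a_J|\big(\sum_{j<J}|\sin a_j|\big)^2$, which expands into terms $|\sin a_J||\sin a_j||\sin a_k|$ with $j,k<J$, each dominated (using $|\sin a_i|\le 1$) by $|\sin a_J|\sin^2 a_j+|\sin a_J|\sin^2 a_k$ or by $|\sin a_j|\sin^2 a_J$ via AM-GM type splitting — all of the claimed form $\sum_{j\ne k}|\sin a_j|\sin^2 a_k$. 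The same scheme handles \eqref{sin2}: $\sin^2(S_J)-\sin^2(S_{J-1})-\sin^2 a_J$ is $\lesssim|\sin S_{J-1}\sin a_J|\lesssim\big(\sum_{j<J}|\sin a_j|\big)|\sin a_J|$, which is a sum of terms $|\sin a_j\sin a_J|$ with $j<J$, and the induction hypothesis on $\sin^2(S_{J-1})-\sum_{j<J}\sin^2 a_j$ closes the loop.

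The main obstacle is purely bookkeeping: turning cubic (and higher) products $|\sin a_{i_1}\sin a_{i_2}\sin a_{i_3}|$ that arise from iterating the two-angle formula into the \emph{quadratic} right-hand sides $\sum_{j\ne k}|\sin a_j|\sin^2 a_k$ (resp.\ $\sum_{j\ne k}|\sin a_j\sin a_k|$), which is legitimate only because each $|\sin a_i|\le 1$ and because the implied constant is allowed to depend on $J$ (the statement fixes $J$). I would therefore be explicit that all constants may depend on $J$, and organize the absorption by noting that any product of $\ge 2$ distinct sine factors, after discarding all but two of them using $|\sin a_i|\le 1$, lies in one of the two target sums. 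No genuinely hard analytic input is needed — only the two base-case identities above and careful induction.
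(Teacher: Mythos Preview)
Your approach is correct and matches the paper's: the same base case $J=2$ via the explicit identity $\sin(2(a_1+a_2))-\sin(2a_1)-\sin(2a_2)=-2\sin(2a_1)\sin^2 a_2-2\sin(2a_2)\sin^2 a_1$ (and the analogous one for $\sin^2$), followed by induction on $J$. The only difference is in the cleanliness of the inductive step: the paper controls the intermediate quantities $|\sin(2S_{J-1})|$ and $|\sin S_{J-1}|$ directly via the subadditivity $|\sin(a+b)|\le|\sin a|+|\sin b|$ (hence $|\sin(2S_{J-1})|\le\sum_{j<J}|\sin(2a_j)|$ and $|\sin S_{J-1}|\le\sum_{j<J}|\sin a_j|$), which avoids your detour through the inductive hypothesis for \eqref{sin1} to bound $|\sin(2S_{J-1})|$. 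Using subadditivity makes the bookkeeping for the term $|\sin(2S_{J-1})|\sin^2 a_J$ immediate, and for $|\sin(2a_J)|\sin^2 S_{J-1}$ you still expand $(\sum_{j<J}|\sin a_j|)^2$ and reduce cubic products to the target quadratic form using $|\sin a_i|\le 1$ exactly as you describe. Your remark that the implicit constant may depend on $J$ is appropriate and consistent with how the claim is used in the paper.
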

\begin{proof}
 \noindent\emph{Proof of \eqref{sin1}.} The inequality \eqref{sin1} with $J=2$ follows from
 $$ \sin(2(a_1+a_2))-\sin(2a_1)-\sin(2a_2)=-2\sin(2a_1)\sin^2a_2-2\sin(2a_2)\sin^2a_1.$$
The general case follows from a straightforward induction and the bound $|\sin(a+b)|\leq |\sin a|+[\sin b|$.

\medskip

\noindent\emph{Proof of \eqref{sin2}.} In the case $J=2$, we have
$$\left|\sin^2(a_1+a_2)-\sin^2a_1-\sin^2a_2\right|=2\left|\sin a_1\sin a_2 \cos (a_1-a_2)\right|\leq 2|\sin a_1 \sin a_2 |.$$
The general case follows from an elementary induction.
\end{proof}

In the following proposition, we gather some important facts about finite time blow-up solutions of \eqref{WM} 
\begin{prop}
\label{P:BlowupWM}
Let $\psi$ be a finite energy solution of \eqref{WM} with maximal time of existence $T_+<\infty$. Then there exists $(\varphi_0,\varphi_1)\in \Ebf$ such that  for all $R>0$,
$$\lim_{t\to T_+} \int_{R}^{\infty} \left((\partial_t \psi(t)-\varphi_1)^2+(\partial_r \psi(t)-\varphi_0)^2+\frac{1}{r^2}(\psi(t)-\varphi_0)^2\right)r\,dr=0.$$
The solution of \eqref{WM}, with initial data $(\varphi_0,\varphi_1)$ at $t=T_+$ satisfies
$$ \psi(t,r)=\varphi(t,r),\quad 0<r<T_+-t, \quad t<T_+\text{ close to }T_+.$$
Furthermore,
\begin{equation}
 \label{QuantWM}
\lim_{t\to T_+} \frac 12\int_0^{T_+-t} \left((\partial_t \psi(t,r))^2+(\partial_r \psi(t,r))^2+\frac{\sin^2(r)}{r^2}\right)rdr\geq E_M(Q,0).
\end{equation} 
Finally, there exists $t_n\to T_+$ such that 
\begin{equation}
 \label{derivative0_WMBup}
 \lim_{n\to\infty} \int_0^{T_+-t_n}(\partial_t \psi(t_n,r)^2rdr=0.
\end{equation} 
\end{prop}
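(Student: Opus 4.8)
The plan is to prove Proposition \ref{P:BlowupWM} by reducing the co-rotational wave map to the four-dimensional radial nonlinear wave equation \eqref{gNLW} with $\Lambda$ given by \eqref{defLambda}, and then transferring the known results about finite-time blow-up for \eqref{NLW}, \eqref{gNLW} (or their analogues) to the wave map setting. Concretely, set $u(t,r)=\sqrt{2/3}\,\psi(t,r)/r$ as in \S\ref{SS:reduction}; then $u$ solves \eqref{gNLW} on the domain of existence, and since $\psi$ has finite energy, $u$ is a finite-energy solution of \eqref{gNLW} with the same maximal forward time $T_+<\infty$. The convergence statement in the exterior of the light cone (existence of $(\varphi_0,\varphi_1)$) then follows from the analogue of Lemma \ref{L:v0v1}: the exterior energy $E_{\out}(\vec\psi(t))$ is nonincreasing for $t\to T_+$, and by the argument of \cite[Lemma~4.1]{DuKeMe13} (which is dimension-independent and adapts verbatim to \eqref{WM} via Claim \ref{Cl:smallWM} to pass between $\sin^2\psi_0/r^2$ and $(\psi_0-\ell\pi)^2/r^2$), $\vec\psi(t)$ converges in $\Hbf(R)+(m\pi,0)$ for every $R>0$ to a limit $(\varphi_0,\varphi_1)$, and the solution $\varphi$ with this data at $T_+$ agrees with $\psi$ inside $\{r<T_+-t\}$ by finite speed of propagation.

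Next I would prove the quantization bound \eqref{QuantWM}. The statement is that the energy of $\psi$ concentrated in the backward cone $\{0<r<T_+-t\}$ has $\liminf$ at least $E_M(Q,0)$ as $t\to T_+$. This is a classical small-data/compactness argument: if the inner energy were eventually $<E_M(Q,0)-\delta$, one could, after rescaling, extract a profile decomposition along a sequence $t_n\to T_+$ of the restriction of $\vec\psi(t_n)$ to $\{r<T_+-t_n\}$; by the variational characterization of $Q$ as the least-energy nontrivial static wave map (recalled in the introduction), a nontrivial concentrating bubble must carry energy $\geq E_M(Q,0)$, so the inner energy below that threshold forces the solution to be subthreshold, hence globally defined and scattering by the small-data theory — contradicting $T_+<\infty$. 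The cleanest route is to invoke the soliton resolution for a sequence of times proved in \cite{CoKeLaSc18} (or \cite{Cote15} in the co-rotational case): along some $t_n\to T_+$ one has $\vec\psi(t_n)=(\varphi_0,\varphi_1)+\sum_{j=1}^J(\iota_j Q(\cdot/\lambda_{j,n}),0)+o_n(1)$ in $\Hbf$ with $J\geq 1$ (again $J=0$ is impossible since $T_+<\infty$), which immediately yields $\liminf_{t\to T_+}$ of the inner energy $\geq JE_M(Q,0)\geq E_M(Q,0)$, using that $E_M$ is additive for decoupled bubbles plus the exterior part, and that the exterior part converges to $E_{\out}(\vec\varphi(T_+))$.

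Finally, for \eqref{derivative0_WMBup}, the claim is that there is a sequence $t_n\to T_+$ along which the $L^2$ norm of $\partial_t\psi(t_n)$ in the backward cone tends to zero. In the reduced variable this is $\|\partial_t u(t_n)\|_{L^2(\{r<T_+-t_n\})}\to 0$, which is precisely the extra fact quoted in the introduction (proved for \eqref{NLW} in \cite{CoKeLaSc18} and for \eqref{WM} in \cite{Cote15}): since $t\mapsto \|(\psi(t),\partial_t\psi(t))\|$ and in particular the kinetic energy cannot be controlled time-by-time, one uses a virial/Morawetz-type time-averaging identity to show that $\int_{t_0}^{T_+}\|\partial_t\psi(t)\|^2_{L^2(r<T_+-t)}\,\frac{dt}{T_+-t}<\infty$, whence some subsequence of times realizes the vanishing. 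I would simply cite \cite{Cote15} (or adapt the argument of \cite{CoKeLaSc18} through the reduction of \S\ref{SS:reduction}) for this.

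The main obstacle is the bookkeeping in transferring \cite[Lemma 4.1]{DuKeMe13}, the sequential soliton resolution of \cite{Cote15}/\cite{CoKeLaSc18}, and the vanishing-time-derivative fact to the co-rotational wave map through the substitution $u=\sqrt{2/3}\,\psi/r$: one must check that the finite-energy condition for $\psi$ corresponds to $(u_0,u_1)\in\HHH(0^+)$, that the error term $\tfrac1{r^3}\Lambda(ru)$ has the right structure \eqref{propLambda} so that the perturbative arguments (long-time perturbation theory, profile decompositions outside the cone as in Proposition \ref{P:NL_profile} and the wave-map analogue \ref{P:approxWM}) apply, and that the norm equivalences of Claim \ref{Cl:smallWM} are used consistently to pass between $\sin^2\psi/r^2$ and $(\psi-\ell\pi)^2/r^2$ near $r=0$ versus near $r=\infty$. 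None of these steps is conceptually new, but the reduction must be carried out carefully; I would present it as a sequence of short lemmas each invoking the already-cited results.
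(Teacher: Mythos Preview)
The paper does not give a self-contained proof of this proposition; it simply records it as a collection of known facts with references: the existence of $(\varphi_0,\varphi_1)$ is attributed to \cite[Lemma~5.2]{CoKeLaSc15a} and \cite[Proposition~5.2]{Cote15}, the lower bound \eqref{QuantWM} to Struwe \cite{Struwe03b}, and \eqref{derivative0_WMBup} to \cite[Corollary~2.7]{Cote15}. Your proposal is therefore more a reconstruction than a comparison, and for the first and third items your citations and sketched arguments are essentially the same as the paper's.

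The one substantive difference is your route to \eqref{QuantWM}. The paper invokes Struwe's bubbling theorem directly: any finite-time blow-up of a co-rotational wave map forces a rescaled subsequence to converge to a nontrivial harmonic map, hence at least $E_M(Q,0)$ of energy must concentrate. Your two alternatives---the subthreshold/global-existence contrapositive, and the sequential soliton resolution of \cite{Cote15}---are correct but logically downstream of Struwe: the subthreshold global existence results (\cite{CoteKenigMerle08}, \cite{CoKeLaSc15a}) use Struwe's result as an input, and C\^ote's sequential decomposition builds on it as well. So your argument is valid but circuitous; the paper's citation of \cite{Struwe03b} is the more foundational and economical reference.

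One small caveat on your reduction: writing $u=\sqrt{2/3}\,\psi/r$ globally does \emph{not} give a finite-energy solution of \eqref{gNLW} on $\RR^4$ unless $\psi_0(0)=0$; if $(\psi_0,\psi_1)\in\Hbf_{\ell,m}$ with $\ell\neq 0$ then $u_0\sim \ell\pi\sqrt{2/3}/r$ near the origin, which is not in $\dot H^1(\RR^4)$. The paper's reduction in \S\ref{SS:reduction} subtracts the appropriate multiple of $\pi$ and works only for $r>R>0$, which is all that is needed for the first assertion. This does not affect the correctness of your outline (everything you need is local away from $r=0$), but the sentence ``$u$ is a finite-energy solution of \eqref{gNLW} with the same maximal forward time'' should be adjusted accordingly.
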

Note that the energy inside the wave cone
$$E_{\inn}(\vec{\psi}(t))=\frac{1}{2}\int_0^{T_+-t} \left((\partial_t \psi(t,r))^2+(\partial_r \psi(t,r))^2+\frac{\sin^2(r)}{r^2}\right) rdr$$
is a nonincreasing function of $t\in [0,T_+)$, so that the limit appearing in \eqref{QuantWM} exists.

The existence of $(\varphi_0,\varphi_1)$ follows from small data well-posedness arguments and finite speed of propagation. See \cite[Lemma 5.2]{CoKeLaSc15a}, \cite[Proposition 5.2]{Cote15}. The lower bound \eqref{QuantWM} is an immediate consequence of the work of Struwe \cite{Struwe03b}. Finally, \eqref{derivative0_WMBup} can be seen as a consequence of the main result of \cite{Cote15}, but is indeed a step of the proof of this main result (see \cite[Corollary 2.7]{Cote15}).

Recall the linearized equation for \eqref{WM} around a constant solution:
\begin{equation}
 \label{LWM}
 \left\{
\begin{aligned}
 \partial_t^2\psi_L-\partial_r^2\psi_L-\frac{1}{r}\partial_r\psi_L+\frac{1}{r^2}\psi_L=0\\
 \vec{\psi}_{L\restriction t=0}=(\psi_0,\psi_1)\in \Hbf.
 \end{aligned}
 \right.
\end{equation} 
\begin{prop}
 \label{P:globalWM}
 Let $\psi$ be a solution of \eqref{WM} with maximal time of existence $T_+=+\infty$ and initial data $(\psi_0,\psi_1)\in \Hbf_{\ell,m}$. Then there exists a solution $\psi_L$ of \eqref{LWM} with initial data in $\Hbf$ and an increasing positive function $\alpha(t)=o(t)$ as $t\to\infty$ such that 
 \begin{equation}
 \label{existence_psiL}
 \lim_{t\to\infty} \left\|(\psi(t)-m\pi-\psi_L(t),\partial_t \psi(t)-\partial_t\psi_L(t))\right\|_{\Hbf(\alpha(t))}=0.  
 \end{equation} 
 Furthermore, there exists $t_n\to+\infty$ such that 
\begin{equation}
 \label{derivative0_WMglobal}
 \lim_{n\to\infty} \int_0^{\infty} (\partial_t \psi(t_n,r) -\partial_t \psi_{L}(t_n,r))^2rdr=0.
\end{equation} 
\end{prop}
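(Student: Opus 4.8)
The statement is the wave-maps counterpart of Proposition~\ref{P:global}, so the plan is to mimic the arguments for \eqref{NLW} (Subsections~3.2--3.3 of \cite{DuKeMe13} and \cite{CoKeLaSc18}) together with C\^ot\'e's analysis of corotational wave maps \cite{Cote15}, after transporting everything to \eqref{gNLW} as in \S\ref{SS:reduction}. Writing $v(t,r)=\sqrt{2/3}\,(\psi(t,r)-m\pi)/r$, the function $v$ solves \eqref{gNLW} with the $\Lambda$ of \eqref{defLambda} and $\vec v_{\restriction t=0}=(v_0,v_1)\in\HHH(0^+)$; the norms $\|\cdot\|_{\Hbf(\alpha)}$ and $\|\cdot\|_{\HHH(\alpha)}$ correspond under this change of unknown, and solutions of \eqref{LWM} correspond to radial solutions $v_L=\sqrt{2/3}\,\psi_L/r$ of the free $4D$ wave equation \eqref{FW}. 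Thus it suffices to produce a finite-energy free wave $v_L$ with $\|\vec v(t)-\vec v_L(t)\|_{\HHH(\alpha(t))}\to0$ and a sequence $t_n\to\infty$ with $\int_0^\infty(\partial_t\psi(t_n)-\partial_t\psi_L(t_n))^2r\,dr\to0$.

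\emph{Step 1 (radiation term, convergence outside the cone).} Since $(v_0,v_1)$ lies in $\HHH$ away from the origin and $\|(v_0,v_1)\|_{\HHH(R)}\to0$ as $R\to\infty$, for $R$ large the small-data theory of Proposition~\ref{P:smalldata} and finite speed of propagation show that $v$ restricted to $\{r>R+|t|\}$ is small and converges as $t\to\infty$ to the free wave $v_L^R$ with data equal to $(v_0,v_1)$ on $\{r>R\}$ (suitably extended inside). Uniqueness in exterior regions makes these consistent; assembling them and choosing the extension near $r=0$ so that the limiting data lie in $\HHH$, one obtains a finite-energy free wave $v_L$ with $\|\vec v(t)-\vec v_L(t)\|_{\HHH(R+|t|)}\to0$ for every $R>0$, and $\psi_L=\sqrt{3/2}\,r\,v_L$ solves \eqref{LWM} with data in $\Hbf$. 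This step is essentially \cite[Lemma~5.2]{CoKeLaSc15a}, \cite[Proposition~5.2]{Cote15}.

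\emph{Step 2 (from $\{r>t+R\}$ to $\{r>\alpha(t)\}$, $\alpha(t)=o(t)$).} This is the genuinely nonlinear point and I expect it to be the main obstacle. Since $E_M(\psi_0,\psi_1)-E_{\out}(\vec\psi(t))$ is nondecreasing, one must show that $E_M\big(\psi(t)-m\pi-\psi_L(t);\{r>\alpha(t)\}\big)\to0$, i.e.\ that all the inner energy together with the part of the radiation not yet captured by Step~1 concentrates below a scale $o(t)$. For fixed small $\eps$ let $\beta_\eps(t)$ be the largest $\rho$ for which the energy of $\psi(t)-m\pi-\psi_L(t)$ in $\{\rho<r<R+t\}$ exceeds $\eps$; then (i) combining with the exterior convergence of Step~1, above $2\beta_\eps(t)$ the remainder has energy $\lesssim\eps$ for $t$ large, and (ii) $\beta_\eps(t)=o(t)$ — otherwise, rescaling along a sequence by a scale comparable to $t$ and using a Struwe-type concentration-compactness argument, one would extract a nontrivial finite-energy self-similar (or static) corotational wave map into the two-sphere, contradicting the classical nonexistence results (cf.\ \cite{ShTZ94}, \cite{Struwe03b} and \cite{Cote15}). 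A diagonal argument over $\eps\downarrow0$ then yields a single increasing $\alpha(t)=o(t)$ with $\|\vec v(t)-\vec v_L(t)\|_{\HHH(\alpha(t))}\to0$; transporting back gives the first conclusion of the proposition.

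\emph{Step 3 (the good sequence).} One cannot expect convergence of $\partial_t\psi-\partial_t\psi_L$ at every time, because in dimension $4$ the exterior energy estimate of Section~\ref{S:linear} controls only data of the form $(u_0,0)$; as in \cite{CoKeLaSc18} for \eqref{NLW}, one extracts a sequence instead. The plan is to use a virial/Morawetz-type averaged identity: a suitable functional of $\vec\psi$ is bounded uniformly on $[0,T]$ while its time integral, up to errors that are $o(T)$, controls $\int_0^T\int_0^\infty(\partial_t\psi(t,r)-\partial_t\psi_L(t,r))^2r\,dr\,dt$ — exploiting that the bubbles $Q(\cdot/\lambda_j(t))$ are static and hence contribute nothing to the time derivative — which forces a sequence $t_n\to\infty$ along which $\int_0^\infty(\partial_t\psi(t_n)-\partial_t\psi_L(t_n))^2r\,dr\to0$. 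This is the global-case analogue of \cite[Corollary~2.7]{Cote15}, which I would invoke, or reprove by the same averaging argument adapted to \eqref{gNLW}.
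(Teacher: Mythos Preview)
The paper does not give its own proof of this proposition: it simply defers to \cite{Cote15}, citing Proposition~5.1 there for \eqref{existence_psiL} and Corollary~2.3 for \eqref{derivative0_WMglobal}. Your three-step sketch is a fair reconstruction of the ideas behind those results in \cite{Cote15} (small-data exterior scattering for the radiation, a concentration/no-self-similar argument to push the matching region down to $r>\alpha(t)$ with $\alpha(t)=o(t)$, and a time-averaged virial identity to extract the good sequence), so there is no substantive divergence in approach. One minor correction: in Step~3 you invoke \cite[Corollary~2.7]{Cote15}, but that is the blow-up statement (used by the paper for Proposition~\ref{P:BlowupWM}); the global-case sequence \eqref{derivative0_WMglobal} is \cite[Corollary~2.3]{Cote15}.
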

See \cite[Proposition 5.1]{Cote15} for \eqref{existence_psiL}  and \cite[Corollary 2.3]{Cote15} for \eqref{derivative0_WMglobal}. 
\subsubsection{Rigidity theorem }

\subsubsection{A space-time bound outside the wave cone}
We will denote by $S$ the space of measurable functions
$\psi$ of $(t,r)\in \RR\times (0,\infty)$ such that the following norm is finite.
\begin{equation}
 \label{P10}
 \|\psi\|_{S}=\left( \int_{\RR} \left( \int_0^{\infty} |\psi(t,r)|^6\frac{dr}{r^3} \right)^{\frac 12}dt \right)^{\frac 13},
\end{equation} 
and by $S(\{r>|t|\}$ the space of restrictions of $S$ to $\{r>|t|\}$, with the norm
\begin{equation}
 \label{P10'}
 \|\psi\|_{S(\{r>[t[\})}=\left( \int_{\RR} \left( \int_{|t|}^{\infty} |\psi(t,r)|^6\frac{dr}{r^3} \right)^{\frac 12}dt \right)^{\frac 13},
\end{equation}
By the change of function $u=\frac{1}{r}\psi$ and the Strichartz estimate for the wave equation in dimension $1+4$, all solutions of \eqref{LWM} are in $S$. This is of course not the case for general finite energy solution of the wave map equation. However we have the following space time bound, which will be useful to construct a nonlinear profile decomposition for \eqref{WM} outside wave cones.
\begin{lemma}
 \label{L:L3L6sinus}
 Let $(\psi_0,\psi_1)\in \Hbf_{\ell,m}$, $(\ell,m)\in \ZZ^2$, and let $\psi$ be the solution of \eqref{WM} with initial data $(\psi_0,\psi_1)$. Then $\sin \psi\in S(\{r>|t|\})$, i.e.
 $$ \int_{-\infty}^{\infty} \left(\int_{|t|}^{\infty}\frac{\sin^6\psi(t,r)}{r^3}dr\right)^{1/2}dt<\infty.$$
\end{lemma}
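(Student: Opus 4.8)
The statement is a global-in-time, space-time $L^3_tL^6_x$-type bound for $\sin\psi$ in the exterior of the light cone. I would prove it by decoupling into a large-time regime, where the solution is small in the exterior energy norm, and a bounded-time regime, where one can use local well-posedness and the exterior-energy decay to control things uniformly; then patch with finite speed of propagation.

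First I would reduce to the associated four-dimensional wave equation via the substitution $u=\sqrt{2/3}\,\psi/r$ introduced in \S\ref{SS:reduction}, so that $u$ solves \eqref{gNLW} with the odd nonlinearity $\Lambda$ of \eqref{defLambda}, and the claimed norm is essentially $\|\,\frac{\sin(\sqrt6\,ru)}{\sqrt6\,r}\,\|_{(L^3L^6)(\{r>|t|\})}$. Since $|\sin(\sqrt6\,ru)/(\sqrt6\,r)|\le |u|$, it suffices to show $u\in (L^3L^6)(\{r>|t|\})$, i.e. $\|u\|_{S(\{r>|t|\})}<\infty$. The point is that although $\psi$ itself need not be globally in $S$, the rescaled $u$ has the dispersive decay of a four-dimensional wave, and by Claim~\ref{Cl:outsideWM} the solution is always defined for $r>|t|$. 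For large $|t|$: using the fact that the exterior energy $E_{\out}(\vec\psi(t))$ is monotone (nonincreasing for $t>0$, nondecreasing for $t<0$) and that finite energy forces $E_{\out}(\vec\psi(t))\to 0$ as $t\to\pm\infty$ (this follows from the fact that, by the profile/radiation decomposition for \eqref{WM}, the exterior energy has a limit and that limit is the energy of a solution defined for $r>|t|$ for all $t$ — alternatively from Claim~\ref{Cl:smallWM} combined with the decomposition results of \cite{Cote15}), I get that for $|t_0|\ge T_0$ large the data $\vec\psi(t_0)$ restricted to $r>|t_0|$, transported to $u$, lies in a small ball of $\HHH(|t_0|)$. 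Then Proposition~\ref{P:smalldata} (small-data theory for \eqref{gNLW} outside a cone) applied with the translated initial time gives $\|u\|_{(L^3L^6)(\{r>|t|\}\cap\{t>t_0\})}\lesssim \|u_L\|_{(L^3L^6)}\lesssim \eps$, uniformly, so the contribution of $|t|\ge T_0$ to the $S$-norm is finite; symmetrically for $t<-T_0$.

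For the bounded-time regime $|t|\le T_0$: here I cover $[-T_0,T_0]$ by finitely many short intervals on each of which the full solution $\psi$ (not just outside a cone) is controlled by local well-posedness — on each such interval $\vec\psi(t)\in\Hbf_{\ell,m}$ and the solution has finite $S$-norm on $\RR^4$ by Strichartz (again via $u=\sqrt{2/3}\,\psi/r$ and the $1+4$ Strichartz estimate \eqref{Strichartz}), at least away from a possible finite blow-up time; if $T_+<\infty$ one uses Proposition~\ref{P:BlowupWM} and finite speed of propagation to replace $\psi$ near $T_+$ by the regular profile $\varphi$ outside the cone $\{r>T_+-t\}$, which has finite $S$-norm on a full neighborhood, absorbing the singular part of $\psi$ harmlessly since it is supported in $\{r<T_+-t\}\subset\{r<|t|+\text{const}\}$ after time translation. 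Summing the finitely many contributions and the two large-time tails yields the claim. The main obstacle, and the part requiring genuine care, is the large-time step: precisely establishing that $E_{\out}(\vec\psi(t))\to0$ and that this exterior smallness can be fed into Proposition~\ref{P:smalldata} uniformly in the (large) initial time — one must be careful that the smallness threshold $\eps$ in that proposition is independent of the base radius $R=|t_0|$, which indeed it is by the scaling-invariant formulation, so the argument closes.
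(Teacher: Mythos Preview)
Your large-time step rests on the claim that $E_{\out}(\vec\psi(t))\to 0$ as $t\to\pm\infty$, and this is false. The exterior energy is monotone, so it has a limit $E_{\out}(\infty)$, but that limit is in general strictly positive: by Proposition~\ref{P:globalWM} the solution looks like $m\pi+\psi_L$ outside a region $r>\alpha(t)=o(t)$, and for the linear solution $\psi_L$ (equivalently the four-dimensional free wave $u_L=\psi_L/r$) the exterior energy $\int_{|t|}^\infty|\partial_{t,r}u_L|^2r^3dr$ converges, via the radiation profile of Proposition~\ref{P:radiation}, to a positive quantity whenever $\psi_L\not\equiv 0$. So the exterior data at a late time $T_0$ need not be in the small-data regime of Proposition~\ref{P:smalldata}, and your argument does not close. (Neither of your proposed justifications works: Claim~\ref{Cl:smallWM} is about small \emph{total} energy, and the sequential resolution of \cite{Cote15} certainly does not force the radiation to vanish.)

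The paper circumvents this by a genuinely different idea. At a late time $T$ it extends the exterior data $\vec\psi(T)\big|_{r>T}$ to a full-space datum $(\phi_0,\phi_1)\in\Ebf$ by filling in $r<T$ with a rescaled piece of $Q$ (or a constant), chosen so that the interior contributes at most $\frac12 E_M(Q,0)$; hence the extended solution $\phi$ has total energy at most $E_{\out}(\infty)+\frac34 E_M(Q,0)$. Since $\phi=\psi$ for $r>t\ge T$, the interior energy of $\phi$ stays below $\frac34 E_M(Q,0)$ for all later times, which by Struwe's concentration lower bound \eqref{QuantWM} rules out finite-time blow-up. Now $\phi$ is global, so one may invoke its radiation profile $\phi_L$ from Proposition~\ref{P:globalWM} and apply small data (Proposition~\ref{P:smalldata}) at some much later time $\tilde T$ where $\vec\phi(\tilde T)-(m\pi,0)-\vec\phi_L(\tilde T)$ is small in $\Hbf(\tilde T)$ and $\phi_L$ has small remaining $S$-norm. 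This is where the finite $S$-norm for $t\ge\tilde T$ comes from; the bounded-time part $[0,\tilde T]$ is then handled essentially as you indicate, but with the correct substitutions $u=(\psi-\ell\pi)/r$ near the origin and $\tilde u=(\psi-m\pi)/r$ near infinity (your $u=\sqrt{2/3}\,\psi/r$ is not in $\HHH$ when $\ell\ne 0$).
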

\begin{remark}
The solution $\psi$ is well-defined for $r>|t|$, see Claim \ref{Cl:outsideWM}.
\end{remark}
\begin{proof}
 Let $u=\frac{\psi-\ell\pi}{r}$, $u_0=\frac{(\psi_0-\ell\pi)}{r}$, $u_1=\frac{\psi_1}{r}$. Since $(\varphi_0,\varphi_1)\in \Hbf_{\ell,m}$, $(u_0,u_1)$ can be considered as a radial function in $(\dot{H}^1\times L^2)_{\loc}(\RR^4)$. By the local well-posedness for the equation \eqref{gNLW} and finite speed of propagation, $u$ is defined on $\{-t_0\leq t\leq t_0,\; 0<r\leq 2\}$ for some small $t_0>0$, and 
 $$ \int_{-t_0}^{t_0} \left(\int_0^2 u^6(t,r)r^3dr\right)^{1/2}dt<\infty. $$
 Hence
 \begin{equation}
 \label{sin10}
 \int_{-t_0}^{t_0} \left(\int_{|t|}^2 \frac{(\sin{\psi})^6}{r^3}dr\right)^{1/2}dt\leq \int_{-t_0}^{t_0} \left(\int_{|t|}^2 \frac{(\psi-\ell\pi)^6(t,r)}{r^3}dr\right)^{1/2}dt<\infty. 
 \end{equation} 
Similarly, for all $R>0$, $\tilde{u}=\frac{\psi-m\pi}{r}$ is a solution of \eqref{gNLW} defined for $r>R+|t|$ with initial data in $\HHH(R)$. Thus for all $T>0$, 
 $$ \int_{-T}^{T} \left(\int_{R+|t|}^{\infty} {\tilde{u}}^6(t,r)r^3dr\right)^{1/2}dt<\infty,$$
 which yields
 $$ \int_{-T}^{T} \left(\int_{R+|t|}^{\infty} \frac{(\sin\psi)^6}{r^3}dr\right)^{1/2}dt<\infty.$$
 Combining with a space translation in time and \eqref{sin10} we obtain
 \begin{equation}
  \label{sin21}
  \forall T>0, \quad \int_{-T}^{T} \left(\int_{|t|}^{\infty} \frac{(\sin\psi)^6}{r^3}dr\right)^{1/2}dt<\infty
 \end{equation} 
 
Using that $E_{\out}$ is nonincreasing for $t>0$, <e define
 \begin{equation}
  \label{sin22}
  E_{\out}(\infty)=\lim_{t\to+\infty} E_{\out}\big(\vec{\psi}(t)\big).
 \end{equation} 
 Let $T$ such that 
 \begin{equation}
  \label{sin23} 
  E_{\out}\big(\vec{\psi}(T)\big)\leq E_{\out}(\infty) +\frac{1}{4}E_M(Q,0).
 \end{equation}  
 We extend $\vec{\psi}(T,r)$, which is defined for $r>T$, as follows. We define 
 $$(\phi_0,\phi_1)(r)=(\psi(T,r),\partial_t\psi(T,r)),\quad r\geq T,$$
 and $\varphi_1(r)=0$ if $r<T$. Let $p\in \ZZ$ such that $p\pi\leq \psi(T,T)< (p+1)\pi$. For $r<T$, we let 
 $$  \begin{cases} \phi_0(r)=p\pi  &\text{ if }\psi(T,T)=p\pi\\
\phi_0(r)=Q(\lambda r)+p\pi&\text{ if } p\pi<\psi(T,T)\leq p\pi+\pi/2\\
\phi_0(r)=(p+1)\pi-Q(\lambda r)&\text{ if } p\pi+\pi/2<\psi(T,T)< (p+1)\pi,\end{cases}$$
 where in the second and third case, $\lambda$ is chosen  so that $\phi_0$ is continuous at $r=T$. We note that $(\phi_0,\phi_1)\in \Ebf$. We claim
 \begin{equation}
 \label{sin30}
 E_M(\phi_0,\phi_1)\leq E_{\out}(\infty) +\frac{3}{4}E_M(Q,0).
 \end{equation} 
 In view of \eqref{sin23}, it is sufficient to check that 
\begin{equation}
 \label{sin31}
 \int_0^T (\partial_r\phi_0)^2rdr+\int_0^T \sin^2 \phi_0\frac{dr}{r}\leq E_M(Q,0).
\end{equation} 
In the case where $\phi_0(r)=p\pi$ for $r<T$, this is trivial. In the two other cases, it follows from the fact that $Q(1)=2\arctan 1=\pi/2$ and that
$$\int_0^1 (\partial_rQ)^2rdr+\int_0^1 \sin^2 Q\frac{dr}{r}= E_M(Q,0)$$
which can be proved by an explicit computation.
 
 Let $\phi$ be the solution of \eqref{WM} with initial data $\vec{\phi}(T)=(\phi_0,\phi_1)$. By finite speed of propagation, $\psi(t,r)=\phi(t,r)$, $r>t\geq T$. As a consequence, for $t\geq T$, $E_{\out}(\vec{\phi}(t))\geq E_{\out}(\infty)$. Combining with the conservation of the energy and \eqref{sin30}, we deduce
 $$ \forall t\geq T,\quad (E_M-E_{\out})(\vec{\phi}(t))\leq \frac{3}{4} E_M(Q,0).$$
 In view of the energy concentration for blow-up solutions (see \eqref{QuantWM}), the solution $\phi$ is global in the future. Let $\phi_L$ be the solution of \eqref{LWM} given by Proposition \ref{P:globalWM}, so that 
\begin{equation}
 \lim_{t\to\infty} \left\|(\phi(t)-m\pi-\phi_L(t),\partial_t \phi(t)-\partial_t\phi_L(t))\right\|_{\Hbf(\alpha(t))}=0,
 \end{equation}  
 where $\alpha(t)/t\to 0$ as $t\to\infty$.
 Let $\tT\gg 1$, so that
 $$\left\|\left(\phi\big(\tT\big)-m\pi-\phi_L\big(\tT\big),\partial_t \phi\big(\tT\big)-\partial_t\phi_L\big(\tT\big)\right)\right\|_{\Hbf(\tT)}+\int_{\tT} ^{\infty}\left(\int_{|t|}^{\infty}\frac{\phi^6_L(t,r)}{r^3}dr\right)^{1/2}dt \ll 1.$$ 
Recall that $\frac{1}{r}\phi_L$ is a solution of the linear wave equation in $\RR\times \RR^4$, and that $\frac{\sqrt{2}}{\sqrt{3}r}(\phi-m\pi)$ is solution of the nonlinear equation \eqref{gNLW}.  Then by Proposition \ref{P:smalldata},
$$ \int_{\tT}^{\infty} \left( \int_{|t|}^{\infty} \frac{\left(\phi(t)-m\pi  \right)^6}{r^3} dr\right)^{1/2} dt<\infty, $$
which implies
 $$\int_{\tT}^{\infty} \left(\int_{|t|}^{\infty} \frac{\sin^6\psi(t,r)}{r^3}dr\right)^{1/2} dt=\int_{\tT}^{\infty} \left(\int_{|t|}^{\infty} \frac{\sin^6\phi(t,r)}{r^3}dr\right)^{1/2} dt<\infty.$$
 Using the same argument for negative times, we obtain the desired conclusion.
 \end{proof}

\subsubsection{Profile decomposition}
\label{SSS:profile}
We next state a profile decomposition which is adapted to the wave maps equation \eqref{WM}. 
\begin{prop}[Profile decomposition for wave maps]
 \label{P:profileWM}
 Let $(\ell,m)\in \ZZ^2$. Consider a sequence $\left\{(\psi_{0,n},\psi_{1,n})\right\}_n$ of elements of $\Hbf_{\ell,m}$ such that 
 \begin{equation}
  \label{P11}
  \sup_nE_M(\psi_{0,n},\psi_{1,n})<\infty.
 \end{equation} 
 Then there exists a subsequence of $\left\{(\psi_{0,n},\psi_{1,n})\right\}_n$ (that we denote the same) and, for all $j\in \NN\setminus \{0\}$, for all $n$, $\left(\Psi_{0,n}^j,\Psi_{1,n}^j\right) \in \Ebf$, $\lambda_{j,n}>0$, $t_{j,n}\in \RR$ such that
 \begin{equation}
  \label{P12}
  j\neq k\Longrightarrow \lim_{n\to\infty} \frac{\lambda_{j,n}}{\lambda_{k,n}} +\frac{\lambda_{k,n}}{\lambda_{j,n}} +\frac{|t_{j,n}-t_{k,n}|}{\lambda_{j,n}}=+\infty,
 \end{equation} 
 and, for all $j\geq 1$, one the following holds:
 \begin{description}
  \item[Compact profile] $\forall n$, $t_{j,n}=0$ and
  $$\exists \left( \Psi_0^j,\Psi_1^j \right)\in \Ebf,\quad (\Psi_{0,n}^j(r),\Psi_{1,n}^j(r))=\left( \Psi_0^j\left( \frac{r}{\lambda_{j,n}} \right),\frac{1}{\lambda_{j,n}}\Psi_1^j\left( \frac{r}{\lambda_{j,n}} \right) \right).$$
  \item[Linear wave profile] $\lim_{n\to\infty}\frac{-t_{j,n}}{\lambda_{j,n}}\in \{\pm\infty\}$ and there exists a solution $\Psi^j_L$ of \eqref{LWM} with initial data $(\Psi_0^j,\Psi_1^j)\in \Hbf$ such that 
  $$(\Psi_{0,n}^j(r),\Psi_{1,n}^j(r))=\left( \Psi_L^j\left( \frac{-t_{j,n}}{\lambda_{j,n}},\frac{r}{\lambda_{j,n}} \right),\frac{1}{\lambda_{j,n}}\Psi_L^j\left( \frac{-t_{j,n}}{\lambda_{j,n}},\frac{r}{\lambda_{j,n}} \right) \right).$$
 \end{description}
Furthermore, denoting by
\begin{equation}
\label{P20}
 \left(\omega_{0,n}^J,\omega_{1,n}^J\right)= \left( \psi_{0,n},\psi_{1,n} \right)-\sum_{j=1}^J \left( \Psi_{0,n}^j,\Psi_{1,n}^j \right),
\end{equation} 
then for large $J$, $(\omega_{0,n}^J,\omega_{1,n}^J)\in \Hbf$ for all $n$,
\begin{equation}
 \label{PytWM}
 E_M\left( \psi_{0,n},\psi_{1,n} \right)=\sum_{j=1}^J E_M\left(\Psi_{0,n}^J,\Psi_{1,n}^J\right)+E_M\left( \omega_{0,n}^J,\omega_{1,n}^J \right)+o_n(1),
\end{equation} 
and, letting $\omega_{n}^J$ be the solution of \eqref{LWM} with initial data $(\omega_{0,n}^J,\omega_{1,n}^J)$, one has 
\begin{equation}
 \label{P22}
 \lim_{J\to\infty} \limsup_{n\to\infty}\left\|\omega_n^J\right\|_S+\|\omega_n^J\|_{L^{\infty}_{t,r}}=0.
\end{equation} 
Finally, for large $J$, $\left\{(\omega_{0,n}^J,\omega_{1,n}^J)\right\}_n$ is bounded in $\Hbf$ and 
\begin{equation}
\label{wnJWM}
1\leq j\leq J\Longrightarrow \left(\omega_n^J(t_{j,n},\lambda_{j,n}\cdot),\lambda_{j,n}\partial_t\omega_n^J(t_{j,n},\lambda_{j,n}\cdot)\right)\xrightharpoonup[n\to\infty]{} 0\text{ in }\Hbf. 
\end{equation} 
\end{prop}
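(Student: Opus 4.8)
The strategy is to reduce Proposition~\ref{P:profileWM} to the classical profile decomposition for the free wave equation on $\RR\times\RR^4$ via the change of unknown $u=\frac1r(\psi-\ell\pi)$, and then to upgrade the conclusions to the wave-map setting. First I would set $(u_{0,n},u_{1,n})=\big(\tfrac1r(\psi_{0,n}-\ell\pi),\tfrac1r\psi_{1,n}\big)$. By Claim~\ref{Cl:smallWM} applied at spatial scales bounded away from $0$ and $\infty$ (more precisely, by the local equivalence of $E_M$ with the $\Hbf$-norm once the data has been normalized, together with the bound \eqref{P11}), the sequence $\{(u_{0,n},u_{1,n})\}_n$ is bounded in $\dot H^1_{\loc}\times L^2_{\loc}(\RR^4)$; in fact the global $\Hbf$-norm of $(\psi_{0,n}-\ell\pi,\psi_{1,n})$, hence the $\HHH$-norm of $(u_{0,n},u_{1,n})$, is bounded. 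Then I would invoke the radial Bahouri--Gérard profile decomposition (\cite{BaGe99}, \cite{Bulut10}) for $\{(u_{0,n},u_{1,n})\}_n$ in $\HHH$, obtaining radial linear profiles $U^j_L$, parameters $(\lambda_{j,n},t_{j,n})$ satisfying the pseudo-orthogonality \eqref{psdo_orth}, a remainder $w_n^J$ with $\lim_J\limsup_n\|w_n^J\|_{L^3L^6}=0$, and the Pythagorean expansion \eqref{Pyt1}. The profiles $\Psi^j$ are then recovered by $\Psi^j=r\,U^j+\ell\pi$ (for a compact profile, using the fixed value $\tau_j=0$ after the standard subsequence extraction and time-shift) or $\Psi^j_L=r\,U^j_L$ (for a linear-wave profile, when $-t_{j,n}/\lambda_{j,n}\to\pm\infty$); the fact that $r U^j_L$ solves \eqref{LWM} is exactly the reduction mentioned in \S\ref{S:preliminaries} (linearized wave map $\leftrightarrow$ $4D$ radial wave). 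The rescaled profiles $\Psi^j_{0,n},\Psi^j_{1,n}$ in the statement are just $r$ times the rescaled $U^j_{L,n}$ evaluated at $t=0$, which gives the formulas in the ``Compact profile'' and ``Linear wave profile'' cases.

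Next I would establish the energy Pythagorean expansion \eqref{PytWM}. This is not an immediate consequence of the $\HHH$-expansion \eqref{Pyt1} because $E_M$ is nonlinear; however, since after normalization all the data have small $\Hbf(\rho)$-norm at each dyadic scale, the quadratic approximation $E_M(\psi_0,\psi_1)\approx\|(\psi_0-\ell\pi,\psi_1)\|_{\Hbf}^2$ up to errors controlled by $\sum_{j\neq k}\int\frac1{r^2}|\sin\Psi_0^j\,\sin\Psi_0^k|\,r\,dr$-type interaction terms applies. Those cross terms go to $0$ as $n\to\infty$ by pseudo-orthogonality of the scales and concentration-compactness cut-off arguments exactly as in the proof of \eqref{Pyt2}, combined with the $\sin$-estimates of Claim~\ref{Cl:sin}: the point is that $\sin^2\big(\sum_j\Psi^j_{0,n}\big)$ decouples, up to summable cross terms, into $\sum_j\sin^2\Psi^j_{0,n}$. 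The property $(\omega_{0,n}^J,\omega_{1,n}^J)\in\Hbf$ for large $J$ (i.e. the winding numbers are all absorbed by the profiles) follows because only finitely many profiles can carry a nonzero topological charge, again by the energy bound and the lower bound $E_M(Q,0)$ on the energy of any charge-carrying configuration; so for $J$ large the remainder lies in the parallel Hilbert space $\Hbf_{0,0}=\Hbf$.

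Then I would prove the dispersive decay \eqref{P22}. The bound $\|\omega_n^J\|_{L^\infty_{t,r}}\to 0$ comes from $\|\omega_n^J\|_{L^\infty_{t,r}}\lesssim\|r^{-1}\omega_n^J\|_{L^\infty_t\dot H^1_x}\cdot$(radial Sobolev) — more carefully, writing $v_n^J=\tfrac1r\omega_n^J$, $v_n^J$ solves the $4D$ free wave equation with data $(\tfrac1r\omega^J_{0,n},\tfrac1r\omega^J_{1,n})$, and $\|r\,v_n^J\|_{L^\infty_{t,x}}\to 0$ follows from \eqref{Linfty0}-type dispersive estimates together with smallness of the remainder's energy localized at scales far from the profile scales; alternatively one interpolates between the $L^3L^6$ smallness of $v_n^J$ and its uniform energy bound, using the radial Sobolev embedding $\|rV\|_{L^\infty}\lesssim\|V\|_{\dot H^1}$. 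The claim $\|\omega_n^J\|_S\to 0$ is, after the substitution $u=\tfrac1r\psi$, exactly $\|v_n^J\|_{L^3_tL^6_x(\RR^4)}\to 0$, which is part of the Bahouri--Gérard decomposition. Finally the weak convergences \eqref{wnJWM} are the standard \eqref{wlim_w} transported through $u=\tfrac1r\psi$: $\big(\lambda_{j,n}v_n^J(t_{j,n},\lambda_{j,n}\cdot),\lambda_{j,n}^2\partial_t v_n^J(t_{j,n},\lambda_{j,n}\cdot)\big)\rightharpoonup 0$ in $\HHH$, and multiplying the first component by $r$ converts the $\HHH$-weak convergence into $\Hbf$-weak convergence.

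\textbf{Main obstacle.} The genuinely nonlinear point is the energy Pythagorean expansion \eqref{PytWM}: one must show that $E_M$, which involves $\int\frac{\sin^2\psi_0}{r^2}rdr$ rather than $\int\frac{\psi_0^2}{r^2}rdr$, decouples along the profiles. The difficulty is twofold — first, the nonlinearity $\sin^2$ in the potential term, handled by Claim~\ref{Cl:sin} to reduce the cross terms to bilinear expressions in $\sin\Psi^j_{0,n}$; second, the fact that superposing profiles of wildly separated scales can produce large pointwise values of $\sum_j\Psi^j_{0,n}$ (since each $\Psi^j_0$ need not be small), so one cannot simply Taylor-expand $\sin^2$. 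The resolution is that at \emph{each} dyadic spatial scale only boundedly many profiles are ``active'' and, after the reduction to $(u_{0,n},u_{1,n})$ normalized as above, the relevant local $\Hbf$-norms are small; the cross-interaction integrals then vanish as $n\to\infty$ by the same scale-separation/cut-off estimates that prove \eqref{Pyt2}. I would expect the bulk of the technical work, and the only part not ``morally immediate'' from the Euclidean theory, to be the careful bookkeeping of these cross terms, including the verification that the number of charge-carrying profiles is finite.
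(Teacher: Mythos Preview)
Your reduction to the $4D$ linear profile decomposition has a genuine gap at the very first step: the claim that $(u_{0,n},u_{1,n})=\big(\tfrac1r(\psi_{0,n}-\ell\pi),\tfrac1r\psi_{1,n}\big)$ is bounded in $\HHH=\dot H^1\times L^2(\RR^4)$ is false in general. The energy bound \eqref{P11} controls $\int\frac{\sin^2\psi_{0,n}}{r^2}\,r\,dr$, not $\int\frac{(\psi_{0,n}-\ell\pi)^2}{r^2}\,r\,dr$. Whenever $\ell\neq m$ the function $\psi_{0,n}-\ell\pi$ tends to $(m-\ell)\pi\neq 0$ at infinity, so $(\psi_{0,n}-\ell\pi,\psi_{1,n})\notin\Hbf$ and $(u_{0,n},u_{1,n})\notin\HHH$. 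Even when $\ell=m$, a sequence containing rescaled copies of $Q$ (which has $Q(r)\to\pi$ at infinity, hence $(Q,0)\in\Ebf\setminus\Hbf$) will have bounded wave-map energy but unbounded $\Hbf$-norm. Claim~\ref{Cl:smallWM} only gives the equivalence $E_M\approx\|\cdot\|_{\Hbf}^2$ under a \emph{smallness} hypothesis, which is exactly what fails here. So Bahouri--G\'erard in $\HHH$ cannot be applied directly, and your formula $\Psi^j=rU^j+\ell\pi$ for the compact profiles is not meaningful.

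What is actually needed (and what the paper imports from \cite[Lemma~5.5]{JiaKenig17}) is a two-stage extraction: first one peels off, at each concentration scale, a profile in $\Ebf$ (possibly a rescaled $Q$, or a constant $p\pi$) that carries the topological charge; only after subtracting these finitely many ``bubble'' profiles does the remainder land in $\Hbf$, where the linear Bahouri--G\'erard decomposition applies and yields the linear-wave profiles. The paper itself does not reprove this and simply cites \cite{JiaKenig17}; the only part it proves (in Appendix~\ref{AA:psdo_orthoWM}) is the Pythagorean expansion \eqref{PytWM}. On that specific point your plan is correct and matches the paper: reduce via Claim~\ref{Cl:sin} to the vanishing of the cross terms $\int\partial_r\Psi^j_{0,n}\partial_r\Psi^k_{0,n}\,r\,dr$, $\int\Psi^j_{1,n}\Psi^k_{1,n}\,r\,dr$, $\int|\sin\Psi^j_{0,n}\sin\Psi^k_{0,n}|\,\frac{dr}{r}$ (and the analogous profile--remainder terms), and kill each of these by scale separation, the dispersive decay $\int|\Psi^j_{0,n}|^2\frac{dr}{r}\to 0$ for $j\in\JJJ_L$, and the weak convergence \eqref{wnJWM}.
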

We will denote by $\JJJ_C$ the indices corresponding to compact profiles, and by $\JJJ_L$ the indices corresponding to nonzero linear wave profiles. We can assume that there exists at most one nonzero constant profile (which is a compact profile). We will consider the profiles such that $(\Psi_0^j,\Psi_1^j)\equiv 0$, as compact profiles. Of course, if there is an infinite number of nonzero profiles, we can assume (rearranging the indices if necessary) that there are no such profiles.

By \eqref{PytWM}, 
 $$\sum_{j\in \JJJ_C}E_M(\Psi_{0}^j,\Psi_{1}^j)<\infty.$$
 By Claim \ref{Cl:smallWM}, for large $j$, $(\Psi_{0}^j,\Psi_{1}^j)\in \Hbf_{\ell_j,\ell_j}$ for some $\ell_j\in \ZZ$, and, since $(\omega_{0,n}^J,\omega_{1,n}^J)\in \Hbf$ for large $J$, $(\Psi_{0}^j,\Psi_{1}^j)\in \Hbf$ for large $j$.

Proposition \ref{P:profileWM} is \cite[Lemma 5.5]{JiaKenig17}, with some changes in the notations:
 \begin{itemize}
  \item The constants denoted by $\phi_0^l(0)$ that appear in the decomposition (5.16) of \cite{JiaKenig17} can be summed up all together and considered (if this sum is not zero) as a constant profile $(\Psi_0^1,\Psi_1^1)=(p\pi,0)$, $p\in \ZZ\setminus\{0\}$ with $1\in \JJJ_C$.
  \item In \cite[Lemma 5.5]{JiaKenig17} the solutions of \eqref{LWM} are denoted by $ru_L$ where $u_L$ is a radial solution of the $1+4$-dimensional wave equation.
  \item In \cite{JiaKenig17}, the profiles corresponding to indices $j\in \JJJ_C$ are divided into two types of profiles, the ones defined as rescaled $(\psi_0^l,0)\in \Ebf$, and the ones defined as rescaled $U^j_L$, with $t_{j,n}=0$ for all $n$ (see (5.16) in this article). 
\end{itemize}
The only part of Proposition \ref{P:profileWM} which is not included in \cite{JiaKenig17} is the Pythagorean expansion \eqref{PytWM} of the energy for wave maps. The proof is given in Appendix \ref{AA:psdo_orthoWM}. We refer to \cite[Lemma 2.16]{CoKeLaSc15a} for a proof of this Pythagorean expansion in a special case.  

We now state the analog of Proposition \ref{P:NL_profile} (a nonlinear profile decomposition outside a wave cone) for wave maps. Since the solutions of \eqref{WM} are always defined on the entire exterior of a wave cone $\{r>|t|\}$ and satisfy the space-time bound given by Lemma \ref{L:L3L6sinus} there, this  decomposition will be valid on $\{r>|t|\}$. 

Let $(\psi_{0,n},\psi_{1,n})$, and, for $j\in \NN\setminus\{0\}$, $\left(\Psi_0^j,\Psi_1^j\right)$, $\{t_{j,n}\}_n$, $\{\lambda_{j,n}\}_n$, $\omega_n^j$ be as in Proposition \ref{P:profileWM}. If $j\in \JJJ_C$, we denote by $\Psi^j$ the solution of \eqref{WM} with initial data $\left( \Psi_0^j,\Psi_1^j \right)$ at $t=0$, which is well-defined for $r>|t|$ (see Claim \ref{Cl:outsideWM}). If $j\in \JJJ_L$, we will denote $\Psi^j=\Psi^j_L$. Let \begin{equation}
\label{P30}
\Psi_{n}^j(t,r)=\Psi^j\left( \frac{t-t_{j,n}}{\lambda_{j,n}},\frac{r}{\lambda_{j,n}} \right).
\end{equation} 
\begin{prop}
 \label{P:approxWM}
 Let $\psi_n$ be the solution of \eqref{WM} with initial data $(\psi_{0,n},\psi_{1,n})$. Then, denoting,
 \begin{equation}
  \label{NLexpansionWM}
  \Theta_n^J(t,r)=\psi_n(t,r)-\sum_{j=1}^J \Psi_n^j(t,r)-\omega_{n}^J(t,r),\quad r>|t|,
 \end{equation} 
 one has for large $J$ that $\vec{\Theta}_n^J(t)\in \Hbf(|t|)$ for all $t$ and 
 $$\lim_{J\to\infty} \limsup_{n\to\infty} \left( \sup_{t} \left\|\vec{\Theta}_n^J(t)\right\|_{\Hbf(|t|)} +\left\|\Theta_n^J\right\|_{S(\{r>|t|\})}\right)=0.$$
\end{prop}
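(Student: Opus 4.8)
The plan is to follow the standard "nonlinear profile decomposition outside a wave cone" scheme, adapting the argument for the $4$D critical wave equation (Proposition \ref{P:NL_profile}) to the wave maps setting, where the crucial new inputs are the space-time bound of Lemma \ref{L:L3L6sinus} (which replaces the global $L^3L^6$ bound for finite energy solutions of \eqref{NLW}) and the small-data/stability theory for \eqref{gNLW} via the reduction $u=\frac{\psi-\ell\pi}{r}$. I would pass to the reduced equation: set $u_n=\frac{\psi_n-\ell\pi}{r}$, $u=\frac1r(\Psi^j-\text{appropriate constant})$ etc., and work with \eqref{gNLW} on $\{r>|t|\}$, where the $S$-norm corresponds exactly to the $(L^3L^6)$-norm in the $\RR^4$ normalization. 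The statement to prove then becomes: the exact solution $u_n$ of \eqref{gNLW} is well approximated in $(L^3L^6)(\{r>|t|\})$ and in $\sup_t\HHH(|t|)$ by $\sum_{j\le J}U_n^j+w_n^J$ (the rescaled nonlinear/linear profiles plus the dispersive remainder), modulo $\Theta_n^J$ whose norm tends to $0$ in the iterated limit $\lim_J\limsup_n$.

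The key steps, in order: (1) Record the a priori bounds: by Lemma \ref{L:L3L6sinus} each nonzero compact or linear profile has finite $S(\{r>|t|\})$-norm, hence after rescaling $\limsup_n\|\Psi_n^j\|_{S(\{r>|t|\})}<\infty$; by \eqref{P22} $\lim_J\limsup_n\|\omega_n^J\|_S=0$; and by \eqref{PytWM} only finitely many profiles are "large", the rest being small-data. (2) Prove the approximate-orthogonality of the profiles in the relevant Strichartz-type norms: for $j\neq k$, $\|\Psi_n^j\Psi_n^k\Psi_n^\ell\|_{(L^1L^2)(\{r>|t|\})}\to0$ as $n\to\infty$, which follows from the pseudo-orthogonality \eqref{P12} of the parameters exactly as in the $\RR^4$ case, using \eqref{propLambda} and \eqref{LipschitzLambda} to control the error between $\Lambda$ and the cubic. (3) Build the approximate solution $\widetilde{U}_n^J=\sum_{j\le J}U_n^j+w_n^J$ and estimate the defect $\partial_t^2\widetilde U_n^J-\Delta\widetilde U_n^J-\frac1{r^3}\Lambda(r\widetilde U_n^J)$ in $(L^1L^2)(\{r>|t|\})$; by Step (2), the Lipschitz bound \eqref{LipschitzLambda}, and the triviality $\Lambda(U)-U^3=O(\min(U^5,U^3))$, this defect is $o_n(1)+o_J(1)$ on the exterior cone. (4) Apply the long-time perturbation (stability) theory for \eqref{gNLW} outside the wave cone — which follows from the Strichartz estimate \eqref{Strichartz_cone} and Proposition \ref{P:smalldata} by the usual bootstrap — to conclude that the genuine solution $u_n$ exists on $\{r>|t|\}$ for large $n$ and stays within $o_J(1)+o_n(1)$ of $\widetilde U_n^J$ in $(L^3L^6)(\{r>|t|\})$ and in $\sup_t\HHH(|t|)$. (5) Translate back to the $\psi$-variables, unwinding the constants $\ell\pi$, $m\pi$, $p_j\pi$ attached to each profile (the constant profile, if present, is absorbed), to get the stated conclusion for $\Theta_n^J$, using Claim \ref{Cl:smallWM} to pass between the $\Hbf$ norm and $\HHH$-type norms of the reductions.

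The main obstacle I expect is Step (4): executing the long-time perturbation argument \emph{outside the wave cone} rather than globally. In the interior problem one has a single uniform Strichartz estimate; here one must iterate the stability estimate across the finitely many "large" profiles ordered by their scales $\lambda_{j,n}$, each time restarting the perturbation argument on the region $\{r>|t|\}$ and using finite speed of propagation to patch. A secondary subtlety is that the wave map nonlinearity is only \emph{approximately} cubic, so all the cubic-interaction estimates (both the $j\neq k$ vanishing and the remainder control) must be supplemented by the higher-order bounds from \eqref{propLambda}; but since $\sup_{t,r>|t|}|r\Psi_n^j|$ and $\sup\|r\widetilde U_n^J\|_{L^\infty}$ are small after rescaling into the exterior region of a large cone (by the radial Sobolev embedding), these corrections are harmless. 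I would therefore organize the proof so that one first restricts to $\{r>R_n+|t|\}$ for a suitable $R_n$ governed by the innermost large profile (as in the construction preceding Proposition \ref{P:NL_profile}), runs the perturbation theory there, and then handles the thin region $\{|t|<r<R_n+|t|\}$ by the finite-speed-of-propagation local theory for \eqref{WM} used in Claim \ref{Cl:outsideWM}. Since the detailed computations parallel both Proposition \ref{P:NL_profile} and the classical references, I would present the proof in the same telegraphic style, emphasizing only the points where the approximate nonlinearity and the exterior-cone geometry require new care, and citing Lemma \ref{L:L3L6sinus} as the replacement for the global integrability that is available in the \eqref{NLW} case.
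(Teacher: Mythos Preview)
Your approach has a genuine gap. Your Step (1) misreads Lemma \ref{L:L3L6sinus}: that lemma bounds $\sin\psi$ in $S(\{r>|t|\})$, not $\psi$ itself. For a large compact profile such as $\Psi^j=Q\in\Hbf_{0,1}$ one has $\|Q\|_{S(\{r>|t|\})}=+\infty$, because $Q(r)\to\pi$ as $r\to\infty$ produces a $|t|^{-1}$ tail in the time integral. Hence there is \emph{no} choice of constant $c_j$ for which $U_n^j=\frac{1}{r}(\Psi_n^j-c_j)$ lies in $(L^3L^6)(\{r>|t|\})$, the cubic interaction estimates in your Steps (2)--(3) break down, and the claim that $\sup_{r>|t|}|rU_n^j|$ is small is simply false (for $\Psi^j=Q$ this supremum is of order $\pi$). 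The same obstruction blocks the very first reduction: $u_n=\frac{\psi_n-\ell\pi}{r}$ has data in $\HHH$ only when $\ell=m$. The paper says this explicitly at the start of its proof: the reduction-to-\eqref{gNLW} route works only under the extra hypothesis that $\{(\psi_{0,n},\psi_{1,n})\}$ is bounded in $\Hbf$, which Proposition \ref{P:approxWM} does \emph{not} assume. Restricting to $\{r>R_n+|t|\}$ does not help either, since the divergence is at $|t|\to\infty$, not at the apex, and the conclusion is required on the full cone $\{r>|t|\}$.

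The paper's proof avoids this by never passing to $U_n^j$. It writes the equation for $\theta_n^J=\frac{1}{r}\Theta_n^J$ directly (equation \eqref{eq_thetanJ}) and uses the trigonometric identity \eqref{sin1} of Claim \ref{Cl:sin} to expand $\sin\bigl(2\sum_j a_j\bigr)-\sum_j\sin(2a_j)$ as a sum of products $|\sin a_j|\sin^2 a_k$. All interaction terms are then controlled through $\frac{1}{r}\sin\Psi_n^j$, which \emph{does} lie in $(L^3L^6)(\{r>|t|\})$ by Lemma \ref{L:L3L6sinus}; indeed the uniform bound $\sum_j\bigl\|\tfrac{1}{r}\sin\Psi_n^j\bigr\|_{L^3L^6}^2\le C$ (the key estimate \eqref{P82}) plays the role your $\sum_j\|U_n^j\|_{L^3L^6}^2<\infty$ would have played. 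The bootstrap is closed by a Gr\"onwall-type inequality on $\|\theta_n^J(t)\|_{L^6(|t|)}$ with weight $F_n\in L^{3/2}$ built from these sine norms. Replacing $\Psi^j-c$ by $\sin\Psi^j$ is exactly the missing idea that makes the argument work across arbitrary topological classes $\Hbf_{\ell,m}$.
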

\begin{proof}
If we assume that the sequence $\left\{(\psi_{0,n},\psi_{1,n})\right\}_n$ is bounded in $\Hbf$, then the conclusion of the proposition can be obtained by considering the sequence of radial finite-energy solutions $\left\{\frac{1}{r}\psi_n\right\}_n$ of the $1+4-$dimensional nonlinear wave equation \eqref{gNLW} (see e.g. \cite[Proposition 2.15]{Cote15}). To work in the more general setting of Proposition \ref{P:approxWM},  we will need the space-time bound for general finite-energy solutions of \eqref{WM} given in Lemma \ref{L:L3L6sinus}.

We denote
\begin{equation}
 \label{P:40}
 \psi_n^J=\sum_{j=1}^J \Psi_n^J+\omega_n^J,\quad \Theta_n^J=\psi_n-\psi_n^J.
\end{equation} 
Then
\begin{multline}
 \label{P41}
 \partial_t^2\Theta_n^J-\partial_r^2\Theta_n^J-\frac{1}{r}\partial_r\Theta_n^J+\frac{1}{r^2}\Theta_n^J
\\ =-\frac{\sin(2\psi_n)-2\psi_n}{2r^2}+\sum_{\substack{1\leq j\leq J\\ j\in \JJJ_C}}\frac{\sin(2\Psi_n^j)-2\Psi_n^j}{2r^2},
 \end{multline}
 with initial data $(\Theta_n^J,\partial_t\Theta_n^j)_{\restriction t=0}=(0,0)$. Expanding the second line of \eqref{P41} and letting $\theta_n^j=\frac{1}{r}\Theta_n^j$, we rewrite it as a radial $1+4$-dimensional wave equation:
 \begin{multline}
 \label{eq_thetanJ}
 \partial_t^2\theta_n^J-\partial_r^2\theta_n^J-\frac{3}{r}\partial_r\theta_n^J\\
 = -\frac{1}{2r^3}\left( \sin\bigg( 2\omega_n^J+2\Theta_n^J +\sum_{j=1}^J2\Psi_n^j\bigg)-\sin\left( 2\omega_n^J \right) -\sin \left( 2\Theta_n^J\right)-\sum_{j=1}^J\sin\left( 2\Psi_n^j \right)\right)\\
 +\frac{1}{r^3} \sum_{\substack{1\leq j\leq J\\ j\in \JJJ_L}}\left( \Psi_n^j-\frac{\sin(2\Psi_n^j)}{2}\right)+\frac{1}{r^3} \left( \Theta_n^J-\frac{\sin(2\Theta_n^J)}{2} \right)+\frac{1}{r^3}\left( \omega_n^J-\frac{\sin(2\omega_n^J)}{2} \right).
\end{multline} 
Let $\Gamma_T:=\{(t,r)\;:\; 0<|t|\leq \min(r,T)\}$. We will prove 
\begin{multline}
 \label{P52}
 \forall T>0,\quad
 \left\|\left(\partial_t^2\theta_n^J-\partial_r^2\theta_n^J-\frac{3}{r}\partial_r\theta_n^J\right)\indic_{\Gamma_T}\right\|_{L^1L^2}\\
 \leq \int_0^T\left\|\theta_n^J(t)\right\|_{L^6(|t|)}F_n(t)dt+C\int_0^T\|\theta_n^J(t)\|^3_{L^6(|t|)}dt+\eps_n^J,
\end{multline} 
where $\eps_n^J$ is independent of $T$ and is such that
\begin{equation}
 \label{P53}
 \lim_{J\to\infty} \limsup_{n\to\infty}\eps_n^J=0,
\end{equation} 
and $F_n$ satisfies
\begin{equation}
 \label{P54}
 F_n\in L^{3/2}(\RR),\quad \sup_{n}\|F_n\|_{L^{3/2}(\RR)}<\infty.
\end{equation} 
In \eqref{P52} the notation $L^6(|t|)$ means as above
$$\|f\|_{L^6(|t|)}=\left(\int_{|t|}^{\infty}\left(f(r)\right)^6r^3dr\right)^{1/6}.$$
We first assume \eqref{P52}, \eqref{P53} and \eqref{P54} and prove the proposition. In all the sequel we assume that $J$ and $n$ are taken large enough, so that $\eps_n^J$ is small. By \eqref{P52}, Strichartz estimates and H\"older's inequality,
$$ \left(\int_0^T \|\theta_n^J(t)\|^3_{L^6(|t|)}dt\right)^{1/3} \leq \eps_n^J+\int_0^T \|\theta_n^J(t)\|_{L^6(|t|)} \left( F_n(t)+C\|\theta_n^J(t)\|^2_{L^6(|t|)} \right)dt.$$
Using \eqref{P54} and a Gr\"onwall-type inequality (see the appendix of  \cite{FaXiCa11}), we obtain that for all $T>0$,
$$\left(\int_0^T \|\theta_n^J(t)\|_{L^6(|t|)}^3dt\right)^{1/3}\leq \eps_n^J \Gamma \left(C+C\Big(\int_0^T \|\theta_n^J\|^3_{L^6(|t|)}dt\Big)^{2/3}  \right),$$
where $\Gamma$ is the standard Gamma function.

Assuming $\eps_n^J\Gamma(C+1)\leq 1/C$, we obtain that
\begin{multline*}
\Big(\int_0^T \|\theta_n^J\|^3_{L^6(|t|)}dt\Big)^{2/3}\leq 1/C\\
\Longrightarrow  \left(\int_0^T \|\theta_n^J(t)\|_{L^6(|t|)}^3dt\right)^{1/3}\leq \eps_n^J \Gamma \left(C+1\right)\leq 1/C,
\end{multline*}
and a standard bootstrap argument yields
$\Big(\int_0^T \|\theta_n^J\|^3_{L^6(|t|)}dt\Big)^{1/3}\lesssim \eps_n^J$ for all $T$. Using \eqref{P52} and again Strichartz estimates we see that it implies
$$\sup_{t\in \RR} \left\|\vec{\theta}_n^J(t)\right\|_{\HHH(|t|)}\lesssim \eps_n^J.$$
Going back to the function $\Theta_n^j(t)=r\theta_n^j$, we deduce the conclusion of the proposition. We are thus left with proving \eqref{P52}. In view of \eqref{eq_thetanJ}, it is sufficient to prove the following inequalities
\begin{gather}
 \label{P70}
 \lim_{n\to \infty} \left\|\indic_{\{r>|t|\}} \frac{1}{r^3} \left(\Psi_n^j-\frac{\sin(2\Psi_n^j)}{2}\right)\right\|_{L^1L^2}=0,\quad \forall j\in \JJJ_L\\
\label{P71}
 \lim_{J\to\infty}\limsup_{n\to \infty} \left\|\indic_{\{r>|t|\}} \frac{1}{r^3} \left(\omega_n^j-\frac{\sin(2\omega_n^j)}{2}\right)\right\|_{L^1L^2}=0\\
 \label{P72}
 \left\|\indic_{\Gamma_T} \frac{1}{r^3} \left(\Theta_n^J-\frac{\sin(2\Theta_n^J)}{2}\right)\right\|_{L^1L^2}\lesssim
\left\|\indic_{\Gamma_T} \theta_n^J\right\|_{L^3L^6}^3,\quad \forall J\gg 1\\
\label{P60}
\left\| \frac{\indic_{\Gamma_T}}{2r^3}\left( \sin\bigg( 2\omega_n^J+2\Theta_n^J +\sum_{j=1}^J2\Psi_n^j\bigg)-\sin\left( 2\omega_n^J \right) -\sin \left( 2\Theta_n^J\right)-\sum_{j=1}^J\sin\left( 2\Psi_n^j \right)\right)\right\|_{L^1L^2} \\ 
\notag
\qquad \qquad \lesssim \int_0^T \left\|\theta_n^J(t)\right\|_{L^6(|t|)}F_n(t)dt+\teps_n^J+\left\|\indic_{\Gamma_T}\theta_n^J\right\|^3_{L^3L^6}, \quad \forall J\gg 1,
 \end{gather}
where $\tilde{\eps}_n^J$ satisfies \eqref{P53}, $F_n$ satisfies \eqref{P54}, and the space-time $L^pL^q$ norms are always with respect to $r^3dr$ and $dt$.

\medskip

\noindent\emph{Proofs of \eqref{P70}, \eqref{P71} and \eqref{P72}.} We use the bound 
\begin{equation}
\label{bnd_sin}
\forall \sigma\in \RR,\quad 
\left|\sigma-\frac{\sin(2\sigma)}{2}\right|\lesssim \sigma^3.
\end{equation}

By \eqref{bnd_sin} and the property \eqref{P22} of $\omega_n^J$, we obtain \eqref{P71}.
The inequality \eqref{P72} follows directly from \eqref{bnd_sin} and H\"older's inequality. 

Let $j\in \JJJ_L$. Then by \eqref{bnd_sin} and H\"older's inequality,
\begin{multline}
\label{CVD}
 \left\|\frac{1}{r^3} \left( \Psi_n^j-\frac{\sin(2\Psi_n^j)}{2} \right)\indic_{\{r>|t|\}}\right\|_{L^1L^2}\\
 \lesssim 
 \int_{\RR}\left( \int_{|t|}^{\infty} \frac{1}{r^3} \left( \Psi^j\left( \frac{t-t_{j,n}}{\lambda_{j,n}},\frac{r}{\lambda_{j,n}} \right) \right)^6dr \right)^{1/2}dt\\
 =\int_{\RR} \left( \int_{\left|\frac{t_{j,n}}{\lambda_{j,n}}+\tau\right|}^{\infty} \frac{1}{\rho^3} \left( \Psi^j(\tau,\rho) \right)^6d\rho\right)^{1/2}d\tau, 
\end{multline}
which tends to $0$ as $n\to\infty$ by dominated convergence, and since $\lim_{n\to\infty} \frac{t_{j,n}}{\lambda_{j,n}}+\tau\in \pm\infty$ for every $\tau\in \RR$ if $j\in \JJJ_L$. Hence \eqref{P70}.

\medskip

\noindent\emph{Proof of \eqref{P60}.} By the Claim \ref{Cl:sin}, we have
\begin{multline*}
\left| \sin\bigg( 2\omega_n^J+2\Theta_n^J +\sum_{j=1}^J2\Psi_n^j\bigg)-\sin\left( 2\omega_n^J \right) -\sin \left( 2\Theta_n^J\right)-\sum_{j=1}^J\sin\left( 2\Psi_n^j \right)\right|\\ \lesssim 
\left|\sin \omega_n^J\right|^3+\left|\sin \Theta_n^J\right|^3
+\left(\left|\sin \omega_n^J\right|+\left|\sin \Theta_n^J\right|\right)\sum_{j=1}^J \sin^2\Psi_n^j+\sum_{\substack{1\leq j,k\leq J\\j\neq k}} \sin^2\Psi_n^j\left|\sin \Psi_n^k\right|,
\end{multline*}
Since $|\sin \omega_n^J|^3\lesssim |\omega_n^J|^3$, we have by \eqref{P22}
\begin{equation}
 \label{P80}\lim_{J\to\infty}\limsup_{n\to\infty} 
 \left\|\frac{1}{r^3} \sin^3\omega_n^J\right\|_{L^1L^2}=0.
\end{equation}
Furthermore,
\begin{equation}
 \label{P81}
 \left\|\indic_{\Gamma_T}\frac{1}{r^3} \sin^3 \Theta_n^J\right\|_{L^1L^2}\lesssim \left\|\indic_{\Gamma_T}\theta_n^J\right\|_{L^3L^6}^3.
\end{equation} 
We will next prove
\begin{equation}
 \label{P100}
 \left\|\frac{1}{r^3}\indic_{\Gamma_T}\sin\Theta_n^J\sum_{j=1}^J (\sin\Psi_n^j)^2\right\|_{L^1L^2}\lesssim \int_0^T \left\|\theta_n^J(t)\right\|_{L^6(|t|)} F_n(t)dt,
 \end{equation}
 (where $F_n$ satisfies \eqref{P54}), and
 \begin{gather}
 \label{P101}
 \lim_{J\to\infty}\limsup_{n\to\infty} \left\|\frac{1}{r^3}\indic_{\{r>|t|\}} (\sin\omega_n^J)\sum_{j=1}^J \sin^2\Psi_n^j\right\|_{L^1L^2}=0\\
 \label{P120}
 j\neq k\Longrightarrow \lim_{n\to\infty}\left\| \frac{1}{r^3} \indic_{\{r>|t|\}}\sin^2\Psi_n^j\sin\Psi_n^k\right\|_{L^1L^2}=0,
\end{gather}
which will complete the proof of \eqref{P60} and thus of the proposition. We first prove that there is a constant $C>0$ independent of $n$ such that
\begin{equation}
\label{P82}
\sum_{j=1}^{\infty} \left\|\indic_{\{r>|t|\}} \frac 1r\sin(\Psi_n^j)\right\|_{L^3L^6}^2\leq C. 
\end{equation}
Let $J_0\geq 1$ such that for all $j\geq J_0$, $(\Psi_0^j,\Psi_1^j)\in \Hbf$ and $E_M(\Psi_0^j,\Psi_1^j)$ is small. By the Claim \ref{Cl:smallWM}, for $j\geq J_0$, $E_M(\Psi_0^j,\Psi_1^j)\approx \|(\Psi_0^j,\Psi_1^j)\|_{\Hbf}^2$. By Strichartz estimates (and the small data theory for \eqref{gNLW} if $j\in \JJJ_C$), $\left\|\indic_{\{\{r>|t|\}} \frac{1}{r}\Psi_n^j\right\|_{L^3L^6}\lesssim \left\|(\Psi_0^j,\Psi_1^j)\right\|_{\Hbf}$ for large $j$. Since $|\sin \Psi_n^j|\leq |\Psi_n^j|$ we deduce, by the Pythagorean expansion \eqref{PytWM} of the energy, 
\begin{equation}
 \label{P90}
\sum_{j=J_0}^{\infty} \left\|\indic_{\{r>|t|\}} \frac{\sin(\Psi_n^j)}{r}\right\|_{L^3L^6}^2\leq C.  
\end{equation} 
Let $j\in \{1,\ldots,J_0-1\}$. If $j\in \JJJ_C$,
\begin{equation}
 \label{P91}
\left\|\indic_{\{r>|t|\}} \frac{\sin(\Psi_n^j)}{r}\right\|_{L^3L^6}=\left\|\indic_{\{r>|t|\}} \frac{\sin(\Psi^j)}{r}\right\|_{L^3L^6},
\end{equation} 
which is finite by Lemma \ref{L:L3L6sinus}, and if $j\in \JJJ_L$,
\begin{equation}
 \label{P92}
 \left\|\indic_{\{r>|t|\}}\frac{\sin \Psi_n^j}{r}\right\|_{L^3L^6}\leq \left\|\indic_{\{r>|t|\}}\frac{\Psi_L^j}{r}\right\|_{L^3L^6},
\end{equation} 
which is finite by Strichartz estimates. Thus \eqref{P82} holds.

\medskip

\noindent\emph{Proof of \eqref{P100}}. We have 
\begin{multline}
\label{P130}
\left\|\frac{1}{r^3}\indic_{\Gamma_T}\sin\Theta_n^J\sum_{j=1}^J \sin^2\Psi_n^j\right\|_{L^1L^2}=\int_0^T 
 \bigg\|\frac{1}{r^3}\sin\Theta_n^J(t)\sum_{j=1}^J \sin^2\Psi_n^j(t)\bigg\|_{L^2(|t|)}dt\\
 \leq \int_0^T \Big\|\frac{1}{r}\Theta_n^J(t)\Big\|_{L^6(|t|)} \bigg\|\frac{1}{r^2}\sum_{j=1}^J \sin^2\Psi_n^j(t)\bigg\|_{L^3(|t|)}dt.
\end{multline}
Furthermore,
\begin{equation}
\label{P131}
\bigg\|\frac{1}{r^2}\sum_{j=1}^J \sin^2\Psi_n^j(t)\bigg\|_{L^3(|t|)}\leq \sum_{j=1}^J \left\| \frac{\sin^2\Psi_n^j(t)}{r^2}\right\|_{L^3(|t|)}=\sum_{j=1}^J \left\| \frac{\sin\Psi_n^j(t)}{r}\right\|^2_{L^6(|t|)}. 
\end{equation} 
Let 
$$F_n(t)=\sum_{j=1}^{\infty} \left\|\frac{\sin \Psi_n^j(t)}{r}\right\|^2_{L^6(|t|)}.$$
Then 
\begin{equation}
\label{P132}
\|F_n\|_{L^{3/2}}\leq \sum_{j=1}^{\infty} \left\|\indic_{\{r>|t|\}}\frac{\sin\Psi_n^J}{r}\right\|^2_{L^3L^6}\leq C
\end{equation}
by \eqref{P90}. Combining \eqref{P130}, \eqref{P131} and \eqref{P132} we obtain \eqref{P100}.

\medskip

\noindent\emph{Proof of \eqref{P101}}. We have
\begin{multline*}
\left\|\frac{\indic_{\{r>|t|\}}}{r^3} (\sin\omega_n^J)\sum_{j=1}^J \sin^2\Psi_n^j\right\|_{L^1L^2}
\\
\leq \left\|\frac{\indic_{\{r>|t|\}}}{r} \sin\omega_n^J\right\|_{L^3L^6}\left\|\frac{\indic_{\{r>|t|\}}}{r^2}\sum_{j=1}^J \sin^2\Psi_n^j\right\|_{L^{3/2}L^3}\\
\leq \left\|\frac{\indic_{\{r>|t|\}}}{r} \sin\omega_n^J\right\|_{L^3L^6}\sum_{j=1}^{\infty} \left\|\indic_{\{r>|t|\}}\frac{\sin\Psi_n^J}{r}\right\|^2_{L^3L^6},
\end{multline*}
and \eqref{P101} follows from
\eqref{P22} and \eqref{P82}.

\medskip

\noindent\emph{Proof of \eqref{P120}}. If $j\in \JJJ_L$, we have by dominated convergence, as in \eqref{CVD},
$$\lim_{n\to\infty}\left\|\indic_{\{r>|t|\}} \frac{1}{r}\sin \Psi_n^j\right\|_{L^3L^6}=0.$$
Thus \eqref{P120} holds if $j\in \JJJ_L$ or $k\in \JJJ_L$. If $j\in \JJJ_C$ and $k\in \JJJ_C$, \eqref{P120} follows from
$$\lim_{n\to\infty}\frac{\lambda_{j,n}}{\lambda_{k,n}}+\frac{\lambda_{k,n}}{\lambda_{j,n}}=\infty,$$
and Lemma \ref{L:L3L6sinus}. 
\end{proof}

\subsection{Proof of the soliton resolution}
\label{SS:solitonWM}
In this subsection, we deduce Theorem \ref{T:mainWM} from the rigidity theorem (Theorem \ref{T:g_rigidity}). The proof is close to the proof of Theorem \ref{T:main} and we will not give all the details. As in the proof of Theorem \ref{T:main}, we will focus on the finite time blow-up case, the proof for global solutions being very similar.

\subsubsection{Analysis along a sequence of times}

We let $\psi$ be a solution of \eqref{WM} such that $T_+=T_+(\psi)<\infty$, and $(\varphi_0,\varphi_1)$ as in Proposition \ref{P:BlowupWM}. The core of the proof of Theorem \ref{T:mainWM} is the following proposition, which is the analog of Proposition \ref{P:subsequence}. Recall from \eqref{defH} the definition of the norm $\|\cdot\|_H$.
\begin{prop}
 \label{P:subsequenceWM}
Let $\{t_n\}_n$ be a sequence of times such that 
\begin{equation*}
\lim_{n\to\infty} t_n=T_+,\quad \forall n,\; 0\leq t_n<T_+.
\end{equation*}
Then there exists a subsequence of $\{t_n\}_n$ (that we still denote by $\{t_n\}_n$), $J\geq 1$, $(\iota_j)_j\in \{\pm 1\}^J$, sequences $\{\lambda_{j,n}\}_n$, $1\leq j\leq J$ with 
$$\lambda_{1,n}\ll \ldots \ll \lambda_{J,n}\ll T_+-t_n,$$
such that
\begin{gather}
\label{CVH1WM}
\lim_{n\to \infty} \left\|\psi(t_n)-\varphi_0 -\sum_{j=1}^J \frac{\iota_j}{\lambda_{j,n}} Q\left( \frac{\cdot}{\lambda_{j,n}} \right)\right\|_{H}=0\\
\label{dev_energyWM}
E_M(\psi_0,\psi_1)=E_M(\psi_0,\psi_1)+ J E_M(Q,0)+ \frac{1}{2}\left\|\partial_t \psi(t_n)-\psi_1\right\|_{L^2}^2+o_n(1).
\end{gather}
\end{prop}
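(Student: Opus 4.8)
The plan is to follow the same strategy as in the proof of Proposition~\ref{P:subsequence} for the $4D$ wave equation, working with the auxiliary radial function $u_n(t,r) = \sqrt{2/3}\,(\psi(t_n+t,r) - \varphi_0(r))/r$ when convenient, but mostly carrying out the analysis directly at the level of the wave map energy, using the wave-map profile decomposition of Proposition~\ref{P:profileWM} and the nonlinear profile decomposition outside wave cones of Proposition~\ref{P:approxWM}. As in Step~1 of the proof of Proposition~\ref{P:subsequence}, first I would reduce to the case where the exterior data $(\varphi_0,\varphi_1)$ has small energy: by finite speed of propagation $\psi$ agrees with the solution $\varphi$ of \eqref{WM} with data $(\varphi_0,\varphi_1)$ for $r > T_+ - t$, and $E_{\out}(\vec\varphi(t)) \to 0$ as $t \to T_+$ since $\vec\varphi(t) \to (\varphi_0,\varphi_1)$ in $\Hbf(R)+(m\pi,0)$; one then surgically modifies $\psi$ inside a slightly larger cone (as in \eqref{Add10}--\eqref{Add13}) without changing it in $\{r < T_+-t\}$, obtaining a new blow-up solution whose data outside every wave cone is small, and for which \eqref{CVH1WM}--\eqref{dev_energyWM} are equivalent to the original ones.

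The heart of the argument is then Steps~2--5. Extracting a subsequence, I would apply Proposition~\ref{P:profileWM} to $\{(\psi(t_n,\cdot),\partial_t\psi(t_n,\cdot))\}_n \in \Hbf_{\ell,m}$ (after subtracting the appropriate constant $m\pi$ to land in $\Hbf$ outside cones, or keeping track of it via the constant profile), obtaining compact profiles $j\in\JJJ_C$ and linear wave profiles $j\in\JJJ_L$. Each nonzero compact profile $\Psi^j$ is a nonzero solution of \eqref{WM} which is not stationary (or else $\Psi^j$ is, up to sign and scaling, $Q + p\pi$). For the non-stationary ones, the rigidity theorem (Theorem~\ref{T:rigidityWM}, equivalently Theorem~\ref{T:g_rigidity} applied to $u = \sqrt{2/3}\,\psi/r$) gives an outer-energy lower bound of the form $\int_{R+|t|}^\infty (\cdots)\,r\,dr \geq \eta_1 > 0$ holding for all $t>0$ or all $t<0$. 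Using the nonlinear profile decomposition outside wave cones (Proposition~\ref{P:approxWM}), evaluated at $\tau = T_+ - t_n$ (for the future case) or at a fixed interior time (for the past case), together with the pseudo-orthogonality of the profiles and of the dispersive remainder (the wave-maps analog of Lemma~\ref{L:psdo_ortho}, which must be proved along the lines of Appendix~\ref{A:psdo_ortho} / \ref{AA:psdo_orthoWM}), and the fact that $\psi = \varphi$ outside the cone $\{r > T_+-t\}$, I would derive a contradiction. This rules out all non-stationary compact profiles and all nonzero linear wave profiles, exactly as in Steps~3 and~4 of the proof of Proposition~\ref{P:subsequence}; the only surviving profiles are rescaled copies of $\pm Q + p\pi$, and since the constant $m\pi$ has been accounted for once, all the constants $p\pi$ must cancel except at most one, leaving precisely the stationary profiles $\iota_j Q(\cdot/\lambda_{j,n})$.

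The final Step~5 is to show that the dispersive remainder $\omega_n^J$ satisfies $\|\omega_n^J\|_H \to 0$; for this I would use that $(\sqrt{2/3}/r)\,\omega_n^J$ is (essentially) a radial solution of the $4D$ linear wave equation, invoke the exterior-energy lower bound \eqref{CKS} of \cite{CoKeSc14} for $4D$ (available since $\partial_t\psi(t_n)-\psi_1 \to 0$ in $L^2$ along a further subsequence by Proposition~\ref{P:BlowupWM}, \eqref{derivative0_WMBup}, so that the relevant $L^2$-part of the remainder is negligible and only the $\dot H^1$-part matters), and argue by contradiction against the profiles as before, using that $\omega_n^J$ is small outside the cone $\{r > T_+-t_n\}$. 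Combined with the ordering $\lambda_{1,n}\ll\cdots\ll\lambda_{J,n}$, which follows from \eqref{psdo_orth}, and with $\lambda_{J,n}\ll T_+-t_n$, which follows from the fact that $Q$ is not compactly supported while $\psi - \varphi$ is supported in $\{r < T_+-t_n\}$ (cf.\ \eqref{noSS}), this gives \eqref{CVH1WM}; the energy expansion \eqref{dev_energyWM} is then read off from the Pythagorean identity \eqref{PytWM} together with $E_M(\pm Q+p\pi,0) = E_M(Q,0)$ and the vanishing of the time derivative along the chosen subsequence. The main obstacle I expect is the careful bookkeeping of the constants $\ell\pi, m\pi, p\pi$ in the wave-maps setting — where the energy space is a disjoint union of affine spaces rather than a Hilbert space — and ensuring that the surgery reductions and the profile decomposition interact correctly with these constants; the analytic core (the channel-of-energy contradiction argument and the $L^3L^6$ bounds outside cones) is already packaged in Lemma~\ref{L:L3L6sinus} and Propositions~\ref{P:approxWM} and~\ref{P:smalldata}.
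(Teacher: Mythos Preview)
Your overall strategy matches the paper's, but two points deserve correction.

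First, the surgery in your Step~1 is unnecessary for wave maps. Unlike the NLW case, every finite-energy solution of \eqref{WM} is defined on the full exterior $\{r>|t|\}$ (Claim~\ref{Cl:outsideWM}) and satisfies a global $S$-bound there (Lemma~\ref{L:L3L6sinus}); this is precisely what makes Proposition~\ref{P:approxWM} available on $\{r>|t|\}$ without any smallness reduction. The paper remarks on this simplification and proceeds directly with the profile decomposition of $\vec\psi(t_n)-(\varphi_0,\varphi_1)$ (treating $(\varphi_0,\varphi_1)$ as an extra compact profile). Your surgery is not wrong, just superfluous, and carrying the affine structure of $\Hbf_{\ell,m}$ through it adds exactly the bookkeeping you were worried about.

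Second, and more substantively, your Step~5 contains a genuine gap. You justify the use of \eqref{CKS} by passing to a further subsequence along which $\partial_t\psi(t_n)-\varphi_1\to 0$ in $L^2$, invoking \eqref{derivative0_WMBup}. But \eqref{derivative0_WMBup} produces one \emph{particular} sequence with this property; it does \emph{not} say that an \emph{arbitrary} given sequence $\{t_n\}$ admits such a subsequence, so you cannot use it here. Fortunately no such restriction is needed: the estimate \eqref{CKS} holds for \emph{general} radial data $(u_0,u_1)$ (for all $t\ge 0$ or for all $t\le 0$), and its right-hand side involves only $\|\nabla u_0\|_{L^2}^2$. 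Applied to $w_n=\frac{1}{r}\omega_n$, it yields a lower bound on $\|\omega_{0,n}\|_H$ alone, which is exactly what the proposition claims --- note that \eqref{CVH1WM} concerns only the first component, and \eqref{dev_energyWM} retains the residual term $\tfrac12\|\partial_t\psi(t_n)-\varphi_1\|_{L^2}^2$ precisely because the time derivative need not vanish. The paper's Step~5 runs the channel argument with \eqref{CKS} directly, using $\lim_n\|\omega_n\|_{L^\infty}=0$ from \eqref{P22} to pass from $\omega_n^2/r^2$ to $\sin^2\omega_n/r$. The same confusion appears in your last sentence, where you read off \eqref{dev_energyWM} from ``the vanishing of the time derivative along the chosen subsequence'': it comes instead from the Pythagorean expansion \eqref{PytWM}, with $E_M(\omega_{0,n},\omega_{1,n})=\tfrac12\|\omega_{1,n}\|_{L^2}^2+o_n(1)$ once $\|\omega_{0,n}\|_H\to 0$.
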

\begin{proof}
 The proof of Proposition \ref{P:subsequenceWM} is close to the proof of Proposition \ref{P:subsequence}, but somehow easier, since solutions of the co-rotational  maps equation \eqref{WM} with initial data at $t=0$ are always defined for $\{r>|t|\}$ and enjoy a global space-time bound there (see Lemma \ref{L:L3L6sinus}).

 To simplify notations, we assume without loss of generality in all the proof that 
 $$T_+(\psi)=1.$$
 
 \noindent\emph{Step 1. Profile decomposition.} We let $\varphi$ be the solution of \eqref{WM} with initial data $(\varphi_0,\varphi_1)$ at $t=1$. By Proposition \ref{P:BlowupWM}, 
 $$\supp (\varphi-\psi)\subset \{r\leq 1-t\}.$$
 Extracting subsequences, we assume that $\Big\{\vec{\psi}(t_n)-(\varphi_0,\varphi_1)\Big\}_n$ has a profile decomposition as in \S \ref{SSS:profile}. We will use the notations of \S \ref{SSS:profile}: $\Psi_n^j$, $\{\lambda_{j,n}\}_n$, $\{t_{j,n}\}_n$ etc... for this decomposition. Recall the partition $\NN\setminus\{0\}=\JJJ_C\cup \JJJ_L$ of the set of indices defined after Proposition \ref{P:profileWM}. Rescaling the profiles, we can assume that there is another partition $\NN\setminus\{0\}=\III_s\cup \III_c\cup\III_+\cup\III_-$ defined by
\begin{align*}
 j\in \III_s&\iff j\in \JJJ_C,\; \Psi_0^j\in \{Q,-Q\}\cup \{m\pi,\; m\in \ZZ\}\text{ and } \Psi_1^j=0,\\
 j\in \III_c&\iff j\in \JJJ_C\text{ and } \Psi^j\text{ is not a stationary solution of \eqref{NLW}},\\
 j\in \III_{\pm}&\iff j\in \JJJ_L\text{ and }\lim_{n\to\infty}\frac{-t_{j,n}}{\lambda_{j,n}}=\pm \infty.
\end{align*} 
In Step 2, we will prove that $\III_c=\emptyset$, in Step 3, that $\III_{\pm}=\emptyset$. In Step 4, we exclude self-similar behaviour for the scaling parameters. In Step 5, we conclude the proof, showing that the first component of the dispersive remainder $\omega_{0,n}^J$ goes to $0$ in $H$ as $n$ goes to infinity.

\smallskip

\noindent\emph{Step 2. Compact profiles.} We prove that $\III_c$ is empty by contradiction. Assume that it is not empty and let $j_0\in \JJJ_c$. Assume $(\Psi_0^{j_0},\Psi_1^{j_0})\in \Hbf_{\ell,m}$. 

By Theorem \ref{T:rigidityWM}, there exists $\eta_1>0$ such that the following holds for all $t\geq 0$ or for all $t\leq 0$:
\begin{equation}
 \label{WM20}
\int_{|t|}^{\infty} \left((\partial_{t,r}\Psi^{j_0}(t,r))^2+\frac{1}{r^2}\sin^2\Psi^{j_0} (t,r)\right)rdr \geq \eta_1.
 \end{equation} 

The sequence $\big\{\vec{\psi}(t_n)\big\}_n$ has the same profile decomposition as the sequence $\big\{\vec{\psi}(t_n)-(\varphi_0,\varphi_1)\big\}_n$, with an additional profile
$$(\Psi_0^0,\Psi_1^0)=(\varphi_0,\varphi_1),\quad \forall n,\; \lambda_{j_0,n}=1,\; t_{j_0,n}=0.$$
By Proposition \ref{P:approxWM} and arguing as in Step 3 of the proof of Proposition \ref{P:subsequence}, we obtain, for all $r>|\tau|$, 
\begin{equation}
 \label{WM30}
 \psi(t_n+\tau)=\varphi(t_n+\tau)+\sum_{j=1}^J \Psi_n^j(\tau,r)+\omega_n^J(\tau,r)+\Theta_n^J(\tau,r), 
\end{equation} 
where 
\begin{equation}
 \label{WM31}
 \lim_{J\to\infty} \limsup_{n\to\infty} \sup_{\tau} \left\| \vec{\Theta}_n^J(\tau)\right\|_{\Hbf(\tau)}=0.
\end{equation}
We distinguish between two cases.

\smallskip

\noindent \emph{Case 1. Channels of energy in the future.} Assume that \eqref{WM20} holds for all $t>0$. Then by \eqref{WM30} at $\tau=1-t_n$,
$$\sum_{j=1}^J \Psi_n^j(1-t_n,r)+\omega_n^J(1-t_n,r)+\Theta_n^J(1-t_n,r)=0, \quad r>1-t_n,$$
which yields
\begin{multline}
 \label{WM32} 
 \int_{1-t_n}^{\infty} \Big( \partial_{t,r}\Big( \sum_{j=1}^J \Psi_n^j(1-t_n,r)+\omega_n^J(1-t_n,r)+\Theta_n^J(1-t_n,r) \Big) \Big)^2rdr\\
 +\int_{1-t_n}^{\infty}\sin^2\Big( \sum_{j=1}^J \Psi_n^j(1-t_n,r)+\omega_n^J(1-t_n,r)+\Theta_n^J(1-t_n,r) \Big)\frac{dr}{r}=0.
\end{multline}
Fixing $J$ large, and using that \eqref{WM20} holds for $t\geq 0$, we claim that the left-hand side of \eqref{WM32} is larger than 
$$ \frac{1}{2}\int_{1-t_n}^{\infty}\left( \partial_{t,r}\Psi_n^{j_0}(1-t_n,r) \right)^2rdr +\frac{1}{2}\int_{1-t_n}^{\infty}\frac{\sin^2\Psi_n^{j_0}(1-t_n,r)}{r} dr\geq \frac{\eta_1}{2}$$
for large $n$, a contradiction. In view of \eqref{WM31}, and Claim \ref{Cl:sin}, it is sufficient to prove 
\begin{gather}
 \label{WM40}
 j\ne j_0\Longrightarrow \lim_{n\to\infty} \int_{1-t_n}^{\infty} \partial_{t,r}\Psi_n^j(1-t_n,r)\cdot\partial_{t,r}\Psi_n^{j_0}(1-t_n,r)rdr=0 \\
 \label{WM40bis}
 j\ne j_0\Longrightarrow \lim_{n\to\infty} \int_{1-t_n}^{\infty} \left|\sin \Psi_n^j(1-t_n,r)\sin \Psi_n^{j_0}(1-t_n,r)\right|\frac{dr}{r}=0 \\
 \label{WM41}
 1\leq j\leq J\Longrightarrow \lim_{n\to\infty} \int_{1-t_n}^{\infty} \partial_{t,r}\Psi_n^j(1-t_n,r)\cdot\partial_{t,r}\omega_n^{J}(1-t_n,r)rdr=0 \\
 \label{WM41bis}
 1\leq j\leq J\Longrightarrow \lim_{n\to\infty} \int_{1-t_n}^{\infty} \left|\sin \Psi_n^j(1-t_n,r)\sin \omega_n^{J}(1-t_n,r)\right|\frac{dr}{r}=0.
\end{gather}
The proof of \eqref{WM40},\ldots,\eqref{WM41} is very close to the proofs of the corresponding properties for the linear wave profiles in Appendix \ref{AA:psdo_ortho} and of the Pythagorean expansion \eqref{PytWM} of the wave maps energy in Appendix \ref{AA:psdo_orthoWM} and we omit it. 

\smallskip

\noindent\emph{Case 2. Channels of energy in the past.}
We assume that \eqref{WM20} holds for all $t\leq 0$. By \eqref{WM30} at $\tau=-t_n$,
\begin{multline}
 \label{WM50} 
 \int_{t_n}^{\infty} \Big| \partial_{t,r}\Big( \sum_{j=1}^J \Psi_n^j(-t_n,r)+\omega_n^J(-t_n,r)+\Theta_n^J(-t_n,r) \Big) \Big|^2rdr\\
 +\int_{t_n}^{\infty}\sin^2\Big( \sum_{j=1}^J \Psi_n^j(-t_n,r)+\omega_n^J(1-t_n,r)+\Theta_n^J(-t_n,r) \Big)\frac{dr}{r}\\
 =\int_{t_n}^{\infty} \left| \partial_{t,r}\psi(0,r)-\partial_{t,r}\varphi(0,r) \right|^2rdr+\int_{t_n}^{\infty} \sin^2(\psi(0,r)-\varphi(0,r))\frac{dr}{r}\underset{n\to\infty}{\longrightarrow}0.
\end{multline}
By the same argument as in Case 1, we obtain that the left-hand side of \eqref{WM50} is larger than
$$ \frac{1}{2}\int_{t_n}^{\infty}\left( \partial_{t,r}\Psi_n^{j_0}(-t_n,r) \right)^2rdr +\frac{1}{2}\int_{t_n}^{\infty}\frac{\sin^2\Psi_n^{j_0}(-t_n,r)}{r} dr\geq \frac{\eta_1}{2},$$
a contradiction.

\smallskip

\noindent\emph{Step 3. Scattering profiles.} In this step we prove by contradiction that $\III_+$ is empty. This step is very close to Case 1 of the preceding step, and also to Step 4 in the proof of Proposition \ref{P:subsequence}, and we only sketch it. The proof that $\III_-$ is empty is very similar and we omit it. 

Assume that $\III_+\neq \emptyset$ and let $j_0\in \III_+$. Since $j_0\in \JJJ_L$, we have 
$$\Psi_n^{j_0}(t,r)=\Psi^{j_0}\left( \frac{t-t_{j_0,n}}{\lambda_{j_0,n}},\frac{r}{\lambda_{j_0,n}} \right)$$
where $\Psi^{j_0}$ is a solution of the linearized equation \eqref{LWM}, with nonzero initial data in $\Hbf$. Then $U^{j_0}=\frac{1}{r}\Psi^{j_0}$ is a radial solution of the linear wave equation in space dimension $4$, and we deduce as in Step 4 of the proof of Proposition \ref{P:subsequence} that there exists $\eta_1>0$, such that
for large $n$, 
$$ \int_{1-t_n}^{\infty} \left|\partial_{t,r}U_n^{j_0}(1-t_n,r)\right|^2r^3dr\geq 2\eta_1,$$
where $U_n^{j_0}(t,r)=\frac{1}{\lambda_{j_0,n}}U^{j_0}\left( \frac{t-t_{j_0,n}}{\lambda_{j_0,n}},\frac{r}{\lambda_{j_0,n}} \right)$. As a consequence, for large $n$
$$ \int_{1-t_n}^{\infty} \left|\partial_{t,r}\Psi_n^{j_0}(1-t_n,r)\right|^2rdr+\int_{1-t_n}^{\infty} \left|\Psi_n^{j_0}(1-t_n,r)\right|^2\frac{dr}{r}  \geq 2\eta_1.$$
Using that $\lim_{n\to\infty}\|\Psi_n^{j_0}(1-t_n)\|_{L^{\infty}}=0$ (see \eqref{Linfty0}), we deduce that for large $n$,
$$ \int_{1-t_n}^{\infty} \left|\partial_{t,r}\Psi_n^{j_0}(1-t_n,r)\right|^2rdr+\int_{1-t_n}^{\infty} \sin^2\Psi_n^{j_0}(1-t_n,r)\frac{dr}{r}  \geq \eta_1.$$
The end of the proof is the same as in Case 1 of Step 2 above, and we omit. 

\smallskip

\noindent\emph{Step 4. Bound on the scaling parameters.} According to the preceding steps, $(\omega_{0,n}^J,\omega_{1,n}^J)$ is, for large $J$, independent of $J$, and we will drop the superscript $J$. Furthermore, all the profiles are in $\III_s$. Using the Pythagorean expansion of the energy \eqref{PytWM} for the sequence $\vec{\psi}(t_n)$, we see that there exists $J_0\geq 0$, $m\in \ZZ$, $(\iota_{j})_{1\leq j\leq J_0}\in \{\pm 1\}^{J_0}$, sequences $\{\lambda_{j,n}\}_n$ with $0<\lambda_{1,n}\ll \ldots \ll \lambda_{J_0,n}$ such that 
\begin{equation}
 \label{WM60}
 \vec{\psi}(t_n)=\vec{\varphi}(t_n)+(m\pi,0)+\sum_{j=1}^{J_0} \left( \iota_jQ\left( \frac{\cdot}{\lambda_{j,n}} \right),0 \right)+\left( \omega_{0,n},\omega_{1,n} \right)+o_n(1) \text{ in }\Hbf.
\end{equation} 
Using that $(\omega_{0,n},\omega_{1,n})\in \Hbf$, and that $\psi(t_n,r)=\varphi(t_n,r)$ for $r>1-t_n$, we see that $m=-\sum_{j=1}^{J_0} \iota_{j}$. In this step, we prove
\begin{equation}
\label{not_SelfS}
\lim_{n\to\infty} \frac{\lambda_{J_0,n}}{1-t_n}=0. 
\end{equation} 
Arguing by contradiction and extracting subsequences, we assume that there exists $R>0$ such that for large $n$,
$$ \lambda_{j_0,n}\geq R(1-t_n).$$
By \eqref{WM60}, and the fact that $\psi(t_n,r)-\varphi(t_n,r)=0$ for $r>1-t_n$, we obtain that  
\begin{equation}
 \label{WM60bis}
\omega_{0,n}(\lambda_{J_0,n}r)+m\pi+\sum_{j=1}^{J_0-1} \iota_j Q\left( \frac{\lambda_{J_0,n}r}{\lambda_{j,n}} \right)+Q(r)=o_n(1)\text{ in }H(R).
 \end{equation} 
Consider $\chi\in C_0^{\infty}\big((R,\infty)\big)$, nonnegative and not identically zero. Since $\omega_{0,n}(\lambda_{J_0,n}\cdot)$ converges to $0$ weakly in $H$, we have
$$\lim_{n\to\infty}\int_0^{\infty} \omega_{0,n}(\lambda_{J_0,n}r)\chi(r)dr=0.$$
Furthermore, for all $j\in \{1,\ldots,J_0-1\}$, we have
$$\lim_{n\to\infty} \iota_j \int_0^{\infty}Q\left( \frac{\lambda_{J_0,n}r}{\lambda_{j,n}} \right)\chi(r) dr=\iota_j\pi \int_0^{\infty}\chi(r)dr.$$
Thus \eqref{WM60bis} implies, letting $n\to\infty$,
$$ \bigg(m+\sum_{j=1}^{J_0-1}\iota_j\bigg)\pi\int_0^{\infty} \chi(r)dr+\iota_{J_0}\int_{0}^{\infty}Q(r)\chi(r)dr=0.$$
Using that $m+\sum_{j=1}^{J_0-1}\iota_j=-\iota_{J_0}$, we deduce
$$\iota_{J_0}\int_{0}^{\infty}(Q-\pi)(r)\chi(r)dr=0, $$
a contradiction since $Q<\pi$.

\smallskip

\noindent\emph{Step 5. Dispersive remainder.}
In this step we prove
\begin{equation}
 \label{WM70}
 \lim_{n\to\infty} \|\omega_{0,n}\|_{H}=0
\end{equation} 
which will conclude the proof of Proposition \ref{P:subsequenceWM}. Using that $(\omega_{0,n},\omega_{1,n})\in \Hbf$, we obtain as usual that $w_n=\frac{1}{r}\omega_n$ is a radial finite energy solution of the free wave equation \eqref{FW} in space dimension $4$. Thus by the result of \cite{CoKeSc14} (see \eqref{CKS}),
\begin{equation*}
 \sum_{\pm} \lim_{t\to\pm\infty} \int_{|t|}^{\infty} \left|\partial_{t,r}w_n(t,r)\right|^2r^3dr\gtrsim \|w_{0,n}\|^2_{\hdot},
\end{equation*} 
where $w_{0,n}=\frac{1}{r}\omega_{0,n}$.
Using the decay of the exterior energy for the linear wave equation, we obtain that the following holds for all $t\geq 0$ or for all $t\leq 0$
\begin{equation*}
 \int_{|t|}^{\infty} \left|\partial_{t,r}w_n(t,r)\right|^2r^3dr\gtrsim \|w_{0,n}\|^2_{\hdot}.
\end{equation*} 
Going back to $\omega_n$, we deduce that the following holds for all $t\geq 0$ or for all $t\leq 0$.
\begin{equation*}
 \int_{|t|}^{\infty}\left| \partial_{t,r}\omega_n(t,r) \right|^2 rdr+\int_{|t|}^{\infty} \frac{1}{r^2} \left(\omega_n(t,r)\right)^2rdr\gtrsim \|\omega_{0,n}\|_{H}^2.
\end{equation*}
Since by \eqref{P22}, $\lim_n\|w_n\|_{L^{\infty}}=0$, we deduce that for all $t\geq 0$ or for all $t\leq 0$
\begin{equation*}
 \int_{|t|}^{\infty}\left| \partial_{t,r}\omega_n(t,r) \right|^2 rdr+\int_{|t|}^{\infty}  \sin^2\omega_n(t,r)\frac{dr}{r}\gtrsim \|\omega_{0,n}\|_{H}^2.
\end{equation*}
The end of the proof of \eqref{WM70} is similar to Step 2, and we omit it.
\end{proof}

\subsubsection{End of the proof of the soliton resolution}
The end of the proof of Theorem \ref{T:mainWM} in the finite time blow-up case is close to the end of the proof of Theorem \ref{T:main}. However, the fact that the energy always gives an a priori bound on a solution $\psi$ of the wave maps equation \eqref{WM} makes the proof slightly simpler. 
 
As before, we let $\psi$ be a solution of \eqref{WM} such that $T_+(\psi)=1$. By Proposition \ref{P:BlowupWM}, there exists a sequence of times $\{t_n\}\to 1$ such that 
\begin{equation}
 \label{WM80} 
 \lim_{n\to\infty}\|\partial_t \psi(t_n)-\varphi_1\|_{L^2}=0.
\end{equation} 
(Here and until the end of this proof, $L^2$ denotes the space $L^2\big((0,\infty)\big)$ with respect to the measure $rdr$.) 
 Combining \eqref{WM80} with Proposition \ref{P:subsequenceWM}, we obtain (extracting subsequences) that there exists $J\geq 0$, $(\iota_j)_j\in \{\pm 1\}^J$, scaling parameters $0<\lambda_{1,n}\ll \ldots \ll \lambda_{J,n}$ such that 
 \begin{equation}
  \label{WM90}
  \lim_{n\to\infty} \bigg\|\vec{\psi}(t_n)-(\varphi_0,\varphi_1)-(m\pi,0)-\sum_{j=1}^{J}\left(\iota_jQ(\lambda_{j,n}\cdot),0\right)\bigg\|_{\Hbf}=0.
 \end{equation} 
 Since $t_n\to 1$ and $1$ is the maximal time of existence of $\psi$, we must have $J\geq 1$. Using the Pythagorean expansion \eqref{PytWM} of the energy, we see that 
 \begin{equation}
  \label{WM91}
  E_M(\psi_0,\psi_1)=E_M(\varphi_0,\varphi_1)+JE_M(Q,0).
 \end{equation} 
 We next prove by contradiction
 \begin{equation}
  \label{WM92}
  \lim_{t\to 1} \|\partial_t\psi(t)-\varphi_1\|_{L^2}=0.
 \end{equation} 
If \eqref{WM92} does not hold, using \eqref{WM80} and the intermediate value theorem, there exists a sequence of times $t_n'\to 1$ such that 
\begin{equation}
 \label{WM93} \lim_{n\to\infty}\left\| \partial_t\psi(t_n')-\varphi_1\right\|_{L^2}=\eps_0.
\end{equation} 
Using Proposition \ref{P:subsequenceWM} along the sequence $\{t_n'\}_n$, we obtain (extracting subsequences) $J'\geq 0$, $(\iota_j')_{j}\in \{\pm 1\}^{J'}$, $0<\lambda_{1,n}'\ll \ldots\ll \lambda_{J',n}'$ such that 
\begin{equation}
 \label{WM94}
 \lim_{n\to\infty} \bigg\|\psi(t_n')-\varphi_0-m\pi-\sum_{j=1}^{J'} \iota_j'Q\bigg( \frac{\cdot}{\lambda_{j,n}'} \bigg)\bigg\|_{H}=0.
\end{equation} 
This yields 
\begin{equation}
 \label{WM95} 
 E_M(\psi_0,\psi_1)=E_M(\varphi_0,\varphi_1)+\frac 12\eps_0^2+J'E_M(Q,0).
\end{equation} 
Combining with \eqref{WM91}, we see that $\frac 12\eps_0^2 =(J-J')E_M(Q,0)$, which yields a contradiction since $\eps_0$ is small and not equal to $0$, concluding the proof of  \eqref{WM91}. 

Combining Proposition \ref{P:subsequenceWM}, the conservation of the energy and \eqref{WM92}, we obtain that $J=\frac{E_M(\psi_0,\psi_1)-E_M(\varphi_{0},\varphi_1)}{E_M(Q,0)}$ is a positive integer, and that
for all sequences of times $t_n\to 1$, there exists a subsequence of $\{t_n\}_n$, $(\iota_j)_j\in \{\pm 1\}^J$, and sequences $\{\lambda_{j,n}\}_n$, $1\leq j\leq J$ with 
$$0<\lambda_{1,n}\ll \ldots \ll \lambda_{J,n}\ll 1-t_n,$$
such that
\begin{equation}
\label{CVHWM}
\lim_{n\to \infty} \bigg\|\vec{\psi}(t_n)-(\varphi_0,\varphi_1) -m\pi-\sum_{j=1}^J \bigg(\iota_j Q\bigg( \frac{\cdot}{\lambda_{j,n}} \bigg),0\bigg)\bigg\|_{\Hbf}=0,
\end{equation}
where $m=\sum \iota_j$.

The fact that we can choose the $\iota_j$ (and thus $m$) independently of the sequence $\{t_n'\}_n$, and that the preceding statement implies the conclusion of Theorem \ref{T:mainWM} (with $\phi_0=\varphi_0-m\pi$, $\phi_1=\varphi_1$) is standard and we omit it (see the end of Section \ref{S:resolution} above, and Section 3.5 of \cite{DuKeMe13}).

\medskip

The proof of Theorem \ref{T:mainWM} in the case where $T_+(\psi)=+\infty$ is very close, using the scattering state $\psi_L$ (defined in Proposition \ref{P:globalWM}) instead of $(\varphi_0,\varphi_1)$. We again omit the details.
 \appendix 

\section{Radiation term for the free wave equation}
\label{A:radiation}
In this appendix we consider the linear inhomogeneous wave equation in any space dimension $N\geq 3$:
\begin{equation}
\label{ILW}
\left\{
\begin{aligned}
\partial_t^2u-\Delta u&=f,\quad (t,x)\in \RR\times \RR^N\\
\vec{u}_{\restriction t=0}&=(u_0,u_1),
\end{aligned}\right.
\end{equation}
with radial data.
\begin{prop}
\label{P:radiation}
 Let
$$(u_0,u_1)\in \HHH=(\hdot\times L^2)_{\rad}(\RR^N), \quad f\in L^1(\RR,L^2_{\rad}(\RR^N)),$$
 and $u$ be the corresponding solution of \eqref{ILW}. Then there exists $G\in L^2(\RR)$ such that
 \begin{align}
  \label{RT10}
\lim_{t\to\infty} \int_0^{+\infty} \left|r^{\frac{N-1}{2}}\partial_r u(t,r)-G(r-t)\right|^2dr&=0\\
\label{RT11}
\lim_{t\to\infty} \int_0^{+\infty} \left|r^{\frac{N-1}{2}}\partial_t u(t,r)+G(r-t)\right|^2dr&=0
\end{align} 
If furthermore $(u_0,u_1)\in \dot{H}^2\times \dot{H}^1$ and $\partial_t f\in L^1L^2$, then 
 \begin{align}
  \label{RT12}
\lim_{t\to\infty} \int_0^{+\infty} \left|r^{\frac{N-1}{2}}\partial_{r}\partial_t u(t,r)+G'(r-t)\right|^2dr&=0\\
\label{RT13}
\lim_{t\to\infty} \int_0^{+\infty} \left|r^{\frac{N-1}{2}}\partial_t^2 u(t,r)-G'(r-t)\right|^2dr&=0
\end{align} 
\end{prop}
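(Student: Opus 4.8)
\textbf{Proof proposal for Proposition~\ref{P:radiation}.}

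The plan is to prove the existence of the radiation profile $G$ for the first pair of limits \eqref{RT10}--\eqref{RT11}, and then obtain \eqref{RT12}--\eqref{RT13} essentially for free by differentiating in time. First I would treat the homogeneous case $f=0$ and smooth, compactly supported data, where one can use the explicit representation of the radial solution: in odd dimension $N$ the substitution $v=r^{(N-1)/2}u$ (combined with the translation-type operators $\partial_r$, $\tfrac1r\partial_r$ from the Euler--Poisson--Darboux formula) reduces the equation essentially to a $1$D wave equation on the half-line, for which $r^{(N-1)/2}\partial_r u$ and $-r^{(N-1)/2}\partial_t u$ both converge to a common outgoing profile $G(r-t)$ in $L^2(dr)$; in even dimension one uses the descent method (or the Fourier-analytic formula \eqref{eq:Fradial} together with the asymptotics of the Bessel function $J_{(N-2)/2}$, exactly as in the proof of Proposition~\ref{pr:channel4D}) to get the same conclusion. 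The key estimate at this stage is that the map $(u_0,u_1)\mapsto G$, defined on a dense subspace, is bounded: $\|G\|_{L^2(\RR)}^2 = c_N^{-1}\cdot\tfrac12\|(u_0,u_1)\|_{\HHH}^2$ where $c_N$ is the surface measure of the unit sphere. This isometry (up to a constant), together with the density of smooth compactly supported data in $\HHH$ and the a priori bound $\|r^{(N-1)/2}\partial_{r,t}u(t)\|_{L^2(0,\infty)}\lesssim \|(u_0,u_1)\|_{\HHH}$ (which is just conservation of energy written in these variables), lets me pass to the limit and define $G$ for arbitrary $(u_0,u_1)\in\HHH$ by continuity, with \eqref{RT10}--\eqref{RT11} holding in the limit.

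Next I would incorporate the forcing term $f\in L^1(\RR,L^2)$ by Duhamel's formula. Writing $u = u_L + \int_0^t \tfrac{\sin((t-s)\sqrt{-\Delta})}{\sqrt{-\Delta}}f(s)\,ds$, I would note that for each fixed $s$ the solution of the free equation with data $(0,f(s))$ at time $s$ has its own radiation profile $G_s\in L^2(\RR)$ with $\|G_s\|_{L^2}\lesssim \|f(s)\|_{L^2}$; since $s\mapsto G_s$ is then in $L^1(\RR,L^2(\RR))$, I can superpose, setting $G = G_{u_L} + \int_\RR G_s(\cdot)\,ds$ (with appropriate shift accounting for the initial time $s$), and the convergence \eqref{RT10}--\eqref{RT11} for $u$ follows by dominated-convergence-type arguments, splitting the $s$-integral into $|s|\le T$ and $|s|>T$ and using the uniform $L^2$ bound on the $s$-slices plus the $L^1$ integrability in $s$. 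The minor technical point here is that the contribution of the tail $\{s : |s|>T\}$ must be shown small uniformly in $t$, which is exactly where $f\in L^1L^2$ is used.

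Finally, for the second assertion, assume $(u_0,u_1)\in\dot H^2\times\dot H^1$ and $\partial_t f\in L^1L^2$. By persistence of regularity $\partial_t u$ solves the linear inhomogeneous wave equation with data $(u_1,\Delta u_0 + f(0))\in \hdot\times L^2$ and forcing $\partial_t f\in L^1L^2$. Applying the part of the proposition already established to $\partial_t u$ produces a radiation profile $\widetilde G\in L^2(\RR)$ with $r^{(N-1)/2}\partial_r\partial_t u(t,r) \to \widetilde G(r-t)$ and $r^{(N-1)/2}\partial_t^2 u(t,r)\to -\widetilde G(r-t)$ in $L^2(dr)$; it remains to identify $\widetilde G = -G'$. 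This identification is the one place where I expect to have to be careful: one way is to test \eqref{RT10} against a smooth compactly supported function, differentiate in $t$, integrate by parts in $r$, and use the equation to commute $\partial_t$ past the limit, which shows $G$ is weakly differentiable with $G' = -\widetilde G$ in the distributional sense; since $\widetilde G\in L^2$ this upgrades to $G\in H^1(\RR)$ and gives the strong-$L^2$ statements \eqref{RT12}--\eqref{RT13}. I expect the main obstacle of the whole argument to be precisely this commutation of $\partial_t$ with the $t\to\infty$ limit (equivalently, the rigorous identification of the profile of $\partial_t u$ with the derivative of the profile of $u$); the existence and boundedness of $G$ itself is routine once the explicit/Bessel formulas are in hand.
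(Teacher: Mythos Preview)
Your approach is essentially the paper's, with two minor differences worth noting.

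For the forcing term, the paper avoids your slice-by-slice superposition of profiles $G_s$ entirely: it simply observes from the Duhamel formula that there is a single free solution $v_L$ with $\lim_{t\to\infty}\|\vec u(t)-\vec v_L(t)\|_{\HHH}=0$, and then takes $G$ to be the radiation profile of $v_L$. This is cleaner than integrating the $G_s$ and handling the tail $|s|>T$ separately, though your argument is also fine.

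For the identification $\widetilde G=-G'$, the paper proceeds exactly along the lines you anticipate (change variables to $\eta=r-t$, differentiate in $\eta$, match the distributional limit), but the step you describe as ``use the equation to commute $\partial_t$ past the limit'' is not quite the right mechanism. What is actually needed is the lower-order dispersive estimate
\[
\lim_{t\to\infty}\int_0^\infty |\partial_t u(t,r)|^2\, r^{N-3}\,dr=0,
\]
which kills the term coming from differentiating the weight $r^{(N-1)/2}$; the paper cites \cite[Theorem~2.1]{DuKeMe19} for this. Once that term is gone, the remaining piece matches $\widetilde G$ via \eqref{RT14}, and uniqueness of the distributional limit gives $H=-G'$. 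You correctly flagged this as the delicate point; the missing ingredient is just this decay, not the wave equation itself.
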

\begin{proof}
 The fact that \eqref{RT10} and \eqref{RT11} hold in the case $f=0$ is well-known (see \cite[Theorem 2.1]{DuKeMe19}) and goes back at least to the work of Friedlander \cite{Friedlander62}.

 To prove \eqref{RT10} and \eqref{RT11} in the general case, one can reduce to the case $f=0$, by recalling that if $u$ is a solution of \eqref{ILW} with $f\in L^1(\RR,L^2(\RR^N))$, there exists a finite energy solution $v_L$ of the free wave equation $(\partial_t^2-\Delta)v_L=0$ such that 
 \begin{equation}
 \label{inhom_hom}
 \lim_{t\to\infty} \left\|\vec{u}(t)-\vec{v}_L(t)\right\|_{\HHH}=0.
 \end{equation} 
 This follows immediately from the Duhamel formulation of \eqref{ILW}, which yields that \eqref{inhom_hom} holds with
 \begin{multline*}
  v_L(t)=\cos(t\sqrt{-\Delta})\left(u_0-\int_0^{+\infty}\frac{\sin(s\sqrt{-\Delta})}{\sqrt{-\Delta}}f(s)ds\right)\\
  +\frac{\sin(t\sqrt{-\Delta})}{\sqrt{-\Delta}}\left( u_1+\int_0^{+\infty}\cos\left(s\sqrt{-\Delta}\right)f(s)ds \right).
 \end{multline*}
We next assume that $u_0\in \dot{H}^2$, $u_1\in \dot{H}^1$, $\partial_t f\in L^1(\RR,L^2)$ and prove \eqref{RT12}, \eqref{RT13}. We first note that $\partial_t u$ is solution of 
\begin{equation}
\label{ILW'}
\left\{
\begin{aligned}
\partial_t^2\partial_tu-\Delta \partial_tu&=\partial_t f,\quad (t,x)\in \RR\times \RR^N\\
\vec{u}_{\restriction t=0}&=(u_1,\Delta u_0+f(0))\in \HHH,
\end{aligned}\right.
\end{equation} 
(where $f(0)$ is well defined and in $L^2(\RR^N)$ by an elementary trace lemma). By the considerations above, there exists $H\in L^2(\RR)$ such that 
\begin{align}
\label{RT14}
\lim_{t\to\infty} \int_0^{+\infty} \left|r^{\frac{N-1}{2}}\partial_r \partial_tu(t,r)-H(r-t)\right|^2dr&=0\\
\label{RT15}
\lim_{t\to\infty} \int_0^{+\infty} \left|r^{\frac{N-1}{2}}\partial_t^2 u(t,r)+H(r-t)\right|^2dr&=0,
\end{align} 
and we are reduced to prove that $H=-G'$.

Let $\chi\in C^{\infty}(\RR)$ such that $\chi(\eta)=0$ if $\eta\leq \frac{1}{2}$ and $\chi(\eta)=1$ if $\eta\geq 1$. Since 
$$\lim_{t\to\infty} \int_0^1 |G(r-t)|^2dr=0,$$
the equality \eqref{RT11} implies
\begin{equation*}
 \lim_{t\to \infty} \int_{\RR} \left| \chi(r)r^{\frac{N-1}{2}} \partial_tu(t,r)+G(r-t)\right|^2dr=0,
\end{equation*} 
that is
\begin{equation*}
 \lim_{\tau\to \infty} \int_{\RR} \left| (\eta+\tau)^{\frac{N-1}{2}} \chi(\eta+\tau)\partial_tu(\tau,\eta+\tau)+G(\eta)\right|^2d\eta=0.
\end{equation*} 
In other words, the family of functions
$$\eta\mapsto (\eta+\tau)^{\frac{N-1}{2}}\chi(\eta+\tau)(\partial_t u)(\tau,\eta+\tau)$$
converge to $-G$ in $L^2(\RR)$ as $\tau$ goes to infinity. Taking the derivative in $\eta$ and using that 
$$\lim_{t\to\infty} \int_{0}^{+\infty}|\partial_t u(t,r)|^2r^{N-3}dr=0$$
(which follows from the fact that there exists a finite energy solution $w_L$ of $\partial_t^2w_L-\Delta w_L=0$
such that
$\lim_{t\to\infty} \|\partial_tu(t)-w_L(t)\|_{\hdot}=0,$ and a classical dispersive estimate, see e.g. \cite[Theorem 2.1]{DuKeMe19})
 we obtain  that the family of functions
$$\eta\mapsto (\eta+\tau)^{\frac{N-1}{2}}\chi(\eta+\tau)(\partial_t\partial_r u)(\tau,\eta+\tau)$$
converge to $-G'$ in $\DDD'(\RR)$ as $\tau\to \infty$. However by \eqref{RT14}, we obtain that this family converge to $H$ in $L^2(\RR)$ as $\tau\to\infty$. By uniqueness of the distributional limit, $H=-G'$ concluding the proof.
\end{proof}
\begin{remark}
 \label{R:radiation}
 We recall that for all $G\in L^2(\RR)$, there exists a unique finite energy solution $u$ of \eqref{ILW} such that \eqref{RT10} and \eqref{RT11} hold: see again \cite[Theorem 2.1]{DuKeMe19}.
\end{remark}

\section{Pseudo-ortogonality and channels of energy}
\label{A:psdo_ortho}

\subsection{Linear wave equation}
\label{AA:psdo_ortho}
In this subsection, we prove \eqref{Ps10} and \eqref{Ps11}

Since $\int_{\rho_n<|x|<\rho_n'}\ldots=\int_{\rho_n<|x|}\ldots-\int_{\rho_n'<|x|}$, it is sufficient to treat the case where $\rho_n'=\infty$ for all $n$. Rescaling $u_n$, we also assume $$\forall n,\quad \lambda_{j,n}=1.$$

\smallskip

\noindent \emph{Pseudo-orthogonality between two profiles.}
We first prove \eqref{Ps10}.
We will use the pseudo-orthogonality  \eqref{psdo_orth} of the scaling and time translation parameters.
We fix $j\neq k$. Let 
\begin{multline*}
\eps_n:=\int_{|x|>\rho_n}\nabla_{t,x}U^j_n(s_n,x)\cdot \nabla_{t,x}U^k_n(s_n,x)dx\\
=\int_{|x|>\rho_n}\nabla_{t,x}U^j\left(s_n-t_{j,n},x\right)\cdot \frac{1}{\lambda_{k,n}^2}\nabla_{t,x}U^k\left(\frac{s_n-t_{k,n}}{\lambda_{k,n}},\frac{x}{\lambda_{k,n}}\right)dx.
\end{multline*}
Arguing by contradiction, we can extract subsequences and assume that the following limits exist in $\RR\cup\{\pm\infty\}$.
\begin{equation*}
 \lim_{n\to\infty}s_n-t_{j,n}=T_j,\quad \lim_{n\to\infty} \frac{s_n-t_{k,n}}{\lambda_{k,n}}=T_k.
\end{equation*} 
\noindent\emph{Case 1: $T_j\in \RR$, $T_k\in \RR$.} In this case 
we have 
$$\eps_n=o_n(1)+\int_{|x|>\rho_n} \nabla_{t,x}U^j(T_j,x)\cdot\frac{1}{\lambda_{k,n}^2}\nabla_{t,x} U^k\left( T_k,\frac{x}{\lambda_{k,n}} \right)dx.$$
Since \eqref{psdo_orth} implies $\lim_n\lambda_{k,n}\in \{0,\infty\}$, we obtain right away $\lim_{n}\eps_n=0$.

\smallskip

\noindent\emph{Case 2. $T_j=+\infty$, $T_k\in \RR$.} We note that in this case, there exists $g^j\in L^2(\RR)$ such that 
\begin{multline}
 \label{Ps20} \eps_n=o_n(1)+\int_{\rho_n}^{\infty} g^j(r-s_n+t_{j,n})\frac{1}{\lambda_{k,n}^2} \partial_tU^k\left(T_k,\frac{r}{\lambda_{k,n}}\right)r^{3/2}dr\\
 -\int_{\rho_n}^{\infty} g^j(r-s_n+t_{j,n})\frac{1}{\lambda_{k,n}^2} \partial_rU^k\left(T_k,\frac{r}{\lambda_{k,n}}\right)r^{3/2}dr.
\end{multline} 
Indeed if $\lim_{n\to\infty} t_{j,n}\in \{\pm\infty\}$, $U^j$ is a solution of the free wave equation \eqref{FW}, and \eqref{Ps20} follows from the asymptotic behaviour for these solutions, recalled in Proposition \ref{P:radiation} above. If $t_{j,n}=0$ for all $n$, then by the definition of $R_n$ and the assumptions $\rho_n\geq R_n+|s_n|$, there exists a solution $V^j_{L}$ of \eqref{FW} such that 
$$\lim_{n\to\infty} \int_{|x|>\rho_n} \left|\nabla_{t,x}U^j(s_n-t_{j,n},x)-\nabla_{t,x}V^j_L(s_n-t_{j,n},x)\right|^2dx=0,$$
and the claim follows again from Proposition \ref{P:radiation}. Letting $h^k=r^{3/2} \partial_tU^k(T_k,r)\in L^2(0,\infty)$, we see that 
$$ \int_{\rho_n}^{\infty} g^j(r-s_n+t_{j,n})\frac{1}{\lambda_{k,n}^2} \partial_tU^k\left(T_k,\frac{r}{\lambda_{k,n}}\right)r^{3/2}dr= \int_{\rho_n}^{\infty} g^j(r-s_n+t_{j,n})h^k\left(\frac{r}{\lambda_{k,n}}\right)dr,$$
which tends to $0$ as $n$ tends to infinity because $\lim_ns_n-t_{j,n}=\infty$. Since the term in the second line of \eqref{Ps20} can be treated in  the exact same way, we obtain $\lim_{n\to\infty}\eps_n=0$.

The proof in the cases $T_j=-\infty$, $T_k\in \RR$, and $T_j\in \RR$, $T_k=\pm\infty$ are very close and we omit them.

\smallskip

\noindent\emph{Case 3. $T_j=+\infty$ and $T_k=+\infty$.} Similarly to Case $2$, there exist $g^j \in L^2(\RR)$ and $g^k\in L^2(\RR)$ such that
$$\eps_n=o_n(1)+2\int_{\rho_n}^{\infty} g^j(r-s_n+t_{j,n})\frac{1}{\lambda_{k,n}^2}g^k\left( \frac{r-s_n+t_{k,n}}{\lambda_{k,n}} \right)dr,$$
and the fact that $\lim_{n\to\infty}\eps_n=0$ follows easily from the pseudo-orthogonality property \eqref{psdo_orth}.

The proof in the case $T_j=-\infty$, $T_k=-\infty$ is the same and we omit it.

\smallskip

\noindent\emph{Case 4. $T_j=+\infty$ and $T_k=-\infty$.} 
Using Proposition \ref{P:radiation} as in cases $2$ and $3$, we obtain that the contributions of the integrals of $\partial_tU^j_n\partial_tU^k_n$ and $\partial_rU^j_n\partial_rU^k_n$ in the definition of $\eps_n$ are opposite, and thus
$$\lim_{n\to\infty}\eps_n=0.$$

The proof in the case $T_j=-\infty$, $T_k=+\infty$ is exactly the same. 

\smallskip

\noindent\emph{Pseudo-orthogonality between a profile and the dispersive remainder.}

We next prove \eqref{Ps11}.
We assume again that $T_j=\lim_{n\to\infty} s_n-t_{j,n}$ exists in $\RR\cup\{\pm\infty\}$.

\smallskip

\noindent\emph{Case 1. $T_j\in \RR$.}
Then 
$$\eps_n=o_n(1)+\int_{|x|>\rho_n} \nabla_{t,x}U^j(T_j,x)\nabla_{t,x}w^J_n(s_n,x)dx.$$
If $\lim_{n\to\infty}\rho_n=+\infty$, we deduce immediately that $\lim_{n\to\infty}\eps_n=0$. If not, we can assume $\lim_{n\to\infty}\rho_n=\rho_{\infty}\in [0,\infty)$, and we obtain
$$\eps_n=o_n(1)+\int_{|x|>\rho_{\infty}} \nabla_{t,x}U^j(T_j,x)\nabla_{t,x}w^J_n(s_n,x)dx.$$
By \eqref{Ps10}, we can take $J$ arbitrarily large. By the definitions of the profiles, and \eqref{wlim_w} we obtain, for $J$ large enough,
\begin{equation}
\label{wlim_w'}
\left( w_n^J\left(s_{n}\right),\partial _tw_n^J\left(s_n\right)\right) \xrightharpoonup[n\to\infty]{} 0 \text{ in }\HHH.
\end{equation} 
As a consequence, $\lim_n\eps_n=0$.

\smallskip

\noindent\emph{Case 2. $T_j=+\infty$.} Arguing as in Case 2 in the proof of \eqref{Ps10}, we obtain that there exists $g^j\in L^2(\RR)$ such that 
\begin{equation*}
 \eps_n=o_n(1)+\int_{\rho_n}^{\infty} g^j(r+t_{j,n}-s_n)\Big(\partial_tw_n^J(s_n,r)-\partial_rw_n^J(s_n,r)\Big)r^{3/2}dr.
\end{equation*}

If $\lim_{n}\rho_n+t_{j,n}-s_n=+\infty$, we obtain right away that $\eps_n=o_n(1)$. 

If $\lim_{n}\rho_n+t_{j,n}-s_n=-\infty$, then 
$$ \int_{0}^{\rho_n}\left|g^j(r+t_{j,n}-s_n)\right|^2dr\leq \int_{-\infty}^{\rho_n+t_{j,n}-s_n} |g(\eta)|^2\underset{n\to\infty}{\longrightarrow}0,$$
and thus
\begin{multline*}
 \eps_n=o_n(1)+\int_{0}^{\infty} g^j(r+t_{j,n}-s_n)\Big(\partial_tw_n^J(s_n,r)-\partial_rw_n^J(s_n,r)\Big)r^{3/2}dr\\
 =o_n(1)+\int \nabla_{t,x}V^j_L(s_n-t_{j,n})\cdot \nabla_{t,x}w_n^J(s_n,x)dx,
\end{multline*}
where $V^j_L$ is the radial solution of the linear wave equation such that 
$$\lim_{t\to\infty}  \int_0^{\infty}|r^{3/2} \partial_tV^j_L(t,r)-g^j(r-t)|^2dr+\int_0^{\infty}|r^{3/2} \partial_rV^j_L(t,r)+g^j(r-t)|^2dr=0.$$
By conservation of the energy for the free wave equation \eqref{FW},
\begin{equation*}
\eps_n=o_n(1)+\int \nabla_{t,x}V^j_L(0)\cdot \nabla_{t,x}w_n^J(t_{j,n},x)dx,
\end{equation*}
and thus by \eqref{wlim_w}, $\eps_n=o_n(1)$. 

Finally, if $\lim_{n}\rho_n+t_{j,n}-s_n=c\in \RR$, we see that 
\begin{multline*}
 \eps_n=o_n(1)+\int_{0}^{\infty} g^j_c(r+t_{j,n}-s_n)\Big(\partial_tw_n^J(s_n,r)-\partial_rw_n^J(s_n,r)\Big)r^{3/2}dr\\
 =o_n(1)+\int \nabla_{t,x}V^j_L(s_n-t_{j,n})\cdot \nabla_{t,x}w_n^J(s_n,x)dx,
\end{multline*}
where $g_c(\eta)=\indic_{\eta>c}g(\eta)$, and $V^j_L$ is the radial solution of the linear wave equation such that 
$$\lim_{t\to\infty}  \int_0^{\infty}|r^{3/2} \partial_tV^j_L(t,r)-g^j_c(r-t)|^2dr+\int_0^{\infty}|r^{3/2} \partial_rV^j_L(t,r)+g^j_c(r-t)|^2dr=0.$$
The same proof as before yields $\eps_n=o_n(1)$. 

Since the case $T_j=-\infty$ can be treated exactly in the same way, the proof of \eqref{Ps11} is complete.
\subsection{Pythagorean expansion of the energy for wave maps}
\label{AA:psdo_orthoWM}
In this subsection we prove \eqref{PytWM}. In view of Claim \ref{Cl:sin}, it is sufficient to prove
\begin{gather}
 \label{AA20}
 j\neq k\Longrightarrow \lim_{n\to\infty}\int_0^{\infty} \partial_{r}\Psi_{0,n}^j\partial_r\Psi_{0,n}^krdr+\int_0^{\infty} \Psi_{1,n}^j\Psi_{1,n}^krdr=0\\
 \label{AA21}
 j\neq k\Longrightarrow \lim_{n\to\infty}\int_0^{\infty}\left|\sin \Psi_{0,n}^j\sin \Psi_{0,n}^k\right|\frac{dr}{r}=0\\
 \label{AA22}
 j\leq J\Longrightarrow \lim_{n\to\infty}\int_0^{\infty} \partial_{r}\Psi_{0,n}^j\partial_r\omega_{0,n}^Jrdr+\int_0^{\infty} \Psi_{1,n}^j\omega_{1,n}^Jrdr=0\\
 \label{AA23}
 j\leq J\Longrightarrow \lim_{n\to\infty}\int_0^{\infty}\left|\sin \Psi_{0,n}^j\sin \omega_{0,n}^J\right|\frac{dr}{r}=0.
 \end{gather}
Recall that if $j\in \JJJ_L$, i.e. $\lim_{n\to\infty} -t_{j,n}/\lambda_{j,n}\in \{\pm\infty\}$, then 
$$ \left(\Psi^j_{0,n}(r),\Psi^j_{1,n}(r)\right)=\left(\Psi^j_L\left( \frac{-t_{j,n}}{\lambda_{j,n}},\frac{r}{\lambda_{j,n}} \right),\frac{1}{\lambda_{j,n}}\partial_t \Psi_L^j\left( \frac{-t_{j,n}}{\lambda_{j,n}},\frac{r}{\lambda_{j,n}} \right)\right),$$
where $\Psi^j_L$ is a solution of the linear equation \eqref{LWM} with initial data in $\Hbf$. Thus $U^j=\frac{1}{r}\Psi^j$ is a radial solution of the $1+4$ dimensional wave equation. By standard dispersive estimates for linear wave equations,
\begin{equation}
 \label{AA24}
 \lim_{n\to\infty} \int_0^{\infty} \left|\Psi_{0,n}^j(r)\right|^2\frac{dr}{r}=\lim_{n\to\infty}\int_0^{\infty}\left| U^j\left( \frac{-t_{j,n}}{\lambda_{j,n}},r \right)\right|^2rdr=0.
\end{equation} 
This yields \eqref{AA21} when $j\in \JJJ_L$ or $k\in \JJJ_k$, and \eqref{AA23} when $j\in \JJJ_L$. 

Next, we assume $j\in \JJJ_L$ and $k\in \JJJ_L$ and see that by \eqref{AA24} letting $U^j_n(t,r)=\frac{1}{\lambda_{j,n}}U^j\left(\frac{t-t_{j,n}}{\lambda_{j,n}},\frac{r}{\lambda_{j,n}} \right)$ and defining similarly $U^k_n$, we have
\begin{multline*}
\lim_{n\to\infty}\int_0^{\infty} \partial_{r}\Psi_{0,n}^j\partial_r\Psi_{0,n}^krdr+\int_0^{\infty} \Psi_{1,n}^j\Psi_{1,n}^krdr\\
= \int_{0}^{\infty} \partial_rU_n^j(0,r)\partial_tU_n^j(0,r)r^3dr+  \int_{0}^{\infty} \partial_rU_n^j(0,r)\partial_rU_n^j(0,r)r^3dr+o_n(1),
\end{multline*}
and \eqref{AA20} follows from \eqref{Ps10}. 

It remains to prove \eqref{AA20} when $j\in \JJJ_C$ or $k\in \JJJ_C$, \eqref{AA21} when $j\in \JJJ_C$ and $k\in \JJJ_C$, and \eqref{AA22}, \eqref{AA23} when $j\in \JJJ_C$. 

\smallskip

\noindent\emph{Proof of \eqref{AA20} when $j\in \JJJ_C$, $k\in \JJJ_L$.} Assume to fix ideas $\lim_{n\to\infty} -t_{j,n}/\lambda_{j,n}=+\infty$. By the asymptotic formulas \eqref{RT10} and \eqref{RT11}, there exists $G^k\in L^2(\RR)$ such that 
$$\lim_{t\to+\infty} \int_0^{\infty}\left| r^{3/2}\partial_tU^k(t,r)+G^k(r-t)\right|dr+\int_0^{\infty} \left| r^{3/2}\partial_rU^k(t,r)-G^k(r-t)\right|dr.$$
Using also that $\lim_{t\to\infty}\int_0^{\infty} |U^k(t,r)|^2r dr=0$, we deduce
$$\lim_{t\to+\infty} \int_0^{\infty}\left| r^{1/2}\partial_t\Psi^k(t,r)+G^k(r-t)\right|dr+\int_0^{\infty} \left| r^{1/2}\partial_r\Psi^k(t,r)-G^k(r-t)\right|dr.$$
Thus 
\begin{multline*}
\left|\int_{0}^{\infty}\partial_r\Psi_{0,n}^j(r)\partial_r\Psi_{0,n}^k(r)rdr\right|\\
=\left| \int_{0}^{\infty}
\frac{1}{\lambda_{j,n}}\partial_r\Psi_{0}^j\left(\frac{r}{\lambda_{j,n}}\right)G^k\left(\frac{t_{k,n}+r}{\lambda_{k,n}}\right)\frac{r^{1/2}}{\lambda_{k,n}^{1/2}}dr \right|\\
=\left| \int_{0}^{\infty}\partial_r\Psi_{0}^j\left(s\right)\frac{\lambda_{k,n}^{1/2}}{\lambda_{j,n}^{1/2}} G^k\left(\frac{t_{k,n}+\lambda_{j,n}s}{\lambda_{k,n}}\right)s^{1/2}ds \right|,
\end{multline*}
which goes to $0$ when $n$ goes to infinity (this is immediate when $G^k$ is continuous and compactly supported, and follows by density in the general case). By a very similar computation,
$$ \lim_{n\to\infty}\int_{0}^{\infty}\Psi_{1,n}^j(r)\Psi_{1,n}^k(r)rdr=0,$$
which concludes the proof of \eqref{AA20} in this case.

\smallskip

\noindent\emph{Proof of \eqref{AA20} and \eqref{AA21} when $j,k\in \JJJ_C$.}
We have 
\begin{multline*}
\int_0^{\infty} \partial_r\Psi_{0,n}^j\partial_r\Psi_{0,n}^k
rdr=\int_0^{\infty} \frac{1}{\lambda_{j,n}}\partial_r\Psi_0^j\left( \frac{r}{\lambda_{j,n}} \right)\frac{1}{\lambda_{k,n}}\partial_r\Psi_0^k\left( \frac{r}{\lambda_{k,n}} \right)rdr\\
=\int_0^{\mu_n}\ldots+\int_{\mu_n}^{\infty}\ldots,
\end{multline*}
where $\mu_n=\sqrt{\lambda_{j,n}\lambda_{k,n}}$. Using Cauchy-Schwarz on each of the integrals, we see that it goes to $0$ when $n$ goes to $\infty$, since $\lim_n\frac{\lambda_{j,n}}{\lambda_{k,n}}+\frac{\lambda_{k,n}}{\lambda_{j,n}}=\infty$. By the same proof for the term involving $\Psi_{1,n}^j$ and $\Psi_{1,n}^k$, we obtain \eqref{AA20}.

The proof of \eqref{AA21} when $j,k \in \JJJ_C$ is the same. 

\smallskip

\noindent\emph{Proof of \eqref{AA23} when $j\in \JJJ_C$.}
We have 
$$\int_0^{\infty}\left|\sin \Psi_{0,n}^j(r) \sin \omega_{0,n}^J(r)\right|\frac{dr}{r}=\int_0^{\infty}\left|\sin \Psi_{0}^j\left( r \right) \sin \omega_{0,n}^J(\lambda_{j,n}r)\right|\frac{dr}{r}. $$
Let $\eps>0$. Then
\begin{multline*}
\int_{\eps}^{\eps^{-1}}\left|\sin \Psi_{0}^j(r) \sin \omega_{0,n}^J(\lambda_{j,n}r)\right|\frac{dr}{r}\\
\leq \left(\int_{\eps}^{\eps^{-1}}\left|\sin \Psi_{0}^j(r)\right|^{3/2}\frac{dr}{r^{3/2}}\right)^{2/3}\left( \int_{\eps}^{\eps^{-1}} \left|\omega_{0,n}^J(\lambda_{j,n}r)\right|^3dr \right)^{1/3}. 
\end{multline*}
Let $w_n^J(r)=\frac{1}{r}\omega_n^J(\lambda_{j,n}r)$. Then \eqref{wnJWM} implies that $\lambda_{j,n}w_n^J(\lambda_{j,n}\cdot)$, considered as a radial function on $\RR^4$, converges weakly to $0$ in $\hdot(\RR^4)$. This implies that $\lambda_{j,n}w_n^J(\lambda_{j,n}\cdot)$ converges strongly in $L^3_{\loc}(\RR^4)$, and thus
$$\lim_{n\to\infty} \int_{\eps}^{\eps^{-1}} \left|\omega_{0,n}^J(\lambda_{j,n}r)\right|^3dr =0.$$
As a consequence, for all $\eps$
$$\lim_{n\to\infty}\int_{\eps}^{\eps^{-1}}\left|\sin \Psi_{0}^j(r) \sin \omega_{0,n}^J(\lambda_{j,n}r)\right|\frac{dr}{r}=0,$$
and  \eqref{AA23} follows since $\int_{0}^{\infty} \sin^2\Psi_{0}^j(r)\frac{dr}{r}$ is finite and $\int_0^{\infty}\sin^2\omega_{0,n}^J(\lambda_{j,n}r)\frac{dr}{r}$ is uniformly bounded. 
\section{Boundedness of integral operators on $L^pL^q$ spaces}
\label{A:anal_harm}
In this appendix, we consider, as in the core of the article, functions defined for $t\in \RR$, $x\in \RR^4$ that are radial in the space variable. As before we use the notation
$$ \|u\|_{(L^pL^q)(R)}:=\left\| \indic_{\{|x|>R+|t|\}} u\right\|_{L^p(\RR,L^q(\RR^4))}.$$
\begin{lemma}
 \label{L:Carlos2}
 Let $R\geq 0$.
 For $w\in (L^2L^8)(R)$, define
 $$ (Aw)(t,r)=\frac{1}{r^2}\int_{0}^t w(\tau,r)d\tau.$$
 Then $Aw \in (L^2L^{8/3})(R)$ and 
 $$ \left\|Aw\right\|_{(L^2L^{8/3})(R)} \lesssim \|w\|_{(L^2L^8)(R)},$$
 where the implicit constant is independent of $R\geq 0$.
\end{lemma}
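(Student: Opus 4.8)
The plan is to bound $(Aw)(t,\cdot)$ in $L^{8/3}\big(\{r>R+|t|\},\,r^3\,dr\big)$ for each fixed $t$, and then integrate in $t$ using the one–dimensional Hardy inequality. Since $A$ is linear and the estimate I am aiming for controls all the quantities that appear, I would first reduce to $w$ smooth and compactly supported in $\{|x|>R+|t|\}$ and argue by density; thus I may assume every integral below converges. Fix $t>0$; the case $t<0$ is identical after $t\mapsto -t$. Writing $v(t,r)=\int_0^t w(\tau,r)\,d\tau$, one has $(Aw)(t,r)=r^{-2}v(t,r)$, and $v(t,\cdot)$ is well defined on $\{r>R+t\}$ (this is precisely the region on which $Aw$ makes sense, and it is where the norm $(L^2L^{8/3})(R)$ lives).

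Next I would apply Hölder's inequality in the radial variable with respect to $r^3\,dr$ on $(R+t,\infty)$, using $\tfrac38=\tfrac18+\tfrac14$, to get
$$\|(Aw)(t,\cdot)\|_{L^{8/3}(\{r>R+t\})}\le \|v(t,\cdot)\|_{L^{8}(\{r>R+t\})}\,\big\|r^{-2}\big\|_{L^{4}(\{r>R+t\})}.$$
Here the weight term is explicit: $\big\|r^{-2}\big\|_{L^4(\{r>R+t\},\,r^3dr)}^4=\int_{R+t}^{\infty}r^{-5}\,dr=\tfrac1{4(R+t)^4}$, so $\big\|r^{-2}\big\|_{L^4(\{r>R+t\})}=\tfrac1{\sqrt2\,(R+t)}\le\tfrac1{\sqrt2\,t}$. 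Then, by Minkowski's integral inequality and monotonicity of the $L^8$ norm in the integration domain,
$$\|v(t,\cdot)\|_{L^8(\{r>R+t\})}\le\int_0^t\|w(\tau,\cdot)\|_{L^8(\{r>R+t\})}\,d\tau\le\int_0^t g(\tau)\,d\tau,$$
where $g(\tau):=\|w(\tau,\cdot)\|_{L^8(\{r>R+|\tau|\},\,r^3dr)}\ge 0$ satisfies $\|g\|_{L^2(\RR)}=\|w\|_{(L^2L^8)(R)}$. Combining the last displays yields $\|(Aw)(t,\cdot)\|_{L^{8/3}(\{r>R+t\})}\le\tfrac1{\sqrt2}\,\tfrac1t\int_0^t g(\tau)\,d\tau$ for $t>0$, and symmetrically $\tfrac1{\sqrt2}\,\tfrac1{|t|}\int_0^{|t|}g(-\sigma)\,d\sigma$ for $t<0$.

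It remains to square and integrate in $t$:
$$\|Aw\|_{(L^2L^{8/3})(R)}^2\le\tfrac12\left[\int_0^{\infty}\Big(\tfrac1t\int_0^t g(\tau)\,d\tau\Big)^2 dt+\int_0^{\infty}\Big(\tfrac1s\int_0^s g(-\sigma)\,d\sigma\Big)^2 ds\right],$$
and the classical Hardy inequality $\int_0^{\infty}\big(\tfrac1t\int_0^t h\big)^2 dt\le 4\int_0^{\infty}h^2$ for $h\ge 0$ bounds the bracket by $4\int_\RR g^2=4\|w\|_{(L^2L^8)(R)}^2$. Hence $\|Aw\|_{(L^2L^{8/3})(R)}\le\sqrt2\,\|w\|_{(L^2L^8)(R)}$, with a constant independent of $R$.

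The main obstacle is mild and purely organizational: getting the Hölder exponent identity $\tfrac38=\tfrac18+\tfrac14$ to match the weight $r^3\,dr$, and checking that $r^{-2}$ genuinely lies in $L^4$ of the exterior of the cone — a logarithmic divergence at the tip $r=R+|t|$ would be fatal, but since $r^{-8}r^3=r^{-5}$ is integrable at infinity and the lower endpoint is strictly positive, this is fine. One also has to keep the two signs of $t$ consistent so that the final Hardy step sees exactly $\int_\RR g^2$. No dispersive or spectral input enters; everything follows from Hölder, Minkowski, and Hardy.
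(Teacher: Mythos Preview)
Your proof is correct and takes a genuinely different route from the paper's. The paper factors $|Aw|\lesssim\frac{|t|}{r^2}M(f)$, where $M$ is the Hardy--Littlewood maximal function in the time variable applied to $f=|w|\indic_{\{r>R+|t|\}}$, and then proves $M:L^2L^8\to L^2L^8$ by interpolating between $L^2L^2$ and $L^2L^\infty$ (Fubini plus the scalar maximal theorem). You instead apply H\"older in space (the same $\tfrac38=\tfrac18+\tfrac14$ split the paper implicitly uses in its Step~1), then Minkowski in time to reduce to a scalar averaging operator $t\mapsto\tfrac1t\int_0^t g$, and finish with the classical Hardy inequality on $(0,\infty)$. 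Your argument is more elementary---no maximal function, no interpolation---and even yields the explicit constant $\sqrt{2}$. The paper's maximal-function route is perhaps more robust if one later wanted to replace $\int_0^t$ by a more general temporal average, but for the lemma as stated your approach is cleaner.
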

\begin{proof}
 \noindent\emph{Step 1.} We first claim that if $h\in (L^2L^8)(R)$, then $\frac{|t|}{r^2}h\in (L^2L^{8/3})(R)$. Indeed 
 \begin{multline*}
  \left( \int_{R+|t|}^{\infty} |h(t,r)|^{8/3} \left( \frac{|t|}{r^2} \right)^{8/3}r^3dr \right)^{3/8}\\
  \lesssim |t|\left( \int_{R+|t|} h(t,r)^8r^3dr \right)^{1/8} \left( \int_{R+|t|}^{\infty} \frac{r^3}{r^8}dr \right)^{1/4}\\
  \lesssim \frac{|t|}{R+|t|}\left(\int_{R+|t|}^{\infty} h(t,r)^8r^3dr\right)^{1/8}.
 \end{multline*}
Now $\frac{|t|}{R+|t|}\leq 1$ and $t\mapsto \left(\int_{R+|t|}^{\infty} h(t,r)^8r^3dr\right)^{1/8}$ is in $L^2$ in time, concluding this step.

\medskip

\noindent\emph{Step 2.} In this step, we consider a positive function $f$ in $L^2(\RR,L^8(\RR^4))$, and define 
$$M(f)(t,x)=\sup_{I\ni t} \frac{1}{|I|} \int_{I}f(\tau,x)d\tau,$$
where the supremum is taken over all finite intervals $I$ that contain $t$, and we prove
\begin{equation}
\label{L2L8}
\left\|M(f)\right\|_{L^2L^8}\lesssim \|f\|_{L^2L^8}. 
\end{equation} 
Indeed, if $f\in L^2L^2$ then 
\begin{equation}
 \label{L2L2}
\|M(f)\|_{L^2L^2}\lesssim \|f\|_{L^2L^2}
 \end{equation} 
by Fubini and Hardy-Littlewood maximal theorem in the time variable. Furthermore, if $f\in L^2L^{\infty}$, one can prove:
\begin{equation}
 \label{L2Linfty}
\|M(f)\|_{L^2L^{\infty}}\lesssim \|f\|_{L^2L^{\infty}}.
 \end{equation} 
Indeed, 
$$M(f)(t,x)\lesssim \sup_{I\ni t}\frac{1}{|I|} \int_{I}\|f(\tau)\|_{L^{\infty}_x}d\tau$$
and thus
$$\|Mf(t)\|_{L^{\infty}_x}\leq \sup_{I\ni t}\frac{1}{|I|} \int_{I}\|f(\tau)\|_{L^{\infty}_x}d\tau.$$
Using Hardy-Littlewood maximum theorem in the $t$ variable, we obtain \eqref{L2Linfty}. Interpolation gives \eqref{L2L8}. 

\medskip

\noindent\emph{Step 3.} Let $w\in (L^2L^8)(R)$. Let 
$$ f(t,r)=|w(t,r)|\indic_{\{r>t+R\}}.$$
Then, for $r>|t|+R$, we have 
$$\frac{1}{|t|}\left|\int_{0}^{t}w(\tau,r)d\tau\right|\leq \frac{1}{|t|}\int_{-|t|}^{|t|}\indic_{r>\tau+R}|w(\tau,r)|d\tau\leq 2M(f)(t,r),$$
since $r>|t|+R\geq |\tau|+R$ for $0\leq |\tau|\leq |t|$. As a consequence, for $r>|t|+R$, 
$$\left|A(w)(t,r)\right|\lesssim \frac{|t|}{r^2}M(f)(t,r).$$
By Step 1 with $h(t,r)=M(f)(t,r)$, then Step 2, we have 
$$\left\|A(w)\right\|_{(L^2L^{8/3})(R)}\lesssim \|M(f)\|_{(L^2L^8)(R)}\lesssim \|w\|_{(L^2L^8)(R)}.$$
\end{proof}
\begin{lemma}
 \label{L:Carlos1}
 Let $R\geq 1$, $\lambda>1/R$. For $w\in (L^3L^6)(R)$, we define
 $$(Bw)(t,r)=
\frac{t}{r^4\log(r\lambda)^{1/2}}\int_0^{t} w(\tau,r)d\tau.$$
Then $B$ is bounded from $(L^3L^6)(R)$ to $(L^1L^2)(R)$ and 
$$\|Bw\|_{(L^1L^2)(R)}\lesssim \frac{1}{\log(\lambda R)^{1/3}}\|w\|_{(L^3L^6)(R)}$$
 \end{lemma}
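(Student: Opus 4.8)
\emph{The plan} is to follow the scheme of the proof of Lemma~\ref{L:Carlos2}, now exploiting the stronger spatial decay $r^{-4}$ (in place of $r^{-2}$) together with the logarithmic weight so as to reach the $(L^1L^2)$ target and to produce the gain $(\log(\lambda R))^{-1/3}$. By symmetry it is enough to estimate the contribution of $t>0$.

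\emph{Step 1 (pointwise bound and Minkowski in $\tau$).} For $r>t+R$ and $0\le\tau\le t$ one has $r>\tau+R$, so $\bigl|\int_0^t w(\tau,r)\,d\tau\bigr|\le\int_0^t|w(\tau,r)|\,d\tau$. Writing the $L^2$-norm in $r$ over $\{r>t+R\}$ with measure $r^3\,dr$ and applying Minkowski's integral inequality in $\tau$,
$$
\Bigl(\int_{t+R}^\infty|Bw(t,r)|^2 r^3\,dr\Bigr)^{1/2}
= t\,\Bigl\|\tfrac{\int_0^t w(\tau,\cdot)\,d\tau}{r^{5/2}(\log(r\lambda))^{1/2}}\Bigr\|_{L^2_r}
\le t\int_0^t\Bigl(\int_{t+R}^\infty\frac{|w(\tau,r)|^2}{r^5\,\log(r\lambda)}\,dr\Bigr)^{1/2}\,d\tau .
$$

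\emph{Step 2 (Hölder in $r$ and Fubini).} Fix $t$ and $\tau\le t$. Since $\log(r\lambda)\ge\log((t+R)\lambda)$ on $\{r>t+R\}$, Hölder's inequality in $r$ with exponents $3$ and $3/2$, together with $\int_{t+R}^\infty r^{-9}\,dr\lesssim(t+R)^{-8}$, gives a bound of the form
$$
\Bigl(\int_{t+R}^\infty\frac{|w(\tau,r)|^2}{r^5\,\log(r\lambda)}\,dr\Bigr)^{1/2}\lesssim \frac{\|w(\tau)\|_{L^6(t+R)}}{(t+R)^{8/3}\,\log\bigl((t+R)\lambda\bigr)} ,
$$
the point being that one carries the \emph{full} logarithmic weight through the Hölder step rather than discarding it. Inserting this in Step~1, bounding $t\,(t+R)^{-8/3}\le(t+R)^{-5/3}$, integrating over $t\in(0,\infty)$, interchanging the $\tau$- and $t$-integrations by Fubini's theorem, and using $\int_\tau^\infty(t+R)^{-5/3}\,dt\lesssim(\tau+R)^{-2/3}$ together with the monotonicity of the logarithm (so that here $\|w(\tau)\|_{L^6(t+R)}\le\|w(\tau)\|_{L^6(\tau+R)}$ may be used without loss), one obtains
$$
\|Bw\|_{(L^1L^2)(R)}\lesssim \int_0^\infty\frac{\|w(\tau)\|_{L^6(\tau+R)}}{(\tau+R)^{2/3}\,\log\bigl((\tau+R)\lambda\bigr)}\,d\tau .
$$
Finally, Hölder's inequality in $\tau$ with exponents $3$ and $3/2$ and the identity $\int_0^\infty\|w(\tau)\|_{L^6(\tau+R)}^3\,d\tau=\|w\|_{(L^3L^6)(R)}^3$ reduce the claim to the scalar estimate $\bigl(\int_0^\infty (\tau+R)^{-1}(\log((\tau+R)\lambda))^{-3/2}\,d\tau\bigr)^{2/3}\lesssim(\log(\lambda R))^{-1/3}$, which follows from the substitution $s=(\tau+R)\lambda$ and $\int_{R\lambda}^\infty s^{-1}(\log s)^{-3/2}\,ds=2(\log(R\lambda))^{-1/2}$.

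\emph{Main obstacle.} The entire argument is tight at the level of the logarithm: the final scalar integral converges, and produces exactly the power $1/3$, only because the weight $(\log((\tau+R)\lambda))^{-3/2}$ has exponent strictly larger than $1$, and this in turn forces one to retain the whole logarithmic weight in the Hölder inequality of Step~2 (and to perform the $\tau$–$t$ Fubini interchange before, rather than after, estimating the spatial integral). Any cruder treatment — discarding part of the weight, or replacing the spatial cut-off $\{r>t+R\}$ by $\{r>\tau+R\}$ before it has been exploited — leaves instead a divergent integral of the type $\int_{R\lambda}^\infty s^{-1}(\log s)^{-\alpha}\,ds$ with $\alpha\le1$. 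Away from this point, the remaining work — the Fubini interchange and the bookkeeping of the nested one-dimensional integrals and constants — is routine.
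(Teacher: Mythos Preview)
Your approach is the same as the paper's: Minkowski in $\tau$, H\"older in $r$ against the $L^6$ norm, Fubini to swap the $t$- and $\tau$-integrals, then H\"older in $\tau$. But there is an arithmetic slip in Step~2 that breaks the argument as written. From the definition with weight $(\log(r\lambda))^{1/2}$, squaring in Step~1 correctly gives $(\log(r\lambda))^{1}$ in the integrand; after pulling the logarithm out by monotonicity and applying H\"older, taking the square root yields
\[
\Bigl(\int_{t+R}^\infty\frac{|w(\tau,r)|^2}{r^5\,\log(r\lambda)}\,dr\Bigr)^{1/2}\lesssim \frac{\|w(\tau)\|_{L^6(t+R)}}{(t+R)^{8/3}\,\bigl(\log((t+R)\lambda)\bigr)^{1/2}},
\]
with exponent $1/2$ on the logarithm, not $1$ as you claim. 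Propagating this through the rest of the argument, the final scalar integral becomes $\int_{R\lambda}^\infty s^{-1}(\log s)^{-3/4}\,ds$, which diverges --- precisely the failure mode your ``Main obstacle'' paragraph warns against.

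The underlying issue is a typo in the statement of the lemma: the weight should be $\log(r\lambda)$ to the first power, not $(\log(r\lambda))^{1/2}$. The paper's own proof computes with $\log(\lambda r)$ throughout (note the $\log^2$ after squaring and the $\log^3$ inside the H\"older step), and the only place the lemma is used (\S\ref{SS:proof_odd_rig}, estimate~\eqref{RTd61}) invokes the bound $|a_\lambda\,\partial_t a_\lambda|\lesssim |t|\,r^{-4}(\log(\lambda r))^{-1}$, with power~$1$. With the corrected weight your Step~2 inequality holds exactly as you stated it, the final integral is $\int_{R\lambda}^\infty s^{-1}(\log s)^{-3/2}\,ds=2(\log(R\lambda))^{-1/2}$, and your proof is complete and coincides with the paper's.
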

\begin{proof}
 We just work with $t>0$. By Minkowski in time,
 $$\left( \int_{R+t}^{\infty} \left( \int_0^t \frac{w(\tau,r) t}{r^4\log (\lambda r)}d\tau \right)^2r^3dr \right)^{1/2}\leq \int_{0}^t \left\|w(\tau,r)\frac{t}{r^4\log (\lambda r)}\right\|_{L^2(R+t)}d\tau.$$
Furthermore, by H\"older
\begin{multline*}
\left\|w(\tau,r)\frac{t}{r^4\log (\lambda r)}\right\|_{L^2(R+t)}=t\left( \int_{R+t}^{\infty} |w(\tau,r)|^2\frac{r^3}{r^8\log^2(\lambda r)}dr \right)^{1/2}\\
\leq t\left( \int_{R+t}^{\infty}|w(\tau,r)|^6r^3dr \right)^{1/6} \left( \int_{R+t}^{\infty}\frac{r^3}{r^{12}\log^3(\lambda r)}dr \right)^{1/3}\\
\lesssim \frac{t}{(R+t)^{8/3}\log(\lambda(R+t))} \int_0^t \left( \int_{R+t}^{\infty} |w(\tau,r)|^6r^3dr \right)^{1/6}d\tau
\end{multline*}
Integrating, we obtain 
\begin{multline*}
 \left\|\indic_{t>0}B(w)\right\|_{(L^1L^2)(R)}\\
 \lesssim \int_0^{\infty} \frac{t}{(R+t)^{8/3}\log(\lambda(R+t))} \int_0^t \left( \int_{R+t}^{\infty} |w(\tau,r)|^6r^3dr \right)^{1/6}d\tau dt
\end{multline*}
By Fubini,
\begin{multline*}
 \left\|\indic_{t>0}B(w)\right\|_{(L^1L^2)(R)}\\
 \lesssim 
\int_0^{\infty} \left( \int_{R+\tau}^{\infty} |w(\tau,r)|^6r^3dr \right)^{1/6} \int_{\tau}^{\infty} \frac{t}{(R+t)^{8/3}\log(\lambda(R+t))} dt\,d\tau\\
\lesssim \int_{0}^{\infty} \left( \int_{R+\tau}^{\infty} |w(\tau,r)|^6r^3dr \right)^{1/6} \int_{\tau}^{\infty}\frac{dt}{(R+t)^{5/3}\log(\lambda(R+t))} d\tau\\
\lesssim\int_{0}^{\infty} \left( \int_{R+\tau}^{\infty} |w(\tau,r)|^6r^3dr \right)^{1/6} \frac{d\tau}{(R+\tau)^{2/3}\log(\lambda(R+\tau))}\\
\lesssim \left( \int_0^{\infty} \left( \int_{R+\tau}^{\infty} |w(\tau,r)|^{6}r^3dr \right)^{1/2}d\tau \right)^{1/3}\left( \int_0^{\infty}\frac{d\tau}{(R+\tau)\log(\lambda(R+\tau))^{3/2}}d\tau \right)^{2/3},
\end{multline*}
where at the last line we have used H\"older's inequality. The desired conclusion follows easily.
 \end{proof}  
 
\bibliographystyle{acm}
\bibliography{/home/duyckaerts/ownCloud2/Recherche/toto} 

\end{document}